\documentclass[11pt]{amsart}
\usepackage[margin=1in]{geometry}
\usepackage{amssymb}
\usepackage{amsthm}
\usepackage{amsmath}
\usepackage{mathrsfs}
\usepackage{amsbsy}
\usepackage{dynkin-diagrams}
\usepackage{bm}
\usepackage{tikz}
\usepackage{array}
\usepackage[normalem]{ulem}
\usepackage{enumerate}
\usepackage{bbm}
\usepackage{comment}
\usepackage{mathtools}
\usepackage{makecell} 
\usepackage{colortbl}
\usepackage{xcolor}
\usetikzlibrary{positioning,calc}
\usetikzlibrary{decorations.markings,arrows.meta}
\usetikzlibrary{fit} 
\usepackage{booktabs}
\usepackage{csquotes}
\usepackage{framed}
\usepackage{longtable}
\usepackage{pdflscape}
\usepackage{ragged2e}
    \definecolor{barclr}{HTML}{1A6FBF}
    \newcommand{\ov}[1]{\textcolor{barclr}{\overline{#1}}}
    \newcommand{\pbsep}{\mkern3mu\allowbreak}      
    \newcommand{\pbgap}{\mkern10mu\allowbreak}     
    \newcommand{\Xword}[1]{$#1$}                   
\usepackage{multirow} 
\usetikzlibrary{decorations.text}

\usepackage{mathbbol}

\DeclareSymbolFontAlphabet{\amsmathbb}{AMSb}
 
\newcolumntype{R}[1]{>{\RaggedRight\arraybackslash\footnotesize$}p{#1}<{$}}
 
  {\par\smallskip\noindent\textbf{#1:}\par\nopagebreak
   \footnotesize\RaggedRight $}%
  {$\par\smallskip}

\makeatletter
\def\paircount#1_#2{\csname count#1_#2\endcsname}
\makeatother

\DeclareFontFamily{U}{mathx}{}
\DeclareFontShape{U}{mathx}{m}{n}{<-> mathx10}{}
\DeclareSymbolFont{mathx}{U}{mathx}{m}{n}
\DeclareMathAccent{\widecheck}{0}{mathx}{"71}

\usepackage{microtype}
\usepackage{multicol}
\usepackage{paracol}
\usepackage{etoolbox}
\usepackage{caption}
\usepackage[nobreak,noadjust]{cite}
\usepackage{hyperref}
\hypersetup{colorlinks=true, citecolor=blue, linkcolor=blue,urlcolor=blue} 
\usepackage{esvect}

\DeclareFontEncoding{OT2}{}{} 
\newcommand{\textcyr}[1]{{\fontencoding{OT2}\fontfamily{wncyr}\fontseries{m}\fontshape{n}\selectfont #1}}\newcommand{\Sha}{{\mbox{\textcyr{Sh}}}}

\usepackage[noabbrev,capitalise,nameinlink]{cleveref}
\crefname{conjecture}{Conjecture}{Conjectures}
 
\Crefname{remark}{Remark}{Remarks}

\newtheorem{theorem}{Theorem}[section]
\newtheorem{proposition}[theorem]{Proposition}
\newtheorem{corollary}[theorem]{Corollary}
\newtheorem{conjecture}[theorem]{Conjecture}

\newtheorem{lemma}[theorem]{Lemma}

\theoremstyle{definition}
\newtheorem{definition}[theorem]{Definition}
\newtheorem{remark}[theorem]{Remark}

\newtheorem{example}[theorem]{Example}
\AtBeginEnvironment{example}{%
  \pushQED{\qed}%
}
\AtEndEnvironment{example}{\popQED}

\newcommand{\vword}{\protect\vv{\beta}}

\newcommand{\inv}{\mathrm{inv}}

\newcommand{\Dist}{\mathcal{D}}

\newcommand{\NC}{\mathrm{NC}}
\newcommand{\Sub}{\mathrm{Sub}}
\newcommand{\Subb}{\mathrm{sub}}
\newcommand{\Des}{\mathrm{Des}}
\newcommand{\Park}{\mathrm{Park}}
\newcommand{\PF}{\mathrm{PF}}

\newcommand{\rev}{\mathrm{rev}}
\newcommand{\Inv}{\mathrm{inv}}

\renewcommand{\t}{\mathbf{t}}
\renewcommand{\S}{\mathbf{S}}
\newcommand{\s}{\mathbf{s}}
\renewcommand{\u}{\mathbf{u}}
\newcommand{\su}{\mathsf{w}}

\usepackage[most]{tcolorbox} 
\newcommand{\Cat}{\mathrm{Cat}} 
\newcommand{\Pop}{\mathrm{Pop}}
\newcommand{\Conj}{\mathrm{Conj}}
\newcommand{\B}{\mathbf{B}}
\renewcommand{\P}{\mathbb{P}}
\renewcommand{\H}{\mathrm{H}}
\newcommand{\R}{\mathbf{T}}
\newcommand{\PR}{\mathbb{T}}
\newcommand{\bc}{\mathbf{B}(c)}
\newcommand{\vvl}[1]{\reflectbox{$\vv{\reflectbox{$#1$}}$}}
\newcommand{\vect}{\protect\vv{t}}
\newcommand{\vectt}{\protect\vv{\tt}}
\newcommand{\vece}{\protect\vv{\epsilon}}

\DeclareDocumentCommand{\T}{e{_^}}{%
  \mathcal{T}%
  \IfValueT{#1}{_{#1\vphantom{{}^{-1}}}}%
  \IfValueT{#2}{^{#2}}%
}
\newcommand{\HOMFLY}{\text{\sc homflypt}}
\newcommand{\homfly}{\text{\sc homflypt}}

\newcommand{\Orb}{\mathrm{Orb}}

\DeclareRobustCommand{\fold}{\raisebox{0.3ex}{\resizebox{\width}{0.5\height}{\rotatebox{90}{\textsf{w}}}}}

\usepackage[textsize=tiny]{todonotes}
\newcommand{\dfn}[1]{\textcolor{blue}{\emph{#1}}}
\newcommand{\defn}[1]{\textcolor{blue}{\emph{#1}}}

\newcommand{\os}{\overline{s}}

\newcommand{\RR}{T}

\newcommand{\Hur}{\mathrm{Hur}} 
\newcommand{\HT}[1]{\H(#1)}
\newcommand{\woc}{{\sf w}_\circ(c)}

\newcommand{\bbeta}{\boldsymbol{\beta}}     
\newcommand{\brho}{\boldsymbol{\rho}}
 
\renewcommand{\tt}{\mathbbm{t}}
\newcommand{\uu}{\mathbbm{u}}
\newcommand{\cc}{\mathbbm{c}}

\newcommand{\XX}{\cc_{p,\mathbf{g}}}

\def\mycycle{1,4,7,10,11,9,8,6,5,3,2}

\newcounter{clen}
\foreach \val in \mycycle {
    \expandafter\xdef\csname pos_\val\endcsname{\theclen}
    \expandafter\xdef\csname val_\theclen\endcsname{\val}
    \stepcounter{clen}
}

\makeatletter
\def\part{\@startsection{part}{0}%
  \z@{2\linespacing\@plus\linespacing}{1\linespacing}%
  {\normalfont\Huge\bfseries\centering\def\@secnumfont{\bfseries}}}
\makeatother

\title{Decategorification of Knot Invariants}
\author{Colin Defant}
\address{Harvard University}
\author{Nathan Williams}
\address{The University of Texas at Dallas}

\date{\today}
\usepackage{environ}

\usepackage{tikz}\usetikzlibrary{decorations.text}
\providecommand{\circabstractbody}{}
\RenewEnviron{abstract}{\global\let\circabstractbody\BODY}

\ExplSyntaxOn
\fp_new:N \l_disc_r_fp   \fp_new:N \l_disc_bls_fp
\fp_new:N \l_disc_y_fp   \fp_new:N \l_disc_hw_fp
\int_new:N \l_disc_n_int \tl_new:N \g_disc_ps_tl
\cs_new_protected:Npn \circledisc #1 {
  \fp_set:Nn \l_disc_r_fp   { \dim_to_fp:n {#1} }
  \fp_set:Nn \l_disc_bls_fp { \dim_to_fp:n {\baselineskip} }
  \int_set:Nn \l_disc_n_int { \fp_to_int:n { floor( 2*\l_disc_r_fp / \l_disc_bls_fp ) } }
  \tl_gclear:N \g_disc_ps_tl
  \int_step_inline:nnnn {1}{1}{\l_disc_n_int} {
    \fp_set:Nn \l_disc_y_fp  { \l_disc_r_fp - (##1 - 0.5)*\l_disc_bls_fp }
    \fp_set:Nn \l_disc_hw_fp { sqrt( max(0, \l_disc_r_fp*\l_disc_r_fp - \l_disc_y_fp*\l_disc_y_fp) ) }
    \tl_gput_right:Nx \g_disc_ps_tl
      { \fp_eval:n{ \l_disc_r_fp - \l_disc_hw_fp }pt~ \fp_eval:n{ 2*\l_disc_hw_fp }pt~ }
  }
  \parshape \int_use:N \l_disc_n_int\space \g_disc_ps_tl
  \noindent
}
\ExplSyntaxOff
\newlength{\rtext}

\providecommand{\circabstractbody}{}
\RenewEnviron{abstract}{\global\let\circabstractbody\BODY}

\renewcommand{\maketitle}{%
  \begin{center}
  \begin{tikzpicture}
    \def\R{6cm}
    \def\RR{6.4cm}
    \path[decorate,decoration={text along path, text align=fit to path,
        text={|\large\bfseries|Noncrossing Combinatorics, the Full Twist, and Decategorification of Knot Invariants}}]
      (180:\R) arc (180:0:\R);
    \path[decorate,decoration={text along path, text align=fit to path,
        text={|\scshape|Colin Defant and Nathan Williams}}]
      (225:\R) arc (225:315:\R);
    \path[decorate,decoration={text along path, text align=fit to path,
        text={|\footnotesize\itshape|Harvard University and The University of Texas at Dallas}}]
      (225:\RR) arc (225:315:\RR);
    \node[align=justify, text width=8.6cm, font=\footnotesize] at (-0.4,.4){\parbox{2\rtext}{\footnotesize
        \tolerance=9999 \emergencystretch=2em
        \circledisc{\rtext}{\scshape Abstract.}\enspace\circabstractbody\par}};
  \end{tikzpicture}
  \end{center}
  \vspace{1.5em}
} 

\hypersetup{
  pdftitle={Noncrossing Combinatorics, the Full Twist, and Decategorification of Knot Invariants},
  pdfauthor={Colin Defant and Nathan Williams}
}

\begin{document}

\begin{abstract}
Much work in knot theory has consisted of categorifying, and thereby strengthening, knot invariants.  We take the opposite approach: decategorification, more commonly called combinatorics.
We introduce a technique that relates the dual braid group generators, the Hecke images of pure braids, and factorization problems in reflection groups to knot invariants.  We prove that the $(a,z{=}0)$-HOMFLYPT polynomial can be computed as a solution to such a problem.
This technique was motivated by Coxeter--Catalan combinatorics.  For example, we give a new proof of EL-shellability of the noncrossing partition lattice using the image of the full twist in the Hecke algebra; in a surprising sort of combinatorial reciprocity, its inverse computes the homotopy type.  Similarly, noncrossing partitions naturally arise from our construction applied to \emph{positive} powers of the full twist, while cluster complexes come from the same construction applied to \emph{negative} powers.  In crystallographic type, we exploit the conjugacy of all Coxeter elements to give the first reflection subword models for rational noncrossing Catalan objects.  Our reciprocity gives two models: one generalizing noncrossing partitions, and one generalizing clusters.  Applying the same method produces two (rational) noncrossing parking {\hphantom{444444}models}. 
\end{abstract}
\maketitle

\section{Introduction}

Let $(W,S)$ be a finite irreducible Coxeter system of rank $r$ with Coxeter number $h$. Fix a standard Coxeter element $c$ (a product of the simple reflections in $S$ in some order) and a reduced $S$-word ${\sf c}$ for $c$. Let $\P_W$, $\B_W$, and $\H_W$ be the pure braid group, the braid group, and the Hecke algebra (over $\mathbb{Z}[q^{\pm 1}]$) of type~$W$, respectively. Let $T=T_W$ be the set of reflections of $W$, and let $N=|T|$. For more details, see~\Cref{sec:background}.

\subsection{Braid Groups and Hecke Algebras}
As we understand it, the central theme of Garside theory is that computations in the infinite group $\B_W$ rigidly reduce to rather manageable computations in the finite group $W$.  In \emph{standard} Garside theory~\cite{garside1969braid,deligne1972immeubles,dehornoy2015foundations}, one uses a set $\S$ of lifts of the simple reflections and the long element $w_\circ$ (opposite of the base chamber labeling the identity) to successively strip away factors lying in $W$ from a braid in $\B_W$; in \emph{dual} Garside theory for $\B_W$~\cite{birman1998new,brady2000artin,brady2001partial,brady2002k,bessis2003dual,bessis2015finite}, one uses a set $\R$ of lifts of all reflections and the standard Coxeter element $c$ to strip away factors lying in the noncrossing partition lattice $\NC(W,c)$. 
In either case, the moral is that even though the braid group $\B_W$ is {\bf large}, its multiplication is relatively {\bf simple}.

\medskip

This story is reversed in the Hecke algebra $\H_W$---the quotient of the group algebra $\mathbb{Z}[q^{\pm 1}][\B_W]$ by the relations $\s^2=(q-1)\s+q$ for all $s \in S$, where $\s$ denotes the Artin lift of $s$.  In contrast to $\B_W$, the dimension of $\H_W$ is {\bf small} (namely, $|W|$), but its multiplication is {\bf complicated}---all standard Garside factors of a braid in $\B_W$ are crushed together by the quotient into a deformation of the group algebra of $W$.  This makes $\H_W$ an excellent source of rich invariants for $\B_W$.\footnote{It is maybe even reasonable to conjecture that the natural projection $\mathbb Z[q^{\pm 1}][\B_W]\to\H_W$ is injective on $\B_W$ (decategorifying~\cite{rouquier2006categorification,jensen20172}).}

Furthermore, the story for $\H_W$ is incomplete, since we do not have a useful \emph{dual} Hecke theory.  While the standard presentation yields a full basis of $\H_W$ (taking the images of the Artin lifts of elements of $W$), the dual presentation only provides lifts for the relatively few noncrossing partitions in $\NC(W,c)$.

\medskip

In this paper, we show that some of the \emph{dual} structural rigidity of $\B_{W}$ survives as a leading term in the quotient to $\H_W$ for noncrossing partitions.  We divide this paper into two parts: 
\begin{itemize}
\item \Cref{part:technique} deals with the background and general framework. 
\item \Cref{part:applications} gives applications to algebraic combinatorics and knot theory. 
\end{itemize}

\subsection{\texorpdfstring{\Cref{part:technique}}{Part I}. Technique}
Let $\{\T_w\}_{w\in W}$ be the standard basis of the Hecke algebra $\H_W$. Given a braid $\bbeta \in \B_W$, we consider its Hecke image $\HT{\bbeta} \in \H_W$. For $\mathrm{x}\in\H_W$ and $v\in W$, let $[\T_v]\mathrm{x}$ denote the coefficient of $\T_v$ in the expansion of $\mathrm{x}$ in the standard Hecke algebra basis. 

\Cref{part:technique} is devoted to using noncrossing combinatorics to compute $[\T_\pi]\H(\brho)$, where $\brho\in\P_W$ is a pure braid and $\pi$ is a noncrossing partition.  More precisely,~\Cref{thm:main} proves that a leading term of noncrossing coefficients of the image of a pure braid in the Hecke algebra can still be computed using the dual Garside theory of $W$. 

\subsubsection{Rewriting pure braids and the full twist}
For $t\in T$, let us write $\t$ for the $c$-dual lift of $t$ to $\B_W$. The set $\R:=\{\t : t \in T\}$ forms a generating set for $\B_{W}$ in Bessis's dual presentation.  Let $\tt=\t^2$. The set $\PR:=\{\tt: t \in T\}$ generates the pure braid group $\P_W$ (see~\Cref{sec:noncrossing_shards} and~\cite{salvetti1987topology,bessis2003dual,defant2022pop}).

We begin by writing $\brho$ as a product of the generators in $\PR$.  Fixing any spanning tree of weak order and the Artin lifts of $w \in W$ as the coset representatives of $\B_W/\P_W$, the Reidemeister--Schreier rewriting process for $\P_W \subset \B_W$ generically results in some horrifically unreduced word for the pure braid $\brho$ in a large set of generators (indexed by shards, as described in~\Cref{sec:shards} and~\cite{defant2022pop}).
For \emph{some} pure braids, however, we can do much better (it is sometimes best to just guess the correct answer, as in~\Cref{sec:rational}).  Let $\B(w)\in\B_W$ denote the Artin lift of an element $w\in W$. In~\Cref{sec:pure}, we use the inversion set of $w$ to give elegantly simple reduced factorizations for pure braids of the form \[\B(w)\B(w^{-1}),\] uniformly reproving some results on Mikado braids along the way.  The factorizations we produce in \Cref{thm:BB=Z} will provide a useful tool for our applications in~\Cref{part:applications}.   In the very special case when $w=w_\circ$ is the long element, we recover the lovely fact that the \defn{full twist} $\Delta$\footnote{This also immediately follows from the commutation equivalence of the \emph{words} $\mathsf{c}^h$ and ${\sf w}_\circ(c){\sf w}_\circ(w_\circ cw_\circ)$.  See~\cite{stump2025cataland,defant2022pop}.} can be factored as a product (in reverse order) over the inversion sequence of ${\sf w}_\circ(c)$, the $c$-sorting word for $w_\circ$:
\begin{equation}\label{eq:full_twist}
\Delta^2:=\B(w_\circ)\B(w_\circ)=\prod_{t \in \inv({\sf w}_\circ(c))}^{\longleftarrow} \tt.
\end{equation}

\subsubsection{Hecke images of pure braids}
Having rewritten a pure braid $\brho$ as a product of elements $\tt$, we find that the defining relations for $\H_W$ give the image of $\brho$ in the Hecke algebra as \begin{equation}\label{eq:expand}\HT{\brho}=\prod \HT{\tt}^{\pm 1}=\prod \HT{\t}^{\pm 2}=\prod \begin{cases} (q-1) \HT{\t}+q & \text{ positive exponent} \\ -q^{-2}(q-1)\HT{\t} + (1-q^{-1}+q^{-2})& \text{ negative exponent.}\end{cases}\end{equation}
Expanding this product looks very much like:
\begin{itemize}
\item[($+$)] ``for each factor $\tt^{+1}$, choose either $(q-1)\HT{\t}$ or $q$'' and
\item[($-$)] ``for each factor $\tt^{-1}$ choose either $-q^{-2}(q-1)\HT{\t}$ or $(1-q^{-1}+q^{-2})$.''
\end{itemize}
The image of $\brho$ in $\H_W$ can also be expanded in the standard basis $\{\T_w\}_{w\in W}$ of the Hecke algebra, and we can try to isolate the coefficient $[\T_w]\H(\brho)$ as a (signed) combinatorial object encoding the expansion of each term in the product from~\eqref{eq:expand} ``either choose the term with $\HT{\t}$, or not.''  The trouble is that this expansion becomes difficult to track---the $\HT{\t}$ terms themselves expand into sums over various standard Hecke basis elements $\T_v$, and as more and more $\HT{\t}$ factors pile up, we get cascading collapses among these standard Hecke basis elements.

\tikzset{
  ed/.style={draw=gray!65, line width=0.9pt},
  vtx/.style={inner sep=2pt, fill=white, font=\small},
  coef/.style={font=\footnotesize, inner sep=2pt, fill=white, fill opacity=0.9, text opacity=1, rounded corners=2pt},
  num/.style={font=\large, inner sep=2pt},
}

\def\hexcoefstyle{coef}
\newcommand{\weakhex}[7]{%
  \begin{scope}[shift={#1}]
  \def\R{2.7}
  \coordinate (e) at (270:\R); \coordinate (o1) at (210:\R);
  \coordinate (o2) at (330:\R); \coordinate (o12) at (150:\R);
  \coordinate (o21) at (30:\R); \coordinate (o121) at (90:\R);
  \draw[ed] (e)--(o1) (e)--(o2) (o1)--(o12) (o2)--(o21) (o12)--(o121) (o21)--(o121);
  \node[vtx] (Ve) at (e) {$\T_e$};      \node[vtx] (V1) at (o1) {$\T_1$};
  \node[vtx] (V2) at (o2) {$\T_2$};     \node[vtx] (V12) at (o12) {$\T_{12}$};
  \node[vtx] (V21) at (o21) {$\T_{21}$};\node[vtx] (V121) at (o121) {$\T_{121}$};
  \node[\hexcoefstyle, below=4pt of Ve]   {$#2$};
  \node[\hexcoefstyle, left=4pt of V1]    {$#3$};
  \node[\hexcoefstyle, right=4pt of V2]   {$#4$};
  \node[\hexcoefstyle, left=4pt of V12]   {$#5$};
  \node[\hexcoefstyle, right=4pt of V21]  {$#6$};
  \node[\hexcoefstyle, above=4pt of V121] {$#7$};
  \end{scope}}

\newcommand{\absorder}[7]{%
  \begin{scope}[shift={#1}]
  \def\X{1.8}
  \def\Y{2.0}
  
  \coordinate (e) at (0,-\Y);
  
  \coordinate (o1)   at (-2.5*\X,0);
  \coordinate (o121) at (0,0);
  \coordinate (o2)   at (2.5*\X,0);
  
  \coordinate (o12) at (-1.5*\X,\Y);
  \coordinate (o21) at (1.5*\X,\Y);
  
  \draw[ed] (e)--(o1) (e)--(o121) (e)--(o2);
  \draw[ed] (o1)--(o12) (o1)--(o21);
  \draw[ed] (o121)--(o12) (o121)--(o21);
  \draw[ed] (o2)--(o12) (o2)--(o21);
  
  \node[vtx] (Ve)   at (e)    {$\T_e$};      
  \node[vtx] (V1)   at (o1)   {$\T_1$};
  \node[vtx] (V121) at (o121) {$\T_{121}$};
  \node[vtx] (V2)   at (o2)   {$\T_2$};     
  \node[vtx] (V12)  at (o12)  {$\T_{12}$};
  \node[vtx] (V21)  at (o21)  {$\T_{21}$};
  
  \node[\hexcoefstyle, below=4pt of Ve]       {$#2$};
  \node[\hexcoefstyle, below=4pt of V1]        {$#3$};
  \node[\hexcoefstyle, below=4pt of V2]       {$#4$};
  \node[\hexcoefstyle, above=4pt of V12] {$#5$};
  \node[\hexcoefstyle, above=4pt of V21]{$#6$};
  \node[\hexcoefstyle, above=4pt of V121]     {$#7$};
  \end{scope}}

\begin{example}
We illustrate this collapse with a small example.  Fix the symmetric group $W=\mathfrak{S}_3$ and $c=s_1s_2=(123)$, where $s_i=(i\,\,i+1)$. Then $\B_W=\B_3$ and $\P_W=\P_3$ are the usual $3$-strand braid group and pure braid group, respectively.  Write \[\B_3=\Big\langle \s_1,\s_2 : \s_1\s_2\s_1=\s_2\s_1\s_2\Big\rangle = \Big\langle \t_{(12)},\t_{(13)},\t_{(23)} : \t_{(12)}\t_{(23)}=\t_{(13)}\t_{(12)}=\t_{(23)}\t_{(13)} \Big\rangle\] for the standard (Artin) and dual (Birman--Ko--Lee) presentations of the 3-strand braid group.  Identifying $\s_1$ with $\t_{(12)}$ and $\s_2$ with $\t_{(23)}$, we can express the usual Artin generators of $\P_3$ as  
\[\tt_{(12)} = \t_{(12)}^2=\s_1^2,\quad \tt_{(13)} =\t_{(13)}^2= \s_1\s_2^2 \s_1^{-1},\quad \tt_{(23)} = \t_{(23)}^2= \s_2^2,\]
so that \[\P_3 = \Big\langle \tt_{(12)},\tt_{(13)},\tt_{(23)} : \tt_{(23)}\tt_{(13)}\tt_{(12)}=\tt_{(13)}\tt_{(12)}\tt_{(23)}=\tt_{(12)}\tt_{(23)}\tt_{(13)}\Big\rangle.\]
Then in the Hecke algebra $\H_3$, we have $\HT{\t_{(12)}}=\T_1$, $\HT{\t_{(23)}}=\T_2$, and
\[\HT{\tt_{(12)}}=(q-1)\T_1+q, \quad \HT{\tt_{(13)}}= q^{-1}(q-1)\T_{121} -q^{-1}(q-1)^2\T_{12} + q, \quad \HT{\tt_{(23)}}=(q-1)\T_2+q.\]

Now fix the pure braid $\brho=\tt_{(12)}\tt_{(13)}\tt_{(23)} \in \P_3$ (note that $\brho$ is \emph{not} the full twist). 
The expansion of the Hecke image of $\brho$ in the standard basis of $\H_3$ is given in~\Cref{fig:ex3}; we suggest to our human readers that it is not immediate to determine these coefficients without explicitly working out the product. \label{ex:3}
\end{example}

\begin{figure}[htbp]
\begin{tikzpicture}
  \def\hexcoefstyle{coef}
  \absorder{(0,0)}
    {-q(q^2-q+1)(q^2-3q+1)}
    {-(q-1)(q^4-3q^3+3q^2-3q+1)}
    {-(q-1)(q^4-3q^3+3q^2-3q+1)}
    {-q^{-1}(q-1)^4(q^2+1)}
    {(q-1)^2(q^2+1)}
    {q^{-1}(q-1)(q^4-q^3+q^2-q+1)}
\end{tikzpicture}
\caption{For $c=s_1s_2$, the Hecke expansion of $\tt_{(1,2)}\tt_{(1,3)}\tt_{(2,3)}$ into the standard basis $\{\T_w\}_{w \in \mathfrak{S}_3}$, where each $w\in \mathfrak{S}_3$ is represented by the indices of a reduced word in simple reflections. }
\label{fig:ex3}
\end{figure}

\subsubsection{Hecke coefficients and subwords}
It turns out that the collapse of the product in~\eqref{eq:expand} in the Hecke algebra is not \emph{completely} unmanageable.  Because each $\HT{\t}$ carries with it a factor of $(q-1)$ in the expansion~\eqref{eq:expand}, we maintain some rudimentary control: the leading term of the coefficient of $\T_w$ at $q=1$ is forced to be divisible by at least $(q-1)^{\ell_T(w)}$  (where $\ell_T(w)$ is the reflection length of $w$; see~\Cref{sec:groups}).  In general, however, this order of vanishing can be higher.

\begin{example}\label{ex:4}
We continue with~\Cref{ex:3} with $c=s_1s_2=(123) \in \mathfrak{S}_3$ and the pure braid $\brho=\tt_{(12)}\tt_{(13)}\tt_{(23)} \in \P_3$.  \Cref{fig:ex4} gives the evaluated coefficients $(q-1)^{-\ell_T(w)}\,[\T_w]\H(\brho)$ at $q=1$.  For each element of $\mathfrak{S}_3$---except for $c$ itself---this evaluated coefficient is just the number of subwords of the word $\big((12),(13),(23)\big)$ that are reduced $T$-words for $w$.  That is, reflection subwords perfectly predict the evaluated coefficients for elements of $\NC(\mathfrak{S}_3,c^{-1})$.  For example, the coefficient of $\T_{c^{-1}}=\T_{21}$ is explained by the reflection subwords $\big((12),(13),\textcolor{lightgray}{(23)}\big)$ and $\big(\textcolor{lightgray}{(12)},(13),(23)\big)$.

On the other hand, the coefficient of $\T_{c}$ in $\H(\brho)$ is divisible by $(q-1)^4$, so $(q-1)^{-\ell_T(c)}[\T_c]\H(\brho)=(q-1)^{-2}[\T_c]\H(\brho)$ vanishes upon specialization.  We can understand this vanishing as follows.  We have the reflection subword $\big((12), \textcolor{lightgray}{(13)}, (23)\big)$, which corresponds to the perfectly reasonable subword \[\HT{\tt_{(12)}}\cdot \textcolor{lightgray}{\HT{\tt_{(13)}}} \cdot \HT{\tt_{(23)}} =((q-1)\T_1\textcolor{lightgray}{+q})\cdot (\textcolor{lightgray}{(q-1)\T_{\t_{13}}}+q)\cdot ((q-1)\T_2\textcolor{lightgray}{+q}) = q(q-1)^2\T_{12}\textcolor{lightgray}{+\cdots}\] (coming from choosing the factors of $(q-1)\T_1$ and $(q-1)\T_2$ from $\HT{\tt_{(12)}}$ and $\HT{\tt_{(23)}}$ and from choosing the factor $q$ in the expansion of $\HT{\tt_{(13)}}$). 
The issue is that $\HT{\tt_{13}}$ has a nonzero coefficient for $\T_{12}$.  Choosing this coefficient of $\T_{12}$ along with the $q$ from the expansions of $\HT{\tt_{(12)}}$ and $\HT{\tt_{(23)}}$, \[(\textcolor{lightgray}{(q-1)\T_1}+q)\cdot (-q^{-1}(q-1)^2\T_{12}\textcolor{lightgray}{+q^{-1}(q-1)\T_{121} + q})\cdot (\textcolor{lightgray}{(q-1)\T_2}+q)=-q(q-1)^2\T_{12}\textcolor{lightgray}{+\cdots},\] perfectly cancels with the previously-found subword leaving only other terms that vanish to high order.\end{example}

\begin{figure}[htbp]
\begin{tikzpicture}
  \def\hexcoefstyle{num}
  \absorder{(0,0)}
    {1}{1}{1}
    {0}
    {2}{1}
\end{tikzpicture}
\caption{The Hecke expansion of $\brho=\tt_{(12)}\tt_{(13)}\tt_{(23)}$, where each coefficient $[\T_w]\H({\brho})$ has been evaluated to $(q-1)^{-\ell_T(w)}\,[\T_w]\H(\brho)\big|_{q=1}$.  For every element $w \in \NC(W,c^{-1})$---that is, every element of $\mathfrak{S}_3$ except for $c=s_1s_2$---the coefficient of $T_w$ is just the number of subwords of the word $\vect=\big((12),(13),(23)\big)$ that are reduced $T$-words for~$w$.}
\label{fig:ex4}
\end{figure}

Recall that we wrote our braid $\brho$ as $\tt_1\cdots\tt_M$, where $\vect=(t_1,\ldots,t_M) \in T^M$.  Ignoring the difference between $c$ and $c^{-1}$ for a moment, if we were still in the group algebra of the dual braid group $\mathbb{Z}[q^{\pm 1}][\B_{W}]$ instead of the Hecke algebra (one could also imagine working in $\H_W$ using a non-standard basis coming from \emph{some} extension of the image of $\NC(W,c)$), then the analogue of~\eqref{eq:expand} would be the product \[\prod_{i=1}^M \begin{cases} (q-1) \t_i+q & \text{ positive exponent } \\ -q^{-2}(q-1)\t_i + (1-q^{-1}+q^{-2})& \text{ negative exponent }.\end{cases}\]
Using dual Garside theory to allow ourselves to compute in absolute order and the noncrossing partition lattice $\NC(W,c)$ instead of the dual braid group $\B_{W}$, the coefficient of the Garside element $\mathbf{c}=\B(c)$ at $q=1$ (after dividing by $(q-1)^{r}$) would quite simply just be the signed sum over subwords of $\vect$ that are reduced $T$-words for $c$ (and similarly for any $\pi \in \NC(W,c)$). 

Remarkably---and now paying attention to the difference between $c$ and $c^{-1}$---for any noncrossing partition $\pi \in \NC(W,c^{-1})$, this leading term at $q=1$ actually survives the quotient from $\B_{W}$ to $\H_W$. Suppose $\protect\vv{t}=(t_1,\ldots,t_M)\in T^M$, $\protect\vv\epsilon=(\epsilon_1,\ldots,\epsilon_M)\in\{\pm 1\}^M$, and $\pi\in\NC(W,c^{-1})$. Our main result (\Cref{thm:main}) concerns the set $\Sub_T(\vect,\pi)$ of subwords of $\vect$ that are reduced $T$-words for $\pi$. Let \[\Subb_T(\vect,\pi)=|\Sub_T(\vect,\pi)|.\] If we choose a sign vector $\protect\vv\epsilon=(\epsilon_1,\ldots,\epsilon_M)\in\{\pm 1\}^M$, then we can assign each subword $\sf w$ of $\vect$ the sign $\mathrm{sgn}^{\protect\vv\epsilon}(\mathsf w)$ obtained by multiplying the signs corresponding to the indices of the reflections used in ${\sf w}$. We then consider 
\begin{equation}\label{eq:sub}\Subb_\RR^{\protect\vv\epsilon}(\protect\vv{t},\pi):=\sum_{{\sf w}\in\Sub_T(\vect,\pi)}\mathrm{sgn}^{\protect\vv\epsilon}({\sf w})=\sum_{\substack{1 \leq i_1 < i_2 < \cdots < i_{\ell_T(\pi)} \leq M \\ t_{i_1}t_{i_2}\cdots t_{i_{\ell_T(\pi)}}=\pi}} \prod_{j=1}^{\ell_T(\pi)} \epsilon_{i_j},\end{equation}
the signed count over all subwords of $\vect$ that are reduced $T$-words for $\pi$. The following result is new even when $\protect\vv\epsilon=(1,\ldots,1)$, in which case $\Subb_T^{\protect\vv\epsilon}(\vect,\pi)=\Subb_T(\vect,\pi)$.

{
\renewcommand{\thetheorem}{\ref{thm:main}}
\begin{theorem}
Fix a sequence $\vect=(t_1,\ldots,t_M)\in T^M$ of reflections and the corresponding sequence ${\vectt=(\tt_1,\ldots,\tt_M)\in\mathbb T^M}$ of squares of $c$-dual lifts. Fix a sign vector $\vece=(\epsilon_1,\ldots,\epsilon_M)\in\{\pm 1\}^M$.
For each $\pi\in\NC(W,c^{-1})$, we have 
\[(q-1)^{-\ell_{\RR}(\pi)}[\T_\pi]\H(\tt_1^{\epsilon_1}\cdots \tt_M^{\epsilon_M})\Big|_{q=1}=\Subb_\RR^{\protect\vv\epsilon}(\protect\vv{t},\pi).\] 
\end{theorem}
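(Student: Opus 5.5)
The plan is to expand the product factor by factor, reduce the theorem to a single statement about products of the Hecke images $\HT{\t}$, and concentrate the effort there.

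\textbf{Expansion.} By \eqref{eq:expand}, each factor $\HT{\tt_i}^{\epsilon_i}$ has the form $a_i(q)+(q-1)b_i(q)\HT{\t_i}$. Here $a_i\in\{q,\,1-q^{-1}+q^{-2}\}$ satisfies $a_i(1)=1$, and $b_i\in\{1,-q^{-2}\}$ satisfies $b_i(1)=\epsilon_i$. Expanding the product therefore gives
\[
\HT{\tt_1^{\epsilon_1}\cdots\tt_M^{\epsilon_M}}=\sum_{I=\{i_1<\cdots<i_k\}\subseteq[M]}(q-1)^{k}\,\Big(\prod_{i\notin I}a_i\prod_{i\in I}b_i\Big)\,\HT{\t_{i_1}}\cdots\HT{\t_{i_k}}.
\]
The scalar prefactor specializes at $q=1$ to $\prod_{j}\epsilon_{i_j}$. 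The theorem therefore follows from the following key lemma.

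\emph{Key lemma.} For every sequence of reflections $u_1,\ldots,u_k$ and every $\pi\in\NC(W,c^{-1})$, the coefficient $[\T_\pi]\HT{\mathbf u_1}\cdots\HT{\mathbf u_k}$ vanishes to order at least $\ell_T(\pi)-k$ at $q=1$.

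Granting the lemma, subsets with $k>\ell_T(\pi)$ contribute nothing after dividing by $(q-1)^{\ell_T(\pi)}$, since the prefactor already supplies $(q-1)^k$ with $k>\ell_T(\pi)$. Subsets with $k<\ell_T(\pi)$ also contribute nothing, by the lemma. For $k=\ell_T(\pi)$, the specialization $q=1$ sends $\H_W$ to $\mathbb Z[W]$, with $\T_v\mapsto v$ and $\HT{\mathbf u}\mapsto u$. The coefficient is then $1$ if $u_1\cdots u_k=\pi$ and $0$ otherwise. This recovers exactly the signed subword count \eqref{eq:sub}.

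\textbf{The key lemma.} I would prove it by induction on $k$, multiplying on the right by one $\HT{\mathbf u}$ at a time. The invariant to maintain is a filtration. For $\mathrm x=\sum_v x_v\T_v$, set
\[
\nu(\mathrm x)=\min_v\big(\operatorname{ord}_{q=1}x_v-\lambda(v)\big),
\]
where $\lambda(v)=\ell_T(v)$ for $v\in\NC(W,c^{-1})$, and $\lambda(v)$ is a suitably chosen smaller weight for $v\notin\NC(W,c^{-1})$. The aim is to show that $\nu(\mathrm x\,\HT{\mathbf u})\ge\nu(\mathrm x)-1$.

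For a single factor, I would write $\HT{\t}$ concretely. Since $\t$ is a conjugate of a simple generator by the Artin lift of a prefix of the $c$-sorting word, we have $\HT{\t}=\T_x\T_s\T_x^{-1}$. Expanding this, the coefficient of $\T_v$ should vanish to order at least $\ell_T(v)-1$, as already visible in \Cref{ex:3}. It also appears that the only $v$ with low vanishing order are the reflection $t$ itself and elements obtained from $t$ by Bruhat-type steps, each of which costs a factor $(q-1)$.

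For the inductive step, I would check the filtration inequality on $\T_v\HT{\mathbf u}$ with $v\in\NC(W,c^{-1})$. This uses the identity $\T_w\T_s=q\T_{ws}+(q-1)\T_w$ when $ws<w$, together with the observation that each length-decreasing step either moves along absolute order in $\NC(W,c^{-1})$ or pays an extra factor of $(q-1)$.

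\textbf{Main obstacle.} The hard part is the inductive step of the key lemma. The naive weight $\lambda=\ell_T$ is not submultiplicative, and the lemma genuinely fails outside $\NC(W,c^{-1})$; \Cref{ex:4} shows extra cancellation at $\T_c$. So the weight on non-noncrossing elements, and the interaction with the orientation $c$ versus $c^{-1}$, must be chosen carefully.

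My first attempt would exploit the dual Garside structure. The dual lifts $\t$ are simple elements below $\B(c)$, and their Hecke images should have leading support on a coset or parabolic interval adapted to $c^{-1}$. Each multiplication by $\HT{\t}$ should then act, to leading order in $(q-1)$, as the Kreweras-type step $\pi\mapsto\pi t$ in $\NC(W,c^{-1})$, and every other term should land in strictly higher filtration. If a direct weight is hard to find, the fallback is a functional approach. This would construct, for each $\pi\in\NC(W,c^{-1})$, a linear functional on $\H_W$ (for example via a trace paired with $\HT{\B(\pi)}^{-1}$) that extracts $[\T_\pi]$ up to higher-order terms. The functional would be chosen to be compatible with the dual relations $\t\t'=\t'\t''$, so that the vanishing-order bound follows from a Hurwitz-orbit argument on reduced $T$-words for $\pi$.
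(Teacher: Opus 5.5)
Your expansion $\HT{\tt_i}^{\epsilon_i}=a_i+(q-1)b_i\HT{\t_i}$ and your treatment of subsets with $k\ge\ell_T(\pi)$ are fine. The reduction, however, has an off-by-one error, and that error hides the entire difficulty.

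After the prefactor $(q-1)^k$, a subset with $k<\ell_T(\pi)$ drops out only if $[\T_\pi]\HT{\mathbf u_1}\cdots\HT{\mathbf u_k}$ vanishes to order at least $\ell_T(\pi)-k+1$. Your key lemma only asserts order $\ell_T(\pi)-k$, which merely makes the contribution finite at $q=1$. In fact, the lemma as you state it holds for every $\pi\in W$ and follows at once from \Cref{lem:reflection_length_divisibility}. The braid $\mathbf u_1\cdots\mathbf u_k$ projects to $u_1\cdots u_k$, so the coefficient is divisible by $(q-1)^{\ell_T(\pi(u_1\cdots u_k)^{-1})}$, and $\ell_T(\pi(u_1\cdots u_k)^{-1})\ge\ell_T(\pi)-k$.

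So your lemma cannot imply the theorem. Your argument would apply verbatim to $\pi=c$ in \Cref{ex:4}. There the subword $((12),e,(23))$ gives count $1$, but the evaluated coefficient is $0$. This happens precisely because $[\T_c]\HT{\t_{(13)}}=-q^{-1}(q-1)$ has order exactly $\ell_T(c)-1$ and contributes $-1$.

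The strict statement you actually need is essentially equivalent to the theorem itself: it follows from \Cref{cor:main} via $\HT{\mathbf u}=(q-1)^{-1}(\HT{\uu}-q)$. Already its case $k=1$ is the nontrivial \Cref{lem:dual_square_expansion}. Your filtration plan has the same defect. An invariant $\nu(\mathrm x\,\HT{\mathbf u})\ge\nu(\mathrm x)-1$ with $\lambda=\ell_T$ on $\NC(W,c^{-1})$ can only return the weak bound $\ell_T(\pi)-k$. (With $\lambda=\ell_T$ on all of $W$, this invariant already holds, again by \Cref{lem:reflection_length_divisibility}.) No choice of weights in that framework produces the extra factor of $q-1$ or detects the noncrossing hypothesis.

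In the paper, the noncrossing input enters in two places, and your proposal has a substitute for neither.

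First, there is a single-factor statement: $[\T_w]\HT{\tt}$ vanishes to order at least $\ell_T(w)+1$ for $w\in\NC(W,c^{-1})\setminus\{e,t\}$. This is proved in three steps. One writes $\HT{\t}=\T_{j_t}\T_{s_{k-1}}^{-1}\cdots\T_{s_1}^{-1}$ for the $c$-sortable join-irreducible $j_t$. One bounds coefficients by the number of skips of $j_t$-distinguished subwords (\Cref{lem:first}). One then uses \Cref{lem:join-irreducible} to see that the resulting reflection factorization of $w$ is never reduced. This is combined with \Cref{lem:reflection_length_divisibility} applied to products $\T_{w_1}\cdots\T_{w_M}$ of standard basis elements, which forces $w_i\le_T\pi$ and hence $w_i\in\{e,t_i\}$. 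Together these replace the product by $\prod_i(1+\epsilon_i(q-1)\T_{t_i})$, with the standard basis elements $\T_{t_i}$ rather than the $\HT{\t_i}$ (\Cref{lem:dual=standard}).

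Second, strict vanishing still fails for this simpler product when $\pi\notin\bigcup_c\NC(W,c^{-1})$ (see the remark after \Cref{cor:main}). The paper therefore argues by induction on $M$. It uses \Cref{lem:conjugation}, which again needs $\pi\in\NC(W,c^{-1})$, together with Reading's sequence of initial simple reflections, to reduce to the case where $t_M=s$ is simple. Right multiplication by $\T_s$ then only mixes $\T_\pi$ with $\T_{\pi s}$, and the $\T_\pi$-to-$\T_\pi$ term carries an extra factor of $q-1$. Induction applied to $\pi s\in\NC(W,c^{-1})$ then closes the argument.

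The ``Kreweras-type step'' and the functional and Hurwitz-orbit ideas in your last paragraph point in this direction, but they do not supply either ingredient.
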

\addtocounter{theorem}{-1}
}

\begin{example}
\Cref{thm:main} actually explains the vanishing of the evaluated coefficient of $\T_c$ in the Hecke algebra image of the pure braid $\brho$ from~\Cref{ex:3,ex:4} in a combinatorial way, since $c$ is still a noncrossing partition for the inverse of the standard Coxeter element $c^{-1}$.  In more detail, if we use the dual braid generators for $c^{-1}=s_2s_1$, then we obtain the dual presentation
\[\B_{3}=\Big\langle \u_{(12)},\u_{(13)},\u_{(23)} : \u_{(23)}\u_{(12)}=\u_{(12)}\u_{(13)}=\u_{(13)}\u_{(23)} \Big\rangle,\]
which, after identifying $\u_{(12)}=\s_1$ and $\u_{(23)}=\s_2$, gives us the dual pure braid group generators \[\uu_{(12)} = \s_1^2,\quad \uu_{(13)} = \s_2\s_1^2 \s_2^{-1},\quad  \uu_{(23)} = \s_2^2.\]
Since $\tt_{(13)}=\uu_{(12)} \uu_{(13)} \uu_{(12)}^{-1}$, the pure braid $\brho$ from~\Cref{ex:3,ex:4} can be expressed in these new generators as
\[\brho = \uu_{(12)}\uu_{(12)} \uu_{(13)} \uu_{(12)}^{-1} \uu_{(23)}.\]

If we now count \emph{signed} reduced reflection subwords for $c$ (which now is a noncrossing partition in $\NC(\mathfrak{S}_3,c)$) in the corresponding word $\vect=\big((12),(12),(13),(12),(23)\big)$ with signs $\vece=(1,1,1,-1,1)$, we obtain the two positive subwords \[\big((12),\textcolor{lightgray}{(12)},\textcolor{lightgray}{(13)},\textcolor{lightgray}{(12)},(23)\big) \quad \text{and}\quad \big(\textcolor{lightgray}{(12)},(12),\textcolor{lightgray}{(13)},\textcolor{lightgray}{(12)},(23)\big),\] which perfectly cancel with the two negative subwords \[\big(\textcolor{lightgray}{(12)},\textcolor{lightgray}{(12)},(13),(12),\textcolor{lightgray}{(23)}\big) \quad \text{and}\quad \big(\textcolor{lightgray}{(12)},\textcolor{lightgray}{(12)},\textcolor{lightgray}{(13)},(12),(23)\big),\] combinatorially explaining the vanishing of the coefficient of $\T_{12}$ in~\Cref{fig:ex4}. 
\end{example}

\subsection{\texorpdfstring{\Cref{part:applications}}{Part II}. Applications}

\Cref{part:applications} explores applications of~\Cref{thm:main}, which evaluates the coefficients of the Hecke images of pure braids at $q=1$ using noncrossing combinatorics.
A central feature of these applications is a sort of \emph{combinatorial reciprocity}~\cite{beck2018combinatorial}, arising from the identity\footnote{In the interest of keeping our dual generators constant for a fixed standard Coxeter element $c$ (so that we can take the coefficient of $\T_{c^{-1}}$ in the Hecke algebra), we will often use $\B(c^{-1})$ in the place of $\B(c)$ in the braid $\bbeta$.}
\begin{align}\tag{\Cref{lem:coeff-delta}}\label{eq:reciprocity}
\underbrace{[\T_e]\H(\Delta^2 \bbeta)}_{\text{positive models}} &= \underbrace{(-1)^{\mathrm{wr}(\bbeta)} q^{N+\mathrm{wr}(\bbeta)} [\T_e]\H(\bbeta^{-1})}_{\text{negative models}} \text{ for any } \bbeta \in \B_W.\end{align}

Given a braid $\bbeta \in \B_W$ whose image in $W$ is a noncrossing partition $\pi \in \NC(W,c)$, we can write $\bbeta=\brho\,\B(\pi)$, where $\brho \in \P_W$. Combining~\Cref{thm:main} with the identity of coefficients
\begin{align}\tag{\Cref{lem:coeff-identity}}[\T_e]\H(\brho\, \B(\pi)) = q^{\ell(\pi)} [\T_{\pi^{-1}}]\H(\brho),\end{align} we find that the two sides of~\Cref{lem:coeff-delta} give two combinatorial reflection subword models---a positive model (coming from the left side of~\Cref{lem:coeff-delta}, after factoring $\brho$ into squares of the dual generators), and a negative model (coming from the right)---for the price of analyzing the one pure braid $\brho$.   

\Cref{sec:fuss} concerns a particularly amusing example of this reciprocity for the braid $\bbeta=\Delta^{2m} \B(c)$ (for which the corresponding pure braid is $\brho=\Delta^{2m}$)---the positive model is interpreted as $m$-noncrossing partitions (\Cref{eq:intropos}), while the negative model gives $m$-clusters (\Cref{eq:introneg}). More generally, \Cref{prop:Catmh+1,prop:Clusters_m}
show that the canonical $c$-sorting-word factorizations may be replaced
by arbitrary suitable factorizations of $\Delta^{2(m+1)}$ and
$\Delta^{2m}$ into $c$-dual pure generators.

\subsubsection{Noncrossing partition lattices}  In~\Cref{sec:noncrossing_lattice}, we apply our technique to study the $c^{-1}$-noncrossing partition lattice. 
\Cref{thm:EL} proves that \emph{any} reduced $\PR$-word for the full twist (for example, \eqref{eq:full_twist} provides one such word) gives an EL-shelling of $\NC(W,c^{-1})$. 

{
\renewcommand{\thetheorem}{\ref{thm:EL}}
\begin{theorem}
Let $\vect=(t_1,\ldots,t_N)$ be a total order on $T$. If $\tt_1\cdots\tt_N=\Delta^2$, then $(t_1,\ldots,t_N)$ is an EL-shelling order of $\NC(W,c^{-1})$. 
\end{theorem}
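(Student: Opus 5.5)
We would combine \Cref{thm:main} with a standard criterion for EL-shellability of intervals in $\NC(W,c^{-1})$ labeled by reflections: the order $(t_1,\ldots,t_N)$ is an EL-shelling order exactly when, in every interval $[u,v]$ of $\NC(W,c^{-1})$, there is a unique maximal chain whose label sequence (the reduced $T$-word for $u^{-1}v$ read off the chain) is increasing, and that chain is lexicographically first. Since absolute-order intervals are isomorphic to $[e,u^{-1}v]$ and reflection-length additivity is preserved, the first step is to reduce to intervals $[e,\pi]$ with $\pi\in\NC(W,c^{-1})$. Here one must take care that the order $(t_1,\ldots,t_N)$ is transported correctly along the isomorphism $[u,v]\cong[e,u^{-1}v]$; if it is not, we would instead argue directly on $[u,v]$ using subwords of $\vect$ that factor $u^{-1}v$ with all letters between positions determined by $u$.

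Next, in $[e,\pi]$, increasing maximal chains correspond bijectively to subwords of $\vect=(t_1,\ldots,t_N)$ that are reduced $T$-words for $\pi$, since $\vect$ lists each reflection exactly once. Thus the number of increasing chains is $\Subb_T(\vect,\pi)$. Applying \Cref{thm:main} with $\vece=(1,\ldots,1)$ to $\tt_1\cdots\tt_N=\Delta^2$ gives
\[\Subb_T(\vect,\pi)=(q-1)^{-\ell_T(\pi)}[\T_\pi]\H(\Delta^2)\Big|_{q=1}.\]
This quantity is independent of the chosen factorization of the full twist. We then compute it once, either from the explicit factorization~\eqref{eq:full_twist} or directly: $\H(\Delta^2)$ is central, and its expansion is controlled by the $c$-sorting inversion order, where one can check that each noncrossing $\pi$ has exactly one increasing factorization. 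The cleanest option is to use that the inversion order of ${\sf w}_\circ(c)$, reversed, is a known EL-labeling order (Athanasiadis--Brady--Watt) for $\NC(W,c^{-1})$, giving the value $1$ for every $\pi$. Combined with independence, this shows that every valid $\vect$ yields exactly one increasing chain per interval.

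Finally, lexicographic minimality of the unique increasing chain must be established. The standard route is the one used for reflection labelings: if a chain is not increasing, find a descent and swap via a Hurwitz move to obtain a lexicographically smaller chain. The difficulty is that Hurwitz moves conjugate labels, so lexicographic comparison is not automatic. We would instead apply the same counting trick to ``first-letter'' statistics: for each $\pi$ and each atom $t\le_T\pi$, consider the truncated word $(t_i,\ldots,t_N)$. Since suffixes of $\PR$-factorizations of $\Delta^2$ are not full twists, this requires a refinement of \Cref{thm:main}. We expect this lex-first condition to be the main obstacle. A fallback is to verify only the count of increasing chains and invoke that reflection labelings of $\NC$ are known to be ``EL iff unique increasing chain per interval,'' which holds because these labelings are compatible with Hurwitz-action sorting.
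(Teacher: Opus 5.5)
Your counting step is sound, but it gets the value $1$ differently from the paper. You do not need to worry about transporting the order along $[u,v]\cong[e,u^{-1}v]$. Left multiplication by $u^{-1}$ preserves the labels exactly, since $(u^{-1}x)^{-1}(u^{-1}y)=x^{-1}y$, and $u^{-1}v\in\NC(W,c^{-1})$.

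On lower intervals you correctly identify increasing chains with $\Sub_T(\vect,\pi)$. You then use \Cref{thm:main} to see that $\Subb_T(\vect,\pi)=(q-1)^{-\ell_T(\pi)}[\T_\pi]\H(\Delta^2)|_{q=1}$ is independent of the $\PR$-word for $\Delta^2$, and import the value $1$ from the Athanasiadis--Brady--Watt special case. This transfer argument is valid, provided the result you cite really covers $\rev(\inv({\sf w}_\circ(c)))$ as a shelling order of $\NC(W,c^{-1})$ (not $\NC(W,c)$). By \Cref{cor:wo}, that word is a $\PR$-word for $\Delta^2$.

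The paper instead proves \Cref{lem:1word} from scratch. For $\sigma$ a Coxeter element of a rank-$k$ standard parabolic subgroup, \Cref{lem:coeff-identity,lem:coeff-delta} give $[\T_\sigma]\H(\Delta^2)=(-1)^kq^N[\T_e]\H(\B(\sigma^{-1})^{-1})=q^{N-k}(q-1)^k$. A general $\pi$ is then reached by conjugating by initial simple reflections, using $\P_{scs}(sts)=\B(s)^{-1}\P_c(t)\B(s)$ and the centrality of $\Delta^2$. Your route is shorter, but it makes the theorem an extension of ABW rather than the independent uniform proof the paper wants, one that recovers ABW as a special case.

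The genuine gap is the lexicographic condition, which you leave open as the ``main obstacle.'' Hurwitz swaps and a suffix refinement of \Cref{thm:main} are both unnecessary. The justification you give for your fallback (``compatibility with Hurwitz-action sorting'') is not an argument. The missing fact is elementary and holds for every total order $\preceq$ on $T$: the lexicographically first maximal chain of any interval is increasing.

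Here is the argument. In $[e,\pi]$, the lex-first chain begins with the $\preceq$-minimal atom $t$. Its remainder is, label for label, the lex-first chain of $[e,t\pi]$, which begins with the minimal atom $t'$ of $[e,t\pi]$. Since $\ell_T(tt')=2$ and $t'^{-1}(tt')=t'tt'$ is a reflection, we get $t'\le_T tt'\le_T\pi$. So $t'$ is also an atom of $[e,\pi]$, which forces $t\prec t'$ (they are distinct because $tt'\neq e$). Induction on $\ell_T(\pi)$ then shows the whole lex-first chain is increasing. Since you have shown the increasing chain is unique, it must be the lex-first chain.

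This is the argument of \cite[Theorem~3.5(i)]{ABW} that the paper invokes. With it in place, your proof is complete.
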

\addtocounter{theorem}{-1}
}

The preceding result was first proven by C.~Athanasiadis, T.~Brady, and C.~Watt in the special case when the resulting permutation of reflections is commutation equivalent to the $c$-sorting word for $w_\circ$~\cite{ABW}.  Our new, uniform proof shows that the unique increasing chain condition of EL-shelling is equivalent to the condition that the coefficient of a noncrossing partition in the expansion of the full twist evaluates to 1: \[(q-1)^{-\ell_T(\pi)} [\T_\pi] \H(\Delta^2)\Big|_{q=1} = 1 \text{ for } \pi \in \NC(W,c^{-1}).\]  This coefficient is easily evaluated---the case when $\pi=c^{-1}$ was already highlighted in V.~Deodhar's 1985 paper~\cite[§4]{deodhar1985} as a particularly simple example of a Richardson variety.

In~\Cref{thm:homotopy_type}, we similarly reduce the computation of the homotopy type of the noncrossing partition lattice to the coefficient
\[(q-1)^{-r}[\T_{e}] \H(\Delta^{-2}\B(c))\Big|_{q=1}. 
\]  Namely, we show that this evaluated coefficient equals the M\"obius function value $\mu_{\NC(W,c^{-1})}(e,c^{-1})$. 
More generally, we show that the braid $\Delta^2\B(c^{-1})$ can be seen as a $q$-analogue of the zeta function of absolute order. 
{
\renewcommand{\thetheorem}{\ref{thm:zeta}}
\begin{theorem}  For $m \in \mathbb{Z}$,
\[(q-1)^{\ell_T(w)-r}[\T_w] \H(\Delta^{2m}\B(c^{-1})) \Big|_{q=1} = \begin{cases}\zeta^{(m)}_{\NC(W,c^{-1})}(w,c^{-1}) &\text{ if } w \in \NC(W,c^{-1}) \\ 0 &\text{otherwise}.\end{cases}\] 
\end{theorem}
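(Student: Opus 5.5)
The plan is to reduce the statement to \Cref{thm:main} applied to the pure braid $\Delta^{2m}$, and then to evaluate the resulting subword count using the EL-shelling of \Cref{thm:EL}. The step I expect to be hardest is the leading-term analysis of right multiplication by $\T_{c^{-1}}$ (Step 1).

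\textbf{Step 1: strip off $\B(c^{-1})$.} Since $\H(\B(c^{-1}))=\T_{c^{-1}}$, we have
\[[\T_w]\H(\Delta^{2m}\B(c^{-1}))=\sum_{x\in W}[\T_x]\H(\Delta^{2m})\cdot[\T_w](\T_x\T_{c^{-1}}).\]
Two divisibility facts control this sum.
\begin{itemize}
\item Each $[\T_x]\H(\Delta^{2m})$ is divisible by $(q-1)^{\ell_T(x)}$, because every $\H(\t)$ in \eqref{eq:expand} carries a factor $(q-1)$.
\item I expect $[\T_w](\T_x\T_y)$ to be divisible by $(q-1)^{\ell_T(x y w^{-1})}$. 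This should follow by induction on $\ell(y)$ from $\T_s\T_v\in\{\T_{sv},(q-1)\T_v+q\T_{sv}\}$: each factor of $(q-1)$ deletes a simple reflection, which changes the product by one reflection.
\end{itemize}
Combining these, the term indexed by $x$ vanishes to order at least $\ell_T(x)+\ell_T(x c^{-1}w^{-1})\ge \ell_T(wc)\ge r-\ell_T(w)$. Equality $\ell_T(wc)=r-\ell_T(w)$ holds exactly when $w\le_T c^{-1}$. So if $w\notin\NC(W,c^{-1})$, everything vanishes after dividing by $(q-1)^{r-\ell_T(w)}$ and setting $q=1$, which gives the "otherwise" case.

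For $w\in\NC(W,c^{-1})$, the surviving terms come from $x\le_T wc$ in absolute order. I would aim to prove that only $x=wc$ contributes, with $[\T_w](\T_{wc}\T_{c^{-1}})|_{q=1}=1$. The extra terms, those $x<_T wc$ weighted by the $(q-1)^{\ell_T(x^{-1}wc)}$-part of $\T_x\T_{c^{-1}}$, must cancel or vanish. The first thing I would try is to rewrite the coefficient via the trace-type identity \Cref{lem:coeff-identity}, which turns this into a coefficient of $\T_e$ where the $(q-1)$-adic leading term is cleaner. If that fails, I would choose the factorization of $\Delta^{2m}$ so that \Cref{thm:main} can be applied to the whole braid $\Delta^{2m}\B(c^{-1})\B(c)\B(c)^{-1}$ in some way.

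\textbf{Step 2: apply \Cref{thm:main}.} Granting Step 1, the target equals $(q-1)^{-\ell_T(wc)}[\T_{wc}]\H(\Delta^{2m})|_{q=1}$, and $wc$ lies in $\NC(W,c^{-1})$ up to the appropriate Kreweras-type identification. For $m\ge 0$, take the word for $\Delta^{2m}$ given by $m$ concatenated copies of the reduced $\PR$-word from \eqref{eq:full_twist}, with all signs $+1$. For $m<0$, take $|m|$ copies of the reversed word $\tt_N^{-1}\cdots\tt_1^{-1}$, with all signs $-1$. \Cref{thm:main} then gives $\Subb^{\vece}_T(\vect,wc)$.

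\textbf{Step 3: count.} A reduced $T$-subword for $\pi=wc$ splits into $|m|$ blocks, one from each copy of the word. The blocks give a factorization $\pi=\pi_1\cdots\pi_{|m|}$ with additive reflection length. Within one block, the subword is an increasing chain (for $m>0$) or a decreasing chain (for $m<0$) in the shelling order of \Cref{thm:EL} for the interval $[e,\pi_i]$.

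For $m>0$, EL gives a unique increasing chain in each interval. Hence the count is the number of $m$-multichains from $e$ to $\pi$, namely $\zeta^{m}(e,\pi)$. For $m<0$, each block contributes $\sum(-1)^{\ell_T(\pi_i)}$ over decreasing chains. By the standard EL fact this equals $\mu(e,\pi_i)$, so the total is $\mu^{|m|}(e,\pi)=\zeta^{(m)}(e,\pi)$.

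Finally, the interval $[e,wc]$ is isomorphic to $[w,c^{-1}]$ in $\NC(W,c^{-1})$ via the Kreweras complement and the anti-automorphisms of the noncrossing partition lattice. This yields $\zeta^{(m)}(w,c^{-1})$.
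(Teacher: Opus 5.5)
Your architecture matches what the paper's one-line proof sketch has in mind: strip off $\B(c^{-1})$, apply \Cref{thm:main} to $\Delta^{2m}$, count multichains using unique increasing chains for $m\ge 0$ and decreasing chains for $m<0$, then transport intervals. Using decreasing chains in place of Stanley's zeta-polynomial reciprocity is fine, and your ``otherwise'' case is correct. But two steps are not established as written.

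\textbf{Step 1 is the missing argument.} The central claim, that for $w\in\NC(W,c^{-1})$ only $x=wc$ survives at leading order, is left as a hope (``must cancel or vanish''), and everything afterwards depends on it. It is true, and the argument is short.

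Write $c=s_1\cdots s_r$ and expand $[\T_w](\T_x\T_{s_r}\cdots\T_{s_1})$ as a sum over walks from $x$. At each letter the walk either applies it (weight $1$ or $q$) or skips it (weight $q-1$); a skip is allowed only when the letter is a right descent of the current element. Suppose a walk skips $k$ letters and $v$ is the product of the applied letters. Then $w=xv$ and $\ell_T(v)=r-k$. The term is divisible by $(q-1)^{\ell_T(x)+k}$, where $\ell_T(x)+k=\ell_T(w^{-1}v)+r-\ell_T(v)\ge r-\ell_T(w)$, with equality iff $w\le_T v$.

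Now assume equality and $k\ge 1$, and let $s_j$ be the last skipped letter.
\begin{itemize}
\item Since $s_j$ was skipped and the letters are distinct, $v\in W_J$ with $J=S\setminus\{s_j\}$.
\item Hence $w\in W_J$, because reduced reflection factorizations of elements of a standard parabolic subgroup use only its reflections.
\item All letters after $s_j$ are applied, so $s_j$ is skipped at the element $ws_1\cdots s_{j-1}\in W_J$.
\item That element does not have $s_j$ as a right descent, so the skip is forbidden.
\end{itemize}
Thus only $k=0$ contributes, i.e.\ $x=wc$ via the all-apply walk, whose weight is a power of $q$.

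\textbf{Step 2 applies \Cref{thm:main} to the wrong element.} From $w\le_T c^{-1}$ you get $c^{-1}w^{-1}\in\NC(W,c^{-1})$, so $wc=(c^{-1}w^{-1})^{-1}\in\NC(W,c)$. In general $wc\notin\NC(W,c^{-1})$: for $w=e$ it is $c$, which is not $\le_T c^{-1}$ once $r\ge 2$. With the $c$-dual generators of \eqref{eq:full_twist}, \Cref{thm:main} only computes $[\T_\pi]$ for $\pi\in\NC(W,c^{-1})$. A ``Kreweras-type identification'' cannot be inserted at the level of a single Hecke coefficient.

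Either of two fixes works.
\begin{itemize}
\item Run Steps 2--3 with the standard Coxeter element $c^{-1}$ in place of $c$: factor $\Delta^2$ into the $\P_{c^{-1}}(t)$ by \Cref{cor:wo}, and use \Cref{lem:1word} and \Cref{thm:EL} for $\NC(W,c)$.
\item Use the anti-automorphism of $\H_W$ fixing every $\T_s$. It sends $\T_x\mapsto\T_{x^{-1}}$ and fixes $\H(\Delta^{2m})$, so $[\T_{wc}]\H(\Delta^{2m})=[\T_{c^{-1}w^{-1}}]\H(\Delta^{2m})$.
\end{itemize}
With either fix, your block count gives $\zeta^{(m)}(e,\pi)$. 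The chain $[e,wc]\cong[e,c^{-1}w^{-1}]\cong[e,w^{-1}c^{-1}]\cong[w,c^{-1}]$ (inversion, conjugation by $w$, left multiplication by $w$) then finishes the proof.

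\textbf{Minor point.} The divisibility of $[\T_x]\H(\Delta^{2m})$ by $(q-1)^{\ell_T(x)}$ should be quoted from \Cref{lem:reflection_length_divisibility}. It does not follow just from each $\H(\mathbf{t})$ carrying a factor $q-1$, because $\H(\mathbf{t})$ itself involves basis elements $\T_v$ with $\ell_T(v)>1$.
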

\addtocounter{theorem}{-1}
}

\subsubsection{Clusters and noncrossing partitions}
In~\Cref{sec:fuss}, we use our framework to explain the existence of reflection subword models for clusters and noncrossing partitions.  The \emph{reason} these models are so simple is that the braid $\B(c)^{mh\pm1} = \Delta^{2m} \B(c)^{\pm 1}$ is very close to a power of the full twist, for which we have the simple factorization from~\eqref{eq:full_twist}.   The positive model from~\Cref{lem:coeff-delta} applied to the trace of the braid $\bbeta=\B(c)^{mh+1}$ gives $m$-noncrossing partitions, while the negative model gives $m$-clusters~\cite{stump2025cataland}.   

In more detail, given an integer $k$, let $[k]_q=\frac{1-q^k}{1-q}$.  Recall that the \defn{Fuss--Catalan numbers} for a finite Coxeter group $W$ of rank $r$ are defined for positive integers $m$ by \[\Cat_{mh+ 1}(W;q):=\prod_{i=1}^r \frac{[mh+ 1+e_i]_q}{[d_i]_q} \quad\text{and}\quad\Cat_{mh+ 1}(W):=\Cat_{mh+ 1}(W;1),
\]
where $d_1,\ldots,d_r$ are the degrees and $e_1,\ldots,e_r$ are the exponents. Similarly, the \defn{Fuss--Dogolon numbers} are defined by \[\Cat_{mh- 1}(W;q):=\prod_{i=1}^r \frac{[mh- 1+e_i]_q}{[d_i]_q} \quad\text{and}\quad\Cat_{mh- 1}(W):=\Cat_{mh- 1}(W;1).
\]
The positive subword model from~\Cref{lem:coeff-delta} for $\bbeta=\B(c)^{mh+1}$ gives the $m$-noncrossing partition subword model from~\cite{stump2025cataland} by applying~\cref{thm:main} to the computation
\begin{equation}\label{eq:intropos}[\T_e]\H(\Delta^2 \B(c)^{mh+1})=q^r[\T_{c^{-1}}]\H((\Delta^2)^{m+1})=q^N (q-1)^r \Cat_{mh+1}(W;q)\end{equation}
from the trace evaluations in~\cite{galashin2024rational}, and factoring $\Delta^2=\B(c)^h$. As in~\eqref{eq:full_twist}, one suitable sequence of reflections we can use for $(\Delta^2)^{m+1}$ is $\inv({\sf w}_\circ(c^{-1}))^{m+1}$. 

The negative model from~\Cref{lem:coeff-delta} for $\bbeta=\B(c^{-1})^{mh+1}$ gives the $m$-cluster subword model from~\cite{stump2025cataland} by applying \Cref{thm:main} to the computation

\begin{align}\label{eq:introneg}\nonumber[\T_e]\H(\B(c^{-1})^{-(mh+1)})&=q^r[\T_{c^{-1}}]\H((\Delta^2)^{-m}(\B(c)\B(c^{-1}))^{-1}))\\&=(-1)^rq^{-r(mh+1)}(q-1)^r \Cat_{mh+1}(W;q),\end{align}
where we note that $\B(c)\B(c^{-1}) = \tt_r\cdots\tt_1$ if $\inv(c)=(t_1,\ldots,t_r)$.

Nearly identical computations for the braid $\bbeta=\B(c^{-1})^{mh-1}$ give the Fuss--Dogolon full-support models from~\cite{stump2025cataland}. 

\subsubsection{Rational models}
The \defn{rational Catalan numbers} of type $W$ are defined for a positive integer $p$ coprime to the Coxeter number $h$ as \[\Cat_p(W;q):=\prod_{i=1}^r \frac{[p+(p e_i \!\!\!\mod h)]_q}{[d_i]_q} \quad\text{and}\quad\Cat_p(W):=\Cat_p(W;1),\]
where $p e_i \!\!\!\mod h$ is the least nonnegative residue of $pe_i$ modulo $h$. 
The trace formulas
\[[\T_e] \H(\Delta^2 \B(c)^{p}) = q^{N}(q-1)^r \Cat_p(W;q) \quad\text{and}\quad [\T_e] \H(\B(c)^{-p}) = (-1)^rq^{-pr}(q-1)^r \Cat_p(W;q)\]
were proven in \cite{galashin2024rational}. 
To produce the long-sought-after (see~\cite[Section 8]{stump2025cataland}) subword models for rational Catalan numbers, we are led to study the braids $\B(c)^{\pm p}$ (generalizing the Fuss--Catalan and Fuss--Dogolon cases, where $p\equiv 1 \pmod h$ or $p\equiv -1\pmod h$).  If we naively mimic the computations in~\eqref{eq:intropos} and \eqref{eq:introneg} to get the coefficient \[[\T_e] \H(\Delta^2 \B(c)^p) = q^{\ell(c^p)} [\T_{c^{-p}}] \H(\Delta^2 \B(c)^p \cdot \B(c^p)^{-1})\] or \[[\T_e] \H(\B(c^{-1})^{-p}) = q^{\ell(c^{p})} [\T_{c^{-p}}] \H(\B(c^{-1})^{-p} \cdot \B(c^{p})^{-1}),\] then we can expand the pure braids $\B(c)^p \cdot \B(c^p)^{-1}$ and $\B(c^{-1})^{-p} \cdot \B(c^{p})^{-1}$ as products of elements of $\mathbb T$. However, we run into the serious problem that $c^{-p}$ is not generally a \emph{standard} Coxeter element (nor a noncrossing partition for a standard Coxeter element, since $\ell_T(c^{-p})=r$), which means that we cannot directly use \Cref{thm:main} to compute the coefficient of $\T_{c^{-p}}$.

In crystallographic type, however, both $c^p$ and $c^{-p}$ are conjugate to
the standard Coxeter element $c$. Choose elements $g_+,g_-\in W$ such that $g_+c^pg_+^{-1}=g_-c^{-p}g_-^{-1}=c$,
and choose braid lifts $\mathbf g_+,\mathbf g_-\in\B_W$. Define the pure
braids
\[
\mathbbm c_{p,\mathbf g_+}
:=
\mathbf g_+\B(c)^p\mathbf g_+^{-1}\B(c)^{-1}
\quad\text{and}\quad
\mathbbm c_{-p,\mathbf g_-}
:=
\mathbf g_-\B(c)^{-p}\mathbf g_-^{-1}\B(c)^{-1}.
\]
The trace is invariant under conjugation, and $\Delta^2$ is central. Hence,
by \Cref{lem:coeff-identity}, the positive-trace model (the analogue of
noncrossing partitions) comes from
\begin{align*}
[\T_e]\H\bigl(\Delta^2\B(c)^p\bigr)
&=
[\T_e]\H\bigl(\Delta^2\mathbf g_+\B(c)^p\mathbf g_+^{-1}\bigr)\\
&=
[\T_e]\H\bigl(\Delta^2\mathbbm c_{p,\mathbf g_+}\B(c)\bigr)\\
&=
q^r[\T_{c^{-1}}]\H\bigl(\Delta^2\mathbbm c_{p,\mathbf g_+}\bigr),
\end{align*}
whereas the negative-trace model (the analogue of clusters) comes from
\begin{align*}
[\T_e]\H\bigl(\B(c)^{-p}\bigr)
&=
[\T_e]\H\bigl(\mathbf g_-\B(c)^{-p}\mathbf g_-^{-1}\bigr)\\
&=
[\T_e]\H\bigl(\mathbbm c_{-p,\mathbf g_-}\B(c)\bigr)\\
&=
q^r[\T_{c^{-1}}]\H\bigl(\mathbbm c_{-p,\mathbf g_-}\bigr).
\end{align*}
Combining these identities with the trace evaluations above gives
\[
(q-1)^{-r}
[\T_{c^{-1}}]\H\bigl(\Delta^2\mathbbm c_{p,\mathbf g_+}\bigr)
\Big|_{q=1}
=
(-1)^r(q-1)^{-r}
[\T_{c^{-1}}]\H\bigl(\mathbbm c_{-p,\mathbf g_-}\bigr)
\Big|_{q=1}
=
\Cat_p(W).
\]
We can then apply
\Cref{thm:main} by expressing the relevant pure braids as products of elements
$\P_c(t)^{\pm1}$ with $t\in\RR$. The exponents in these factorizations are not
guaranteed to all be positive, so a priori this produces signed combinatorial
models. If $\mathbbm c_{p,\mathbf g_+}$ admits a positive factorization, then
its writhe forces that factorization to contain exactly $r(p-1)/2$ pure dual
generators.\footnote{It is well known from nonnesting combinatorics that
$r(p-1)/2$ counts the positive roots at heights $\lceil jh/p\rceil$ for
$j=1,2,\ldots,p-1$. These are not, however, the roots appearing in our
factorizations of $\mathbbm c_{p,\mathbf g_+}$, nor do we see how to modify
them naturally to obtain the factorizations in \cref{sec:rational}.}

We stress that---although our strategy is well motivated and uniform for
crystallographic type---our execution is type-by-type. Furthermore, it does
not apply to noncrystallographic types, since $c^p$ and $c$ are not generally
conjugate (see \cref{sec:noncrystallographic} for a discussion of our attempts to extend the
method to noncrystallographic types). In every crystallographic type except
$E_7$ and $E_8$, we were able to choose $\mathbf g_+$ so that
$\mathbbm c_{p,\mathbf g_+}$ has a positive factorization with $r(p-1)/2$
factors. In type $E_7$ when $p=5$ and in type $E_8$ for the cases not already
covered in \cref{sec:fuss}, our factorizations are signed, although we
strongly suspect that unsigned models can be found with additional effort.
The factorizations needed for the negative-trace model are obtained by
applying the same constructions to the residue of $-p$ modulo $h$.

{
\renewcommand{\thetheorem}{\ref{thm:rational-subword-models}}
\begin{theorem}Let $W$ be a Weyl group of rank $r$, let $c$ be a standard Coxeter
element, and let $p>0$ be coprime to the Coxeter number $h$. Choose
$g_+,g_-\in W$ such that $g_+c^pg_+^{-1}=g_-c^{-p}g_-^{-1}=c$, and choose
a braid lift $\mathbf g_+$ of $g_+$ and a braid lift $\mathbf g_-$ of $g_-$. Define
\[
\mathbbm c_{p,\mathbf g_+}
=
\mathbf g_+\B(c)^p\mathbf g_+^{-1}\B(c)^{-1}
\quad\text{and}\quad
\mathbbm c_{-p,\mathbf g_-}
=
\mathbf g_-\B(c)^{-p}\mathbf g_-^{-1}\B(c)^{-1}.
\]
Suppose we have factorizations 
\[
\mathbbm c_{p,\mathbf g_+}
=
\prod_{i=1}^{M_+}\P_c(t_i)^{\epsilon_i},
\qquad
\mathbbm c_{-p,\mathbf g_-}
=
\prod_{j=1}^{M_-}\P_c(v_j)^{\delta_j},
\qquad
\Delta^2
=
\prod_{k=1}^{N}\P_c(u_k),
\]
where\[
\vect=(t_1,\ldots,t_{M_+})\in T^{M_+},\qquad
\protect\vv{v}=(v_1,\ldots,v_{M_-})\in T^{M_-},\qquad
\protect\vv{u}=(u_1,\ldots,u_N)\in T^N,
\]
\[\protect\vv{\epsilon}=(\epsilon_1,\ldots,\epsilon_{M_+})\in\{\pm 1\}^{M_+},\qquad\protect\vv{\delta}=(\delta_1,\ldots,\delta_{M_-})\in\{\pm 1\}^{M_-}.\]
Let $(1^N,\protect\vv{\epsilon})\in\{\pm 1\}^{M_++N}$ be obtained by concatenating the vector $(1,\ldots,1)\in\{\pm 1\}^N$ with $\protect\vv{\epsilon}$. 
Then
\[
\Subb_T^{(1^N,\protect\vv{\epsilon})}
(\vv{u\vphantom{t}}\vect,c^{-1})
=
\Cat_p(W)
=
(-1)^r
\Subb_T^{\protect\vv\delta}
(\protect\vv{v},c^{-1}). 
\] 
\end{theorem}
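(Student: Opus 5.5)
The plan is to assemble the two chains of identities displayed just before the statement and then convert each Hecke coefficient into a signed subword count using \Cref{thm:main}. I will treat the positive side first. By conjugation invariance of the trace $[\T_e]$ and centrality of $\Delta^2$, we have
\[
[\T_e]\H\bigl(\Delta^2\B(c)^p\bigr)=[\T_e]\H\bigl(\Delta^2\mathbbm c_{p,\mathbf g_+}\B(c)\bigr).
\]
This uses $\mathbf g_+\B(c)^p\mathbf g_+^{-1}=\mathbbm c_{p,\mathbf g_+}\B(c)$, which holds by definition. Next I apply \Cref{lem:coeff-identity} with the pure braid $\brho=\Delta^2\mathbbm c_{p,\mathbf g_+}$ and $\pi=c$. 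Here $\ell(c)=r$, so this equals $q^r[\T_{c^{-1}}]\H(\Delta^2\mathbbm c_{p,\mathbf g_+})$. It is worth checking that $\brho$ really is pure: the image of $\mathbbm c_{p,\mathbf g_+}$ in $W$ is $g_+c^pg_+^{-1}c^{-1}=e$, which is exactly where the choice of $g_+$ enters.

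Now I substitute the given factorizations. They give
\[
\Delta^2\mathbbm c_{p,\mathbf g_+}=\prod_{k}\P_c(u_k)\prod_i\P_c(t_i)^{\epsilon_i},
\]
which is a product of squares of $c$-dual lifts indexed by the concatenated word $\vv{u\vphantom{t}}\vect$ with signs $(1^N,\vece)$. Since $c^{-1}\in\NC(W,c^{-1})$ and $\ell_T(c^{-1})=r$, \Cref{thm:main} gives
\[
(q-1)^{-r}[\T_{c^{-1}}]\H(\Delta^2\mathbbm c_{p,\mathbf g_+})\big|_{q=1}=\Subb_T^{(1^N,\vece)}(\vv{u\vphantom{t}}\vect,c^{-1}).
\]
On the other hand, the trace formula of \cite{galashin2024rational} gives $[\T_e]\H(\Delta^2\B(c)^p)=q^N(q-1)^r\Cat_p(W;q)$. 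Dividing by $q^r(q-1)^r$ and setting $q=1$ yields $\Cat_p(W)$, which proves the left equality.

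The negative side goes the same way. By conjugation,
\[
[\T_e]\H(\B(c)^{-p})=[\T_e]\H\bigl(\mathbbm c_{-p,\mathbf g_-}\B(c)\bigr)=q^r[\T_{c^{-1}}]\H(\mathbbm c_{-p,\mathbf g_-}),
\]
where $\mathbbm c_{-p,\mathbf g_-}$ is pure because $g_-c^{-p}g_-^{-1}=c$. Applying \Cref{thm:main} to the factorization with word $\protect\vv v$ and signs $\protect\vv\delta$ identifies $(q-1)^{-r}[\T_{c^{-1}}]\H(\mathbbm c_{-p,\mathbf g_-})|_{q=1}$ with $\Subb_T^{\protect\vv\delta}(\protect\vv v,c^{-1})$. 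Comparing with $[\T_e]\H(\B(c)^{-p})=(-1)^rq^{-pr}(q-1)^r\Cat_p(W;q)$ from \cite{galashin2024rational} gives $\Subb_T^{\protect\vv\delta}(\protect\vv v,c^{-1})=(-1)^r\Cat_p(W)$. This is the right equality, since $((-1)^r)^2=1$.

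The main obstacle is bookkeeping rather than any new idea. One point to check is that the notation $\P_c(t)$ coincides with the generator $\tt$ of \Cref{thm:main} for the fixed Coxeter element $c$. Another is that \Cref{lem:coeff-identity} is applied with the correct orientation, so that $\B(\pi)$ sits on the right and the resulting coefficient is that of $\T_{\pi^{-1}}=\T_{c^{-1}}$. I would also verify that the powers of $q$ disappear at $q=1$. The existence of the factorizations is part of the hypothesis; if needed, it follows from $\PR$ generating $\P_W$.
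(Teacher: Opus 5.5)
Your proposal is correct and follows the paper's proof essentially step for step. You use trace invariance and centrality of $\Delta^2$ to replace $\B(c)^{\pm p}$ by $\mathbbm c_{\pm p,\mathbf g_\pm}\B(c)$, apply \Cref{lem:coeff-identity} to pass to the coefficient of $\T_{c^{-1}}$, and then apply \Cref{thm:main} to the concatenated signed factorizations and compare with the trace formulas of \cite{galashin2024rational}, exactly as the paper does.
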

}

In \cref{thm:rational-subword-models}, the first model comes from the positive braid
$\Delta^2\B(c)^p$ and uses a factorization of
$\mathbbm c_{p,\mathbf g_+}$, while the second comes from the negative
braid $\B(c)^{-p}$ and uses a factorization of
$\mathbbm c_{-p,\mathbf g_-}$. Explicit choices of factorizations of
$\mathbbm c_{p,\mathbf g_+}$ are given in type~$A$ in
\cref{subsec:Type_A}, in type~$B$ (and hence type~$C$) in \cref{subsec:Type_B}, in
type~$D$ in \cref{subsec:Type_D}, and in the exceptional crystallographic
types in \cref{tab:exceptional}. Applying the same constructions to the residue of
$-p$ modulo $h$ gives the factorizations required for the negative
model. Note that the cases
$p\equiv 1\pmod h$ and $p\equiv-1\pmod h$ are the Fuss--Catalan and
Fuss--Dogolon models of \Cref{sec:fuss-cat,sec:dog}, respectively, and
arbitrary $p$ is reduced to $1\leq p<h$ by
\Cref{prop:p-range}. 

Recorded in~\cite[Section 8]{stump2025cataland} are the results of exhaustively searching for rational Catalan subword models by computer (where the sequence of reflections is taken to be a prefix of ${\sf c}^\infty$).  This met with limited success---the authors proposed some constructions in the classical types, but there were no proofs, there was no understanding of {\emph why} the constructions should work, and the exceptional types were not addressed.  The difficulty of finding a ``correct'' lift ${\bf g}$ of the element $g \in W$ satisfying $g c^p g^{-1}=c$ (so that $\XX$ uses only positive powers of the generators in $\PR$) formalizes and explains the difficulties encountered in~\cite{stump2025cataland}.

\subsubsection{Parking models}
For $p>0$ coprime to $h$, the \defn{rational parking numbers} are defined to be
\[\Park_p(W;q):=[p]_q^r \quad\text{and}\quad\Park_p(W):=p^r.\]
It follows from~\cite{galashin2024rational,trinh2026partial} that 
\begin{align*}[\T_e] \sum_{w \in W} q^{-\ell(w)} \H\!\left(\B(w)\B(w^{-1})\cdot  \B(c)^p\right) &= (q-1)^r\Park_p(W;q);\\
[\T_e] \sum_{w \in W} q^{-\ell(w)} \H\!\left(\B(w)\B(w^{-1}) \cdot \B(c)^{-p}\right) &= (-q)^{-rp}(q-1)^r\Park_p(W;q).
\end{align*}
From this, we are able to easily derive subword models for rational parking objects, generalizing the cluster parking functions from~\cite{douvropoulos2025cluster} and the noncrossing parking functions from~\cite{edelman1980chain,armstrong2015parking,rhoades2014parking}.  For $w\in W$, let $\inv_c(w)$ denote the sequence of inversions of $w$, taken in the order they appear in $\inv({\sf w}_\circ(c))$. For $\protect\vv\epsilon=(\epsilon_1,\ldots,\epsilon_M)\in\{\pm 1\}^M$ and $k\geq 0$, let 
\[
(1^k,\protect\vv\epsilon)
=
(\underbrace{1,\ldots,1}_{k\text{ times}},
\epsilon_1,\ldots,\epsilon_M).
\] 

{
\renewcommand{\thetheorem}{\ref{thm:rat_parking}}
\begin{theorem}
Let $W$ be a Weyl group of rank $r$, and fix a standard Coxeter element $c$. Let
$p>0$ be coprime to $h$. Choose an element $g_+\in W$ such that
$g_+c^pg_+^{-1}=c$,
and choose a braid lift $\mathbf g_+\in\B_W$ of $g_+$. Define the
pure braid
\[
\mathbbm c_{p,\mathbf g_+}
=
\mathbf g_+\B(c)^p\mathbf g_+^{-1}\B(c)^{-1}.
\]
Suppose that $\vect=(t_1,\ldots,t_M)\in T^M$ and
$\protect\vv\epsilon=(\epsilon_1,\ldots,\epsilon_M)\in\{\pm1\}^M$
satisfy
\[
\mathbbm c_{p,\mathbf g_+}
=
\P_c(t_1)^{\epsilon_1}\cdots\P_c(t_M)^{\epsilon_M}.
\]
Then
\[
\sum_{w\in W}
\Subb_T^{(1^{\ell(w)},\protect\vv\epsilon)}
\bigl(\rev(\inv_c(w))\vect,c^{-1}\bigr)
=
\Park_p(W).
\]
\end{theorem}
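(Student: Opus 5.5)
Starting from the parking trace formula quoted above (from \cite{galashin2024rational,trinh2026partial}), I would rewrite each summand as a coefficient of $\T_{c^{-1}}$ in the Hecke image of a pure braid, and then apply \Cref{thm:main} term by term. Fix $w\in W$. Since the trace $[\T_e]\H(\cdot)$ is invariant under conjugation, we may conjugate $\B(w)\B(w^{-1})\B(c)^p$ by $\mathbf g_+$. This gives
\[
[\T_e]\H\bigl(\B(w)\B(w^{-1})\B(c)^p\bigr)=[\T_e]\H\bigl(\mathbf g_+\B(w)\B(w^{-1})\mathbf g_+^{-1}\cdot \mathbbm c_{p,\mathbf g_+}\B(c)\bigr).
\]
\textbf{First obstacle.} The conjugated pure braid $\mathbf g_+\B(w)\B(w^{-1})\mathbf g_+^{-1}$ is awkward to factor. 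A cleaner route is to use cyclic invariance of the trace to move $\B(w)\B(w^{-1})$ to the far left next to $\mathbbm c_{p,\mathbf g_+}$, and to choose the conjugation compatibly. Concretely, I would instead write
\[
\B(c)^p=\mathbf g_+^{-1}\mathbbm c_{p,\mathbf g_+}\B(c)\,\mathbf g_+,
\]
and then use $[\T_e]\H(\mathbf x\mathbf y)=[\T_e]\H(\mathbf y\mathbf x)$. I expect the key input to be that the averaged element
\[
\sum_w q^{-\ell(w)}\H(\B(w)\B(w^{-1}))
\]
is central in $\H_W$: it is the image of the sum over a full double-coset-type symmetrization, essentially the Hecke analogue of $\sum_w w\otimes w^{-1}$ acting by conjugation. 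If that centrality is available, or can be proved by checking commutation with each $\T_s$, then the conjugation by $\mathbf g_+$ passes through the sum for free.

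Granting this, the trace formula becomes
\[
(q-1)^r\Park_p(W;q)=\sum_{w} q^{-\ell(w)}[\T_e]\H\bigl(\B(w)\B(w^{-1})\,\mathbbm c_{p,\mathbf g_+}\B(c)\bigr).
\]
By \Cref{lem:coeff-identity}, each summand on the right equals
\[
q^{-\ell(w)}\,q^{r}\,[\T_{c^{-1}}]\H\bigl(\B(w)\B(w^{-1})\,\mathbbm c_{p,\mathbf g_+}\bigr).
\]
Next I invoke \Cref{thm:BB=Z}, which factors $\B(w)\B(w^{-1})$ as the product of the $\tt$ over $\inv_c(w)$ in reverse order. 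These factors all have positive exponents, and there are $\ell(w)$ of them. Concatenating this factorization with the given factorization of $\mathbbm c_{p,\mathbf g_+}$ expresses the whole pure braid as a product of dual pure generators indexed by $\rev(\inv_c(w))\vect$, with signs $(1^{\ell(w)},\vece)$. One point must be checked here: that the ordering convention in \Cref{thm:BB=Z} matches $\rev(\inv_c(w))$ with respect to the same $c$ that defines the dual lifts.

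Now divide by $(q-1)^r$ and set $q=1$. The powers of $q$ become $1$. Since $c^{-1}\in\NC(W,c^{-1})$ with $\ell_T(c^{-1})=r$, \Cref{thm:main} identifies each summand with
\[
\Subb_T^{(1^{\ell(w)},\vece)}\bigl(\rev(\inv_c(w))\vect,c^{-1}\bigr).
\]
Summing over $w$ yields $\Park_p(W;1)=p^r$.

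\textbf{Main obstacle.} The hard step is the centrality (or conjugation-invariance) of $\sum_w q^{-\ell(w)}\H(\B(w)\B(w^{-1}))$ needed to insert $\mathbf g_+$. If centrality fails, the fallback is to apply the cited trace formula in a form already conjugated appropriately. The natural source for this is that \cite{trinh2026partial} proves the formula for any braid whose closure is the same link, since $\B(c)^p$ and $\mathbf g_+^{-1}\mathbbm c_{p,\mathbf g_+}\B(c)\mathbf g_+$ are equal. What is actually needed is equality of the braids themselves, and that holds trivially. So the issue reduces to showing that $[\T_e]\bigl(Z\cdot\H(\mathbf g^{-1}\mathbf x\mathbf g)\bigr)=[\T_e]\bigl(Z\cdot\H(\mathbf x)\bigr)$, where $Z$ denotes the averaged element; centrality of $Z$ gives exactly this.
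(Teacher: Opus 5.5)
Your proposal is correct and follows the paper's proof essentially step for step. The steps are: pass the conjugation by $\mathbf g_+$ through the averaged element; apply \Cref{lem:coeff-identity} to move to the coefficient $[\T_{c^{-1}}]$; factor $\B(w)\B(w^{-1})$ via \Cref{lem:inversions}, the $\mathsf{w}_\circ(c)$ case of \Cref{thm:BB=Z}; and apply \Cref{thm:main} termwise. The centrality you flag as the main obstacle is exactly \Cref{lem:PF_central}, proved by the dual-basis (Casimir-type) argument you anticipate, using that $\{\T_w\}$ and $\{q^{-\ell(w)}\T_{w^{-1}}\}$ are dual bases for the trace. The paper then packages the conjugation step as \Cref{prop:8.9}.
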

\addtocounter{theorem}{-1}
} 

{
\renewcommand{\thetheorem}{\ref{thm:rat_parking_negative}}
\begin{theorem}
Let $W$ be a Weyl group of rank $r$, and fix a standard Coxeter element $c$. Let
$p>0$ be coprime to $h$. Choose an element $g_-\in W$ such that
\(
g_-c^{-p}g_-^{-1}=c,
\)
and choose a braid lift $\mathbf g_-\in\B_W$ of $g_-$. Define the
pure braid
\[
\mathbbm c_{-p,\mathbf g_-}
=
\mathbf g_-\B(c)^{-p}\mathbf g_-^{-1}\B(c)^{-1}.
\]
Suppose that $\vect=(t_1,\ldots,t_M)\in T^M$ and
$\protect\vv\epsilon=(\epsilon_1,\ldots,\epsilon_M)\in\{\pm1\}^M$
satisfy
\[
\mathbbm c_{-p,\mathbf g_-}
=
\P_c(t_1)^{\epsilon_1}\cdots\P_c(t_M)^{\epsilon_M}.
\]
Then
\[
(-1)^r
\sum_{w\in W}
\Subb_T^{(1^{\ell(w)},\protect\vv\epsilon)}
\bigl(\rev(\inv_c(w))\vect,c^{-1}\bigr)
=
\Park_p(W).
\]
\end{theorem}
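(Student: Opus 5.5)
Start from the parking trace formula quoted above:
\[
[\T_e]\sum_{w\in W}q^{-\ell(w)}\H\bigl(\B(w)\B(w^{-1})\B(c)^{-p}\bigr)=(-q)^{-rp}(q-1)^r\Park_p(W;q).
\]
The plan is to rewrite each summand as a Hecke coefficient of a single pure braid times $\B(c)$, and then apply \Cref{thm:main} term by term. This mirrors the positive case \Cref{thm:rat_parking}, except that the $\Delta^2$ factor is absent and an overall sign and power of $q$ appear.

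First, use the conjugation invariance of the trace $[\T_e]\H(-)$. For each $w$, conjugating $\B(w)\B(w^{-1})\B(c)^{-p}$ by $\mathbf g_-$ gives the trace of $\mathbf g_-\B(w)\B(w^{-1})\mathbf g_-^{-1}\cdot\mathbf g_-\B(c)^{-p}\mathbf g_-^{-1}$. This is not immediately of the needed form: \Cref{thm:BB=Z} factors $\B(w)\B(w^{-1})$ itself, and the conjugate by $\mathbf g_-$ need not factor as nicely.

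So rather than conjugating the whole product, I would use cyclicity differently. The trace is invariant under $\mathbf x\mathbf y\mapsto\mathbf y\mathbf x$, and the sum $\sum_w q^{-\ell(w)}\H(\B(w)\B(w^{-1}))$ is central in $\H_W$; it is the image of the standard "full-twist-like" central element appearing in \cite{trinh2026partial}. Centrality lets me move this element past $\mathbf g_-$, so the summed trace equals
\[
[\T_e]\sum_w q^{-\ell(w)}\H\bigl(\B(w)\B(w^{-1})\,\mathbf g_-\B(c)^{-p}\mathbf g_-^{-1}\bigr)=[\T_e]\sum_w q^{-\ell(w)}\H\bigl(\B(w)\B(w^{-1})\,\mathbbm c_{-p,\mathbf g_-}\B(c)\bigr).
\]
By \Cref{lem:coeff-identity}, each term equals $q^{r-\ell(w)}[\T_{c^{-1}}]\H(\brho_w)$, where $\brho_w=\B(w)\B(w^{-1})\,\mathbbm c_{-p,\mathbf g_-}$.

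Next, \Cref{thm:BB=Z} gives $\B(w)\B(w^{-1})=\prod\tt$ over $\rev(\inv_c(w))$ with all exponents positive. Concatenating this with the given factorization of $\mathbbm c_{-p,\mathbf g_-}$ yields a word $\rev(\inv_c(w))\vect$ with signs $(1^{\ell(w)},\vece)$. Since $c^{-1}\in\NC(W,c^{-1})$ and $\ell_T(c^{-1})=r$, \Cref{thm:main} shows that $(q-1)^{-r}[\T_{c^{-1}}]\H(\brho_w)|_{q=1}$ equals the signed subword count in the statement. Dividing the trace identity by $(q-1)^r$ and setting $q=1$, the prefactors $q^{r-\ell(w)}$ become $1$ and the right side becomes $(-1)^{rp}\Park_p(W)$.

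The remaining sign mismatch is $(-1)^{rp}$ against the claimed $(-1)^r$. In Weyl groups with $h$ even and $p$ coprime to $h$, $p$ is odd, so $(-1)^{rp}=(-1)^r$. In type $A_{n-1}$ with $h=n$ odd, $r=n-1$ is even, so both signs equal $1$. I would check this parity carefully.

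The main obstacle is the centrality claim for $\sum_w q^{-\ell(w)}\H(\B(w)\B(w^{-1}))$. I would first try to justify it from the citation to \cite{galashin2024rational,trinh2026partial}. Failing that, I would avoid centrality entirely by deriving the displayed conjugated formula directly from those references.
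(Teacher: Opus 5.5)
Your proposal is correct and matches the paper's proof: centrality of $\PF$ plus cyclicity of the trace (\cref{prop:8.9}), then \cref{lem:coeff-identity}, \cref{lem:inversions}, and \cref{thm:main} applied term by term, and finally the parity check $(-1)^{rp}=(-1)^r$. The centrality you flag as the main obstacle is proved in the paper itself (\cref{lem:PF_central}), using the fact that $\{\T_w\}$ and $\{q^{-\ell(w)}\T_{w^{-1}}\}$ are dual bases for the trace. The parity step can also be done without case analysis: if $r$ is odd, then $rh=2N$ forces $h$ to be even, so $p$ is odd.
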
 
\addtocounter{theorem}{-1}
}

\subsubsection{Knots} Our final application is to knot theory. Recall that the HOMFLYPT polynomial is determined by
\begin{align*}
\HOMFLY(\bigcirc;a,z)&=1,\\
a\,\HOMFLY(L_+;a,z)-a^{-1}\,\HOMFLY(L_-;a,z)
&=
z\,\HOMFLY(L_0;a,z),
\end{align*}
Under the substitution $z=q^{1/2}-q^{-1/2}$, the specialization $z=0$
corresponds to $q=1$. We will show that $\HOMFLY(K;a,z=0)$ admits a reflection subword model. In~\cite[Theorem~1.8]{trinh2026partial}, the
$a$-coefficients of the HOMFLYPT polynomial were expressed in terms of
leading Hecke coefficients of braid wedges. Combining this result with
\Cref{thm:main} turns these leading coefficients into signed counts of
reduced reflection subwords.

Let $c=(12\cdots n)\in \mathfrak{S}_n$, and set $\mathfrak{S}_{n,k} = \bigl\{w\in \mathfrak{S}_n:\Des(w)=\{s_1,\ldots,s_k\}\bigr\}.$  For $w\in \mathfrak{S}_n$, write $\inv_c(w)$ for its inversion sequence in
$c$-sorting order. 

{
\renewcommand{\thetheorem}{\ref{thm:knots}}
\begin{theorem}
Let $\bbeta_K\in\B_n$ be a braid whose closure is a knot $K$. Let $\beta_K$ be the image of $\bbeta_K$ in $\mathfrak S_n$. Choose $g\in\mathfrak S_n$
such that $g\beta_Kg^{-1}=c$,
and choose a braid lift $\mathbf g\in\B_n$ of $g$. Factor the pure braid $\brho_K
:=
\mathbf g\bbeta_K\mathbf g^{-1}\B(c)^{-1}$ as
\[
\brho_K
=
\P_c(t_1)^{\epsilon_1}\cdots\P_c(t_M)^{\epsilon_M}
\]
for some $\vect=(t_1,\ldots,t_M)\in T^M$ and
$\protect\vv\epsilon=(\epsilon_1,\ldots,\epsilon_M)
\in\{\pm1\}^M$.
We have
\[
(-1)^{n-1}
\Subb_T^{\protect\vv\epsilon}(\vect,c^{-1})
=
\left.[a^{-\mathrm{wr}(\bbeta_K)-n+1}]
\homfly(K;a,z)\right|_{z=0}.
\]
More generally, we recover the full HOMFLYPT polynomial at $z=0$ by
\[
\homfly(K;a,z)\Big|_{z=0}
=
\sum_{k=0}^{n-1}
(-1)^{n-1+k}
\sum_{u\in \mathfrak S_{n,k}}
\Subb_T^{(1^{\ell(u)},\protect\vv\epsilon)}
\bigl(\rev(\inv_c(u))\vect,c^{-1}\bigr)
a^{-\mathrm{wr}(\bbeta_K)-n+1+2k}.
\]
\end{theorem}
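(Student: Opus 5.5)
We combine the trace reduction already used for the rational models, \Cref{thm:main}, and the Hecke description of the $a$-coefficients of the HOMFLYPT polynomial from~\cite[Theorem~1.8]{trinh2026partial}.

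\textbf{Step 1: the lowest $a$-degree.} Normalize as in~\cite{trinh2026partial}, with $z=q^{1/2}-q^{-1/2}$ and an overall power of $q$ and of $(q-1)$. In these conventions the lowest $a$-coefficient $[a^{-\mathrm{wr}(\bbeta_K)-n+1}]$ of $\homfly(K;a,z)$ is, up to sign, the leading term $(q-1)^{-(n-1)}[\T_e]\H(\bbeta_K)$. This is essentially the Ocneanu trace restricted to its $\T_e$-part. The trace is conjugation invariant, so
\[[\T_e]\H(\bbeta_K)=[\T_e]\H(\mathbf g\bbeta_K\mathbf g^{-1})=[\T_e]\H(\brho_K\B(c)).\]
By \Cref{lem:coeff-identity}, this equals $q^{n-1}[\T_{c^{-1}}]\H(\brho_K)$. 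Since $c^{-1}\in\NC(\mathfrak S_n,c^{-1})$ and $\ell_T(c^{-1})=n-1$, \Cref{thm:main} gives
\[(q-1)^{-(n-1)}[\T_{c^{-1}}]\H(\brho_K)\big|_{q=1}=\Subb_T^{\vece}(\vect,c^{-1}).\]
The sign $(-1)^{n-1}$ should come out of matching the normalization of~\cite{trinh2026partial}. Concretely, I expect a factor $(q^{1/2}-q^{-1/2})^{-(n-1)}$ against $(q-1)^{n-1}$, together with the standard sign convention relating $a$ and $q$. At $q=1$ these leave exactly $(-1)^{n-1}$ after the powers of $q$ are set to $1$.

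\textbf{Step 2: the general $a$-coefficient.} By~\cite{trinh2026partial}, the coefficient of $a^{-\mathrm{wr}-n+1+2k}$ is a leading coefficient of the "wedge" trace. I expect this to take the form
\[\sum_{u\in\mathfrak S_{n,k}} q^{-\ell(u)}[\T_e]\H\bigl(\B(u)\B(u^{-1})\,\bbeta_K\bigr),\]
possibly with the summation set given as $\mathfrak S_{n,k}$ or in an equivalent form. The precise statement there must be checked; it is the same mechanism behind the parking formulas. Conjugating by $\mathbf g$ sends $\B(u)\B(u^{-1})$ to a conjugate pure braid, which at first sight is a problem. I would instead use cyclicity of the trace on the whole product to reduce to $[\T_e]\H(\mathbf g\B(u)\B(u^{-1})\mathbf g^{-1}\brho_K\B(c))$. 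To keep the factorization in the dual generators of $c$, it is cleaner to argue as in the proof of \Cref{thm:rat_parking}, which handles exactly this shape. The key observation is that the sum over $u$ in a descent class should be invariant under replacing $\B(u)\B(u^{-1})$ by its conjugates, because the wedge trace is a class function. So one may place $\B(u)\B(u^{-1})$ directly in front of $\brho_K$.

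Then \Cref{thm:BB=Z} writes
\[\B(u)\B(u^{-1})=\prod^{\longleftarrow}_{t\in\inv_c(u)}\tt,\]
a positive product over $\rev(\inv_c(u))$. Concatenating with the factorization of $\brho_K$ and applying \Cref{lem:coeff-identity} and \Cref{thm:main} exactly as in Step 1 yields the signed subword count with signs $(1^{\ell(u)},\vece)$. The factor $q^{-\ell(u)}$ becomes $1$ at $q=1$, and the sign $(-1)^{n-1+k}$ comes from the normalization at level $k$.

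\textbf{Main obstacle.} The main obstacle is Step 2's bookkeeping. One part is justifying that the $\B(u)\B(u^{-1})$ factors can be moved past $\mathbf g$ without changing the descent-class sum. The other is pinning down the exact sign and normalization conventions of~\cite[Theorem~1.8]{trinh2026partial}, including why the $z=0$ specialization isolates exactly the $(q-1)^{n-1}$ leading order. I would first reduce Step 2 to the already-established argument for \Cref{thm:rat_parking}, with $\B(c)^p$ replaced by $\bbeta_K$ and the sum over $W$ restricted to descent classes.
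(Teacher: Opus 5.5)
Your proposal is correct and follows the paper's proof. The paper moves to $\mathbf g\bbeta_K\mathbf g^{-1}=\brho_K\B(c)$ inside the trace, converts $[\T_e](\,\cdot\,\T_c)$ into $q^{n-1}[\T_{c^{-1}}](\cdot)$ by \Cref{lem:coeff-identity}, factors $\B(u)\B(u^{-1})$ over $\rev(\inv_c(u))$ by \Cref{lem:inversions}, applies \Cref{thm:main} with signs $(1^{\ell(u)},\vece)$, and reads off $(-1)^{n-1+k}$ directly from \Cref{thm:homfly-trace}. Your proposed $z$-versus-$(q-1)$ mechanism for the sign is unnecessary, and it would not produce a sign anyway, since $(q-1)/z=q^{1/2}$.

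The step you flag as the obstacle is handled in the paper by the centrality of $\sum_{u\in\mathfrak S_{n,k}}q^{-\ell(u)}\H(\B(u)\B(u^{-1}))$. Your class-function remark can be made rigorous even more simply: apply \Cref{thm:homfly-trace} directly to $\mathbf g\bbeta_K\mathbf g^{-1}$, which has the same closure $K$ and the same writhe as $\bbeta_K$.
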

\addtocounter{theorem}{-1}
}

As explained in~\Cref{sec:knot}, the same construction extends to links
by replacing the long cycle $c$ with the standard noncrossing
permutation having the cycle type of the link.

\newpage
\part{Technique}\label{part:technique}

In this part of the paper, we review background and introduce our methods.
\begin{itemize}
    \item In~\Cref{sec:background}, we recall standard concepts from combinatorics, Coxeter theory, and Coxeter--Catalan combinatorics.
    \item In~\Cref{sec:pure}, we recall the relationship between the shard generators of $\P_W$ and the dual braid generators of $\B_{W}$, and we discuss how to rewrite pure braids in terms of these generators.  Of special interest are pure braids of the form $\B(w)\B(w^{-1})$, which will occur in our work on rational Catalan models and noncrossing parking functions.  We also recover results on Mikado braids from~\cite{gobet2020dual,digne2017dual,baumeister2017simple,licata2017braid} in a new and uniform way.
    \item In~\Cref{sec:subwords}, we prove our main~\Cref{thm:main}, which interprets the leading term of $[\T_\pi]\H(\bbeta)$ (for $\bbeta \in \P_W$ and $\pi \in \NC(W,c^{-1})$) as a combinatorial model consisting of signed subwords that are reduced $T$-words for $\pi$.
\end{itemize}

\section{Background}\label{sec:background}

\subsection{General Groups}\label{sec:groups}
Let $G$ be a group. We denote the identity element of a group by $e$. Given a subset $U\subseteq G$, we define a \dfn{$U$-word} to be a tuple $(u_1,\ldots,u_M)$ such that the entries $u_1,\ldots,u_M$ all belong to $U$. We say the word $(u_1,\ldots,u_M)$ \dfn{represents} the element $u_1\cdots u_M$ of $G$. For $g\in G$, let $\ell_U(g)$ be the smallest length of a $U$-word that represents $g$. A \dfn{reduced $U$-word} for $g$ is a $U$-word that represents $g$ and has length $\ell_U(g)$. 

Given a word $(g_1,\ldots,g_M)\in G^M$ and an index $i\in[M-1]$, we let 
\[\Hur_i(g_1,\ldots,g_M)=(g_1,\ldots,g_{i-1},g_ig_{i+1}g_i^{-1},g_i,g_{i+2},\ldots,g_M).\] Note that $\Hur_i$ is an invertible operator on $G^M$ whose inverse is given by 
\[\Hur_{i}^{-1}(g_1,\ldots,g_M)=(g_1,\ldots,g_{i-1},g_{i+1},g_{i+1}^{-1}g_ig_{i+1},g_{i+2},\ldots,g_M).\] The operators $\Hur_i$ and $\Hur_{i}^{-1}$ are called \dfn{Hurwitz moves}. The set of all words that can be obtained from $(g_1,\ldots,g_M)$ via a sequence of Hurwitz moves is the \dfn{Hurwitz orbit} of $(g_1,\ldots,g_M)$. Note that two words in the same Hurwitz orbit necessarily represent the same element. 

A \dfn{commutation move} is an operation that changes a word by swapping two consecutive entries that commute with each other. Two words are \dfn{commutation equivalent} if one can be obtained from the other via a sequence of commutation moves. For our purposes, two words that are commutation equivalent will often behave in the same manner, so we will commonly work with words up to commutation equivalence. 

Given $\protect\vv{g}=(g_1,\ldots,g_M)\in G^M$, we let $\mathrm{rev}({\protect\vv g})=\vvl{g}:=(g_M,\ldots,g_1)$. We use the notation 
\[\prod_{a\in \protect\vv{g}}a=g_1\cdots g_M\quad\text{and}\quad\prod_{a\in\protect\vv{g}}^{\longleftarrow}a=g_M\cdots g_1.\] A \dfn{subword} of $\protect\vv{g}$ is a word ${\sf g}=(\overline{g}_1,\ldots,\overline g_M)$ such that $\overline g_i\in\{g_i,e\}$ for all $i$. If $\overline g_1\cdots\overline g_M=g$, then we call ${\sf g}$ a \dfn{$g$-subword} of $\protect\vv{g}$. If we assume in addition that none of $g_1,\ldots,g_M$ is the identity element $e$, then we can use a subword ${\sf g}$ to partition the index set $[M]$: we say an index $i$ is a \dfn{skip} of ${\sf g}$ if $\overline g_i=e$; otherwise we say $i$ is a \dfn{take} of ${\sf g}$. Let $\mathrm{skip}({\sf g})$ denote the number of skips of ${\sf g}$.

\subsection{Coxeter Groups}\label{subsec:coxeter}

Let $(W,S)$ be a finite irreducible Coxeter system with Coxeter presentation 
\[W=\langle S\mid s^2=(ss')^{m_{ss'}}=e\text{ for all distinct }s,s'\in S\rangle.\]
Let $r=|S|$ be the rank of this Coxeter system. The \dfn{Coxeter graph} of $(W,S)$ is an edge-labeled graph with vertex set $S$ in which two vertices $s,s'$ are adjacent if and only if $m_{ss'}\geq 3$; the label of the edge between $s$ and $s'$ is $m_{ss'}$. Let $w_\circ$ be the long element of $W$. For $J\subseteq S$, the \dfn{standard parabolic subgroup} $W_J$ is the subgroup of $W$ generated by $J$. A \dfn{parabolic subgroup} of $W$ is a subgroup that is conjugate to a standard parabolic subgroup. Since $W_J$ is a Coxeter group whose simple reflections are the elements of $J$, it has a long element, which we denote by $w_\circ(J)$. Let $\ell=\ell_S$ denote the usual Coxeter length function. When we refer to a \dfn{reduced word} for an element of $W$, we mean a reduced $S$-word. Let \[N=\ell(w_\circ).\] 

Write \[\RR=\RR_W=\{wsw^{-1}:w\in W,\, s\in S\}\] for the set of reflections in $W$. Note that $|T|=N$. The \dfn{Coxeter number} of $W$ is 
\[h=\frac{2N}{r}.\] 
Define the \dfn{inversion sequence} of an $S$-word $\mathsf{w}=(s_1,\ldots,s_M)$ to be the $T$-word $\inv(\mathsf{w})=(t_1,\ldots,t_M)$, where $t_i=s_1\cdots s_{i-1}s_is_{i-1}\cdots s_1$. 

For $\vect=(t_1,\ldots,t_M)\in T^M$ and $w\in W$, we let $\Sub_T(\vect,w)$ denote the set of subwords ${\sf w}$ of $\vect$ such that the word obtained from ${\sf w}$ by deleting all occurrences of $e$ forms a reduced $T$-word for $w$. By a slight abuse of terminology, we will often say the subwords in $\Sub_T(\vect,w)$ \emph{are} reduced $T$-words for $w$, even though this is technically wrong because such words could contain occurrences of $e$. 

A \dfn{descent} of an element $w\in W$ is a simple reflection $s\in S$ such that $\ell(ws)<\ell(w)$. Let $\Des(w)$ denote the set of descents of $w$. A \dfn{cover reflection} of $w$ is a reflection $wsw^{-1}$ such that $s\in\Des(w)$. 
 
Suppose $\protect\vv\beta=(s_1,\ldots,s_k)$ is an $S$-word. Fix $v\in W$. Consider a subword ${\sf w}=(\overline s_1,\ldots,\overline s_k)$ of $\protect\vv\beta$. We say a subword ${\sf w}$ of $\protect\vv{\beta}$ is \dfn{$v$-distinguished} if $\overline s_i=s_i$ whenever $s_i$ is a descent of $v\overline s_1\cdots \overline s_{i-1}$. We can consider the sequence \[v,\,v\overline s_1,\,v\overline s_1\overline s_2,\ldots,v\overline s_1\overline s_2\cdots\overline s_k\] as a walk in $W$ that starts at $v$; at the $i$th step, we either multiply by $s_i$ (if $\overline s_i=s_i$) or stay still (if $\overline s_i=e$). Saying ${\sf w}$ is $v$-distinguished means that, at each step, if multiplying by the simple reflection would cause the Coxeter length to decrease, then we must multiply by the simple reflection. Let $\mathcal D^v(\protect\vv\beta,w)$ be the set of $v$-distinguished $w$-subwords of $\protect\vv\beta$. A subword is called \dfn{distinguished} if it is $e$-distinguished.  Distinguished subwords encode a natural decomposition of Richardson varieties (and, more generally, braid varieties) into pieces that are products of tori times affine spaces~\cite{deodhar1985}.

Let $V$ be the standard geometric representation of $W$. A reflection $t$ of $W$ acts via the (geometric) reflection through a hyperplane $H_t$ in the Coxeter arrangement $\mathcal H_W$. Thus, $\mathcal H_W=\{H_t:t\in T\}$. Let $\Phi\subseteq V^*$ be the associated root system, and let $\Phi^+$ be the set of positive roots. For each root $\alpha\in\Phi$, let $t_\alpha\in \RR$ be the reflection through the hyperplane orthogonal to $\alpha$. For $u\in \RR$, let $\alpha_u\in\Phi^+$ be the positive root such that $u=t_{\alpha_u}$. 

Let $V^{\mathbb C}=V\otimes\mathbb C$ and $\mathcal H_W^{\mathbb C}=\{H^{\mathbb C}:H\in\mathcal H_W\}$, where $H^{\mathbb C}=H\otimes\mathbb C\subseteq V^{\mathbb C}$. A \dfn{regular vector} is a
vector in
\(
V^{\mathbb C}\setminus\bigcup_{H\in\mathcal H_W}H^{\mathbb C}
\).  An element of $W$ is \dfn{$d$-regular} if it has a regular eigenvector whose associated eigenvalue is a primitive $d$th root of unity.  A \defn{Coxeter element} is an $h$-regular element.

A \defn{standard Coxeter element} is an element of $W$ that can be written as a product of the simple reflections of $W$, with each simple reflection used exactly once. All standard Coxeter elements are Coxeter elements.  If $c$ is a standard Coxeter element, then $\ell(c)=\ell_T(c)=r$. The standard Coxeter elements are all conjugate to each other~\cite{humphreys1992reflection}. When $W$ is crystallographic, the set of all Coxeter elements is a single conjugacy class~\cite[Theorem 4.2, Proposition 4.7]{springer1974regular}.  In noncrystallographic types, however, the Coxeter elements form more than one conjugacy class, so there are Coxeter elements that are not conjugate to any standard Coxeter elements (for example, in $I_2(5)$ with $S=\{s_1,s_2\}$, the element $(s_1s_2)^2$ is a Coxeter element, but it is not conjugate to $s_1s_2$ or $s_2s_1$).  See~\cite{reiner2017non} for further details. 

Consider a standard Coxeter element $c$ of the Coxeter system $(W,S)$, and let ${\sf c}$ be a reduced word for $c$. Given an element $w\in W$, we define the \dfn{$c$-sorting word} of $w$, denoted ${\sf w}(c)$, to be the lexicographically first subword of the infinite word ${\sf c}^\infty={\sf c}{\sf c}{\sf c}\cdots$ that is a reduced $S$-word for $w$. The word ${\sf w}(c)$  depends on the particular choice of the reduced word ${\sf c}$. However, we will often abuse notation and ignore this technicality; doing so is justified by the fact that different reduced words for $c$ give rise to commutation-equivalent $c$-sorting words for $w$.  We similarly write $\inv(c)$ instead of $\inv({\sf c})$, which is justified by the fact that different reduced words for $c$ have commutation-equivalent inversion sequences. The $c$-sorting word $\woc$ for $w_\circ$ is of particular interest~\cite{reading2007clusters,speyer2009powers}. 

\subsection{Posets} 
We assume the reader is familiar with basic notions from the theory of posets and lattices, which can be found in \cite[Chapter 3]{Stanley2012}.   Let $P$ be a finite poset, and let \[\mathcal E(P)=\{(x,y)\in P\times P:x\lessdot y\}\] be the set of edges in the Hasse diagram of $P$. Let $\Lambda$ be a set. A map $\lambda\colon\mathcal E(P)\to\Lambda$ is called an \dfn{edge-labeling} of $P$. 

Let $L$ be a finite lattice with meet operation $\wedge$ and join operation $\vee$. Recall that an element $j\in L$ is \dfn{join-irreducible} if every set with join $j$ contains $j$. Equivalently, $j$ is join-irreducible if it covers exactly one element of $L$. Let $\mathcal J_L$ be the set of join-irreducible elements of $L$. 

We say $L$ is \dfn{semidistributive} if the following implications hold for all $x,y,z\in L$: 
\[\begin{aligned}
x\vee y&= x\vee z\Longrightarrow x\vee y= x\vee (y\wedge z)\\ 
x\wedge y&= x\wedge z\Longrightarrow x\wedge y= x\wedge (y\vee z).\end{aligned}\]
Assume $L$ is semidistributive. For each cover relation $x\lessdot y$ in $L$, this assumption ensures that the set $\{z\in L:z\vee x=y\}$ has a unique minimal element $j_{x,y}$; moreover, this element is necessarily join-irreducible. 

When $L$ is the poset of regions of a simplicial hyperplane arrangement, Reading found a natural bijection between join-irreducible elements of $L$ and certain polyhedral pieces of hyperplanes called \emph{shards}. For this reason, we will sometimes abuse terminology by referring to the join-irreducible elements of a general semidistributive lattice as \dfn{shards}. The \dfn{shard labeling} of $L$ is the edge-labeling $\mathcal E(L)\to\mathcal J_L$ given by $(x,y)\mapsto j_{x,y}$. 

The main semidistributive lattice that will interest us is the \dfn{(right) weak order} on the Coxeter group $W$, which is the partial order $\leq$ on $W$ defined so that $x\leq y$ if and only if $\ell(y)=\ell(x)+\ell(x^{-1}y)$. A reduced word $\sf w$ for an element $w$ can be seen as a saturated chain in the weak order from $e$ to $w$; let $\Sha({\sf w})$ be the sequence of shard labels of the edges of this chain, read from bottom to top. More precisely, if ${\sf w}=(s_1,\ldots,s_k)$ and we write $x_i=s_1\cdots s_i$, then \[\Sha({\sf w})=(j_{x_0,x_1},j_{x_1,x_2},\ldots,j_{x_{k-1},x_k}).\] 

Another important partial order on $W$ is the \dfn{absolute order} $\leq_{T}$, which is defined so that $x\leq_{T}y$ if and only if $\ell_T(y)=\ell_T(x)+\ell_T(x^{-1}y)$. The \dfn{$c$-noncrossing partition lattice}, denoted $\NC(W,c)$, is the interval between $e$ and $c$ in the absolute order. If $x\lessdot_{T} y$ is a cover relation
in $\NC(W,c)$, then $x^{-1}y$ is a reflection. The map that sends the edge $x\lessdot_{T}y$ to $x^{-1}y$ is the \dfn{natural reflection labeling} of $\NC(W,c)$. 

A shard is called \dfn{$c$-noncrossing} if it appears in $\Sha({\sf w}_\circ(c))$. Note that there is a natural one-to-one correspondence between $c$-noncrossing shards and reflections under which the $i$th shard in $\Sha({\sf w}_\circ(c))$ corresponds to the $i$th reflection in $\inv({\sf w}_\circ (c))$. In the geometric picture where a shard is a polyhedral piece of a reflecting hyperplane, this one-to-one correspondence comes from the fact that each hyperplane in $\mathcal H_W$ contains a unique $c$-noncrossing shard and the fact that the map $t\mapsto H_t$ is a bijection from $T$ to $\mathcal H_W$. An element $w\in W$ is \dfn{$c$-sortable} if for every element $x$ that is covered by $w$ in the weak order, the shard label $j_{x,w}$ is $c$-noncrossing. 

Given a $c$-sortable element $w\in W$, let $\mathrm{nc}_c(w)=r_1\cdots r_k$, where $r_1,\ldots,r_k$ are the cover reflections of $w$, listed in the order they appear in $\inv({\sf w}(c))$. N.\ Reading proved the following fundamental result. 

\begin{theorem}[{\cite[Theorem 6.1]{reading2007clusters}}]\label{thm:Reading_bijection}
The map $\mathrm{nc}_c$ is a bijection from the set of $c$-sortable elements of $W$ to the set $\NC(W,c)$ of $c$-noncrossing partitions.  
\end{theorem}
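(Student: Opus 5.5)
The plan is to argue by induction on the rank of $W$ and on the Coxeter length of $w$, using the standard recursive description of $c$-sortable elements. Let $s$ be initial in $c$, so that ${\sf c}$ starts with $s$. A $c$-sortable $w$ falls into one of two cases:
\begin{itemize}
\item[(a)] $s\not\le w$ in weak order, and then $w$ is an $sc$-sortable element of the standard parabolic subgroup $W_{\langle s\rangle}=W_{S\setminus\{s\}}$;
\item[(b)] $s\le w$, and then $sw$ is $scs$-sortable.
\end{itemize}
These facts come directly from the definition of ${\sf w}(c)$ as the lexicographically first subword of ${\sf c}^\infty$. Noncrossing partitions have a matching recursion. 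Let $\NC(W,c)_{\not\ge s}$ be the set of $\pi\in\NC(W,c)$ with $s\not\le_T\pi$. Then
\[
\NC(W,c)=\NC(W_{\langle s\rangle},sc)\ \sqcup\ s\cdot\NC(W,scs)_{\not\ge s}\cdot s,
\]
where the first piece consists of those $\pi\le_T sc$. The two maps should respect these recursions.

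\textbf{Well-definedness.} First I will check that $\mathrm{nc}_c$ lands in $\NC(W,c)$ and is compatible with the recursion.
\begin{itemize}
\item In case (a), the inversion sequence ${\sf w}(c)$ is the inversion sequence of $w$'s $sc$-sorting word in $W_{\langle s\rangle}$. The cover reflections are also the same, so $\mathrm{nc}_c(w)=\mathrm{nc}_{sc}(w)$. By induction on rank this lies in $\NC(W_{\langle s\rangle},sc)\subseteq\NC(W,c)$.
\item In case (b), the word ${\sf w}(c)$ is $s$ followed by $({\sf sw})(scs)$. Its inversion sequence is therefore $s$ followed by the $s$-conjugates of the inversions of $({\sf sw})(scs)$.
\begin{itemize}
\item If $s$ is not a cover reflection of $w$, then the cover reflections of $w$ are exactly the $s$-conjugates of those of $sw$, in the same relative order. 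Hence $\mathrm{nc}_c(w)=s\,\mathrm{nc}_{scs}(sw)\,s$, and conjugation by $s$ carries $\NC(W,scs)$ onto the appropriate part of $\NC(W,c)$.
\item The delicate subcase is when $s$ itself is a cover reflection of $w$. Then $sw$ is $s$-free on the relevant side, and one must use the parabolic case for $sw$ together with the fact that $s\cdot\NC(W_{\langle s\rangle},sc)\subseteq\NC(W,c)$, since $s\le_T c$ with complement $sc$.
\end{itemize}
\end{itemize}
In every case I also need that the product $r_1\cdots r_k$ is $T$-reduced. Reducedness follows because the cover reflections correspond to linearly independent roots (their hyperplanes are the lower walls of the cone of $w$), so $\ell_T(r_1\cdots r_k)=k$.

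\textbf{Bijectivity.} With the recursion in hand, I would prove bijectivity by showing that $\mathrm{nc}_c$ maps the two halves of the sortable recursion bijectively onto the two halves of the noncrossing recursion, by induction. Alternatively, I could use a cardinality argument: both sets are counted by $\Cat(W)$. The sortables are counted via their bijection with clusters; the noncrossing partitions are counted by the Bessis/Reiner enumeration. With equal cardinalities, injectivity suffices. For injectivity, semidistributivity of weak order gives that $w$ is the join of the shard labels $j_{x,w}$ over its lower covers $x$ (its canonical join representation). Since $w$ is $c$-sortable, each such label is $c$-noncrossing, and $c$-noncrossing shards correspond bijectively to reflections. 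So $w$ is determined by the \emph{set} of its cover reflections. It then remains to recover that set from the product $\mathrm{nc}_c(w)$. Here I would use that the set of cover reflections is exactly the set of reflections $t\le_T\mathrm{nc}_c(w)$ minimal in the ordering by $\inv({\sf w}_\circ(c))$ among certain chains, and prove this through the same recursion.

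\textbf{Main obstacle.} I expect the hardest step to be case (b) in which $s$ is itself a cover reflection. There one must show that the ordered product (with $s$ placed first, as dictated by $\inv({\sf w}(c))$) is still below $c$ in absolute order, and that no information is lost. My first attempt would be to show that in this subcase $sw\in W_{\langle s\rangle}$ up to the appropriate sortable reduction. That would give $\mathrm{nc}_c(w)=s\cdot\mathrm{nc}_{sc}(\cdot)$, which lies in $s\cdot\NC(W_{\langle s\rangle},sc)$ and hence in the interval $[e,c]_T$.
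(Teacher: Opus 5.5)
The paper gives no proof of this statement; it quotes Reading. So I am judging your outline on its own terms. It has a genuine gap exactly where the content of the theorem lies. Your treatment of case (a), of case (b) when $s$ is not a cover reflection, and of reducedness via linear independence of the roots (Carter's lemma) is fine.

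\textbf{The displayed recursion for $\NC(W,c)$ is false.} Absolute order is invariant under conjugation, and conjugation by $s$ fixes $s$. Hence $s\cdot\NC(W,scs)_{\not\ge s}\cdot s$ is exactly $\{\pi\in\NC(W,c): s\not\le_T\pi\}$. This set already contains $\NC(W_{\langle s\rangle},sc)$, because every reflection below an element of $W_{\langle s\rangle}$ lies in $W_{\langle s\rangle}$. So your two pieces overlap (both contain $e$), and their union misses every $\pi$ with $s\le_T\pi$, for instance $s$ and $c$.

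The correct decomposition has three pieces:
\begin{enumerate}
\item $\NC(W_{\langle s\rangle},sc)$;
\item the set $\{\pi: s\le_T\pi\}$, which equals $s\cdot\NC(W_{\langle s\rangle},sc)$;
\item the remaining $\pi\notin W_{\langle s\rangle}$ with $s\not\le_T\pi$, which are handled by conjugation by $s$.
\end{enumerate}
These must be matched, respectively, with case (a), case (b) with $s\in\mathrm{cov}(w)$, and case (b) with $s\notin\mathrm{cov}(w)$. So the plan of matching ``two halves with two halves'' cannot be carried out.

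\textbf{The subcase $s\in\mathrm{cov}(w)$ is left unproved, and the route you suggest does not work.} This is the subcase that feeds the piece you omitted. Take type $A_2$, $c=s_1s_2$, $s=s_1$. Then $w=w_\circ$ is $c$-sortable with cover reflections $s_1,s_2$, yet $sw=s_2s_1\notin W_{\langle s\rangle}$.

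What is true, and what you need, is that the \emph{other} cover reflections of $w$ lie in $W_{\langle s\rangle}$. This follows from the canonical join representation you already invoke:
\begin{itemize}
\item The joinand attached to $s$ is the atom $s$.
\item Canonical joinands form an antichain, so every other joinand $j_t$ is a $c$-sortable element with $s\not\le j_t$. By case (a), $j_t\in W_{\langle s\rangle}$.
\item Subsets of canonical join representations are again canonical join representations. Hence $u=\bigvee_{t\neq s}j_t$ is an $sc$-sortable element of $W_{\langle s\rangle}$ with $\mathrm{cov}(u)=\mathrm{cov}(w)\setminus\{s\}$.
\item The first entry of $\inv({\sf w}(c))$ is $s$, and both relevant orders are restrictions of $\inv({\sf w}_\circ(c))$. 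Therefore $\mathrm{nc}_c(w)=s\cdot\mathrm{nc}_{sc}(u)\in s\cdot\NC(W_{\langle s\rangle},sc)$.
\end{itemize}

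Even with this, bijectivity still needs two further facts. First, for every $sc$-sortable $u$, the element $s\vee u$ must be $c$-sortable with canonical joinands $s$ together with those of $u$. Second, $w\mapsto s\,\mathrm{nc}_{scs}(sw)\,s$ must map the remaining sortable elements onto the third piece.

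\textbf{The fallback does not avoid this work.} Your alternative is equal cardinalities plus injectivity. But recovering $\mathrm{cov}(w)$ from $\mathrm{nc}_c(w)$ is the whole point, and the criterion you state (``minimal among certain chains'') is not well defined. A version that does work goes as follows:
\begin{itemize}
\item For $c$-sortable $w$, the order of $\inv({\sf w}(c))$ is the restriction of $\inv({\sf w}_\circ(c))$.
\item Hence $r_1\cdots r_k$ is an increasing reduced factorization of $\mathrm{nc}_c(w)$ in that reflection order.
\item Uniqueness of increasing factorizations in $\NC(W,c)$ for this order, an EL-shelling statement of Athanasiadis--Brady--Watt type, then recovers the set of cover reflections.
\end{itemize}
Combined with the partly case-by-case enumerations of sortable elements and noncrossing partitions, however, this imports substantial external results. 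A direct bijective proof has to run the three-way recursion above.
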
 

For $x\in W$, let
\[\mathrm{Inv}(x)=\{u\in T:\ell(ux)<\ell(x)\}
\]
be the inversion set of $x$. Let $\inv_c(x)$ be the $T$-word obtained by restricting $\inv({\sf w}_\circ(c))$ to $\mathrm{Inv}(x)$. We will use the following terminology and facts from Reading
\cite[Sections~3--4]{reading2007clusters}. Reading defined the \dfn{$c$-orientation} on irreducible rank-2 parabolic subgroups using the ordering of $T$ given by $\inv({\sf w}_\circ(c))$. He then defined an element $x\in W$ to be \dfn{$c$-aligned} if in every irreducible rank-2 parabolic subgroup $W'$ with canonical generators $\rho_1,\rho_M$ oriented $\rho_1\to_c \rho_M$, we have the implication 
\[
\mathrm{Inv}(x)\cap \bigl((T\cap W')\setminus\{\rho_M\}\bigr)\neq \emptyset
\implies \rho_1\in\mathrm{Inv}(x).\]
Reading proved that $x$ is $c$-sortable if and only if $x$ is $c$-aligned \cite[Theorem~4.1]{reading2007clusters}; this uses the fact that $\mathrm{Inv}(x)\cap (T\cap W')$ is an initial segment or a final segment in the rank-2 reflection order \cite[Lemma~1.6]{reading2007clusters}.

\begin{lemma}\label{lem:join-irreducible}
Let $j$ be a $c$-sortable join-irreducible element of the weak order, and let $t$ be the unique cover reflection of $j$. If $r\in \mathrm{Inv}(j)\setminus\{t\}$, then
$tr\notin \NC(W,c^{-1})$.
\end{lemma}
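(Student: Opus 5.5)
The plan is to show that $tr$, whose reflection length is $2$, fails to lie below $c^{-1}$ in absolute order. The tool is the rank-2 parabolic subgroup $W'$ generated by $t$ and $r$, together with Reading's $c$-aligned characterization of $c$-sortable elements.

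First I set up the rank-2 data. Let $W'$ be the parabolic subgroup fixing $H_t\cap H_r$; it is a rank-2 parabolic subgroup with canonical generators $\rho_1,\rho_M$, oriented $\rho_1\to_c\rho_M$. Here $T\cap W'$ is listed in the order $\rho_1,\rho_2,\ldots,\rho_M$ induced by $\inv({\sf w}_\circ(c))$. By \cite[Lemma~1.6]{reading2007clusters}, $\mathrm{Inv}(j)\cap W'$ is an initial or final segment of this order. Since $j$ is join-irreducible with unique cover reflection $t$, I claim $t$ is the last element of $\mathrm{Inv}(j)$ along this rank-2 order, whichever direction the segment runs. The reason is that $t$ is the unique descent-type inversion, so removing it leaves the inversion set of the unique lower cover $j s$; that set must still be a segment of $W'$, which forces $t$ to be an endpoint.

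Next I apply $c$-alignment. Since $r\neq t$ both lie in $\mathrm{Inv}(j)\cap W'$, this intersection contains an element other than $\rho_M$, so $c$-alignment gives $\rho_1\in\mathrm{Inv}(j)$. Combined with the segment property, $\mathrm{Inv}(j)\cap W'$ is an initial segment $\{\rho_1,\ldots,\rho_k\}$ with $k\ge 2$. The endpoint claim above then gives $t=\rho_k$, and $r=\rho_i$ for some $i<k$.

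Then I characterize the noncrossing products in $W'$. The standard fact I use is that for two reflections $a,b$ of a rank-2 parabolic $W'$, $ab\le_T c$ holds only when $(a,b)$ is compatible with the $c$-orientation. Concretely, $W'\cap\NC(W,c)$ is the noncrossing lattice of $W'$ for the Coxeter element $\rho_1\rho_M$, so the rotation $ab$ is noncrossing for $c$ exactly when $ab=\rho_1\rho_M$. For $c^{-1}$ the orientation reverses, so the products of two distinct reflections of $W'$ that lie in $\NC(W,c^{-1})$ all equal $\rho_M\rho_1$. I will phrase this as: $x\in W'$ with $\ell_T(x)=2$ lies in $\NC(W,c^{-1})$ iff $x=\rho_M\rho_1$. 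This follows from the fact that $\NC(W,c^{-1})\cap W'$ is a noncrossing lattice of $W'$ with top element determined by the orientation, which I would cite from Reading's treatment of $c$-orientation or prove via parabolic restriction of noncrossing partitions.

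Finally I rule out $tr=\rho_M\rho_1$. In the dihedral group, $\rho_k\rho_i$ is rotation by the angle $2(k-i)\theta$, and $\rho_M\rho_1$ is rotation by $2(M-1)\theta$ in the same direction. With $1\le i<k\le M$, equality would force $k-i\equiv M-1$ modulo the order, so $i=1$ and $k=M$. But $t=\rho_M$ is impossible, since $\mathrm{Inv}(j)\cap W'$ would then be all of $T\cap W'$. In that case $j$ would be above the longest element of $W'$ in the appropriate coset sense, and $j$ would have at least two cover reflections coming from $W'$; this contradicts join-irreducibility. A subtler point is that $\rho_M$ can be the last element of a full initial segment. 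To handle it, I would argue that if $\mathrm{Inv}(j)\supseteq T\cap W'$, then both $\rho_1$ and $\rho_M$ are cover-type inversions, in the sense of being removable while preserving segment structure. Hence $j$ covers two elements, a contradiction.

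The main obstacle is the third step, the precise rank-2 characterization of which length-2 elements of $W'$ lie in $\NC(W,c^{-1})$ with the correct orientation. The endpoint argument in the last step also needs care, and I expect to need a more careful lemma there.
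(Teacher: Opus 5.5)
\textbf{Verdict: the proposal has a genuine gap in its last step.} Your route is the paper's: pass to the rank-2 parabolic $W'$ containing $t$ and $r$, use alignment and the segment property to write $\mathrm{Inv}(j)\cap W'=\{\rho_1,\ldots,\rho_k\}$ with $t$ at its end, and use that the only length-2 element of $W'$ that can lie in $\NC(W,c^{-1})$ is $\rho_M\rho_1$. Your dihedral computation correctly reduces everything to one configuration: $\mathrm{Inv}(j)\cap W'=T\cap W'$ with $(t,r)=(\rho_M,\rho_1)$. Since your endpoint analysis also allows $t=\rho_1$ there, the pair $(t,r)=(\rho_1,\rho_2)$ must be handled too, and it has the same product.

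Your last step declares this configuration impossible because $j$ would have two cover reflections in $W'$. That claim is false. Take $W=\mathfrak S_4$, $c=s_2s_1s_3=(1\,3\,4\,2)$, and $j=s_2s_1s_3s_2$, which is $3412$ in one-line notation.
\begin{itemize}
\item Its $c$-sorting word is $(s_2,s_1,s_3\mid s_2)$, so $j$ is $c$-sortable.
\item Its only right descent is $s_2$, so $j$ is join-irreducible with cover reflection $t=(14)$.
\item $\mathrm{Inv}(j)=\{(13),(14),(23),(24)\}$. Taking $r=(23)$ gives $W'=\{e,(14),(23),(14)(23)\}$.
\item Since $\inv(\woc)=((23),(13),(24),(14),(34),(12))$, we have $\rho_1=(23)$ and $\rho_M=(14)=t$.
\end{itemize}
So $t=\rho_M$ and $\mathrm{Inv}(j)\supseteq T\cap W'$, yet $j$ has a single cover reflection. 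For a non-standard $W'$, containing all of $T\cap W'$ does not make both canonical generators cover reflections.

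The lemma still holds in this example, but only because $(14)(23)$ is crossing for $c^{-1}=(1\,2\,4\,3)$. That is, $W'$ contains no length-2 element of $\NC(W,c^{-1})$ at all, and the rank-2 data you use (alignment and segments) cannot detect this. Moreover, whenever $M=2$ you are always in this configuration, since alignment says nothing there, so that whole case is unproved. The paper's proof follows the same rank-2 route, and its sentence about $t$ being ``the unique removable reflection'' passes over exactly this configuration, so it does not supply the missing idea either. Relatedly, the ``iff'' in your third step is too strong: only the implication towards $x=\rho_M\rho_1$ holds, as the example shows.

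\textbf{A global argument that works: Reading's induction on an initial letter $s$ of $c$.}
\begin{itemize}
\item If $s\notin\mathrm{Inv}(j)$, then $j$ is an $sc$-sortable join-irreducible of $W_{S\setminus\{s\}}$. Since $\NC(W,c^{-1})\cap W_{S\setminus\{s\}}\subseteq\NC(W_{S\setminus\{s\}},c^{-1}s)$, induct on rank.
\item If $s\in\mathrm{Inv}(j)$ (the case $j=s$ is vacuous), then $sj$ is an $scs$-sortable join-irreducible with cover reflection $sts$, and $\mathrm{Inv}(j)=\{s\}\sqcup s\,\mathrm{Inv}(sj)\,s$. For $r\neq s$, conjugating by $s$ carries $\NC(W,c^{-1})$ to $\NC(W,sc^{-1}s)$, which reduces to the shorter element $sj$.
\item In the remaining case $r=s$, we have $ts\in\NC(W,c^{-1})$ iff $st\le_T c$ iff $t\le_T sc$, and the last condition forces $t\in W_{S\setminus\{s\}}$. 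But then the $sc$-sortable join-irreducible of $W_{S\setminus\{s\}}$ with cover reflection $t$ is a second $c$-sortable join-irreducible with cover reflection $t$. It differs from $j$ because it is not above $s$, contradicting injectivity of $\mathrm{nc}_c$ (\cref{thm:Reading_bijection}).
\end{itemize}
In the example this last case is exactly what occurs: $s=s_2=r$ and $t=(14)\notin\langle s_1,s_3\rangle$.
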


\begin{proof}
This is a consequence of Reading's alignment characterization. Suppose $W'$ is an irreducible rank-2 parabolic subgroup, and write the reflections of $W'$ in the order induced by the $c$-sorting word for $w_\circ$ as $\rho_1,\rho_2,\ldots,\rho_M$,
where $\rho_1$ and $\rho_M$ are the canonical generators of $W'$ and $\rho_1\to_c \rho_M$. Then the reduced $T$-words for the $c^{-1}$-noncrossing partitions in $W'$ are precisely those of the form $(\rho_i,\rho_{i+1})$, where the indices are read modulo $M$. 

Now suppose $r\in \mathrm{Inv}(j)\setminus\{t\}$. Let $W''$ be the largest rank-2 parabolic subgroup of $W$ containing $t$ and $r$. Since $j$ is $c$-sortable, it is $c$-aligned. Hence, Reading's rank-2 description of inversion sets forces the inversions of $j$ in $W''$ to lie on the $c$-aligned side of the rank-2 reflection order. Since $t$ is the unique cover reflection of $j$, it is the unique removable reflection from this rank-2 inversion set. It follows that the ordered pair $t,r$ is not one of the rank-2 adjacent pairs listed above for the reversed orientation. Therefore, $tr$ is not the rank-2 $c^{-1}$-noncrossing Coxeter element in $W''$. It follows that $tr\notin \NC(W,c^{-1})$.
\end{proof}

\subsection{Braid Groups and Hecke Algebras}\label{subsec:braid_hecke} 
 Let $V^{\mathrm{reg}}:=V^{\mathbb C}\setminus \bigcup\mathcal H_W^{\mathbb C}$. The \dfn{braid group} $\B_W$ is the fundamental group $\pi_1(V^{\mathrm{reg}}/W)$, and the \dfn{pure braid group} $\P_W$ is the fundamental group $\pi_1(V^{\mathrm{reg}})$. The center of $\P_W$ is generated by the \dfn{full twist}, a loop that winds once around all of the complexified hyperplanes. 

Each simple reflection $s\in S$ has an associated \dfn{Artin lift} $\s\in\B_W$. Let ${\bf S}=\{\s:s\in S\}$. The \dfn{Artin presentation} of the braid group of $W$ is 
\[\B_W=\langle{\bf S}:\,[\mathbf{w}_\circ]\rangle,\]
where $[\mathbf{w}_\circ]$ is the relation that sets equal all expressions of the form $\s_1\cdots\s_N$ such that $(s_1,\ldots,s_N)$ is a reduced $S$-word for $w_\circ$. For $w\in W$, let \begin{equation}\label{def:artin_lift}\B(w):=\s_1\cdots\s_k,\end{equation} where $(s_1,\ldots,s_k)$ is a reduced $S$-word for $w$. The element $\B(w)\in\B_W$ is the \dfn{Artin lift} of $w$; it is well defined (i.e., does not depend on the chosen reduced word). The full twist is $\Delta^2$, where $\Delta=\B(w_\circ)$. 

Recall that the \defn{writhe} of a braid $\bbeta \in \B_W$ is the quantity $\mathrm{wr}(\bbeta)$ defined as the sum of the exponents of the simple Artin generators in any $\S$-word representing $\bbeta$.

The \dfn{positive braid monoid} $\B_W^+$ is the monoid generated by ${\bf S}$; in other words, it is the set of elements of $\B_W$ that can be expressed as products of the Artin lifts of simple reflections without using inverses. An element of $\B_W^+$ is called a \dfn{positive braid}.  

There is a natural quotient map $\B_W\twoheadrightarrow W$ that sends each generator $\s\in{\bf S}$ to the corresponding simple reflection $s\in S$. In general, a \dfn{lift} of an element $w\in W$ is a preimage of $w$ under this quotient map. 
The pure braid group $\P_W$ is the kernel of this quotient map; in other words, $\P_W$ is the set of lifts of the identity element of $W$. 

The \dfn{Hecke algebra} $\H_W$ is the quotient of the group algebra $\mathbb{Z}[q^{\pm 1}][\B_W]$ by the quadratic relations
\[
(\s-q)(\s+1)=0 \quad \text{for all } \s\in {\bf S}.
\]
We denote the quotient map by $\H\colon \mathbb{Z}[q^{\pm 1}][\B_W]\to\H_W$.
One can view $\H_W$ as a deformation of the group algebra $\mathbb{Z}[W]$ since we recover this group algebra by setting $q=1$.  For each $w\in W$, let $\T_w=\H(\B(w))$. Then $\{\T_w\}_{w\in W}$ is the \dfn{standard basis} for $\H_W$ with multiplication determined by
\[
\T_s\T_w=
\begin{cases}
\T_{sw} & \text{if } \ell(sw)>\ell(w)\\
(q-1)\T_w+q\T_{sw} & \text{if } \ell(sw)<\ell(w)
\end{cases}
\] 
for all $w\in W$ and $s\in S$. 

Given $\mathrm{x}\in \H_W$ and $w\in W$, we write $[\T_w]\mathrm{x}$ for the coefficient of $\T_w$ in the standard-basis expansion of $\mathrm{x}$. The map $\H_W\to \mathbb Z[q^{\pm 1}]$ defined by $\mathrm{x}\mapsto[\T_e]\mathrm{x}$ is a trace map in the sense that it is linear and satisfies $[\T_e](\mathrm{xy})=[\T_e](\mathrm{yx})$ for all $\mathrm{x},\mathrm{y}\in \H_W$. 
We will repeatedly use the following coefficient identity, which is similar to~\cite[Example 7.4]{trinh2026partial} and \cite{kalman2009meridian}.

\begin{lemma}[{\cite[Example 7.4]{trinh2026partial},\cite{kalman2009meridian}}]\label{lem:coeff-delta}
For $\bbeta \in \B_W$, we have 
\[
[\T_e]\H({\Delta^2} {\bbeta}) = (-1)^{\mathrm{wr}(\bbeta)} q^{N+\mathrm{wr}(\bbeta)} [\T_e]\H({\bbeta}^{-1}). 
\] 
\end{lemma}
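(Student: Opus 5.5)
The plan is to reduce the identity to a statement about basis elements of $\H_W$, using three standard symmetries of the Hecke algebra. Write $\tau(\mathrm x)=[\T_e]\mathrm x$ for the trace. The three symmetries are:
\begin{enumerate}[(i)]
\item the algebra anti-involution $\iota_{\mathrm{rev}}\colon\T_w\mapsto \T_{w^{-1}}$, which is induced by reversing words in $\B_W$ and preserves $\tau$;
\item the algebra automorphism $\iota\colon \T_s\mapsto -q\T_s^{-1}=-\T_s+(q-1)$. It is well defined because $-q\T_s^{-1}$ satisfies the same quadratic relation as $\T_s$, and it covers the \emph{mirror} automorphism $\s\mapsto\s^{-1}$ of $\B_W$ up to scalars;
\item the facts that $\H(\Delta^2)$ is central in $\H_W$ and that $\tau(\T_x\T_y)=q^{\ell(x)}\delta_{x,y^{-1}}$.
\end{enumerate}

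First I track the writhe. Let $\overline{\bbeta}$ denote the mirror of $\bbeta$. Applying $\iota$ to an $\S$-word for $\bbeta$ letter by letter gives $\iota(\H(\bbeta))=(-q)^{\mathrm{wr}(\bbeta)}\H(\overline{\bbeta})$, since each $\s^{\pm1}$ contributes a factor $(-q)^{\pm1}$. Reversing the word of $\overline{\bbeta}$ yields $\bbeta^{-1}$, so (i) gives $\tau(\H(\bbeta^{-1}))=\tau(\H(\overline{\bbeta}))=(-q)^{-\mathrm{wr}(\bbeta)}\tau(\iota(\H(\bbeta)))$. Since $(-1)^{\mathrm{wr}}q^{\mathrm{wr}}\cdot(-q)^{-\mathrm{wr}}=1$, the lemma becomes the writhe-free identity
\[
\tau\bigl(\H(\Delta^2)\,\mathrm x\bigr)=q^{N}\,\tau(\iota(\mathrm x))\qquad\text{for all }\mathrm x\in\H_W.
\]
Both sides are $\mathbb Z[q^{\pm1}]$-linear in $\mathrm x$, so it suffices to take $\mathrm x=\T_w$. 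For a reduced word of $w$, $\iota(\T_w)=(-q)^{\ell(w)}\T_{w^{-1}}^{-1}$. The target is therefore
\[
\tau(\T_{w_\circ}^2\T_w)=(-1)^{\ell(w)}q^{N+\ell(w)}\,\tau(\T_{w^{-1}}^{-1}).
\]

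For the left side, I use the length-additive factorization $w_\circ=w^{-1}\cdot(ww_\circ)$, which gives $\T_{w_\circ}=\T_{w^{-1}}\T_{ww_\circ}$, together with $\T_{w_\circ}\T_x=\T_{w_\circ x w_\circ}\T_{w_\circ}$. Cyclically rotating under $\tau$ and applying (iii) should turn $\tau(\T_{w_\circ}^2\T_w)$ into $q^{N}$ times the coefficient of $\T_e$ in a product of the form $\T_{y}^{-1}\T_{y'}$. Equivalently, it becomes the coefficient of $\T_{y'^{-1}}$ in $\T_y^{-1}$, up to a power of $q$. That coefficient is a Kazhdan--Lusztig $R$-polynomial. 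On the right side, $\tau(\T_{w^{-1}}^{-1})=(-1)^{\ell(w)}q^{-\ell(w)}R_{e,w}(q)$, by the standard expansion $\T_y^{-1}=\sum_x(-1)^{\ell(x)+\ell(y)}q^{-\ell(y)}R_{x,y}\T_x$.

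I expect the main obstacle to be this last bookkeeping step: matching the $R$-polynomial from the left side with $R_{e,w}$ and getting every sign and power of $q$ right. I would first try to do it directly, via the symmetry $R_{x,y}=R_{w_\circ y w_\circ,\,w_\circ x w_\circ}=R_{yw_\circ,\,xw_\circ}$ (or $R_{x^{-1},y^{-1}}$). If that becomes messy, the fallback is induction on $\ell(w)$. Pick $s$ with $\ell(ws)<\ell(w)$, write $\T_w=\T_{ws}\T_s$, and expand $\T_s=q\T_s^{-1}+(q-1)$ on the left side and $\iota(\T_s)=-\T_s+(q-1)$ on the right. Both sides then satisfy the same two-term recursion, and the base case $w=e$ reduces to $\tau(\T_{w_\circ}^2)=q^N$, which follows from (iii).
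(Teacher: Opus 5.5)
Your reduction is correct and is the paper's own. Composing your automorphism $\iota$ with word reversal gives exactly the anti-automorphism $\varpi$, $\varpi(\T_s)=-q\T_s^{-1}$, that the paper uses, and your writhe-free identity is \eqref{eq:varpi}. The gap is the last step. It is not bookkeeping but the entire content of the lemma, and neither of your routes supplies it.

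The inductive fallback is circular. Your ``two-term recursion'' is just linearity applied to $\T_w=q\,\T_{ws}\T_s^{-1}+(q-1)\T_{ws}$. Since $s$ is an ascent of $ws$, the new term re-expands as $\T_{ws}\T_s^{-1}=q^{-1}\T_w-q^{-1}(q-1)\T_{ws}$. Substituting returns $X(\T_w)=X(\T_w)$ for either side $X$, so length never decreases. Cyclicity cannot rescue an induction whose only base case is $w=e$. Both $\mathrm x\mapsto\tau(\H(\Delta^2)\mathrm x)$ and $\mathrm x\mapsto q^N\tau(\iota(\mathrm x))$ are trace functions, and trace functions are not determined by their value at $\T_e$. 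For instance, the index character $\T_w\mapsto q^{\ell(w)}$ and the sign character $\T_w\mapsto(-1)^{\ell(w)}$ both equal $1$ there.

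The direct route stalls in the same place. Cyclic rotation and (iii) only give $\tau(\T_{w_\circ}^2\T_w)=q^N[\T_{w_\circ}](\T_{w_\circ}\T_w)$. Identifying that coefficient with $R_{e,w}$ is equivalent to what you are trying to prove, and the $R$-polynomial symmetries do not compute it. (Also, your first listed symmetry should read $R_{x,y}=R_{w_\circ xw_\circ,\,w_\circ yw_\circ}$.)

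The missing idea is to test the writhe-free identity on the basis $\{\T_w^{-1}\}_{w\in W}$ instead of $\{\T_w\}$. This is a basis because $\T_w^{-1}$ equals $q^{-\ell(w)}\T_{w^{-1}}$ plus Bruhat-lower terms. Since $w_\circ=(w_\circ w^{-1})\,w$ is length-additive, you get $\H(\Delta^2)\T_w^{-1}=\T_{w_\circ}\T_{w_\circ w^{-1}}=\T_{w_\circ}\T_{s_1}\cdots\T_{s_k}$ for a reduced word $(s_1,\ldots,s_k)$ of $w_\circ w^{-1}$. Right multiplication by $\T_s$ lowers length by at most one, so $\T_e$ can appear only if $k=N$, i.e.\ $w=e$, where the coefficient is $q^N$. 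On the other side, $\iota(\T_w^{-1})=(-q)^{-\ell(w)}\T_{w^{-1}}$, whose trace is $\delta_{w,e}$. Both sides therefore equal $q^N\delta_{w,e}$, with no $R$-polynomials needed.

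If you prefer to keep the basis $\{\T_w\}$, you can close the gap by comparing weighted walks. For a reduced word $(s_1,\ldots,s_k)$ of $w$, both $[\T_{w_\circ}](\T_{w_\circ}\T_{s_1}\cdots\T_{s_k})$ and $\tau(\T_{s_1}^{-1}\cdots\T_{s_k}^{-1})=\tau(\T_{w^{-1}}^{-1})$ are sums over walks. These walks correspond under $x\mapsto w_\circ x$, and each walk on the first side has weight exactly $(-q)^{k}$ times that of its partner.
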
 
\begin{proof}
The linear map sending each standard basis element $\T_w$ to its inverse $\T_w^{-1}$ is lower-triangular with respect to the Bruhat order, so $\{\T_w^{-1}\}_{w\in W}$ is a basis of $\H_W$. Let $\varpi$ be the anti-automorphism of $\H_W$ defined by $\varpi(\T_{s})=-q\T_s^{-1}=q-1-\T_s$ for all $s\in S$. For $\bbeta\in\B_W$, we have \[\varpi(\H(\bbeta))=(-q)^{\mathrm{wr}(\bbeta)}\H(\bbeta^{-1}).\] We will prove that 
\begin{equation}\label{eq:varpi}
[\T_e]\H(\Delta^2)\mathrm{x}=q^N[\T_e]\varpi(\mathrm{x})
\end{equation} for all $\mathrm{x}\in \H_W$. It suffices to prove this when $\mathrm{x}$ belongs to the basis $\{\T_w^{-1}\}_{w\in W}$. 

Fix $w\in W$, and let $\mathrm{x}=\T_w^{-1}$. Let $s_1\cdots s_k$ be a reduced $S$-word for $w_\circ w^{-1}$, and let $s_{k+1}\cdots s_N$ be a reduced $S$-word for $w$. We have \[\H(\Delta^2)\mathrm{x}=\T_{w_\circ}\T_{w_\circ}\T_w^{-1}=\T_{w_\circ}(\T_{s_1}\cdots\T_{s_N})\T_{s_N}^{-1}\cdots\T_{s_{k+1}}^{-1}=\T_{w_\circ}\T_{s_1}\cdots\T_{s_k}.\] If $w\neq e$, then $k<\ell(w_\circ)$, so $\T_e$ cannot appear in the standard-basis expansion of $\T_{w_\circ}\T_{s_1}\cdots\T_{s_k}$. In this case, we have $[\T_e]\H(\Delta^2)\mathrm{x}=0$, and $[\T_e]\varpi(\mathrm{x})=[\T_e](-q)^{-\ell(w)}\T_w=0$, so \eqref{eq:varpi} holds in this case. Now assume $w=e$. Then $s_1\cdots s_k$ is a reduced word for $w_\circ$, so the coefficient of $\T_e$ in $\T_{w_\circ}\T_{s_1}\cdots\T_{s_k}$ counts the walk in the weak order from $w_\circ$ to $e$ with steps obtained by multiplying by $s_1,\ldots,s_k$, with each step contributing a factor of $q$. Thus, $[\T_e]\H(\Delta^2)\mathrm{x}=q^N$; since $\mathrm{x}=\T_e$, the identity in \eqref{eq:varpi} holds in this case as well. 

Now take $\mathrm{x}=\H(\bbeta)$ in \eqref{eq:varpi} to find that 
\[[\T_e]\H(\Delta^2\bbeta)=q^N[\T_e]\varpi(\H(\beta))=q^N[\T_e](-q)^{\mathrm{wr}(\bbeta)}\H(\bbeta^{-1}).\] This simplifies to the desired identity. 
\end{proof}

\begin{lemma}\label{lem:coeff-identity}
For $\mathrm{x}\in\H_W$ and $w\in W$, we have 
\[
 [\T_e](\mathrm{x} \T_w) =[\T_e](\T_w \mathrm{x})= q^{\ell(w)} [\T_{w^{-1}}]\, \mathrm{x}.
\]
\end{lemma}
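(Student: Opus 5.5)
By linearity in $\mathrm{x}$, it suffices to prove the identity when $\mathrm{x}$ is a standard basis element $\T_v$. We then need to show
\[
[\T_e](\T_v\T_w)=[\T_e](\T_w\T_v)=q^{\ell(w)}\delta_{v,w^{-1}}.
\]
The right-hand side really is $q^{\ell(w)}[\T_{w^{-1}}]\T_v$, since $[\T_{w^{-1}}]\T_v=\delta_{v,w^{-1}}$.

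The equality $[\T_e](\mathrm{x}\T_w)=[\T_e](\T_w\mathrm{x})$ follows at once from the trace property of $\mathrm{x}\mapsto[\T_e]\mathrm{x}$ recalled just above. So only the first product needs to be computed.

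To compute $[\T_e](\T_v\T_w)$, we induct on $\ell(w)$. Write $w=s w'$ with $\ell(w')=\ell(w)-1$, so that $\T_w=\T_s\T_{w'}$ and $\T_v\T_w=(\T_v\T_s)\T_{w'}$. There are two cases, using the right-handed version of the multiplication rule.

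If $\ell(vs)>\ell(v)$, then $\T_v\T_s=\T_{vs}$. By induction the coefficient of $\T_e$ is $q^{\ell(w')}\delta_{vs,w'^{-1}}$. Equality $vs=w'^{-1}$ would give $v=w'^{-1}s=w^{-1}$, but then $\ell(vs)=\ell(w'^{-1})<\ell(v)$, a contradiction. So this case contributes $0$. This matches the target, because $v=w^{-1}$ forces $\ell(vs)<\ell(v)$, so we are never in this case when $v=w^{-1}$.

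If $\ell(vs)<\ell(v)$, then $\T_v\T_s=(q-1)\T_v+q\T_{vs}$. By induction the $\T_e$-coefficient of $\T_v\T_w$ is
\[
(q-1)q^{\ell(w')}\delta_{v,w'^{-1}}+q\cdot q^{\ell(w')}\delta_{vs,w'^{-1}}.
\]
In the first term, $v=w'^{-1}$ would give $\ell(vs)=\ell(w'^{-1}s)=\ell((sw')^{-1})=\ell(w)>\ell(v)$, contradicting $\ell(vs)<\ell(v)$. So the first term vanishes. The second term equals $q^{\ell(w)}\delta_{v,w^{-1}}$, as required.

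The base case $w=e$ is trivial.

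No step is a real obstacle. The only care needed is checking, in both cases, that the length conditions exclude the spurious Kronecker deltas.
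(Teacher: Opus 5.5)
Your proof is correct and is essentially the paper's argument: induction on $\ell(w)$, splitting off one simple reflection, applying the quadratic Hecke multiplication rule, and using length comparisons to rule out the spurious terms. The differences are cosmetic. You first reduce to basis elements $\mathrm{x}=\T_v$ and absorb $s$ on the right of $\T_v$, whereas the paper keeps $\mathrm{x}$ general, writes $w=vs$, and lets $\T_s$ act on the left of $\mathrm{x}$. As a small bonus, your basis-level formula gives the symmetry $[\T_e](\T_v\T_w)=[\T_e](\T_w\T_v)$ directly, without appealing to the trace property.
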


\begin{proof}
The first equality is just the trace property. 
We prove the second equality by induction on $\ell(w)$. The case $w=e$ is immediate, so assume $\ell(w)\geq 1$. Write $w=vs$, where
$\ell(w)=\ell(v)+1$. Then
\[
[\T_e](\T_w\mathrm{x})=[\T_e](\T_v\T_s\mathrm{x})=q^{\ell(v)}[\T_{v^{-1}}](\T_s\mathrm{x}).
\]
Since $\ell(sv^{-1})=\ell(v^{-1})+1$, the only term of $\mathrm{x}$ that can contribute to the coefficient of
$\T_{v^{-1}}$ in $\T_s\mathrm{x}$ is $\T_{sv^{-1}}$, and it contributes with coefficient $q$. Hence, $[\T_{v^{-1}}](\T_s\mathrm{x})=q[\T_{sv^{-1}}]\,\mathrm{x}=q[\T_{w^{-1}}]\,\mathrm{x}$.
This yields the desired identity. 
\end{proof}

\subsection{Type~A} 
We will illustrate several concepts using examples in type~$A$. We write $\mathfrak{S}_n$ for the symmetric group of permutations of the set $[n]:=\{1,\ldots,n\}$, which is the Coxeter group of type $A_{n-1}$. In this setting, the simple reflections are $s_1,\ldots,s_{n-1}$, where we let $s_i$ denote the transposition $(i\,\,i+1)$.  
When $W=\mathfrak{S}_n$, the braid group $\B_W$ and the pure braid group $\P_W$ are the usual $n$-strand braid group and pure braid group, which we denote by $\B_n$ and $\P_n$, respectively. We sometimes denote the transposition $(ij)$ by $t_{(ij)}$ so that we can write the $c$-dual lift as $\t_{(ij)}$ and the square of the $c$-dual lift as $\tt_{(ij)}$. 

\section{Dual Lifts and the Full Twist} \label{sec:pure}
The purpose of this section is to develop explicit factorizations of certain pure braids in terms of generators that are compatible with noncrossing combinatorics. More precisely, after fixing a standard Coxeter element $c$, we want to rewrite pure Artin braids as products of the pure braid generators  coming from the $c$-dual lifts of reflections. Such factorizations are the input needed later to turn Hecke coefficients into subword counts.

A natural starting point is the Reidemeister--Schreier generating set for $\P_W\subseteq \B_W$ obtained by using the Artin lifts $\B(w)$ for $w\in W$ as coset representatives. This procedure produces generators indexed by cover relations in weak order, namely braids of the form $(\B(u')\B(u)^{-1})^2$ with $u\lessdot u'$. This generating set is redundant. In \cref{sec:shards}, we show that the braid $\B(u')\B(u)^{-1}$ depends only on the shard label of the cover relation $u\lessdot u'$. 

We then identify the noncrossing part of this shard picture with the usual dual braid group. In \cref{sec:noncrossing_shards}, we recall the $c$-dual lifts of reflections and show how to express them in the standard Artin generators using the $c$-sorting word for $w_\circ$. The resulting pure braids $\tt=\t^2$ are precisely the pure shard generators attached to the $c$-noncrossing shards. We also extend this comparison from individual reflections to noncrossing partitions, leading in \cref{sec:Mikado} to a uniform proof that the dual lift of a noncrossing partition is a Mikado braid. 

The main factorization result of the section concerns the pure braids $\B(w)\B(w^{-1})$ for $w\in W$.
In \cref{sec:Pure_from_Coxeter}, we prove that every such braid belongs to the dual pure braid monoid, and we give an explicit factorization indexed by the inversions of $w$. The full twist comes from the special case where $w=w_\circ$; in \cref{cor:wo}, this yields the factorization of $\Delta^2$ as the product of the pure dual generators associated to the inversion sequence of the $c$-sorting word for $w_\circ$. These factorizations will be used throughout \cref{part:applications}, especially for the Fuss--Catalan, Fuss--Dogolon, parking, and knot-theoretic applications. In later sections, when more complicated pure braids arise, we will either guess similarly efficient factorizations or fall back on Reidemeister--Schreier rewriting.

\subsection{Shards and Braids}\label{sec:shards}
We are interested in braids of the form $\B(u')\B(u)^{-1}$, where $u\lessdot u'$ in the weak order. A primary motivation for studying these elements comes from the Reidemeister--Schreier method applied to $\P_W \subset \B_W$ with coset representatives given by the Artin lifts $\B(w)$ for $w \in W$. This method yields that $\P_W$ can be generated by $\{(\B(u')\B(u)^{-1})^2:u,u'\in W,\, u\lessdot u'\}$~\cite{salvetti1987topology}, although this generating set is redundant. Our first result explains exactly how redundant this generating set is, recovering a theorem we proved in~\cite[Theorem 1.1]{defant2022pop}. Namely, we show that the element $\B(u')\B(u)^{-1} \in \B_W$ depends only on the shard label $j_{u,u'}$.  The algebraic proof we give here is rather simpler than our earlier geometric proof. 
\begin{theorem}\label{thm:shards}
Let $u\lessdot u'$ and $v\lessdot v'$ be cover relations in the weak order on $W$. If $j_{u,u'}=j_{v,v'}$, then 
\[\B(u')\B(u)^{-1}=\B(v')\B(v)^{-1}.\]
\end{theorem}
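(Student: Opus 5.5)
The plan is to reduce to the elementary move that generates the equivalence relation ``same shard label'' on cover relations. We use a standard fact about shards in the weak order, due to Reading: two cover relations $u\lessdot u'$ and $v\lessdot v'$ have the same shard label exactly when they are connected by a sequence of elementary steps. Each step happens inside a rank-two interval $[x\wedge y,\, x\vee y]$ of the weak order, which is a polygon $x\vee y = x\cdot w_\circ(\{s,s'\})$ with $x$ the bottom. Within such a polygon, the only cover relations that share shard labels are ``opposite'' ones. These are the bottom edge on one side and the top edge on the other side, of the form $x\lessdot xs$ and $x\,w_\circ(\{s,s'\})s''\lessdot x\,w_\circ(\{s,s'\})$, where $w_\circ(\{s,s'\})=s''\cdots s$ is chosen so that the last letters match appropriately. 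Equivalently, the edges are the two edges whose labels are the same reflection in the rank-two parabolic, and one of them is at the bottom. Forcing (shard labels propagating) goes exactly through such pairs. So we check the identity one polygon at a time and chain the steps together.

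First we normalize the braid. If $u\lessdot u'=us$, then $\B(u')=\B(u)\s$, so
\[
\B(u')\B(u)^{-1}=\B(u)\,\s\,\B(u)^{-1}.
\]
This is a conjugate of a simple Artin generator. Now take a polygon with bottom $x$ and simple reflections $s,s'$. Write $\mathbf{d}=\B(w_\circ(\{s,s'\}))$, with $m=m_{ss'}$ letters. The bottom edge $x\lessdot xs$ yields $\B(x)\s\B(x)^{-1}$. The opposite edge is the top edge $y\lessdot x\,w_\circ(\{s,s'\})$ where $w_\circ(\{s,s'\})=y'\,a$ with $a\in\{s,s'\}$ the last letter. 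Here $y=xy'$, and which $a$ occurs is determined by which edge carries the same reflection. Since $\B(x\,w_\circ(\{s,s'\}))=\B(x)\mathbf{d}$ and $\B(xy')=\B(x)\B(y')$, that edge yields $\B(x)\,\B(y')\,\mathbf a\,\B(y')^{-1}\B(x)^{-1}$.

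So it suffices to prove the rank-two identity $\B(y')\,\mathbf a\,\B(y')^{-1}=\s$ in the dihedral Artin group. Here $y'\mathbf{}$ is the alternating word of length $m-1$ with $y'a\,=\,w_\circ$ and $y'a y'^{-1}=s$ in $W$. This follows from the braid relation: $\mathbf d=\B(y')\mathbf a$, and also $\mathbf d=\s\,\B(y')$ by reading the other reduced word of $w_\circ(\{s,s'\})$. The second word starts with $s$ precisely when $y'ay'^{-1}=s$, since conjugation by $w_\circ$ in dihedral type sends $a$ to $s$. Hence $\B(y')\mathbf a=\s\B(y')$, which is exactly what we need.

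The main obstacle is the combinatorial input: that equality of shard labels $j_{u,u'}=j_{v,v'}$ is generated by these polygon moves. Reading's theory of shards gives this for the geometric shards, where shards are obtained by cutting hyperplanes along rank-two subarrangements. But we must match the lattice-theoretic labels $j_{x,y}$ used here. I would handle this by an induction showing that the label $j_{u,u'}$ can be moved down to the join-irreducible edge $j_*\lessdot j$. In a semidistributive lattice, $j_{u,u'}=j$ means that $u\wedge j=j_*$ and $u'=u\vee j$. Then I would induct on $\ell(u)-\ell(j_*)$, descending through polygons: pick a lower cover $x$ of $u$ with $x\geq j_*$ and $x\vee j\neq u$. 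The polygon above $x\wedge$ (the relevant elements) carries the edge to a lower one with the same label. Both edges of a given label connect to the canonical edge $j_*\lessdot j$, and by the rank-two computation each step preserves $\B(u')\B(u)^{-1}$.
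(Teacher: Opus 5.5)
Your rank-two computation is correct. In a polygon with bottom $x$, the bottom edge $x\lessdot xs$ and the opposite top edge both give $\B(x)\,\s\,\B(x)^{-1}$, because $\B(y')\mathbf{a}=\s\,\B(y')$ in the dihedral Artin group.

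The gap is the step you flag as the main obstacle: showing that every cover $u\lessdot u'$ with label $j$ is connected to $j_*\lessdot j$ by opposite-edge moves. Quoting Reading's geometric shards would require identifying them with the lattice labels $j_{x,y}$, which you do not do. The induction you sketch also does not work as written.
\begin{itemize}
\item Your condition $x\vee j\neq u$ is vacuous: if $x\vee j\le u$, then $j\le u$, contradicting $u\vee j=u'$.
\item A lower cover $x$ of $u$ does not give a polygon in which $u\lessdot u'$ is a top edge. Writing $u=xs'$, the chain $x\lessdot u\lessdot u'$ lies in the coset polygon of $xW_{\{s',s\}}$. There $u\lessdot u'$ can be a side edge, and a side edge shares its label with no other edge of that polygon.
\end{itemize}
So nothing in your sketch actually moves the edge down.

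The repair is to descend from the top, by induction on $\ell(u')$.
\begin{itemize}
\item If $u'$ is join-irreducible, then $u$ is its only lower cover, so every $z<u'$ satisfies $z\le u$. Hence $j_{u,u'}=u'$, and $u\lessdot u'$ is the edge $j_*\lessdot j$.
\item Otherwise $u'$ has a second lower cover $u''$, and $[u\wedge u'',u']$ is a polygon in which $u\lessdot u'$ is a top edge. Let $y\lessdot b$ be the bottom edge of the maximal chain through $u''$.
\end{itemize}
Semidistributivity alone shows these two edges share a label.
\begin{itemize}
\item Since $b\not\le u$ and $y\le u$, we get $j_{y,b}\not\le u$. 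Hence $j_{y,b}\vee u=u'$, so $j_{u,u'}\le j_{y,b}$, because $j_{u,u'}$ is the minimum of $\{z:z\vee u=u'\}$.
\item Then $y\le j_{u,u'}\vee y\le b$. Also $j_{u,u'}\not\le y$, since $y\le u$. These force $j_{u,u'}\vee y=b$, so $j_{y,b}\le j_{u,u'}$.
\end{itemize}
Since $\ell(b)<\ell(u')$, the induction together with your rank-two identity completes the argument.

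For comparison, the paper avoids polygons entirely. From $j\le u'$ it gets $u'=jz$ with $\ell(u')=\ell(j)+\ell(z)$. The reflection $usu^{-1}$ of the edge equals the cover reflection of $j$, so also $u=j_*z$ with lengths adding. This gives $\B(u')\B(u)^{-1}=\B(j)\B(j_*)^{-1}$ in one line.
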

\begin{proof}
Let $j=j_{u,u'}=j_{v,v'}$, and let $j_*$ be the unique element covered by $j$. We will prove that $\B(u')\B(u)^{-1}=\B(j)\B(j_*)^{-1}$. Since the same argument shows that $\B(v')\B(v)^{-1}=\B(j)\B(j_*)^{-1}$, this will complete the proof. 

Let us write $u'=us$ and $j=j_*s'$, where $s,s'\in S$. Let $t=usu^{-1}$ and $t'=j_*s'j_*^{-1}$, and let $z=j^{-1}u'$. It is known that $t=t'$. Indeed, this is because the (polyhedral) shard corresponding to $j$ is contained in the hyperplanes $H_t$ and $H_{t'}$, and each shard is contained in a unique hyperplane from $\mathcal H_W$. Hence, $u=tu'=tjz=j_*z$. 

By the definition of the shard label $j=j_{u,u'}$, we have $j\vee u=u'$, so $j\leq u'$. This implies that $\B(u')=\B(j)\B(z)$. In addition, 
\[\ell(u)=\ell(u')-1=\ell(j)+\ell(z)-1=\ell(j_*)+\ell(z),\] so $\B(u)=\B(j_*)\B(z)$. This shows that 
\begin{align*}
\B(u')\B(u)^{-1}&=(\B(j)\B(z))(\B(j_*)\B(z))^{-1} =\B(j)\B(j_*)^{-1},
\end{align*}
as desired. 
\end{proof} 

\subsection{Noncrossing Shards and Dual Braids}\label{sec:noncrossing_shards}
We are interested in particular lifts of reflections that depend on a fixed standard Coxeter element $c$ and feature prominently in dual Garside theory. These lifts come from the following theorem due to Bessis.  

\begin{theorem}[{\cite{bessis2003dual}}]\label{thm:Bessis_Hurwitz} 
Let $c$ be a standard Coxeter element of $W$. Let $(s_1,\ldots,s_r)$ be a reduced word for $c$. For each $t\in T$, there exists a unique lift $\t$ of $t$ in $\B_W$ that appears in a word in the Hurwitz orbit of the ${\bf S}$-word $(\s_1,\ldots,\s_r)$. 
\end{theorem}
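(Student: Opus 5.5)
The plan is to compare the Hurwitz action on $\mathbf S$-words with the Hurwitz action on reduced $T$-words for $c$ in $W$, and then to use the dual Garside structure to get uniqueness.

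\emph{Existence.} The quotient map $\B_W\twoheadrightarrow W$ is a homomorphism, so it intertwines the Hurwitz operators $\Hur_i^{\pm1}$ on $\B_W^r$ and on $W^r$. The $T$-word $(s_1,\ldots,s_r)$ is reduced for $c$, and Hurwitz moves preserve both the product and the length. So everything in its Hurwitz orbit in $W^r$ is a reduced $T$-word for $c$. Conversely, I will invoke the Hurwitz transitivity theorem: the Hurwitz action is transitive on reduced $T$-words for a Coxeter element. This is proved by induction on rank, using that the last letter can be moved to any reflection below $c$ and that $t^{-1}c$ is a Coxeter element of a parabolic subgroup. Next I need every $t\in T$ to occur in some reduced $T$-word for $c$, i.e.\ $t\leq_T c$. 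This holds because $\ell_T(tc)=r-1$, since $tc$ fixes a nonzero vector. Hence some word $(t,\ast,\ldots,\ast)$ lies in the $W$-orbit. Lifting the same sequence of Hurwitz moves to $(\s_1,\ldots,\s_r)$ produces a braid tuple whose first entry is a lift $\t$ of $t$.

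\emph{Uniqueness.} This is the substantive part, and I would route it through the dual braid monoid. Let $\mathbf M$ be the monoid generated by formal symbols $\{\t:t\in T\}$. Its relations are $\t\t'=\t'\t''$ whenever $tt'=t't''$ with both sides reduced $T$-words for an element of $\NC(W,c)$. I would then establish three facts in order:
\begin{enumerate}
\item Because $\NC(W,c)$ is a lattice, $\mathbf M$ is a Garside monoid with Garside element $\mathbf c$, and its simples are in bijection with $\NC(W,c)$. In particular, the atoms dividing $\mathbf c$ are in bijection with $T$.
\item The group of fractions of $\mathbf M$ is isomorphic to $\B_W$, via a map sending the symbols attached to $s_1,\ldots,s_r$ to $\s_1,\ldots,\s_r$.
\item The Hurwitz orbit of $(\s_1,\ldots,\s_r)$ is exactly the set of factorizations of $\mathbf c$ into atoms of $\mathbf M$. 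By the defining relations, each Hurwitz move turns one atom factorization into another.
\end{enumerate}
Granting these, every entry of every tuple in the orbit is an atom $\t$. Two atoms with the same image $t$ in $W$ are equal, because the map from simples to $\NC(W,c)$ is a bijection. This gives uniqueness.

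\emph{Main obstacle.} I expect step (2), identifying the interval group with $\B_W$, to be the hardest step; the lattice property in (1) is a prerequisite for it. For (2), I would first show that the relations of $\mathbf M$ imply the Artin braid relations among the symbols attached to $s_1,\ldots,s_r$, working inside rank-two parabolic subintervals. Then I would show that every atom is a conjugate of one of these symbols by a product of them, by following Hurwitz paths, so the Artin generators generate. Together this gives mutually inverse homomorphisms. The lattice property itself I would take from the known (case-based, or Brady--Watt uniform) proofs rather than reprove here.
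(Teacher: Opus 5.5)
The paper does not prove this statement; it quotes it from Bessis, and in the remark after the statement it takes for granted his identification of the dual and Artin presentations. Your existence argument is fine. It uses three inputs: the projection to $W$ intertwines Hurwitz moves, Hurwitz transitivity holds on reduced $T$-words for $c$, and every reflection lies below $c$. You should simply cite transitivity, because your one-line sketch of it assumes that every reflection already occurs in the orbit, which is what you then deduce from it.

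\textbf{The gap is in step (2), and uniqueness depends entirely on it.} Write $\mathbf x_t$ for the atom of $\mathbf M$ indexed by $t$. You propose to prove two facts: the Artin relations hold among $\mathbf x_{s_1},\ldots,\mathbf x_{s_r}$, and these atoms generate $G(\mathbf M)$. Together these give only a \emph{surjection} $\psi\colon\B_W\twoheadrightarrow G(\mathbf M)$ with $\psi(\s_i)=\mathbf x_{s_i}$. They do not give an inverse. An inverse is a homomorphism $G(\mathbf M)\to\B_W$, which means choosing a lift of \emph{every} reflection in $\B_W$ so that the dual relations hold in $\B_W$. Suppose you define the lift of $t$ by following a Hurwitz path from $(\s_1,\ldots,\s_r)$ to a tuple with an entry over $t$. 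That is well defined only if the entry does not depend on the path, and that independence is exactly the uniqueness you are trying to prove. So the argument is circular at its crux. The surjection alone does not help either: two entries of the braid orbit lying over $t$ both map under $\psi$ to $\mathbf x_t$, and you cannot conclude they are equal without injectivity of $\psi$.

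All of the difficulty sits in this missing direction. Suppose you had any monoid homomorphism $\phi\colon\mathbf M\to\B_W$ with $\phi(\mathbf x_{s_i})=\s_i$. Then uniqueness follows immediately. Consider a tuple $(\phi(\mathbf x_{u_1}),\ldots,\phi(\mathbf x_{u_r}))$ with $(u_1,\ldots,u_r)$ a reduced $T$-word for $c$. The relations $\mathbf x_u\mathbf x_v=\mathbf x_{uvu}\mathbf x_u$ and $\mathbf x_u\mathbf x_v=\mathbf x_v\mathbf x_{vuv}$ show that $\Hur_i^{\pm1}$ sends such a tuple to another tuple of the same form. Starting from $(\s_1,\ldots,\s_r)$, every entry of every tuple in the orbit is therefore $\phi(\mathbf x_t)$, where $t$ is its image in $W$. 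This deduction uses neither the lattice property nor the Garside structure. Conversely, the theorem together with Hurwitz transitivity produces such a $\phi$. So the statement is equivalent to saying that the dual relations are consequences of the Artin relations. That is the nontrivial direction of Bessis's identification $G(\mathbf M)\cong\B_W$.

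You have two ways to close the gap. You can cite that identification, in which case your deduction of uniqueness in steps (1)--(3) is correct and matches what the paper does by citing Bessis. Or you can give a genuine argument for this direction, for instance by taking the explicit braids of Proposition~\ref{prop:dual_artin} as the definition of the lifts and verifying the dual relations among them directly in $\B_W$. That verification is substantial work.
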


We refer to $\t$ as the \dfn{$c$-dual lift} of $t$. Note that $\t$ depends on $c$; when we wish to stress this dependence, we will denote $\t$ by $\B_c(t)$. We are also interested in the pure braid $\tt=\t^2$. When we wish to stress the dependence on $c$, we denote $\tt$ by $\P_c(t)$. Thus,
$\P_c(t)\coloneq\B_c(t)^2$.
Bessis's \dfn{dual presentation} of the braid group is  
\[\B_{W}=\langle \R:\,[\bc]\rangle,\] 
where $\R=\R_c=\{\t: t\in T\}$ and $[\bc]$ is the relation that sets equal all elements of the form $\t-1\cdots\t_r$ such that $(t_1,\ldots,t_r)$ is a reduced $\RR$-word for $c$. 

\begin{remark}
According to D.~Bessis~\cite[Section 3]{bessis2003dual}, one should view the $c$-dual presentation as a presentation for a different group called the \emph{dual braid group}. Then the braid group and the $c$-dual braid group are fundamental groups of  $V^{\mathrm{reg}}/W$ using different base points. From this perspective, the two groups are technically different, but they are isomorphic to each other since $V^{\mathrm{reg}}/W$ is connected. For our purposes, we find it simpler to view them as the same group. Note that we can identify the groups coming from the Artin presentation and the $c$-dual presentation by first identifying the lifts of the simple reflections.  
\end{remark} 

The \dfn{dual braid monoid} is the monoid $\B_{W,c}^+$ generated by the set $\R={\bf T}_c=\{\t:t\in T\}$. The \dfn{dual pure braid monoid} is the monoid $\P_{W,c}^+$ generated by $\mathbb T=\mathbb{T}_c=\{\tt:t\in T\}$. 

The $c$-dual lifts can also be obtained naturally from the $c$-sorting word for $w_\circ$; this is well known to experts, but we have not seen an explicit statement and proof in the literature (the closest may be~\cite[Proposition~3.13]{digne2017dual}, which proves a similar result for the $h$th power of a reduced word for $c$). 

\begin{proposition}\label{prop:dual_artin}
Let ${\sf w}_\circ(c)=(s_1,\ldots,s_N)$, and write $t_i=s_1\cdots s_{i-1}s_is_{i-1}\cdots s_1$ so that the inversion sequence of ${\sf w}_\circ(c)$ is ${\inv({\sf w}_\circ(c))=(t_1,\ldots,t_N)}$. For each $i\in[N]$, we have \[\t_i=\B_c(t_i)=\s_1\cdots\s_{i-1}\s_i\s_{i-1}^{-1}\cdots\s_1^{-1}.\]    
\end{proposition}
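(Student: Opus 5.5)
The plan is to exhibit, for every $i$, a word in the Hurwitz orbit of $(\s'_1,\ldots,\s'_r)$ that contains the braid $\mathbf{b}_i:=\s_1\cdots\s_{i-1}\s_i\s_{i-1}^{-1}\cdots\s_1^{-1}$; here $(\s'_1,\ldots,\s'_r)$ is the Artin lift of the fixed reduced word ${\sf c}$ for $c$. Since $\mathbf b_i$ is visibly a lift of $t_i$, the uniqueness in \Cref{thm:Bessis_Hurwitz} then gives $\mathbf b_i=\t_i$. The natural vehicle is the infinite word ${\sf c}^\infty$, of which ${\sf w}_\circ(c)$ is a subword. I would first prove the analogous statement for the prefix of ${\sf c}^h$: if ${\sf c}^\infty=(s'_1,s'_2,\ldots)$, then each conjugate
\[
\mathbf a_k:=\s'_1\cdots\s'_{k-1}\s'_k(\s'_1\cdots\s'_{k-1})^{-1}
\]
is a $c$-dual lift. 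Afterwards I would transfer this to ${\sf w}_\circ(c)$.

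\textbf{Step 1 (the ${\sf c}^\infty$ version).} Write $\mathbf c=\s'_1\cdots\s'_r$. Applying $\Hur_{r-1}^{-1}\cdots\Hur_1^{-1}$ to $(\s'_1,\ldots,\s'_r)$ moves the first letter to the end, conjugated by the product of the others, and yields $(\s'_2,\ldots,\s'_r,\mathbf c\s'_1\mathbf c^{-1})$. (One should check the order of conjugation against the conventions for $\Hur_i$.) Iterating this rotation, the Hurwitz orbit contains the words
\[
(\mathbf a_{k+1},\ldots,\mathbf a_{k+r})
\]
for every $k\ge 0$, because $\mathbf a_{k+r}=\mathbf c\,\mathbf a_k\,\mathbf c^{-1}$. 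This is the standard ``rotation by conjugation by $\mathbf c$'' argument. Hence every $\mathbf a_k$ is a $c$-dual lift.

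\textbf{Step 2 (passing to the sorting word).} The word ${\sf w}_\circ(c)$ is obtained from a prefix of ${\sf c}^\infty$ by deleting letters. The letters at the $m$th position of ${\sf w}_\circ(c)$ and at the corresponding position $k$ of ${\sf c}^\infty$ coincide, so it suffices to show $\mathbf b_m=\mathbf a_k$, i.e.\ that the deleted letters do not change the conjugating braid. By the structure of $c$-sorting words, ${\sf w}_\circ(c)={\sf c}_{K_1}{\sf c}_{K_2}\cdots$ with $K_1\supseteq K_2\supseteq\cdots$. A letter $s'$ is skipped only after it has been dropped from the support forever, so every later letter $s'_j$ used in ${\sf w}_\circ(c)$ lies in $K$. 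I then want to show that, modulo the letters kept, the deleted letters commute past all subsequent kept letters. Then $\mathbf a_k=\mathbf u\,\s'_k\,\mathbf u^{-1}$, where $\mathbf u$ is the product of the kept letters before position $k$ times deleted letters commuting with $\s'_k$, and these deleted letters cancel.

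This commutation claim is where I expect the main obstacle, since deleted letters need not commute with later kept letters in general. If it fails, I would instead use the geometric characterization: the $c$-dual lift of $t$ is determined by $t$ together with the fact that it is a positive half-twist around $H_t$ along a path in the ``positive'' region determined by $c$. I would also use the commutation-equivalence of ${\sf c}^h$ with ${\sf w}_\circ(c){\sf w}_\circ(w_\circ c w_\circ)$ from the footnote to \eqref{eq:full_twist}. Under that equivalence, ${\sf w}_\circ(c)$ \emph{is} (up to commutations) a prefix of ${\sf c}^h$. Commutation moves on a word $(s_1,\ldots,s_M)$ preserve the conjugates $\s_1\cdots\s_{i-1}\s_i(\s_1\cdots\s_{i-1})^{-1}$, merely permuting them, because swapping commuting $\s_i,\s_{i+1}$ fixes each one. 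So this second route avoids Step 2's commutation analysis entirely. I would therefore adopt it as the main argument: prefix of a commutation class of ${\sf c}^h$, then Step 1, then uniqueness from \Cref{thm:Bessis_Hurwitz}.
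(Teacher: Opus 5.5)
Your final route (Step~1, then the commutation equivalence ${\sf c}^h\equiv{\sf w}_\circ(c)\,{\sf w}_\circ(w_\circ cw_\circ)$, then uniqueness in \cref{thm:Bessis_Hurwitz}) does prove the proposition. However, Step~1 as written has the orientation backwards and is missing its base case.

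With $\mathbf p_k=\s'_1\cdots\s'_k$ you have $\mathbf a_k=\mathbf p_k\mathbf p_{k-1}^{-1}$. So products of the $\mathbf a_k$ telescope only in \emph{decreasing} order: $\mathbf a_k\cdots\mathbf a_{j+1}=\mathbf p_k\mathbf p_j^{-1}$. Each $\mathbf a_k$ maps to an involution and $\mathbf a_{k+r}\cdots\mathbf a_{k+1}=\mathbf c$. Hence $\mathbf a_{k+1}\cdots\mathbf a_{k+r}$ maps to $c^{-1}\neq c$, and no window $(\mathbf a_{k+1},\ldots,\mathbf a_{k+r})$ lies in the Hurwitz orbit. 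For example, in $A_2$ the orbit of $(\s_1,\s_2)$ is $\{(\s_1,\s_2),(\s_1\s_2\s_1^{-1},\s_1),(\s_2,\s_1\s_2\s_1^{-1})\}$. It contains $(\mathbf a_2,\mathbf a_1)$ but not $(\mathbf a_1,\mathbf a_2)$.

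Likewise, the composite you describe yields $(\s'_2,\ldots,\s'_r,\mathbf c^{-1}\s'_1\mathbf c)$. Rotations started at $(\s'_1,\ldots,\s'_r)$ only ever produce $\mathbf c$-conjugates of the $\s'_j$; in $A_3$ with $c=s_1s_2s_3$ they never reach $\mathbf a_2=\s_1\s_2\s_1^{-1}$. So rotation alone cannot supply a first window.

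The correct statement is that the orbit contains the reversed windows $(\mathbf a_{k+r},\ldots,\mathbf a_{k+1})$ for all $k\geq 0$.
\begin{itemize}
\item Base window: apply $\Hur_{k-1},\Hur_{k-2},\ldots,\Hur_1$ (in that order) to $(\mathbf a_{k-1},\ldots,\mathbf a_1,\s'_k,\ldots,\s'_r)$. The first $k-1$ entries multiply to $\mathbf p_{k-1}$, so this yields $(\mathbf a_k,\ldots,\mathbf a_1,\s'_{k+1},\ldots,\s'_r)$. Induct from $k=1$ up to $k=r$.
\item Advancing the window: apply $\Hur_{r-1},\ldots,\Hur_1$ in that order. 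This sends $(g_1,\ldots,g_r)$ with $g_1\cdots g_r=\mathbf c$ to $(\mathbf c g_r\mathbf c^{-1},g_1,\ldots,g_{r-1})$. Then use your identity $\mathbf a_{j+r}=\mathbf c\mathbf a_j\mathbf c^{-1}$.
\end{itemize}
Uniqueness in \cref{thm:Bessis_Hurwitz} only needs each $\mathbf a_k$ to occur in \emph{some} word of the orbit, so nothing downstream changes. Your commutation step is fine: swapping commuting letters in positions $i,i+1$ exchanges the $i$th and $(i+1)$st conjugates and fixes the others. It also makes the deletion analysis of Step~2 unnecessary.

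The paper argues differently. It inducts on $i$ simultaneously over all standard Coxeter elements. It uses the one-step rotation ${\sf w}_\circ(s_1cs_1)\equiv(s_2,\ldots,s_N,w_\circ s_1w_\circ)$ to get the claim at index $i-1$ for $\widetilde c=s_1cs_1$. It then conjugates the Hurwitz orbit of $(\s_2,\ldots,\s_r,\s_1)$ by $\s_1$, noting that $(\s_1\s_2\s_1^{-1},\ldots,\s_1\s_r\s_1^{-1},\s_1)$ is Hurwitz equivalent to $(\s_1,\ldots,\s_r)$.

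You keep $c$ fixed and trade that induction for two global inputs:
\begin{itemize}
\item the ${\sf c}^h$ statement, essentially \cite[Proposition~3.13]{digne2017dual}, which the paper cites as the nearest result in the literature;
\item the $h$-fold commutation equivalence from the footnote to \eqref{eq:full_twist}.
\end{itemize}
Your route thus exhibits the proposition as a direct corollary of the ${\sf c}^h$ result. As a bonus, it shows that the prefix conjugates along the second factor ${\sf w}_\circ(w_\circ cw_\circ)$ are also $c$-dual lifts. The paper's route is more self-contained: it needs only the one-step rotation property of sorting words and the equivariance of Hurwitz moves under simultaneous conjugation.
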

    
\begin{proof}
We proceed by induction on $i$, noting that the result is obvious when $i=1$. Now assume $i\geq 2$. Consider the standard Coxeter element $\widetilde c=s_1cs_1$. The word ${\sf w}_\circ(\widetilde c)$ is commutation equivalent to $(s_2,\ldots,s_N,s')$, where $s'=w_\circ s_1 w_\circ$. Moreover, the inversion sequence $\inv({\sf w}_\circ(\widetilde c))$ is commutation equivalent to $(u_1,\ldots,u_{N-1},s_1)$, where $u_j=s_1t_{j+1}s_1$. We know by induction that $\B_{\widetilde c}(u_{i-1})=\s_2\cdots \s_{i-1}\s_i\s_{i-1}^{-1}\cdots\s_2^{-1}$. By definition, this means that $\s_2\cdots \s_{i-1}\s_i\s_{i-1}^{-1}\cdots\s_2^{-1}$ appears in a word in the Hurwitz orbit of $(\s_2,\cdots,\s_r,\s_1)$. Each Hurwitz move commutes with the operation of simultaneously conjugating all entries in a tuple by $\s_1$. Therefore, $\s_1(\s_2\cdots \s_{i-1}\s_i\s_{i-1}^{-1}\cdots\s_2^{-1})\s_1^{-1}$ appears in a word in the Hurwitz orbit of $(\s_1\s_2\s_1^{-1},\cdots,\s_1\s_r\s_1^{-1},\s_1)$. Finally, 
\[(\Hur_1^{-1}\circ\Hur_2^{-1}\circ\cdots\circ\Hur_{r-1}^{-1})(\s_1\s_2\s_1^{-1},\cdots,\s_1\s_r\s_1^{-1},\s_1)=(\s_1,\ldots,\s_r),\] so $\s_1\s_2\cdots \s_{i-1}\s_i\s_{i-1}^{-1}\cdots\s_2^{-1}\s_1^{-1}$ appears in a word in the Hurwitz orbit of $(\s_1,\ldots,\s_r)$. 
\end{proof} 

Given a shard (i.e., join-irreducible element of the weak order) $j$, let 
\[\mathbb{\Sigma}_j=(\B(j)\B(j_*)^{-1})^2,\]where $j_*$ is the unique element covered by $j$ in the weak order. 
\cref{thm:shards,prop:dual_artin} imply that if we write \[\inv({\sf w}_\circ(c))=(t_1,\ldots,t_N)\quad\text{and}\quad\Sha({\sf w}_\circ(c))=(j_1,\ldots,j_N),\] then \[\mathbb \Sigma_{j_i}=\tt_i=\t_i^2.\]

For the next lemma, we need the \dfn{pop-stack operator} $\Pop\colon W\to W$ defined by \[\Pop(x)=x\,w_\circ(\Des(x)),\] where we recall that $w_\circ(\Des(x))$ is the long element of the parabolic subgroup generated by the descents of $x$. Equivalently, $\Pop(x)$ is the meet (in the weak order) of $x$ with the elements covered by $x$ \cite{defantPopCoxeter}. 

\begin{lemma}\label{lem:semidistrim}
Let $w\in W$ be $c$-sortable. Let $t$ be a cover reflection of $w$, and let $v=\Pop(w)$. There exists $s\in\Des(w)$ such that $j_{v,vs}$ is the $c$-noncrossing shard corresponding to the reflection $t$. Moreover, 
\[
    \t=\B_c(t)=\B(v)\,\s\,\B(v)^{-1}.
\]
\end{lemma}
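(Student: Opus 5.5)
The plan is to take $s$ to be the conjugate of the descent defining $t$ by the long element of the descent parabolic, and then reduce the braid identity to \cref{thm:shards} and \cref{prop:dual_artin}.

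\emph{Choice of $s$.} Write $J=\Des(w)$, so that $v=\Pop(w)=w\,w_\circ(J)$ and $\ell(w)=\ell(v)+\ell(w_\circ(J))$. Since $t$ is a cover reflection, $t=ws'w^{-1}$ for some $s'\in J$. I will take $s:=w_\circ(J)\,s'\,w_\circ(J)$. This lies in $J$ because conjugation by $w_\circ(J)$ permutes $J$. Then
\[
vsv^{-1}=w\,w_\circ(J)\,s\,w_\circ(J)\,w^{-1}=ws'w^{-1}=t .
\]
Moreover $vs\in vW_J$ and $\ell(vs)=\ell(v)+1$, because $v$ is the minimal element of the coset $vW_J$. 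Hence $v\lessdot vs\le w$, and the edge $v\lessdot vs$ crosses the hyperplane $H_t$. (The statement's phrase ``$s\in\Des(w)$'' is satisfied since $s\in J$.)

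\emph{Comparing shards.} The next step is to show that $j_{v,vs}=j_{ws',w}$. The right-hand side is $c$-noncrossing because $w$ is $c$-sortable, and it lies in $H_t$. I would argue this geometrically. Both edges $v\lessdot vs$ and $ws'\lessdot w$ correspond to facets in $H_t$ that contain the face $F$ of the arrangement on which the coset $vW_J$ is centered; equivalently, the interval $[v,w]$ is a copy of the weak order on $W_J$. A shard of $H_t$ is cut only by hyperplanes $H$ such that $H\cap H_t$ is codimension two and $H_t$ is not basic in the rank-two subarrangement through $H\cap H_t$.

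Hyperplanes not containing $F$ cannot separate the two facets, since both facets contain a neighborhood of a point of $F$ inside $H_t$. Now consider a hyperplane $H\supseteq F$. The rank-two subarrangement through $H\cap H_t$ then consists of reflections of the local parabolic $vW_Jv^{-1}$. Its base region, as seen from the global base region, is the one containing $v$. Since $H_t$ is a wall of $v$ (because $v\lessdot vs$), $H_t$ is basic in that subarrangement, so $H$ does not cut it. Therefore both facets lie in a single shard, and $j_{v,vs}=j_{ws',w}$ is the $c$-noncrossing shard corresponding to $t$. Alternatively, one could argue lattice-theoretically, using that the shard labeling is compatible with the interval $[v,w]\cong W_J$ together with forcing in semidistributive lattices. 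I expect making this step rigorous to be the main obstacle.

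\emph{Braid identity.} Write $\inv({\sf w}_\circ(c))=(t_1,\dots,t_N)$ and $x_i=s_1\cdots s_i$, with $t=t_i$. By \cref{prop:dual_artin},
\[
\t=\s_1\cdots\s_{i-1}\s_i\s_{i-1}^{-1}\cdots\s_1^{-1}=\B(x_i)\B(x_{i-1})^{-1},
\]
and the label $j_{x_{i-1},x_i}$ is exactly the $c$-noncrossing shard $j_i$ attached to $t$. Since $j_{v,vs}=j_i$, \cref{thm:shards} gives
\[
\B(vs)\B(v)^{-1}=\B(x_i)\B(x_{i-1})^{-1}=\t .
\]
Finally, $\B(vs)=\B(v)\s$ because $\ell(vs)=\ell(v)+1$. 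This yields $\t=\B(v)\,\s\,\B(v)^{-1}$.
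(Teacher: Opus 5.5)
Your argument is correct. Its final step, reading off $\t=\B(x_i)\B(x_{i-1})^{-1}$ from \cref{prop:dual_artin} and transporting it with \cref{thm:shards}, is exactly the paper's. The difference is in how you obtain $j_{v,vs}=j_{ws',w}$.

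The paper does not choose $s$ explicitly. It cites a general result on pop-stack sorting in semidistrim lattices \cite[Proposition~9.5]{defant2023semidistrim} to get some $s$ with $v\lessdot vs\le w$ and $j_{v,vs}=j_{ws',w}$. It then uses length-additivity of $w=v\,w_\circ(\Des(w))$ to conclude $s\in\Des(w)$. This is essentially the lattice-theoretic alternative you mention.

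You instead write down $s=w_\circ(J)s'w_\circ(J)$, which is in fact forced by $vsv^{-1}=t$. You then prove the shard equality directly from Reading's cutting rule, working only in the star of the face $F$ shared by the regions of $vW_J$. Every hyperplane through $F$ is $H_{vuv^{-1}}$ for a reflection $u\in W_J$, so minimality of $v$ in $vW_J$ means none of them separates the base region from $v$. Hence $H_t$, being a wall of $v$, is basic in every rank-two subarrangement through $F$.

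Your route is self-contained and shows exactly where minimality of $v$ enters. It does depend on the geometric model of shards, namely Reading's identification of $j_{x,y}$ with the shard containing the facet between $x$ and $y$. The paper's citation is shorter and works in any semidistrim lattice.

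One justification needs adjusting. The two facets do not contain a neighborhood in $H_t$ of a point of $F$: when $|J|\ge 2$, $F$ lies in their relative boundaries. What you actually need is the following. Two facets of $H_t$ lie in the same shard exactly when no cutting locus $X\subset H_t$ separates them, because cutting loci are codimension-one subspaces of $H_t$.

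Take a locus $X=H\cap H_t$. If $X\supseteq F$, then $H_t$ is basic at $X$, so $X$ is not a cutting locus. If $X\not\supseteq F$, then no hyperplane other than $H_t$ through $X$ contains $F$. Each facet is a face whose closure contains $F$, so both facets lie on the same open side of every such hyperplane as the relative interior of $F$. Hence $X$ does not separate them.
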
 

\begin{proof} 
Because $w$ is $c$-sortable, $j_{tw,w}$ is the $c$-noncrossing shard corresponding to $t$. It follows from \cite[Proposition~9.5]{defant2023semidistrim} that there is a simple reflection $s\in S$ such that $v\lessdot vs\leq w$ and $j_{v,vs}=j_{tw,w}$. Because $v\,w_\circ(\Des(w))=w$ and $\ell(w)=\ell(v)+\ell(w_\circ(\Des(w)))$, the simple reflection $s$ must be in $\Des(w)$. Finally, let $\inv({\sf w}_\circ(c))=(t_1,\ldots,t_N)$, and let $i$ be the index such that $t_i=t$. Let $x_{i-1}=s_1\cdots s_{i-1}$ and $x_i=s_1\cdots s_i=x_{i-1}s_i$. Since $j_{x_{i-1},x_i}=j_{v,vs}$, it follows from \cref{thm:shards,prop:dual_artin} that \[\B_c(t)=\B(x_i)\B(x_{i-1})^{-1} 
=\B(vs)\B(v)^{-1} = \B(v)\,\s\,\B(v)^{-1}. \qedhere\] 
\end{proof} 

The $c$-dual lifts of reflections extend to an embedding $\B_c$ of $\NC(W,c)$ into the (dual) braid group $\B_{W}$. For $\pi\in\NC(W,c)$, we define \[\B_c(\pi)=\B_c(t_1)\cdots\B_c(t_k),\] where $(t_1,\ldots,t_k)$ is a reduced $\RR$-word for $\pi$ (it follows from the relations in Bessis's dual presentation that the choice of reduced $T$-word does not matter).

\subsection{Mikado Braids}\label{sec:Mikado} 
A \dfn{Mikado braid} is a braid of the form $\bbeta_1\bbeta_2^{-1}$, where $\bbeta_1,\bbeta_2\in\B_W^+$ are positive braids~\cite[Theorem 5.8]{digne2017dual}. T.~Gobet proved that for every $\pi\in\NC(W,c)$, the braid $\B_c(\pi)$ is a Mikado braid.  His approach relied on some case-by-case computations in \cite[Lemma 4.6]{gobet2020dual} (see also~\cite{digne2017dual,baumeister2017simple,licata2017braid}). Our next theorem explains Gobet's result in a completely type-uniform manner. Recall Reading's bijection $\mathrm{nc}_c$ from \cref{thm:Reading_bijection}. 

\begin{theorem}\label{thm:mikado}
Let $c$ be a standard Coxeter element of $W$. Let $\pi\in\NC(W,c)$, and consider the $c$-sortable element $w_\pi=\mathrm{nc}_c^{-1}(\pi)\in W$. We have 
\[\B_c(\pi)=\B(w_\pi)\B(\pi^{-1}w_\pi)^{-1}.\] 
\end{theorem}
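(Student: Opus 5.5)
We argue by induction on $k=\ell_T(\pi)$, the number of cover reflections of $w=w_\pi$; when $k=0$ both sides are the identity. Let $D=\Des(w)$, $v=\Pop(w)$ and $w_D=w_\circ(D)$, so that $w=vw_D$ with $\ell(w)=\ell(v)+\ell(w_D)$. The cover reflections of $w$ are $t_s=wsw^{-1}$ for $s\in D$, and $\pi=\mathrm{nc}_c(w)$ is their product in the order of $\inv({\sf w}(c))$. The first step is to compute $\pi^{-1}w$. Since $t_s=v(w_Dsw_D)v^{-1}$, we get $\pi=v\,u\,v^{-1}$, where $u$ is a product of the simple reflections $w_Dsw_D$ ($s\in D$), each used once. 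So $u$ is a Coxeter element of the parabolic $W_D$ with $\ell(u)=|D|=k$. This gives
\[\pi^{-1}w=vu^{-1}v^{-1}vw_D=v\,u^{-1}w_D.\]
Writing $u^{-1}w_D=w_D\cdot(w_Du^{-1}w_D)$ shows that $\ell(u^{-1}w_D)=\ell(w_D)-k$, and hence $\B(w_D)=\B(u)\B(u^{-1}w_D)$.

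The second step converts each dual lift. By \Cref{lem:semidistrim}, each $\B_c(t_s)$ equals $\B(v)\,\s'\,\B(v)^{-1}$ for some $s'\in D$. Because $j_{v,vs'}$ must be the shard corresponding to $t_s=v(w_Dsw_D)v^{-1}$, the element $s'$ is forced to be $w_Dsw_D$. Multiplying these lifts in the $\inv({\sf w}(c))$ order gives
\[\B_c(\pi)=\B(v)\,\B(u)\,\B(v)^{-1},\]
where $\B(u)=\s'_1\cdots\s'_k$ is the Artin lift of the Coxeter element $u$ of $W_D$. This product of distinct simple generators is reduced, so it is indeed $\B(u)$.

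The final step is algebra. We have $\B(w)=\B(v)\B(w_D)=\B(v)\B(u)\B(u^{-1}w_D)$. We also need $\B(\pi^{-1}w)=\B(v)\B(u^{-1}w_D)$, which holds if the lengths add, i.e. $\ell(vu^{-1}w_D)=\ell(v)+\ell(u^{-1}w_D)$. This is true because $v$ is the minimal-length representative of its coset $vW_D$: $v=\Pop(w)$ has no descents in $D$. Then
\[\B(w)\B(\pi^{-1}w)^{-1}=\B(v)\B(u)\B(u^{-1}w_D)\B(u^{-1}w_D)^{-1}\B(v)^{-1}=\B(v)\B(u)\B(v)^{-1}=\B_c(\pi).\]
This closes the argument directly, so the induction is not actually needed.

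The main obstacle is the ordering claim in the second step. The product of the $\B_c(t_s)$ is taken in the $\inv({\sf w}(c))$ order. We need the corresponding order of the $s'$ to give a reduced word of the \emph{same} Coxeter element $u$ whose conjugate by $v$ is $\pi$. Since $\B_c$ is well defined on reduced $T$-words of $\pi$, it suffices to check that conjugating by $v$ transports the order of the $t_s$ to an order of the $s'$ multiplying to $u$. This is automatic, because the identity $\pi=vuv^{-1}$ was computed with the same ordering. What still needs checking is the identification $s'=w_Dsw_D$ from \Cref{lem:semidistrim}, which uses that shards lie in unique hyperplanes, as in the proof of \Cref{thm:shards}.
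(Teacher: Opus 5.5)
Your argument is correct and follows essentially the same route as the paper's proof. You write $w_\pi=v\,w_\circ(D)$ with $v=\Pop(w_\pi)$, conjugate the cover reflections by $v$ to get a Coxeter element $u$ of $W_D$ (the paper's $c'$), split $\B(w_\circ(D))=\B(u)\,\B(u^{-1}w_\circ(D))$ (the paper's $y=u^{-1}w_\circ(D)$), and convert each dual lift using \Cref{lem:semidistrim}. Your explicit identification of the simple reflection from \Cref{lem:semidistrim} as $w_\circ(D)\,s\,w_\circ(D)$ (via the hyperplane containing the shard) spells out a step the paper leaves implicit, and, as you note yourself, the induction framing is unnecessary.
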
 

\begin{proof}
For notational convenience, let $v=\Pop(w_\pi)$. Then $w_\pi=v\,w_{\circ}(\Des(w_\pi))$, and $v$ is the minimum-length representative
of the coset $v\,W_{\Des(w_\pi)}$.

Let $(r_1,\ldots,r_k)$ be the word obtained from $\inv({\sf w}_\pi(c))$ by keeping only the cover reflections of $w_\pi$. Then $\pi=r_1\cdots r_k$ by the definition of Reading's bijection. Let $u_i=v^{-1}r_i v$.
Since $r_1,\ldots,r_k$ are the cover reflections of $w_\pi$, it follows from \cref{lem:semidistrim} that $u_1,\ldots,u_k$ are precisely the descents of $w_\pi$. In particular,
$c':=u_1u_2\cdots u_k$
is a Coxeter element of the parabolic subgroup $W_{\Des(w_\pi)}$. Since $w_{\circ}(\Des(w_\pi))$ is the maximum element of $W_{\Des(w_\pi)}$ in weak order, we can choose $y\in W_{\Des(w_\pi)}$ such
that
\[
    w_{\circ}(\Des(w_\pi))=c'y\quad\text{and}\quad 
    \ell(w_{\circ}(\Des(w_\pi)))=\ell(c')+\ell(y).
\]
Then
\[
    \pi=r_1\cdots r_k
        =v\,u_1\cdots u_k\,v^{-1}
        =vc'v^{-1}.
\]
Hence,
\[
    \pi^{-1}w_\pi
      =(vc'v^{-1})^{-1}v\,w_{\circ}(\Des(w_\pi))
      =v(c')^{-1}w_{\circ}(\Des(w_\pi))
      =vy.
\]
This also shows that $\pi^{-1}w_\pi\leq w_\pi$ in weak order.  Now compute that
\begin{align*}
    \B(w_\pi)\B(\pi^{-1}w_\pi)^{-1}
      &= \B(v\,w_{\circ}(\Des(w_\pi)))\,\B(vy)^{-1}  \\
      &= \B(v)\B(w_{\circ}(\Des(w_\pi)))\,\B(y)^{-1}\B(v)^{-1} \\
      &= \B(v)\B(c')\B(y)\,\B(y)^{-1}\B(v)^{-1} \\
      &= \B(v)\B(c')\B(v)^{-1} \\
      &= \B(v)\mathbf{u}_1\cdots \mathbf{u}_k\B(v)^{-1} \\
      &= \prod_{i=1}^{k} (\B(v)\mathbf{u}_i\B(v)^{-1}).
\end{align*}
By \cref{lem:semidistrim}, $\B(v)\mathbf{u}_i\B(v)^{-1}=\B_c(r_i)$ for every $i$. Therefore,
\[
    \B(w_\pi)\B(\pi^{-1}w_\pi)^{-1}
      =\B_c(r_1)\cdots\B_c(r_k)=\B_c(\pi),
\]
as desired.
\end{proof}

\subsection{Pure Braids from Coxeter Group Elements}\label{sec:Pure_from_Coxeter} 
Our next goal in this section is to show that for every standard Coxeter element $c$, every braid of the form $\B(w)\B(w^{-1})$ with $w\in W$ belongs to the dual pure braid monoid $\P_{W,c}^+$. We first need some lemmas. 

Fix ${\gamma}=(r_1,\ldots,r_M)\in S^M$, and let ${\boldsymbol{\gamma}}=(\mathbf{r}_1,\ldots,\mathbf{r}_M)\in{\bf S}^M$. Fix $x\in W$, and set
\[
    x_i=xr_1r_2\cdots r_i
\]
for $0\leq i\leq M$. Note that for each $i$, the elements $x_{i-1}$ and $x_i$ form a cover relation in the weak order on $W$ (so either $x_{i-1}\lessdot x_i$ or $x_{i}\lessdot x_{i-1}$). 
Define
\[
    {\bf D}_x({\boldsymbol{\gamma}})
    =
    \prod_{\substack{1\leq i\leq M\\ x_i\lessdot x_{i-1}}}^{\longrightarrow}
    \B(x_{i})\,{\bf r}_i^2\,\B(x_i)^{-1}
    \quad\text{and}\quad {\bf U}_x({\boldsymbol{\gamma}})
    =
    \prod_{\substack{1\leq i\leq M\\ x_{i-1}\lessdot x_{i}}}^{\longleftarrow}
    \B(x_{i-1})\,{\bf r}_i^2\,\B(x_{i-1})^{-1},
\]
where $\prod\limits^{\longrightarrow}$ means the product taken in increasing order of $i$ and
$\prod\limits^{\longleftarrow}$ means the product taken in decreasing order of $i$. 

Given ${\boldsymbol\alpha}=\s_1\cdots\s_k\in\B_W^+$, where $\s_1,\ldots,\s_k\in{\bf S}$, we write $\mathrm{rev}({\boldsymbol\alpha})=\s_k\cdots\s_1$.  

\begin{lemma}\label{lem:DA}
Let ${\gamma}=(r_1,\ldots,r_M)\in S^M$ and ${\boldsymbol{\gamma}}=(\mathbf{r}_1,\ldots,\mathbf{r}_M)\in{\bf S}^M$. For $x\in W$, we have
\[
    {\bf D}_x(\boldsymbol{\gamma}){\bf U}_x(\boldsymbol{\gamma})
    =
    \B(x){\bf r}_1\cdots {\bf r}_M{\bf r}_M\cdots {\bf r}_1\B(x)^{-1}.
\]
\end{lemma}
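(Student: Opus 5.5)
We argue by induction on $M$. When $M=0$ both products are empty, so both sides equal $\B(x)\B(x)^{-1}=e$. For the inductive step, write $\boldsymbol\gamma=(\mathbf r_1,\boldsymbol\gamma')$ with $\boldsymbol\gamma'=(\mathbf r_2,\ldots,\mathbf r_M)$, and set $x'=x_1=xr_1$. The walk $x'_i=x'r_2\cdots r_{i}$ attached to $\boldsymbol\gamma'$ starting at $x'$ is just the tail $x_1,x_2,\ldots,x_M$ of the original walk. Reading off the definitions, ${\bf D}_x(\boldsymbol\gamma)$ is the $i=1$ factor (if present) followed by ${\bf D}_{x'}(\boldsymbol\gamma')$. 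Since ${\bf U}$ is a product in decreasing order, ${\bf U}_x(\boldsymbol\gamma)$ is ${\bf U}_{x'}(\boldsymbol\gamma')$ followed on the right by the $i=1$ factor (if present). By induction,
\[{\bf D}_{x'}(\boldsymbol\gamma'){\bf U}_{x'}(\boldsymbol\gamma')=\B(x')\,{\bf r}_2\cdots{\bf r}_M{\bf r}_M\cdots{\bf r}_2\,\B(x')^{-1}=:\B(x')\,\boldsymbol\mu\,\B(x')^{-1}.\]
It therefore suffices to prove the single-step identity
\[\bigl[\text{$i=1$ D-factor}\bigr]\cdot\B(x')\boldsymbol\mu\B(x')^{-1}\cdot\bigl[\text{$i=1$ U-factor}\bigr]=\B(x)\,\mathbf r_1\boldsymbol\mu\mathbf r_1\,\B(x)^{-1}.\]
Exactly one of these two bracketed factors is present, and we split into two cases.

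In the first case, $x\lessdot x'$. Only the U-factor $\B(x)\mathbf r_1^2\B(x)^{-1}$ appears. Since $x\lessdot xr_1$ is a weak-order cover, $\ell(xr_1)=\ell(x)+1$, so $\B(x')=\B(x)\mathbf r_1$. The left side then becomes
\[\B(x)\mathbf r_1\boldsymbol\mu\mathbf r_1^{-1}\B(x)^{-1}\cdot\B(x)\mathbf r_1^2\B(x)^{-1}=\B(x)\mathbf r_1\boldsymbol\mu\mathbf r_1\B(x)^{-1},\]
as required.

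In the second case, $x'\lessdot x$. Only the D-factor $\B(x')\mathbf r_1^2\B(x')^{-1}$ appears. Now $\ell(x)=\ell(x')+1$, so $\B(x)=\B(x')\mathbf r_1$, that is, $\B(x')=\B(x)\mathbf r_1^{-1}$. The left side becomes
\[\B(x')\mathbf r_1^2\boldsymbol\mu\B(x')^{-1}=\B(x)\mathbf r_1^{-1}\mathbf r_1^2\boldsymbol\mu\mathbf r_1\B(x)^{-1}=\B(x)\mathbf r_1\boldsymbol\mu\mathbf r_1\B(x)^{-1},\]
again as required.

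The only potential obstacle is bookkeeping: checking that the orders in ${\bf D}$ and ${\bf U}$ split as claimed (D peels its first factor off on the left, U peels its first factor off on the right), and that in each case the Artin lifts satisfy $\B(us)=\B(u)\s$ whenever $\ell(us)>\ell(u)$. Both follow directly from the definitions and from the well-definedness of Artin lifts, so the induction closes.
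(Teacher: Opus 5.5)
Your proof is correct, and it takes a different route from the paper.

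The paper keeps the base point $x$ fixed and inducts by appending the \emph{last} letter $\mathbf r_i$. To make that step work, it first proves, by a separate induction, the auxiliary identity $\B(x)\mathbf r_1\cdots\mathbf r_i={\bf D}_x(\bbeta_i)\B(x_i)$, where $\bbeta_i=\mathbf r_1\cdots\mathbf r_i$. It then uses this identity to rewrite $\B(x_{i-1})\operatorname{rev}(\bbeta_{i-1})\B(x)^{-1}$ as ${\bf D}_x(\bbeta_{i-1})^{-1}{\bf F}_{i-1}$, which lets it insert the new factor $\B(x_{i-1})\mathbf r_i^2\B(x_{i-1})^{-1}$ between ${\bf D}_x(\bbeta_{i-1})$ and ${\bf U}_x(\bbeta_{i-1})$.

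You instead peel off the \emph{first} letter and move the base point to $xr_1$. This exploits the ordering conventions: the $i=1$ factor of ${\bf D}$ is leftmost and the $i=1$ factor of ${\bf U}$ is rightmost, so both sit at the outer ends of ${\bf D}_x{\bf U}_x$. The argument then reduces to a single induction on $M$, taken over all base points at once. That is legitimate because the lemma is quantified over all $x\in W$. Each of the two cover cases is then a one-line computation using $\B(us)=\B(u)\mathbf s$ when $\ell(us)>\ell(u)$.

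Your route is shorter and avoids the auxiliary identity entirely. What the paper's route buys is that auxiliary identity itself. It is exactly the Reidemeister--Schreier rewriting of $\B(x)\bbeta_i\B(x_i)^{-1}$ with Artin lifts as coset representatives: trivial Schreier generators on up-steps, and $\B(x_i)\mathbf r_i^2\B(x_i)^{-1}$ on down-steps. This ties the lemma to the shard-generator motivation of the section, although the paper does not reuse the identity later.
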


\begin{proof}
Let $\bbeta_i=\mathbf{r}_1\cdots\mathbf{r}_i$. We first prove that
\[
    \B(x)\bbeta_i={\bf D}_x(\bbeta_i)\B(x_i)
\]
for every $i$. This is clear when $i=0$, so we may assume $i\geq 1$ and proceed by induction. Assume $\B(x)\bbeta_{i-1}={\bf D}_x(\bbeta_{i-1})\B(x_{i-1})$. 
If $x_{i-1}\lessdot x_i$, then ${\bf D}_x(\bbeta_i)={\bf D}_x(\bbeta_{i-1})$, so
\[
    \B(x)\bbeta_i
    =
    {\bf D}_x(\bbeta_{i-1})\B(x_{i-1})\mathbf{r}_i
    =
    {\bf D}_x(\bbeta_i)\B(x_i).
\]
On the other hand, if $x_i\lessdot x_{i-1}$, then ${\bf D}_x(\bbeta_i)={\bf D}_x(\bbeta_{i-1})\B(x_{i})\,{\bf r}_i^2\,\B(x_i)^{-1}$, so 
\[\B(x)\bbeta_i
    =
    {\bf D}_x(\bbeta_{i-1})\B(x_{i-1})\mathbf{r}_i  =
    {\bf D}_x(\bbeta_{i-1})\B(x_i)\mathbf{r}_i^2 =
    {\bf D}_x(\bbeta_i)\B(x_i).
\]
This proves the claim.

Now set
\[
    {\bf F}_i
    =
    \B(x){\bf r}_1\cdots{\bf r}_i{\bf r}_i\cdots{\bf r}_1\B(x)^{-1}.
\]
We prove by induction that
\[
    {\bf F}_i={\bf D}_x(\bbeta_i){\bf U}_x(\bbeta_i).
\]
The result is clear for $i=0$. For $i\geq 1$, we have
\[
    {\bf F}_i
    =
    \B(x)\bbeta_{i-1}\mathbf{r}_i^2
    \operatorname{rev}(\bbeta_{i-1})\B(x)^{-1}.
\]
Using the identity $\B(x)\bbeta_{i-1}
    =
    {\bf D}_x(\bbeta_{i-1})\B(x_{i-1})$,
we compute
\[
\begin{aligned}
    {\bf F}_i
    &=
    \B(x)\bbeta_{i-1}\mathbf{r}_i^2
    \operatorname{rev}(\bbeta_{i-1})\B(x)^{-1} \\
    &=
    {\bf D}_x(\bbeta_{i-1})\B(x_{i-1})\mathbf{r}_i^2
    \operatorname{rev}(\bbeta_{i-1})\B(x)^{-1} \\ 
    &=
    {\bf D}_x(\bbeta_{i-1})
    \left(\B(x_{i-1})\mathbf{r}_i^2\B(x_{i-1})^{-1}\right)
    \B(x_{i-1})\operatorname{rev}(\bbeta_{i-1})\B(x)^{-1}. 
\end{aligned}
\]
In addition, 
\[
\begin{aligned}
    {\bf F}_{i-1}
    &=
    \B(x)\bbeta_{i-1}\operatorname{rev}(\bbeta_{i-1})\B(x)^{-1} \\
    &=
    {\bf D}_x(\bbeta_{i-1})\B(x_{i-1})
    \operatorname{rev}(\bbeta_{i-1})\B(x)^{-1},
\end{aligned}
\]
so 
\[
    \B(x_{i-1})\operatorname{rev}(\bbeta_{i-1})\B(x)^{-1}
    =
    {\bf D}_x(\bbeta_{i-1})^{-1}{\bf F}_{i-1}.
\]
Substituting this into the previous expression yields
\[{\bf F}_i
    =
    {\bf D}_x(\bbeta_{i-1})
    \left(\B(x_{i-1})\mathbf{r}_i^2\B(x_{i-1})^{-1}\right)
    {\bf D}_x(\bbeta_{i-1})^{-1}
    {\bf F}_{i-1}
\] By induction, ${\bf F}_{i-1}={\bf D}_x(\bbeta_{i-1}){\bf U}_x(\bbeta_{i-1})$, so
\[
    {\bf F}_i
    =
    {\bf D}_x(\bbeta_{i-1})\left(\B(x_{i-1})\mathbf{r}_i^2\B(x_{i-1})^{-1}\right){\bf U}_x(\bbeta_{i-1}).
\]

If $x_i\lessdot x_{i-1}$, then \[{\bf D}_x(\bbeta_i)={\bf D}_x(\bbeta_{i-1})\left(\B(x_{i-1})\mathbf{r}_i^2\B(x_{i-1})^{-1}\right)\quad\text{and}\quad{\bf U}_x(\bbeta_i)={\bf U}_x(\bbeta_{i-1}).\] If
$x_{i-1}\lessdot x_i$, then \[{\bf D}_x(\bbeta_i)={\bf D}_x(\bbeta_{i-1})\quad\text{and}\quad{\bf U}_x(\bbeta_i)=\left(\B(x_{i-1})\mathbf{r}_i^2\B(x_{i-1})^{-1}\right){\bf U}_x(\bbeta_{i-1}).\] In either case, ${\bf F}_i={\bf D}_x(\bbeta_i){\bf U}_x(\bbeta_i)$, as desired. Taking $i=M$ proves the
lemma. 
\end{proof} 

\begin{lemma}\label{lem:rank-2} 
Let $s,s'\in S$. Let $M=m_{s,s'}$, and let
\[
    a_1a_2\cdots a_M=ss's\cdots\quad\text{and}
    \quad
    b_1b_2\cdots b_M=s'ss'\cdots
\]
be the two alternating reduced words for $w_\circ(\{s,s'\})$. Suppose that
$x\in W$ is minimal in its right coset $x\,W_{\{s,s'\}}$. The weak order has two saturated chains
\[
    x=y_0\lessdot y_1\lessdot\cdots\lessdot y_M
    =
    xw_\circ(\{s,s'\})
\]
and
\[
    x=y'_0\lessdot y'_1\lessdot\cdots\lessdot y'_M
    =
    xw_\circ(\{s,s'\}),
\]
where $y_i=xa_1a_2\cdots a_i$ and $y'_i=xb_1b_2\cdots b_i.$
Let $j_i=j_{y_{i-1},y_i}$ and $j_i'=j_{y_{i-1}',y_i'}$. Then for all $0\leq k\leq M$,
\[
    \mathbb{\Sigma}_{j_k}\cdots \mathbb{\Sigma}_{j_1}
    =
    \mathbb{\Sigma}_{j_M'}\cdots \mathbb{\Sigma}_{j_{M-k+1}'}.
\] 
\end{lemma}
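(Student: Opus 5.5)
The plan is to reduce to the dihedral parabolic subgroup, then compute both sides with \Cref{lem:DA} and identify them using conjugation by $\Delta_{s,s'}:=\B(w_\circ(\{s,s'\}))$.

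\textbf{Step 1 (the left side).} Apply \Cref{lem:DA} with base point $x$ and $\boldsymbol\gamma=(\mathbf a_1,\ldots,\mathbf a_k)$. Because $x$ is minimal in $x\,W_{\{s,s'\}}$ and $a_1\cdots a_M$ is reduced, every step $y_{i-1}\to y_i$ is an up-step. Hence ${\bf D}_x(\boldsymbol\gamma)=1$, and
\[
{\bf U}_x(\boldsymbol\gamma)=\prod_{i=k}^{1}\B(y_{i-1})\,\mathbf a_i^2\,\B(y_{i-1})^{-1}.
\]
Since $y_i=y_{i-1}a_i$ is a cover, $\B(y_i)=\B(y_{i-1})\mathbf a_i$. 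Therefore each factor equals $\bigl(\B(y_i)\B(y_{i-1})^{-1}\bigr)^2=\mathbb\Sigma_{j_i}$, using \Cref{thm:shards}. \Cref{lem:DA} then gives
\[
\mathbb\Sigma_{j_k}\cdots\mathbb\Sigma_{j_1}=\B(x)\,\mathbf a_1\cdots\mathbf a_k\,\mathbf a_k\cdots\mathbf a_1\,\B(x)^{-1}.
\]

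\textbf{Step 2 (the right side).} Apply \Cref{lem:DA} again, now with base point $x'=y'_{M-k}=xb_1\cdots b_{M-k}$ and the suffix word $\boldsymbol\gamma'=(\mathbf b_{M-k+1},\ldots,\mathbf b_M)$. All steps are again up-steps, so the same argument yields
\[
\mathbb\Sigma_{j'_M}\cdots\mathbb\Sigma_{j'_{M-k+1}}=\B(x')\,\boldsymbol\sigma\,\mathrm{rev}(\boldsymbol\sigma)\,\B(x')^{-1},
\qquad \boldsymbol\sigma:=\mathbf b_{M-k+1}\cdots\mathbf b_M.
\]
Because lengths add, $\B(x')=\B(x)\,\mathbf b_1\cdots\mathbf b_{M-k}$ and $\mathbf b_1\cdots\mathbf b_{M-k}\,\boldsymbol\sigma=\Delta_{s,s'}$. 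So $\B(x')=\B(x)\,\Delta_{s,s'}\,\boldsymbol\sigma^{-1}$, and the right side becomes
\[
\B(x)\,\Delta_{s,s'}\,\mathrm{rev}(\boldsymbol\sigma)\,\boldsymbol\sigma\,\Delta_{s,s'}^{-1}\,\B(x)^{-1}.
\]
After cancelling $\B(x)$, the lemma reduces to a statement in the rank-two braid group:
\[
\Delta_{s,s'}\,\mathrm{rev}(\boldsymbol\sigma)\,\boldsymbol\sigma\,\Delta_{s,s'}^{-1}=\mathbf a_1\cdots\mathbf a_k\,\mathbf a_k\cdots\mathbf a_1 .
\]

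\textbf{Step 3 (identifying the two sides).} Conjugation by $\Delta_{s,s'}$ acts on $\{\mathbf s,\mathbf s'\}$ by the diagram involution $\iota$. This is trivial when $M$ is even and swaps $\mathbf s,\mathbf s'$ when $M$ is odd; it is the standard Garside fact $\Delta\mathbf u\Delta^{-1}=\B(w_\circ u w_\circ)$. The word $\mathrm{rev}(\boldsymbol\sigma)=\mathbf b_M\cdots\mathbf b_{M-k+1}$ is alternating and begins with $\mathbf b_M$, which is $\mathbf s$ if $M$ is even and $\mathbf s'$ if $M$ is odd. In both cases $\iota(\mathbf b_M)=\mathbf s=\mathbf a_1$. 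Hence $\iota$ carries $\mathrm{rev}(\boldsymbol\sigma)$ to the alternating word $\mathbf a_1\cdots\mathbf a_k$ and $\boldsymbol\sigma$ to $\mathbf a_k\cdots\mathbf a_1$, which proves the identity.

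The main obstacle is bookkeeping rather than substance: checking that every step in both applications of \Cref{lem:DA} is an up-step (this is where minimality of $x$ in its coset is used), and getting the parity of $M$ right when matching $b_M$ with $a_1$. The edge cases $k=0$ (both sides trivial) and $k=M$ (where $x'=x$) fall out of the same computation.
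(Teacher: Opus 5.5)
Your proof is correct and is essentially the paper's argument. You evaluate both sides with \Cref{lem:DA}, the second time with base point $xb_1\cdots b_{M-k}$ and the suffix word $\mathbf b_{M-k+1}\cdots\mathbf b_M$. You then rewrite the right side as a conjugate by $\Delta_{\{s,s'\}}$ using $\mathbf b_1\cdots\mathbf b_{M-k}=\Delta_{\{s,s'\}}(\mathbf b_{M-k+1}\cdots\mathbf b_M)^{-1}$, and conclude because conjugation by $\Delta_{\{s,s'\}}$ is the diagram involution. The paper phrases that last step through $\omega^2=\mathrm{id}$, while you apply the involution directly to the alternating word. Your explicit use of \Cref{thm:shards} to identify each factor $\B(y_{i-1})\mathbf a_i^2\B(y_{i-1})^{-1}$ with $\mathbb{\Sigma}_{j_i}$ fills in a step the paper leaves implicit.
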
 
\begin{proof}
Let $\Delta_{\{s,s'\}}={\bf a}_1\cdots {\bf a}_M={\bf b}_1\cdots {\bf b}_M$ be the Artin lift of $w_\circ(\{s,s'\})$. For $0\leq k\leq M$, define
\[
    \boldsymbol{\alpha}_k={\bf a}_1\cdots {\bf a}_k,\quad
    \bbeta_k={\bf b}_1\cdots {\bf b}_{M-k},\quad
    \boldsymbol{\gamma}_k={\bf b}_{M-k+1}\cdots {\bf b}_M.
\]
Applying \cref{lem:DA} to the word $({\bf a}_1,\ldots,{\bf a}_k)$ 
gives
\[
    \mathbb{\Sigma}_{j_k}\cdots \mathbb{\Sigma}_{j_1}
    =
    \B(x){\bf a}_1\cdots{\bf a}_k{\bf a}_k\cdots{\bf a}_1\B(x)^{-1}.
\]
Similarly, applying \cref{lem:DA} to the word $({\bf b}_{M-k+1},\ldots,{\bf b}_M)$ with $x$ replaced by $xb_1\cdots b_{M-k}$ gives
\[
\begin{aligned}
    \mathbb{\Sigma}_{j_M'}\cdots \mathbb{\Sigma}_{j'_{M-k+1}}
    &=
    \B(x)\bbeta_k\boldsymbol{\gamma}_k\operatorname{rev}(\boldsymbol{\gamma}_k)\bbeta_k^{-1}\B(x)^{-1} \\
    &=
    \B(x)\Delta_{\{s,s'\}}\operatorname{rev}(\boldsymbol{\gamma}_k)\bbeta_k^{-1}\B(x)^{-1}.
\end{aligned}
\]
Thus, it remains only to check that
\[
    \boldsymbol{\alpha}_k\operatorname{rev}(\boldsymbol{\alpha}_k)
    =
    \Delta_{\{s,s'\}}\operatorname{rev}(\boldsymbol{\gamma}_k)\bbeta_k^{-1}.
\]
Let $\omega$ be the automorphism of the rank-2 braid group $\B_{W_{\{s,s'\}}}$ given by conjugation by $\Delta_{\{s,s'\}}$. 
Thus, $\omega$ fixes $\s$ and $\s'$ if $M$ is even, and it interchanges them
if $M$ is odd. In particular, $\omega$ is an involution. 

Observe that $\boldsymbol{\gamma}_k=\omega(\mathrm{rev}(\boldsymbol{\alpha}_k))$ and $\mathrm{rev}(\boldsymbol{\gamma}_k)=\omega(\boldsymbol{\alpha}_k)$. Also, since
$\Delta_{\{s,s'\}}=\bbeta_k\boldsymbol{\gamma}_k$,
we have
$\bbeta_k^{-1}=\boldsymbol{\gamma}_k\Delta_{\{s,s'\}}^{-1}$.
Therefore,
\[
\begin{aligned}
\Delta_{\{s,s'\}}\mathrm{rev}(\boldsymbol{\gamma}_k)\bbeta_k^{-1}
&=
\Delta_{\{s,s'\}}\mathrm{rev}(\boldsymbol{\gamma}_k)\boldsymbol{\gamma}_k
  \Delta_{\{s,s'\}}^{-1}  \\
&=
\Delta_{\{s,s'\}}\omega(\boldsymbol{\alpha}_k)
  \omega(\mathrm{rev}(\boldsymbol{\alpha}_k))
  \Delta_{\{s,s'\}}^{-1} \\
&=
\Delta_{\{s,s'\}}\omega(\boldsymbol{\alpha}_k\mathrm{rev}(\boldsymbol{\alpha}_k))
  \Delta_{\{s,s'\}}^{-1} \\
&=
\omega^2(\boldsymbol{\alpha}_k\mathrm{rev}(\boldsymbol{\alpha}_k)) \\
&=
\boldsymbol{\alpha}_k\mathrm{rev}(\boldsymbol{\alpha}_k),
\end{aligned}
\]
as desired. 
\end{proof}

Given a reduced word ${\sf w}_\circ$ for $w_\circ$, let $\Sha_w({\sf w}_\circ)=(j_1,\ldots,j_{\ell(w)})$ be the restriction of $\Sha({\sf w}_\circ)$ to the shards contained in the inversion hyperplanes of $w$, and let \[{\bf Z}_w({\sf w}_\circ)=\mathbb{\Sigma}_{j_{\ell(w)}}\cdots\mathbb{\Sigma}_{j_1}.\] 

\begin{theorem}\label{thm:BB=Z}
Fix $w\in W$. For every reduced word $\mathsf{w}_\circ$ for $w_\circ$, we have 
\[\B(w)\B(w^{-1})={\bf Z}_{w}(\mathsf{w}_\circ).\]
\end{theorem}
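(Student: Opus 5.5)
The plan has two stages. First, prove the identity for one convenient reduced word of $w_\circ$, one that passes through $w$. Then show that ${\bf Z}_w({\sf w}_\circ)$ does not depend on the choice of ${\sf w}_\circ$, using \cref{lem:rank-2} to handle braid moves.

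\emph{Step 1: words passing through $w$.} Suppose ${\sf w}_\circ=(s_1,\ldots,s_N)$ with $(s_1,\ldots,s_k)$ a reduced word for $w$, where $k=\ell(w)$. The inversion hyperplanes of $w$ are exactly those crossed by the first $k$ steps of the chain. So $\Sha_w({\sf w}_\circ)=(j_1,\ldots,j_k)$, the shard labels of the prefix chain $x_0\lessdot x_1\lessdot\cdots\lessdot x_k=w$.

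\begin{itemize}
\item Since every step of this prefix goes up in weak order, \cref{lem:DA} applies with $x=e$ and $\boldsymbol\gamma=(\s_1,\ldots,\s_k)$, and ${\bf D}_e$ is empty.
\item This gives $\B(w)\operatorname{rev}(\B(w))={\bf U}_e(\boldsymbol\gamma)=\prod^{\longleftarrow}\B(x_{i-1})\s_i^2\B(x_{i-1})^{-1}$.
\item By \cref{thm:shards}, each factor $\B(x_{i-1})\s_i^2\B(x_{i-1})^{-1}=(\B(x_i)\B(x_{i-1})^{-1})^2$ equals $\mathbb{\Sigma}_{j_i}$.
\item Finally, $\operatorname{rev}(\B(w))=\B(w^{-1})$.
\end{itemize}

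Together these give $\B(w)\B(w^{-1})=\mathbb{\Sigma}_{j_k}\cdots\mathbb{\Sigma}_{j_1}={\bf Z}_w({\sf w}_\circ)$.

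\emph{Step 2: invariance under braid moves.} Any two reduced words for $w_\circ$ are connected by braid moves (Matsumoto/Tits), and every $w$ admits a reduced word for $w_\circ$ beginning with one for $w$. So it suffices to show that ${\bf Z}_w$ is unchanged by a single braid move. Take a move replacing a segment $a_1\cdots a_M$ by $b_1\cdots b_M$, starting from the prefix element $x$. Here $x$ is minimal in $xW_{\{s,s'\}}$, since both alternating words extend reduced.

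\begin{itemize}
\item Shards of steps outside the segment are unchanged, and so is their relative order.
\item Inside the segment, the two chains cross the same $M$ hyperplanes $H_t$, $t\in xW_{\{s,s'\}}x^{-1}\cap T$, but in reversed order.
\item The set $\mathrm{Inv}(w)\cap\{xW_{\{s,s'\}}x^{-1}\cap T\}$ is an initial or a final segment of the rank-2 reflection order, since inversion sets are biconvex (\cite[Lemma~1.6]{reading2007clusters}).
\item Hence the restriction picks out $j_1,\ldots,j_k$ from the $a$-chain, and $j'_M,\ldots,j'_{M-k+1}$ from the $b$-chain, for some $k$ (or the mirror-image situation).
\end{itemize}

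\Cref{lem:rank-2} then says exactly that $\mathbb{\Sigma}_{j_k}\cdots\mathbb{\Sigma}_{j_1}=\mathbb{\Sigma}_{j'_M}\cdots\mathbb{\Sigma}_{j'_{M-k+1}}$. The final-segment case follows by swapping the roles of $a$ and $b$. These restricted segment products occupy the same contiguous position in the reversed product defining ${\bf Z}_w$, because all inversions of $w$ coming from the segment appear consecutively between the unchanged factors. So ${\bf Z}_w$ is invariant.

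The main obstacle is this bookkeeping in Step 2. One must verify the initial/final segment claim for the restricted inversions within the rank-2 coset. One must also confirm that the $j_i$ in \cref{lem:rank-2} match the shard labels appearing in $\Sha({\sf w}_\circ)$, with the correct indexing so the lemma applies verbatim. I would first nail down carefully how the order of hyperplane crossings reverses under a braid move.
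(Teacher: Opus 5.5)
Your proposal is correct and follows essentially the paper's proof. The paper likewise shows that ${\bf Z}_w(\mathsf{w}_\circ)$ is unchanged by braid moves, using Matsumoto's theorem, the initial/final-segment structure of $\mathrm{Inv}(w)$ on the rank-2 subsystem $x(\Phi^+_{\{s,s'\}})$, and \cref{lem:rank-2}, and then evaluates on a reduced word for $w_\circ$ beginning with a reduced word for $w$ via \cref{lem:DA} with $x=e$. The differences are cosmetic: you do the two steps in the opposite order, you cite Reading's Lemma 1.6 where the paper cites Dyer's biclosedness, and you make explicit the identification of each factor with $\mathbb{\Sigma}_{j_i}$ via \cref{thm:shards}, which the paper leaves implicit.
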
 
\begin{proof}
We first claim that if $\mathsf{w}_\circ$ and $\mathsf{w}_\circ'$ are two reduced words for $w_\circ$, then ${\bf Z}_{w}(\mathsf{w}_\circ)={\bf Z}_{w}(\mathsf{w}_\circ')$. To prove this, it suffices by Matsumoto's theorem \cite[Theorem~3.3.1]{BjornerBrenti} to prove it when $\mathsf{w}_\circ$ and $\mathsf{w}_\circ'$ differ by a braid move. Suppose they differ by replacing the alternating
subword
\[
    a_1a_2\cdots a_M=ss's\cdots
\quad\text{with}\quad b_1b_2\cdots b_M=s'ss'\cdots.
\]
Let $x$ be the element represented by the common prefix before this braid
move. Since both full words are reduced, the two local subwords give the two
saturated chains in the rank-$2$ interval $[x,x\,w_\circ(\{s,s'\})]$. The maximal chains corresponding to the two reduced words agree outside this rank-2 interval. Therefore, the
corresponding shard labels outside the braid move agree. 

Consider the rank-2 subsystem \[\Psi^+=x\left(\Phi^+_{\{s,s'\}}\right)\] of positive roots whose hyperplanes are crossed inside the interval
$[x,xw_\circ(\{s,s'\})]$. The two local root sequences are the two opposite
reflection orders on $\Psi^+$. The selected hyperplanes are precisely
those indexed by the roots in $\Psi^+$ that are inversions of $w$.
Since the set of inversions of $w$ is biclosed in $\Phi^+$ (see~\cite{DyerWeakOrder}), the set of roots in $\Psi^+$ that are inversions of $w$ is biclosed in $\Psi^+$. In rank~2, the biclosed subsets are exactly the initial segments (equivalently, the final segments) in the reflection orders. Therefore, it follows from \cref{lem:rank-2} that the contributions of the shards crossed in the interval $[x,xw_\circ(\{s,s'\})]$ are the same in ${\bf Z}_w(\mathsf{w}_\circ)$ and ${\bf Z}_w(\mathsf{w}_\circ')$. Thus, ${\bf Z}_w(\mathsf{w}_\circ)={\bf Z}_w(\mathsf{w}_\circ')$. 

To complete the proof of the theorem, it suffices to find one reduced word $\mathsf{w}_\circ$ for $w_\circ$ such that $\B(w)\B(w^{-1})={\bf Z}_w(\mathsf{w}_\circ)$. Choose  $\mathsf{w}_\circ$ so that it contains a reduced word $(r_{1},\ldots, r_{\ell(w)})$ of $w$ as a prefix. Let $\boldsymbol{\gamma}=(\mathbf{r}_1,\ldots,\mathbf{r}_{\ell(w)})$. Then $\boldsymbol{\gamma}\,\mathrm{rev}(\boldsymbol{\gamma})=\B(w)\B(w^{-1})$, so it follows from \cref{lem:DA} that 
\[\B(w)\B(w^{-1})=\B(e)\B(w)\B(w^{-1})\B(e)^{-1}={\bf D}_e(\boldsymbol{\gamma}){\bf U}_e(\boldsymbol{\gamma}).\] But ${\bf D}_e(\boldsymbol{\gamma})=e$, and ${\bf U}_e(\gamma)={\bf Z}_w(\mathsf{w}_\circ)$. 
\end{proof}

We can take ${\sf w}_\circ$ to be the $c$-sorting word ${\sf w}_\circ(c)$ in \cref{thm:BB=Z} to obtain the following corollary.  
\begin{corollary}\label{lem:inversions} 
For $w\in W$, we have 
\[\B(w)\B(w^{-1})=\prod_{t \in \inv_c(w)}^{\longleftarrow} \tt.\]
\end{corollary}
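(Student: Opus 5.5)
This follows from \cref{thm:BB=Z} by choosing the reduced word for $w_\circ$ to be ${\sf w}_\circ(c)$, and then identifying the shard generators with the squares of the $c$-dual lifts. So first apply \cref{thm:BB=Z} with $\mathsf{w}_\circ={\sf w}_\circ(c)=(s_1,\ldots,s_N)$. This gives
\[
\B(w)\B(w^{-1})={\bf Z}_w({\sf w}_\circ(c))=\mathbb{\Sigma}_{j_{\ell(w)}}\cdots\mathbb{\Sigma}_{j_1},
\]
where $(j_1,\ldots,j_{\ell(w)})=\Sha_w({\sf w}_\circ(c))$. This sequence is the restriction of $\Sha({\sf w}_\circ(c))$ to the shards lying in the inversion hyperplanes of $w$.

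Second, write $\inv({\sf w}_\circ(c))=(t_1,\ldots,t_N)$ and $\Sha({\sf w}_\circ(c))=(j'_1,\ldots,j'_N)$. The shard $j'_i$ labels the cover $x_{i-1}\lessdot x_i$ with $x_i=s_1\cdots s_i$. That cover crosses the hyperplane $H_{t_i}$, since $x_i x_{i-1}^{-1}=x_{i-1}s_ix_{i-1}^{-1}=t_i$. So $j'_i$ is contained in $H_{t_i}$. Because $t\mapsto H_t$ is a bijection, the shards of $\Sha({\sf w}_\circ(c))$ lying in inversion hyperplanes of $w$ are exactly the $j'_i$ with $t_i\in\mathrm{Inv}(w)$. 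They occur in the same relative order as the $t_i$ in $\inv({\sf w}_\circ(c))$. In other words, the indices selected by $\Sha_w({\sf w}_\circ(c))$ are precisely those selected by $\inv_c(w)$.

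One point needs care: $\mathrm{Inv}(w)$ is defined by left multiplication ($\ell(uw)<\ell(w)$), and it must agree with "inversion hyperplanes of $w$" as used in the definition of ${\bf Z}_w$. This holds because the hyperplanes separating the base chamber from $w$'s chamber are $H_u$ for $u$ with $\ell(uw)<\ell(w)$. I expect this matching of conventions to be the only real check needed.

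Third, use the identity $\mathbb{\Sigma}_{j'_i}=\tt_i$ recorded after \cref{prop:dual_artin}, which combines \cref{thm:shards} and \cref{prop:dual_artin}. Substituting it into the product from the first step replaces each $\mathbb{\Sigma}_{j}$ by $\tt$ for the matching reflection $t$. The factors still appear in decreasing index order, which is exactly $\prod^{\longleftarrow}_{t\in\inv_c(w)}\tt$. This proves the corollary.
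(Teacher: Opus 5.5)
Your proposal is correct and is essentially the paper's argument: the paper obtains the corollary by taking ${\sf w}_\circ={\sf w}_\circ(c)$ in \cref{thm:BB=Z}, leaving implicit the identification $\mathbb{\Sigma}_{j_i}=\tt_i$ that comes from \cref{thm:shards,prop:dual_artin}. You have spelled out the remaining details explicitly. These are that the shard labeling the $i$th cover lies in $H_{t_i}$, so that $\Sha_w({\sf w}_\circ(c))$ selects exactly the positions of $\inv_c(w)$, and that the left-inversion convention for $\mathrm{Inv}(w)$ matches the hyperplanes used in ${\bf Z}_w$.
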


Note that an element $w\in W$ is $c$-sortable if and only if it is $c$-aligned if and only if $\inv_c(w) = \inv({\sf w}(c))$.

\begin{example}
Take $W=\mathfrak{S}_5$, and let $c=s_1s_2s_3s_4$, where $s_i=(i,i+1)$. Let $w=s_4s_3s_1s_2s_1$ so that $\inv(w)=((45),(35),(12),(15),(25))$. Denote each transposition $(ij)$ by $t_{(ij)}$ so that $\P_c((ij))=\tt_{(ij)}$. Then \[\B(w)\B(w^{-1}) = \s_4\s_3\s_1\s_2\s_1\s_1\s_2\s_1\s_3\s_4 =\tt_{(45)}\tt_{(35)}\tt_{(25)}\tt_{(15)}\tt_{(12)}. \qedhere\]
\end{example} 

\begin{corollary}\label{cor:wo}
Let $\inv({\sf w}_\circ(c))=(t_1,\ldots,t_N)$. Then 
$\Delta^2=\tt_N\cdots \tt_1.$
\end{corollary}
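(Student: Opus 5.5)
This is the special case $w=w_\circ$ of \Cref{lem:inversions}. The plan is to take $w=w_\circ$ there and identify both sides with those of \Cref{cor:wo}.

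For the left side, note that $w_\circ^{-1}=w_\circ$. Hence $\B(w_\circ)\B(w_\circ^{-1})=\B(w_\circ)\B(w_\circ)=\Delta^2$, using the definition $\Delta=\B(w_\circ)$ of the full twist.

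For the right side, recall that $\inv_c(w)$ is the restriction of $\inv({\sf w}_\circ(c))$ to $\mathrm{Inv}(w)$. Every reflection is an inversion of the long element, so $\mathrm{Inv}(w_\circ)=T$. Since $\inv({\sf w}_\circ(c))=(t_1,\ldots,t_N)$ lists each reflection exactly once (it is the inversion sequence of a reduced word for $w_\circ$, of length $N=|T|$), the restriction is the whole sequence. Thus $\inv_c(w_\circ)=(t_1,\ldots,t_N)$.

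The reversed product in \Cref{lem:inversions} is therefore $\tt_N\cdots\tt_1$, which gives $\Delta^2=\tt_N\cdots\tt_1$.

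If one prefers to bypass the corollary, the same argument runs through \Cref{thm:BB=Z} with ${\sf w}_\circ={\sf w}_\circ(c)$. There, every shard of $\Sha({\sf w}_\circ(c))$ is retained, and each retained factor satisfies $\mathbb\Sigma_{j_i}=\tt_i$ by \cref{thm:shards,prop:dual_artin}. No step is a real obstacle; the only point to check is that the restriction to inversions of $w_\circ$ is the full sequence.
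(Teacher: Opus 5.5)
Your proposal is correct and follows the paper's own route: \Cref{cor:wo} is obtained there as the case $w=w_\circ$ of \Cref{lem:inversions} (equivalently, \Cref{thm:BB=Z} with ${\sf w}_\circ={\sf w}_\circ(c)$). As you note, $\mathrm{Inv}(w_\circ)=T$ makes $\inv_c(w_\circ)$ the whole sequence $\inv({\sf w}_\circ(c))$, and $w_\circ^{-1}=w_\circ$ makes the left side equal to $\Delta^2$.
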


\section{Counting Subwords}\label{sec:subwords}
\subsection{Technique}
Let us briefly present the main ideas behind our technique. As before, fix a standard Coxeter element $c$ of $W$.  

For a reflection $t \in \RR$, we project the pure braid $\tt=\P_c(t)$ to the Hecke algebra to obtain the element $\H(\tt)$, which we expand in the standard basis $\{\T_w\}_{w\in W}$. We will find (see~\Cref{lem:dual_square_expansion}) that 
\begin{equation}
\H(\tt) = \sum_{w \in W} (q-1)^{\ell_T(w)}a^{c}_{t,w}(q) \T_w,\label{eq:image}\end{equation}
where each coefficient $a_{t,w}^c(q)$ is in $\mathbb Z[q^{\pm 1}]$ and $a_{t,w}^c(1)=0$ for all $w\in\NC(W,c^{-1})\setminus\{e,t\}$.  We will also see that $a_{t,e}^c(1)=a_{t,t}^c(1)=1$. 

For a sequence $\vect = (t_1,\ldots,t_M) \in \RR^M$ and a $c^{-1}$-noncrossing partition $\pi\in\NC(W,c^{-1})$, we consider the coefficient $[\T_\pi]\H(\tt_1\cdots\tt_M)$. 
As every basis element $\T_w$ in~\eqref{eq:image} has a coefficient divisible by $(q-1)^{\ell_T(w)}$, this desired coefficient of $\T_\pi$ is divisible by $(q-1)^{\ell_T(\pi)}$.  Moreover, since every reduced $T$-word for $\pi$ comes from a chain in $\NC(W,c^{-1})$, the only contributions to $(q-1)^{-\ell_T(\pi)}[\T_\pi]\H(\tt_1\cdots\tt_M)$ at $q=1$ can be those subwords of $\vect$ that are reduced $T$-words for $\pi$.  In summary, we will find that
\begin{equation}\label{eq:sub2}
\Subb_\RR(\vect,\pi)=(q-1)^{-\ell_{\RR}(\pi)}[\T_\pi]\H(\tt_1\cdots\tt_M)\Big|_{q=1}.
\end{equation} We are especially interested in this formula when $\pi=c^{-1}$. The purpose of this section is to establish~\eqref{eq:image} in \cref{lem:dual_square_expansion}, prove a
signed refinement of~\eqref{eq:sub2} in \cref{thm:main}, and then deduce~\eqref{eq:sub2} in
\cref{cor:main}.

\subsection{Divisibility}\label{sec:divisible}
Given $\protect\vv{\beta}=(s_1,\ldots,s_k)\in S^k$ and $v,w\in W$, recall that $\mathcal D^v(\protect\vv{\beta},w)$ denotes the set of $v$-distinguished $w$-subwords of $\protect\vv{\beta}$. Let \[\kappa^v(\protect\vv{\beta},w)=\min\{\mathrm{skip}(\su):\su\in\Dist^v(\vword,w)\},\] where $\mathrm{skip}(\su)$ is the number of skips of the subword $\su$. (We use the convention that $\min\emptyset=\infty$.)  

\begin{lemma}\label{lem:first}
Let $\protect\vv{\beta}=(s_1,\ldots,s_k)\in S^k$ and $v\in W$. For each $y\in W$, the coefficient 
\[[\T_{vy}](\T_v\T_{s_1}^{-1}\cdots \T_{s_k}^{-1})\] is divisible by $(q-1)^{\kappa^v(\vword,y)}$. 
\end{lemma}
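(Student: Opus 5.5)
We expand the product one factor at a time, tracking a walk in $W$ that starts at $v$. The key input is the inverse relation $\T_s^{-1}=q^{-1}\T_s-(1-q^{-1})$, which follows from the quadratic relation $\T_s^2=(q-1)\T_s+q$. Combining it with the multiplication rule gives, for $u\in W$ and $s\in S$:
\[
\T_u\T_s^{-1}=
\begin{cases}
\T_{us} & \text{if } \ell(us)<\ell(u),\\
q^{-1}\T_{us}-(1-q^{-1})\T_u & \text{if } \ell(us)>\ell(u).
\end{cases}
\]
The case $\ell(us)<\ell(u)$ follows by writing $\T_u=\T_{us}\T_s$. Note that $1-q^{-1}=q^{-1}(q-1)$ is divisible by $(q-1)$.

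Expanding $\T_v\T_{s_1}^{-1}\cdots\T_{s_k}^{-1}$ step by step therefore writes it as a sum over walks $v=u_0,u_1,\ldots,u_k$ with $u_i\in\{u_{i-1},u_{i-1}s_i\}$. At step $i$ the walk is governed as follows:
\begin{itemize}
\item if $s_i$ is a descent of $u_{i-1}$, the walk is forced to move, $u_i=u_{i-1}s_i$, with weight $1$;
\item otherwise it may move with weight $q^{-1}$, or stay put with weight $-q^{-1}(q-1)$.
\end{itemize}
Writing $u_i=v\overline s_1\cdots\overline s_i$, the walks that occur are exactly the subwords $\su=(\overline s_1,\ldots,\overline s_k)$ of $\vword$ that are $v$-distinguished. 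This holds because staying is allowed only when $s_i$ is not a descent of the current element, which is precisely the distinguishedness condition.

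Each skip contributes exactly one factor $-q^{-1}(q-1)$, and every other step contributes a unit of $\mathbb Z[q^{\pm1}]$. Hence
\[
\T_v\T_{s_1}^{-1}\cdots\T_{s_k}^{-1}=\sum_{y\in W}\ \sum_{\su\in\Dist^v(\vword,y)}(-1)^{\mathrm{skip}(\su)}\,q^{-a(\su)}(q-1)^{\mathrm{skip}(\su)}\,\T_{vy},
\]
where $a(\su)$ counts the non-forced steps and the endpoint is $v\,\overline s_1\cdots\overline s_k=vy$.

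The coefficient of $\T_{vy}$ is thus a sum of terms, each divisible by $(q-1)^{\mathrm{skip}(\su)}$ for some $\su\in\Dist^v(\vword,y)$. Every such term is divisible by $(q-1)^{\kappa^v(\vword,y)}$. If $\Dist^v(\vword,y)$ is empty, the coefficient is $0$, which is consistent with the convention $\kappa=\infty$.

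The only real care needed is bookkeeping: checking that the derived rule for $\T_u\T_s^{-1}$ is correct in both cases. Equivalently, one can run the whole argument as an induction on $k$, peeling off the last factor $\T_{s_k}^{-1}$ and applying the same two-case rule.
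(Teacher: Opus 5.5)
Your argument is correct and is essentially the paper's proof. The paper runs an induction on $k$, peeling off $\T_{s_k}^{-1}$ with exactly your two-case rule for $\T_u\T_s^{-1}$ and tracking the recursion $\kappa^v(\vword,y)=\min\{\kappa^v(\vword',y)+1,\,\kappa^v(\vword',ys_k)\}$ (or $\kappa^v(\vword',ys_k)$ when $s_k$ is a descent of $vy$). You unroll that same induction into the explicit Deodhar-type sum over $\Dist^v(\vword,y)$, which also gives an exact formula for the coefficient at no extra cost.
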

\begin{proof}
Assume first that $k=0$ so that $\vword$ is the empty word $\epsilon$. The result is trivial when $y\neq e$ since then $[\T_{vy}]\T_v=0$. On the other hand, $\Dist^v(\vword,e)=\{\epsilon\}$, so $\kappa^v(\vword,e)=0$. Thus, the result follows again when $y=e$. 

Now assume $k\geq 1$, and proceed by induction on $k$. Let $\vword'=(s_1,\ldots,s_{k-1})$. We can write \begin{equation}\label{eq:Tv}
\T_v\T_{s_1}^{-1}\cdots \T_{s_{k-1}}^{-1}=\sum_{x\in W}f_x(q)\T_{vx},
\end{equation} where each $f_x(q)\in\mathbb Z[q^{\pm 1}]$ is divisible by $(q-1)^{\kappa^v(\vword',x)}$. Fix $y\in W$. If $x\in W$ is such that $s_k$ is a descent of $vx$, then $\T_{vx}\T_{s_k}^{-1}=\T_{vxs_k}$. If $x\in W$ is such that $s_k$ is not a descent of $vx$, then the Hecke algebra quadratic relation $\T_{s_k}^{-1}=q^{-1}\T_{s_k}-q^{-1}(q-1)$ implies that $\T_{vx}\T_{s_k}^{-1}=q^{-1}\T_{vxs_k}-q^{-1}(q-1)\T_{vx}$. Thus, the only terms from the sum in \eqref{eq:Tv} that can contribute a term involving $\T_{vy}$ after we multiply by $\T_{s_k}^{-1}$ on the right are those with $x=y$ and those with $x=ys_k$. In other words, 
\begin{equation}\label{eq:Tv2}
[\T_{vy}](\T_v\T_{s_1}^{-1}\cdots \T_{s_k}^{-1})=[\T_{vy}](f_{y}(q)\T_{vy}\T_{s_k}^{-1}+f_{ys_k}(q)\T_{vys_k}\T_{s_k}^{-1}).
\end{equation} 
We now consider two cases depending on whether or not $s_k$ is a descent of $vy$. 

Suppose first that $s_k$ is a descent of $vy$. Then we have \[\T_{vy}\T_{s_k}^{-1}=\T_{vys_k}\quad\text{and}\quad \T_{vys_k}\T_{s_k}^{-1}=q^{-1}\T_{vy}-q^{-1}(q-1)\T_{vys_k},\] so it follows from \eqref{eq:Tv2} that 
\[[\T_{vy}](\T_v\T_{s_1}^{-1}\cdots \T_{s_k}^{-1})=q^{-1}f_{ys_k}(q).\] This coefficient is divisible by $(q-1)^{\kappa^v(\vword',ys_k)}$. Every $v$-distinguished $y$-subword of $\vword$ must end with a take, so $\kappa^v(\vword,y)=\kappa^v(\vword',ys_k)$. This completes the proof in this case.  

Suppose next that $s_k$ is not a descent of $vy$. Then we have \[\T_{vy}\T_{s_k}^{-1}=q^{-1}\T_{vys_k}-q^{-1}(q-1)\T_{vy}\quad\text{and}\quad \T_{vys_k}\T_{s_k}^{-1}=\T_{vy},\] so it follows from \eqref{eq:Tv2} that 
\[[\T_{vy}](\T_v\T_{s_1}^{-1}\cdots \T_{s_k}^{-1})=-q^{-1}(q-1)f_{y}(q)+f_{ys_k}(q).\] This coefficient is divisible by $(q-1)^{\min\{\kappa^v(\vword',y)\}+1,\,\kappa^v(\vword',ys_k)\}}$. If $(\os_1,\ldots,\os_k)\in\Dist^v(\vword,y)$, then either we have $\os_k=e$ and $(\os_1,\ldots,\os_{k-1})\in\Dist^v(\vword',y)$ or we have $\os_k=s_k$ and $(\os_1,\ldots,\os_{k-1})\in\Dist^v(\vword',ys_k)$. Therefore, \[\kappa^v(\vword,y)=\min\{\kappa^v(\vword',y)\}+1,\,\kappa^v(\vword',ys_k)\}.\] This completes the proof in this case.  
\end{proof}

\begin{lemma}\label{lem:reflection_length_divisibility} 
Let $v_1,\ldots,v_k\in W$ and $\epsilon_1,\ldots,\epsilon_k\in\{\pm 1\}$. For each $w\in W$, the coefficient \[[\T_w](\T_{v_1}^{\epsilon_1}\cdots \T_{v_k}^{\epsilon_k})\] is divisible by $(q-1)^{\ell_{\RR}(wv_k^{-\epsilon_k}\cdots v_1^{-\epsilon_1})}$. 
\end{lemma}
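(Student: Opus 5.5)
We will reduce the statement to \Cref{lem:first} by first rewriting every factor $\T_{v_i}^{\epsilon_i}$ as a product of inverses of simple generators times simple generators, and then controlling each step by reflection length. Concretely, the plan is to prove the stronger, one-step-at-a-time statement: for any $\mathrm{x}=\sum_{u}g_u(q)\T_u\in\H_W$ with each $g_u$ divisible by $(q-1)^{\ell_T(u z^{-1})}$ for some fixed $z\in W$, and any $s\in S$, the elements $\mathrm{x}\T_s$ and $\mathrm{x}\T_s^{-1}$ have the same property with $z$ replaced by $zs$. Since $\T_v=\T_{s_1}\cdots\T_{s_m}$ and $\T_v^{-1}=\T_{s_m}^{-1}\cdots\T_{s_1}^{-1}$ for a reduced word $(s_1,\ldots,s_m)$ of $v$, iterating this one-step claim starting from $\mathrm{x}=\T_e$ and $z=e$ yields divisibility of $[\T_w]$ by $(q-1)^{\ell_T(w z^{-1})}$, where $z=v_1^{\epsilon_1}\cdots v_k^{\epsilon_k}$; this is exactly the exponent $\ell_T(wv_k^{-\epsilon_k}\cdots v_1^{-\epsilon_1})$ in the statement.

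For the one-step claim, I would argue directly from the multiplication rules rather than from \Cref{lem:first}. We have $\T_u\T_s=\T_{us}$ when $\ell(us)>\ell(u)$, and $\T_u\T_s=(q-1)\T_u+q\T_{us}$ otherwise; similarly, $\T_u\T_s^{-1}$ is either $\T_{us}$ or $q^{-1}\T_{us}-q^{-1}(q-1)\T_u$. In each case, the coefficient of $\T_y$ in the product receives contributions only from $u=ys$, with no extra factor, and from $u=y$, with an extra factor of $(q-1)$.

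For the term $u=ys$, we need $\ell_T(ys z^{-1})\ge \ell_T(y(zs)^{-1})$. These two quantities are equal, since $ysz^{-1}=y s z^{-1}$ and $y(zs)^{-1}=ysz^{-1}$; they are literally the same element. For the term $u=y$, we need $\ell_T(yz^{-1})+1\ge \ell_T(ysz^{-1})$. This holds because $ysz^{-1}=(yz^{-1})(zsz^{-1})$ and $zsz^{-1}\in T$, so reflection length changes by at most one. Hence, the one-step claim holds in both cases, and the induction closes.

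I expect no serious obstacle. The only point requiring care is the bookkeeping of which side the conjugated reflection lands on, so that the exponent matches $\ell_T(wz^{-1})$ rather than $\ell_T(z^{-1}w)$. Since reflection length is conjugation invariant, these agree anyway, which makes the order of multiplication harmless.
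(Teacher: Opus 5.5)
Your proposal is correct and follows essentially the same route as the paper: reduce to simple generators $\T_s^{\pm 1}$, induct one factor at a time, and observe that after multiplying by $\T_s^{\pm 1}$ the coefficient of $\T_y$ comes only from $\T_{ys}$ (with a unit factor $1$, $q$, or $q^{-1}$) and from $\T_y$ (with an extra factor of $q-1$). The exponent bound then follows because multiplying by a reflection changes $\ell_T$ by at most one, and your auxiliary element $z$ is just a repackaging of the paper's induction using the coefficients $b_w$ and $b_{ws_k}$.
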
 

\begin{proof}
Since each basis element $\T_{v_i}$ can be expressed as a product of generators in the set $\{\T_s:s\in S\}$, it suffices to prove the lemma in the special case where $v_1,\ldots,v_k$ are all simple reflections. Thus, assume $v_i=s_i\in S$. 

The result is trivial if $k=0$, so we may assume $k\geq 1$ and proceed by induction on $k$. Let $b_{ws_{k}}=[\T_{ws_k}](\T_{s_1}^{\epsilon_1}\cdots \T_{s_{k-1}}^{\epsilon_{k-1}})$ and $b_{w}=[\T_{w}](\T_{s_1}^{\epsilon_1}\cdots \T_{s_{k-1}}^{\epsilon_{k-1}})$. We know by induction that $b_{ws_k}$ is divisible by $(q-1)^{\ell_\RR(ws_ks_{k-1}\cdots s_{1})}$. Moreover, since $ws_{k-1}\cdots s_{1}=(ws_kw^{-1})ws_k\cdots s_{1}$ and since $ws_kw^{-1}$ is a reflection, we have $\ell_\RR(ws_{k-1}\cdots s_{1})=\ell_\RR(ws_{k}\cdots s_{1})\pm 1$. We know by induction that $b_w$ is divisible by $(q-1)^{\ell_\RR(ws_{k-1}\cdots s_{1})}$, which is divisible by $(q-1)^{\ell_\RR(ws_k\cdots s_{1})-1}$. 

If $\epsilon_k=1$, then we compute that 
\[[\T_w](\T_{s_1}^{\epsilon_1}\cdots \T_{s_{k-1}}^{\epsilon_{k-1}}\T_{s_k})=\begin{cases} 
        (q-1)b_w+b_{ws_k} & \text{if } \ell(ws_k)<\ell(w) \\ 
        qb_{ws_k} & \text{if } \ell(ws_k)>\ell(w), 
    \end{cases}\] 
 and the preceding discussion ensures that this is divisible by $(q-1)^{\ell_\RR(ws_k\cdots s_1)}$. On the other hand, if $\epsilon_k=-1$, then since $\T_{s_k}^{-1}=q^{-1}\T_{s_k}-q^{-1}(q-1)$, we compute that  
\[[\T_w](\T_{s_1}^{\epsilon_1}\cdots \T_{s_{k-1}}^{\epsilon_{k-1}}\T_{s_k}^{-1})=\begin{cases} 
        q^{-1}b_{ws_k} & \text{if } \ell(ws_k)<\ell(w) \\ 
        b_{ws_k}-q^{-1}(q-1)b_w & \text{if } \ell(ws_k)>\ell(w). 
    \end{cases}\] 
 Once again, the preceding discussion ensures that this is divisible by $(q-1)^{\ell_\RR(ws_k\cdots s_1)}$.    
\end{proof} 

\subsection{Noncrossing Partition Coefficients}

A corollary of \cref{lem:reflection_length_divisibility} is that $[\T_w]\H({\brho})$ is divisible by $(q-1)^{\ell_T(w)}$ whenever $\brho$ is a pure braid. Hence, for $t\in T$, we can write \[\H(\tt)=\H(\P_c(t))=\sum_{w\in W}(q-1)^{\ell_T(w)}a_{t,w}^c(q)\T_w,\] where each coefficient $a_{t,w}^c(q)$ is in $\mathbb Z[q^{\pm 1}]$. 

\begin{lemma}\label{lem:dual_square_expansion}
Fix $t\in \RR$. If ${w\in\NC(W,c^{-1})\setminus\{e,t\}}$, then $a_{t,w}^c(q)$ is divisible by $q-1$. Moreover, \[a_{t,e}^c(1)=a_{t,t}^c(1)=1.\] 
\end{lemma}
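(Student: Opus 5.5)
The plan is to make $\H(\tt)$ completely explicit using a conjugation formula for $\t$, and then read off the coefficients, using \cref{lem:reflection_length_divisibility} for the easy cases and \cref{lem:first} together with \cref{lem:join-irreducible} for the hard one.

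\textbf{Explicit formula.} Let $j$ be the $c$-noncrossing shard attached to $t$, let $j_*$ be the element it covers, and write $j=j_*s$ with $s\in S$. Then $t=j_*sj_*^{-1}$. Combining \cref{prop:dual_artin} with \cref{thm:shards} gives $\t=\B(j)\B(j_*)^{-1}=\B(j_*)\,\s\,\B(j_*)^{-1}$. Since $j$ has the single cover reflection $t$, whose shard is $c$-noncrossing, $j$ is $c$-sortable. Squaring and applying the quadratic relation gives
\[
\H(\tt)=\T_{j_*}\bigl((q-1)\T_s+q\bigr)\T_{j_*}^{-1}=q+(q-1)\,\T_{j}\T_{j_*}^{-1}.
\]

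\textbf{The easy coefficients.} Apply \cref{lem:reflection_length_divisibility} with $v_1=j$, $\epsilon_1=1$, $v_2=j_*$, $\epsilon_2=-1$. It shows that $[\T_w](\T_j\T_{j_*}^{-1})$ is divisible by $(q-1)^{\ell_T(wj_*j^{-1})}=(q-1)^{\ell_T(wt)}$.
\begin{itemize}
\item For $w=e$, the coefficient of $\T_e$ in $\H(\tt)$ is $q$ plus a multiple of $q-1$, so $a^c_{t,e}(1)=1$.
\item For $w=t$, specializing $q=1$ turns $\H_W$ into $\mathbb Z[W]$ and sends $\T_j\T_{j_*}^{-1}$ to $j j_*^{-1}=t$. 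Hence $a^c_{t,t}(1)=[\T_t](\T_j\T_{j_*}^{-1})|_{q=1}=1$.
\item For $w\neq e,t$, we have $(q-1)^{\ell_T(w)}a^c_{t,w}=(q-1)[\T_w](\T_j\T_{j_*}^{-1})$. We need this to be divisible by $(q-1)^{\ell_T(w)+1}$, i.e.\ we need $[\T_w](\T_j\T_{j_*}^{-1})$ divisible by $(q-1)^{\ell_T(w)}$. If $\ell_T(wt)=\ell_T(w)+1$, this already follows from the divisibility by $(q-1)^{\ell_T(wt)}$.
\end{itemize}
So the only remaining case is $w\in\NC(W,c^{-1})\setminus\{e,t\}$ with $\ell_T(wt)=\ell_T(w)-1$.

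\textbf{The remaining case (main obstacle).} Here we get only $(q-1)^{\ell_T(w)-1}$ for free and must gain one more factor. Write $\T_{j_*}^{-1}=\T_{s_{k}}^{-1}\cdots\T_{s_1}^{-1}$ for a reduced word $(s_1,\ldots,s_k)$ of $j_*$, and apply \cref{lem:first} with $v=j$ and $\vv\beta=(s_k,\ldots,s_1)$. This shows that $[\T_w]$ is divisible by $(q-1)^{\kappa}$, where $\kappa$ is the minimum number of skips in a $j$-distinguished subword of $\vv\beta$ whose product, multiplied on the left by $j$, equals $w$. It therefore suffices to show that no such subword has exactly $\ell_T(w)-1$ skips.

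A subword that takes every letter produces $jj_*^{-1}=t$. Skipping letters modifies the product one reflection at a time: the effect is $w=t\,r_1\cdots r_m$, where $m$ is the number of skips. Each $r_i$ is obtained by conjugating a skipped letter through the walk; since the walk starts at $j$ and moves by right multiplication inside $[e,j]$, I expect each $r_i$ to be an inversion of $j$ different from $t$.

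Now suppose $m=\ell_T(w)-1$. Then $(t,r_1,\ldots,r_m)$ is a reduced $T$-word for $w\in\NC(W,c^{-1})$. Prefixes of reduced $T$-words of elements of $\NC(W,c^{-1})$ stay in $\NC(W,c^{-1})$, so $tr_1\in\NC(W,c^{-1})$. This contradicts \cref{lem:join-irreducible} applied to the $c$-sortable join-irreducible element $j$ and the inversion $r_1\in\mathrm{Inv}(j)\setminus\{t\}$.

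The delicate step is justifying that every skipped reflection $r_i$, after conjugation through the walk, really lies in $\mathrm{Inv}(j)\setminus\{t\}$. The distinguished condition should be exactly what guarantees this: it forces a take whenever the walk would otherwise leave the interval $[e,j]$.
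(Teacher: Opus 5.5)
Your route is the paper's own: the same formula $\H(\tt)=q+(q-1)\T_j\T_{j_*}^{-1}$ and the same evaluations of $a^c_{t,e}(1)$ and $a^c_{t,t}(1)$. The key step is also the same: \cref{lem:first} with $v=j$, a minimal-skip distinguished subword, the decomposition $w=t\,r_1\cdots r_m$, and \cref{lem:join-irreducible}. Your preliminary split via \cref{lem:reflection_length_divisibility} is harmless but unnecessary, since the minimal-skip argument gives $\kappa\ge\ell_T(w)$ for every $w\in\NC(W,c^{-1})\setminus\{e,t\}$ at once.

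The gap is the step you flag, and the justification you sketch for it is wrong.

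\begin{itemize}
\item Walks of $j$-distinguished subwords do not stay in $[e,j]$. Since $s$ is the only descent of $j$ and $s_k\neq s$, the all-takes subword (which is always distinguished) immediately climbs from $j$ to $js_k$. The distinguished condition only forces downward steps, and it plays no role here beyond its use inside \cref{lem:first}.
\item Nor should the reflections come from conjugating by the walk, because that places them to the left of $t$. For example, skipping only $s_k$ gives $w=(js_kj^{-1})\,t$, and $js_kj^{-1}\notin\mathrm{Inv}(j)$ because $\ell(js_k)>\ell(j)$.
\end{itemize}

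What you need is the following direct computation, which is valid for every subword, distinguished or not. If the skipped letters are $s_{i_1},\ldots,s_{i_m}$ with $i_1>\cdots>i_m$, then
\[
j\,\os_k\cdots\os_1=t\,r_{i_1}r_{i_2}\cdots r_{i_m},\qquad r_i=s_1\cdots s_{i-1}s_is_{i-1}\cdots s_1 .
\]
This follows from $t=js_k\cdots s_1$ together with repeated use of $(s_a\cdots s_1)\,r_i=s_a\cdots s_{i+1}s_{i-1}\cdots s_1$ for $i\le a$.

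The $r_i$ form the inversion sequence of the reduced word $(s_1,\ldots,s_k)$ for $j_*$. Since $(s_1,\ldots,s_k,s)$ is reduced with inversion sequence $(r_1,\ldots,r_k,t)$, the $r_i$ are exactly the elements of $\mathrm{Inv}(j)\setminus\{t\}$ (left inversions, as in the paper). Hence $r_{i_1}\in\mathrm{Inv}(j)\setminus\{t\}$, and your contradiction with \cref{lem:join-irreducible} goes through. With this computation inserted, your argument coincides with the paper's proof.
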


\begin{proof}
Let $j_t$ be the unique $c$-sortable join-irreducible element of $W$ whose cover reflection is $t$. Fix a reduced word $(s_1,\ldots,s_k)$ for $j_t$. Let $\vword=(s_{k-1},\ldots,s_{1})$. For each $1\leq j\leq k$, let $r_j=s_1\cdots s_{j-1}s_js_{j-1}\cdots s_1$. By \cref{prop:dual_artin}, we have \[\H(\t)=\T_{s_1}\cdots \T_{s_{k-1}}\T_{s_k}\T_{s_{k-1}}^{-1}\cdots \T_{s_1}^{-1}=\T_{j_t}\T_{s_{k-1}}^{-1}\cdots \T_{s_1}^{-1},\] so 
\begin{align}
\nonumber \H(\tt)&=\H(\t)^2 \\ \nonumber &=(q-1)\H(\t)+q \\ 
\label{eq:q-1} &=(q-1)\T_{j_t}\T_{s_{k-1}}^{-1}\cdots \T_{s_1}^{-1}+q. 
\end{align}
Choose $w\in W$; it follows from \cref{lem:reflection_length_divisibility} that $[\T_w]\H(\tt)$ is divisible by $(q-1)^{\ell_{\RR}(w)}$. Now assume that $w\in\NC(W,c^{-1})\setminus\{e,t\}$ and that $[\T_w]\H(\tt)\neq 0$. Write $w=j_ty$. Since $[\T_{w}]\H(\tt)\neq 0$, it follows from \cref{lem:first} that $\kappa^{j_t}(\vword,y)<\infty$, so there exists a $j_t$-distinguished subword \[{\sf y}=(\os_{k-1},\ldots,\os_1)\in\Dist^{j_t}(\vword,y)\] such that $\mathrm{skip}({\sf y})=\kappa^{j_t}(\vword,y)$. Let $p_1<\cdots<p_m$ 
be the skips of ${\sf y}$, where $m=\mathrm{skip}({\sf y})$ (so $\os_{k-p_i}=e$ for all $1\leq i\leq m$). Since $y=\os_{k-1}\cdots\os_{1}$, a straightforward computation shows that 
\begin{equation}\label{eq:wproduct}
w=j_ty=t\prod_{i=1}^mr_{k-p_i}.
\end{equation} 
It follows from \cref{lem:first} and \eqref{eq:q-1} that $[\T_w]\H(\t)$ is divisible by $(q-1)^{\kappa^{j_t}(\vword,y)}$, so $[\T_w]\H(\tt)$ is divisible by $(q-1)^{\kappa^{j_t}(\vword,y)+1}$. Note that $m\geq 1$ since $w\neq t$. It follows from \cref{lem:join-irreducible} that $tr_{k-p_1}$ is not $c^{-1}$-noncrossing. This implies that the reflection factorization in \eqref{eq:wproduct} is not a reduced reflection factorization, so $\ell_{\RR}(w)<\kappa^{j_t}(\vword,y)+1$. Hence, $a_{t,w}^c(q)$ is divisible by $q-1$. 

The fact that $a_{t,e}^c(1)=1$ follows from \eqref{eq:q-1}. When we specialize $q=1$, the element $\H(\t)$ specializes to the element $t$ in the group algebra $\mathbb Z[W]$. Hence, 
\[a_{t,t}^c(1)=(q-1)^{-1}[\T_t]\H(\tt)\Big|_{q=1}=[\T_t]\H(\t)\Big|_{q=1}=1. \qedhere \]
\end{proof} 

\begin{lemma}\label{lem:dual=standard} 
Fix $\vect=(t_1,\ldots,t_M)\in T^M$ and $(\epsilon_1,\ldots,\epsilon_M)\in\{\pm 1\}^M$. For each $w\in W$, the coefficient 
\[[\T_w]\H(\tt_1^{\epsilon_1}\cdots\tt_M^{\epsilon_M})\] is divisible by $(q-1)^{\ell_{\RR}(w)}$. Moreover, for each $\pi\in\NC(W,c^{-1})$, we have 
\[(q-1)^{-\ell_{\RR}(\pi)}[\T_\pi]\H(\tt_1^{\epsilon_1}\cdots \tt_M^{\epsilon_M})\Big|_{q=1} =(q-1)^{-\ell_{\RR}(\pi)}[\T_\pi]((1+\epsilon_1(q-1)\T_{t_1})\cdots (1+\epsilon_M(q-1)\T_{t_M}))\Big|_{q=1}.\] 
\end{lemma}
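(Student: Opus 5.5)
The plan is to prove the divisibility claim first, and then the leading-term identity by expanding the product factor by factor.

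Divisibility is immediate from \cref{lem:reflection_length_divisibility}. Each $\H(\tt_i)^{\epsilon_i}$ is a product of basis elements $\T_s^{\pm1}$, obtained by writing $\t_i$ as in \cref{prop:dual_artin} and squaring or inverting. The total word in $S$ multiplies to the identity in $W$, because $\tt_1^{\epsilon_1}\cdots\tt_M^{\epsilon_M}$ is a pure braid. So the exponent $\ell_T(w\cdot e)=\ell_T(w)$ appears, and $[\T_w]$ of the product is divisible by $(q-1)^{\ell_T(w)}$.

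For the identity, I will first express each factor in a common shape. For $\epsilon_i=1$, write $\H(\tt_i)=1+(q-1)X_i$, where
\[X_i=\sum_{w\neq e}(q-1)^{\ell_T(w)-1}a^c_{t_i,w}\T_w+a^c_{t_i,e}\,\frac{q\cdot 1-1}{q-1}\cdots.\]
Cleaner is the decomposition $\H(\tt)=\sum_w (q-1)^{\ell_T(w)}a_{t,w}\T_w$, with $a_{t,e}(1)=1$. For $\epsilon_i=-1$, use $\H(\tt)^{-1}$. Since $\H(\t)^2=(q-1)\H(\t)+q$, we get $\H(\tt)^{-1}=q^{-2}(1+q-q^{\,}\cdots)$. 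More usefully, $\H(\tt)^{-1}$ is also the image of a pure braid, so it too has an expansion $\sum_w (q-1)^{\ell_T(w)}b_{t,w}\T_w$. Its $\T_e$ coefficient is $1$ at $q=1$. For $w\in\NC(W,c^{-1})\setminus\{e,t\}$, the explicit formula from \eqref{eq:expand} gives $\H(\tt)^{-1}=-q^{-2}(q-1)\H(\t)+(1-q^{-1}+q^{-2})$. Hence $b_{t,w}=-q^{-2}\cdot(q-1)^{-1}a_{t,w}$ up to the $\T_e$ term, and so $b_{t,w}(1)=0$ for such $w$ by \cref{lem:dual_square_expansion}, while $b_{t,t}(1)=-1$. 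In both cases, write
\[\H(\tt_i^{\epsilon_i})=1+\epsilon_i(q-1)\T_{t_i}+E_i,\qquad E_i=\sum_w (q-1)^{\ell_T(w)}e_{i,w}\T_w,\]
where $e_{i,w}(1)=0$ for every $w\in\NC(W,c^{-1})$, including $w=e$ and $w=t_i$. The difference at $e$ is $q-1$ times something, and the difference at $t_i$ is controlled by the $a_{t,t}(1)=1$ normalization.

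Next I expand $\prod_i(A_i+E_i)$, where $A_i=1+\epsilon_i(q-1)\T_{t_i}$, and show that every term containing some $E_i$ contributes zero to $(q-1)^{-\ell_T(\pi)}[\T_\pi](\cdot)|_{q=1}$. A typical term is a product of monomials $(q-1)^{\ell_T(w_i)}c_i\T_{w_i}$, with $w_i=e$ allowed. By \cref{lem:reflection_length_divisibility}, applied to $\T_{w_1}\cdots\T_{w_M}$, the coefficient of $\T_\pi$ is divisible by $(q-1)^{\ell_T(\pi w_M^{-1}\cdots w_1^{-1})}$. The total exponent is therefore at least $\sum\ell_T(w_i)+\ell_T(\pi w_M^{-1}\cdots w_1^{-1})\ge\ell_T(\pi)$. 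Equality forces $(w_1,\ldots,w_M)$, completed by the remaining factor, to be a reduced factorization of $\pi$. This puts every $w_i$ in $[e,\pi]_T\subseteq\NC(W,c^{-1})$, since intervals of noncrossing partitions are downward closed.

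For such a term, a factor taken from $E_i$ has $e_{i,w_i}(1)=0$, so the term vanishes at $q=1$ after division. All other terms have strictly higher $(q-1)$-order and vanish as well. Only terms using the $A_i$ survive, which gives exactly the right-hand side.

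The main obstacle is this vanishing argument: making the monomial-by-monomial application of \cref{lem:reflection_length_divisibility} rigorous, and confirming the leading coefficients of $E_i$ at $e$ and $t_i$, especially in the $\epsilon_i=-1$ case.
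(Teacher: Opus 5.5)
Your route is essentially the paper's. You get divisibility from \cref{lem:reflection_length_divisibility}. You then expand each $\H(\tt_i)^{\epsilon_i}$ in the standard basis and apply \cref{lem:reflection_length_divisibility} monomial by monomial. Equality of $(q-1)$-orders forces every $w_i\le_T\pi$, hence $w_i\in\NC(W,c^{-1})$, and \cref{lem:dual_square_expansion} then kills everything except $1+\epsilon_i(q-1)\T_{t_i}$. Packaging the deviations at $e$ and $t_i$ into $E_i$ is a small improvement: it makes explicit the final replacement $a_{t_i,e}(q)\mapsto 1$, $a_{t_i,t_i}(q)\mapsto\epsilon_i$, which the paper does in one line.

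The one step that is wrong as written is the $\epsilon_i=-1$ computation, which is exactly where you located your main worry. The relation $b_{t,w}=-q^{-2}(q-1)^{-1}a_{t,w}$ is off by a factor of $q-1$. With it, $b_{t,t}$ would not even be a Laurent polynomial, since $a_{t,t}(1)=1$. Also, $(q-1)\mid a_{t,w}$ would no longer give $b_{t,w}(1)=0$.

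The correct relation comes from substituting $(q-1)\H(\t)=\H(\tt)-q$ into the formula you quote, which gives
\[
\H(\tt)^{-1}=q^{-2}\bigl(q^2+1-\H(\tt)\bigr).
\]
Hence $b_{t,w}=-q^{-2}a_{t,w}$ for $w\neq e$, and $b_{t,e}=q^{-2}(q^2+1-a_{t,e})$. This is the paper's relation \eqref{eq:neg}, and it gives exactly the values you use:
\begin{itemize}
\item $b_{t,e}(1)=1$;
\item $b_{t,t}(1)=-1$;
\item $(q-1)\mid b_{t,w}$ for $w\in\NC(W,c^{-1})\setminus\{e,t\}$.
\end{itemize}
With that correction, and after discarding the abandoned $X_i$ and ``$q^{-2}(1+q-\cdots)$'' fragments, your argument is complete.
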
 

\begin{proof}
The fact that $[\T_w]\H(\tt_1^{\epsilon_1}\cdots\tt_M^{\epsilon_M})$ is divisible by $(q-1)^{\ell_{\RR}(w)}$ is a direct consequence of \cref{lem:reflection_length_divisibility}. 

For each $1\leq i\leq M$, we can expand $\H(\tt_i)$ as in \cref{lem:dual_square_expansion} as $\sum_{w\in W}(q-1)^{\ell_\RR(w)}a_{t_i,w}^c(q)\T_w$. Let us write $a_{t_i,w}^{c,(1)}(q)=a_{t_i,w}^c(q)$ and also write \[\H(\tt_i)^{-1}=\sum_{w\in W}(q-1)^{\ell_\RR(w)}a_{t_i,w}^{c,(-1)}(q)\T_w.\]
Since $\t_i$ is a braid lift of a reflection, it is
conjugate to the Artin lift of a simple reflection.  Hence, we have the
quadratic Hecke relation $\H(\tt_i)=\H(\t_i)^2=(q-1)\H(\t_i)+q$.
It follows that
\[
\H(\t_i)^{-1}=q^{-1}\H(\t_i)-q^{-1}(q-1).
\]
Thus, 
\[
\H(\tt_i)^{-1}=\H(\t_i)^{-2}
=q^{-2}(\H(\t_i)-(q-1))^2.
\]
Using the identity $\H(\t_i)^2=(q-1)\H(\t_i)+q$, this becomes
\[
\H(\tt_i)^{-1}
=q^{-2}\left(-(q-1)\H(\t_i)+q+(q-1)^2\right)
.
\]
Finally, since $\H(\tt_i)=(q-1)\H(\t_i)+q$, we can rewrite this as
\[
\H(\tt_i)^{-1}
=q^{-2}\left(-\H(\tt_i)+q^2+1\right).
\]
This implies that 
\begin{equation}\label{eq:neg} 
a_{t_i,w}^{c,(-1)}(q)=-q^{-2}a_{t_i,w}^{c,(1)}(q)\quad\text{for all }w\in W\setminus\{e\}.
\end{equation} In particular, $a_{t_i,t_i}^{c,(\epsilon_i)}(1)=\epsilon_i$ by \cref{lem:dual_square_expansion}. Moreover, \[a_{t_i,e}^{c,(-1)}(q)=q^{-2}\left(-a_{t_i,e}^{c,(1)}(q)+q^2+1\right),\] so it follows from \cref{lem:dual_square_expansion} that $a_{t_i,e}^{c,(\epsilon_i)}(1)=1$. Note that 
\[\H(\tt_1^{\epsilon_1}\cdots \tt_M^{\epsilon_M})=\sum_{(w_1,\ldots,w_M)\in W^M}(q-1)^{\ell_T(w_1)+\cdots+\ell_T(w_M)}a_{t_1,w_1}^{c,(\epsilon_1)}(q)\cdots a_{t_M,w_M}^{c,(\epsilon_M)}(q)\T_{w_1}\cdots \T_{w_M}.\]

Now suppose $\pi\in\NC(W,c^{-1})$. For each $(w_1,\ldots,w_M)\in W^M$, we can invoke \cref{lem:reflection_length_divisibility} to see that \[[\T_\pi]\left((q-1)^{\ell_\RR(w_1)+\cdots+\ell_\RR(w_M)}a_{t_1,w_1}^{c,(\epsilon_1)}(q)\cdots a_{t_M,w_M}^{c,(\epsilon_M)}(q)\T_{w_1}\cdots \T_{w_M}\right)\] is divisible by \[(q-1)^{\ell_\RR(w_1)+\cdots+\ell_\RR(w_M)+\ell_\RR(\pi w_M^{-1}\cdots w_1^{-1})}.\] We have the inequality $\ell_\RR(\pi w_M^{-1}\cdots w_1^{-1})\geq \ell_\RR(\pi)-(\ell_{\RR}(w_1)+\cdots+\ell_\RR(w_M))$. In order for this inequality to be an equality, we must have the relation $w_1,\ldots,w_M\leq_{T} \pi$ in the absolute order, implying that $w_1,\ldots,w_M\in\NC(W,c^{-1})$. Thus, in order for \[(q-1)^{-\ell_\RR(\pi)}[\T_\pi]\left((q-1)^{\ell_\RR(w_1)+\cdots+\ell_\RR(w_M)}a_{t_1,w_1}^{c,(\epsilon_1)}(q)\cdots a_{t_M,w_M}^{c,(\epsilon_M)}(q)\T_{w_1}\cdots \T_{w_M}\right)\Big|_{q=1}\] to be nonzero, we must have $w_1,\ldots,w_M\in\NC(W,c^{-1})$, and the product $a_{t_1,w_1}^{c,(\epsilon_1)}(q)\cdots a_{t_M,w_M}^{c,(\epsilon_M)}(q)$ must not be divisible by $q-1$. According to \cref{lem:dual_square_expansion} and \eqref{eq:neg}, this would require $w_i\in\{e,t_i\}$ for all $1\leq i\leq M$. Hence, $(q-1)^{-\ell_{\RR}(\pi)}[\T_\pi]\H(\tt_1^{\epsilon_1}\cdots\tt_M^{\epsilon_M})\big|_{q=1}$ is equal to 
\[(q-1)^{-\ell_{\RR}(\pi)}[\T_\pi]\left(\left(a_{t_1,e}^{c,(\epsilon_1)}(q)+(q-1)a_{t_1,t_1}^{c,(\epsilon_1)}(q)\T_{t_1}\right)\cdots \left(a_{t_M,e}^{c,(\epsilon_M)}(q)+(q-1)a_{t_M,t_M}^{c,(\epsilon_M)}(q)\T_{t_M}\right)\right)\Big|_{q=1}.\] The desired identity now follows since $a_{t_i,e}^{c,(\epsilon_i)}(1)=1$ and $a_{t_i,t_i}^{c,(\epsilon_i)}(1)=\epsilon_i$ for all $1\leq i\leq M$.  
\end{proof}

In the following lemma, we must use multiple different standard Coxeter elements, so we use the notation $\B_c(t)$ and $\P_c(t)$ instead of $\t$ and $\tt$.

\begin{lemma}\label{lem:conjugation}
Fix $\vect=(t_1,\ldots,t_M)\in T^M$ and $(\epsilon_1,\ldots,\epsilon_M)\in\{\pm 1\}^M$. Fix a standard Coxeter element $c$ of $W$, and choose $s\in S$ such that $\ell(sc)<\ell(c)$. Let $\pi\in\NC(W,c^{-1})$. Then 
\begin{align*}
&(q-1)^{-\ell_{\RR}(\pi)}[\T_\pi](\H(\P_c(t_1))^{\epsilon_1}\cdots \H(\P_c(t_M))^{\epsilon_M})\Big|_{q=1} \\ =&(q-1)^{-\ell_{\RR}(s\pi s)}[\T_{s\pi s}](\H(\P_{scs}(st_1s))^{\epsilon_1}\cdots \H(\P_{scs}(st_Ms))^{\epsilon_M})\Big|_{q=1}. \end{align*}
\end{lemma}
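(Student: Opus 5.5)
The plan is to reduce both sides, via \cref{lem:dual=standard}, to coefficients of a product in which only the elements $\T_{t_i}$ appear. We then compare these reduced products using conjugation by $\T_s$ in the Hecke algebra, evaluated at $q=1$.

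First we must check that \cref{lem:dual=standard} applies on the right-hand side. Since $\ell(sc)<\ell(c)$, the element $s$ is initial in $c$, so $c'=scs$ is again a standard Coxeter element. Also $s\pi s\in\NC(W,c'^{-1})$, because conjugation by $s$ is an automorphism of absolute order sending $c^{-1}$ to $c'^{-1}$. Applying \cref{lem:dual=standard} twice (once with $c$, once with $c'$), the left side equals
\[
(q-1)^{-\ell_T(\pi)}[\T_\pi]\prod_i\bigl(1+\epsilon_i(q-1)\T_{t_i}\bigr)\Big|_{q=1},
\]
and the right side equals the analogous expression with $s\pi s$ and $\T_{st_is}$. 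These expressions no longer depend on the Coxeter element, so it suffices to prove a purely Hecke-theoretic statement: for any $\pi\in\NC(W,c^{-1})$, the leading coefficients at $\pi$ and at $s\pi s$ of these two products agree.

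To prove this, we expand each product as a sum over subsets $I=\{i_1<\cdots<i_k\}$ of the indices, with the term for $I$ equal to $\prod_{j}\epsilon_{i_j}(q-1)^k\T_{t_{i_1}}\cdots\T_{t_{i_k}}$. By \cref{lem:reflection_length_divisibility}, the $\T_\pi$-coefficient of the term for $I$ is divisible by $(q-1)^{k+\ell_T(\pi t_{i_k}\cdots t_{i_1})}$. So only subsets with $k+\ell_T(\pi t_{i_k}\cdots t_{i_1})=\ell_T(\pi)$ survive after dividing by $(q-1)^{\ell_T(\pi)}$ and setting $q=1$.

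For a surviving $I$, the leading coefficient is $\prod_j\epsilon_{i_j}$ times $(q-1)^{-\ell_T(\pi)+k}[\T_\pi](\T_{t_{i_1}}\cdots\T_{t_{i_k}})|_{q=1}$. We must show this equals the corresponding quantity for $s\pi s$ and the reflections $st_{i_j}s$; the two sets of surviving subsets coincide, since conjugation by $s$ preserves $\ell_T$. The natural tool is the identity $\T_{sws}=\T_s\T_w\T_s^{-1}$ when $\ell(sws)=\ell(w)+2$, together with its analogues when the length changes by $0$ or $-2$. Working modulo higher powers of $q-1$, we note that $\T_s\T_w\T_s^{-1}$ and $\T_{sws}$ differ by terms involving $\T_{sw}$ or $\T_{ws}$, times factors of $q-1$. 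We would therefore like to show that conjugation $x\mapsto\T_sx\T_s^{-1}$ sends $(q-1)^{\ell_T(w)}\T_w$ to $(q-1)^{\ell_T(w)}\T_{sws}$ plus corrections whose $(q-1)$-adic valuation exceeds the reflection-length threshold. Given this, the identity follows: $\T_s\bigl(\prod_i(1+\epsilon_i(q-1)\T_{t_i})\bigr)\T_s^{-1}=\prod_i\bigl(1+\epsilon_i(q-1)\T_s\T_{t_i}\T_s^{-1}\bigr)$, and the coefficients of $\T_{s\pi s}$ in this conjugate can be compared with the original coefficients of $\T_\pi$.

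The main obstacle is exactly this leading-order comparison of $\T_s\T_t\T_s^{-1}$ with $\T_{sts}$, and of the corresponding coefficients at $\pi$ versus $s\pi s$. The correction terms involve $\T_{sw}$ and $\T_{ws}$, and since $\ell_T(sw)=\ell_T(w)\pm1$ they can drop the valuation by one. I would first try the specialization of the Hecke algebra to the group algebra $\mathbb Z[W]$ at $q=1$, where conjugation by $s$ is exact. The real point is then a filtration argument: the associated graded of the $(q-1)$-adic filtration, weighted by $\ell_T$, should be a graded algebra on which $\T_s$ acts by conjugation as in $W$. If that approach fails, I would use the explicit rule that $\T_s\T_w\T_s^{-1}$ equals $\T_{sws}$ when $\ell(sws)=\ell(w)$ with $sw\ne ws$, and that $\T_s\T_w\T_s^{-1}=\T_w$ when $sw=ws$. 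In that case the only problematic corrections arise when $sw>w$ and $ws<w$; I would bound these using \cref{lem:reflection_length_divisibility} applied to the full product.
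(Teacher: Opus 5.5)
Your reduction via \cref{lem:dual=standard} is legitimate, and $s\pi s\in\NC(W,(scs)^{-1})$ is right. The problem is the comparison step, which carries all of the content: it does not work as proposed.

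Conjugation by $\T_s$ does not act on leading terms the way $W$ does. If $s$ is a descent of a reflection $t\neq s$, then $\T_s\T_t\T_s^{-1}=(q-1)\T_{ts}+q\T_{sts}$. So $(q-1)\T_s\T_t\T_s^{-1}$ contains $(q-1)^2\T_{ts}$, whose valuation equals the threshold $\ell_T(ts)=2$ exactly, and this term survives at $q=1$.

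Your explicit rule is also off. The identity $\T_s\T_w\T_s^{-1}=\T_{sws}$ holds precisely when $sw>w>ws$ or $sw=ws$, which is the very case you flag as problematic. All other cases produce corrections.

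This breaks your comparison even under the lemma's hypotheses. Take $W=\mathfrak S_3$, $c=s_1s_2$, $s=s_1$, $\vect=((13))$, $\epsilon_1=1$, and $\pi=c^{-1}=s_2s_1$, so that $s\pi s=s_1s_2$. Both sides of the lemma are $0$, yet
\[
\T_{s_1}\bigl(1+(q-1)\T_{s_1s_2s_1}\bigr)\T_{s_1}^{-1}=1+(q-1)^2\T_{s_1s_2}+q(q-1)\T_{s_2}
\]
has $(q-1)^{-2}[\T_{s_1s_2}](\,\cdot\,)\big|_{q=1}=1$.

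More fundamentally, your comparison never uses that $s$ is a left descent of $c$ or that $\pi\in\NC(W,c^{-1})$, and some such input is unavoidable. By the remark after \cref{cor:main}, normalized coefficients are not conjugation invariant in general. The element $\nu=(1324)$ there has normalized coefficient $1$. Conjugating it to a standard Coxeter element turns that coefficient into a subword count for a length-$2$ word, which is $0$. So no filtration argument that is blind to these hypotheses can succeed.

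The missing idea is to keep the dual lifts rather than passing to standard-basis products. Since Hurwitz moves commute with simultaneous conjugation, $\B_{scs}(sts)=\s^{-1}\B_c(t)\s$ holds exactly. Hence the right-hand product is exactly $\T_s^{-1}X\T_s$ with $X=\H(\P_c(t_1)^{\epsilon_1}\cdots\P_c(t_M)^{\epsilon_M})$; note that this conjugation runs in the direction opposite to yours. Then \cref{lem:coeff-identity} and the trace property move the conjugation onto a single basis element:
\[
[\T_{s\pi s}]\bigl(\T_s^{-1}X\T_s\bigr)=q^{-\ell(s\pi s)}[\T_e]\bigl(\T_s\T_{s\pi^{-1}s}\T_s^{-1}X\bigr).
\]
Expand $\T_s\T_{s\pi^{-1}s}\T_s^{-1}$ over $\{\pi^{-1},s\pi^{-1},\pi^{-1}s,s\pi^{-1}s\}$. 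Since each $[\T_w]X$ is divisible by $(q-1)^{\ell_T(w)}$, the only terms besides $\T_{\pi^{-1}}$ that could survive are those at $s\pi^{-1}$ and $\pi^{-1}s$ when $\ell_T(\pi s)=\ell_T(\pi)-1$.

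This is exactly where the hypotheses enter. The conditions $\pi\leq_T c^{-1}$, $\ell_T(\pi s)=\ell_T(\pi)-1$, and $s$ being a left descent of $c$ together give $\pi s\leq_T c^{-1}s$. Here $c^{-1}s$ is a Coxeter element of $W_{S\setminus\{s\}}$, so $\pi s\in W_{S\setminus\{s\}}$. Therefore $\T_{s\pi^{-1}s}=\T_{s\pi^{-1}}\T_s$, and $\T_s\T_{s\pi^{-1}s}\T_s^{-1}=\T_{\pi^{-1}}$ exactly, so the dangerous terms never appear.
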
 
\begin{proof}
Choose a reduced word $(s_1,s_2,\ldots,s_r)$ for $c$, where $s_1=s$. Then $(s_2,\ldots,s_r,s_1)$ is a reduced word for $scs$. By definition, $\B_c(t_i)$ is the unique lift of $t_i$ that appears in a word in the Hurwitz orbit of $(\s_1,\s_2,\ldots,\s_r)$. Note that the word \[(\Hur_{r-1}\circ\cdots\circ\Hur_{1})(\s_1,\s_2,\ldots,\s_r)=(\s_1\s_2\s_1^{-1},\ldots,\s_1\s_r\s_1^{-1},\s_1)\] is obtained from $(\s_2,\ldots,\s_r,\s_1)$ by conjugating each entry by $\s_1$. Now, $\B_c(t_i)$ is the unique lift of $t_i$ that appears in a word in the Hurwitz orbit of $(\s_1\s_2\s_1^{-1},\ldots,\s_1\s_r\s_1^{-1},\s_1)$. Moreover, $\B_{scs}(st_is)$ is the unique lift of $st_is$ that appears in a word in the Hurwitz orbit of $(\s_2,\ldots,\s_r,\s_1)$. Every Hurwitz move in the braid group commutes with the operation of simultaneously conjugating all entries in a tuple by $\s$. Therefore, $\B_{scs}(st_is)=\s^{-1}\B_c(t_i)\s$. It follows that $\H(\B_{scs}(st_is))=\T_s^{-1}\H(\B_c(t_i))\T_s$ for all $1\leq i\leq M$. Invoking \cref{lem:coeff-identity} and the fact that traces are invariant under conjugation, we find that 
\begin{align}
\nonumber [\T_{s\pi s}]\H\!\left(\P_{scs}(st_1s)^{\epsilon_1}\cdots \P_{scs}(st_Ms)^{\epsilon_M}\right)&=[\T_{s\pi s}]\left(\T_s^{-1}\H(\P_{c}(t_1)^{\epsilon_1}\cdots \P_c(t_M)^{\epsilon_M})\T_s\right) \\ 
\nonumber &=q^{-\ell(s\pi s)}[\T_{e}]\left(\T_{s\pi^{-1}s}\T_s^{-1}\H(\P_{c}(t_1)^{\epsilon_1}\cdots {\P_c(t_M)}^{\epsilon_M}])\T_s\right) \\ 
\label{eq:u}&=q^{-\ell(s\pi s)}[\T_{e}]\left(\T_s\T_{s\pi^{-1}s}\T_s^{-1}\H({\P_{c}(t_1)}^{\epsilon_1}\cdots {\P_c(t_M)}^{\epsilon_M})\right). 
\end{align}
In addition,  
\begin{equation}\label{eq:t}
[\T_\pi]\H({\P_{c}(t_1)}^{\epsilon_1}\cdots {\P_c(t_M)}^{\epsilon_M})=q^{-\ell(\pi)}[\T_e](\T_{\pi^{-1}}\H({\P_{c}(t_1)}^{\epsilon_1}\cdots {\P_c(t_M)}^{\epsilon_M})).
\end{equation}
It is straightforward to check that we can write \begin{equation}\label{eq:conjugation_expand}
\T_s\T_{s\pi^{-1}s}\T_s^{-1}=\sum_{z\in Z}b_z(q)\T_z,\end{equation} where $Z=\{\pi^{-1},s\pi^{-1},\pi^{-1}s,s\pi^{-1}s\}$ and each $b_z(q)$ is an element of $\mathbb Z[q^{\pm 1}]$. (It is possible that $\pi$ and $s$ commute, in which case $Z=\{\pi^{-1},s\pi^{-1}\}$.) By specializing $q=1$ in \eqref{eq:conjugation_expand}, we find that $b_{\pi^{-1}}(1)=1$ and that $b_z(q)$ is divisible by $q-1$ for each $z\in Z\setminus\{\pi^{-1}\}$. We claim that for each $z\in Z\setminus\{\pi^{-1}\}$, the coefficient $b_z(q)$ is divisible by $(q-1)^{\ell_\RR(\pi)-\ell_{\RR}(z)+1}$. If we can prove this claim, then it will follow from \cref{lem:reflection_length_divisibility} that 
\[(q-1)^{-\ell_\RR(\pi)}[\T_e]\left(b_z(q)\T_z\H({\P_c(t_1)}^{\epsilon_1}\cdots {\P_c(t_M)}^{\epsilon_M})\right)\Big|_{q=1}=0\] for all $z\in Z\setminus\{\pi^{-1}\}$. Since $\ell_\RR(s\pi s)=\ell_\RR(\pi)$, it will then follow from \eqref{eq:u}, \eqref{eq:t}, and \eqref{eq:conjugation_expand} that 
\begin{align*}
&\hphantom{=}\hspace{0.2cm}(q-1)^{-\ell_\RR(s\pi s)}[\T_{s\pi s}](\H({\P_{scs}(st_1s)}^{\epsilon_1}\cdots {\P_{scs}(st_Ms)}^{\epsilon_M}))\Big|_{q=1} \\ &=q^{-\ell(s\pi s)}(q-1)^{-\ell_\RR(\pi)}[\T_{e}](\T_s\T_{s\pi^{-1}s}\T_s^{-1}\H({\P_{c}(t_1)}^{\epsilon_1}\cdots {\P_c(t_M)}^{\epsilon_M}))\Big|_{q=1} \\ &= q^{-\ell(s\pi s)}(q-1)^{-\ell_\RR(\pi)}[\T_{e}](b_{\pi^{-1}}
(q)\T_{\pi^{-1}}\H({\P_{c}(t_1)}^{\epsilon_1}\cdots {\P_c(t_M)}^{\epsilon_M}))\Big|_{q=1} \\ &= q^{-\ell(s\pi s)}(q-1)^{-\ell_\RR(\pi)}[\T_{e}](\T_{\pi^{-1}}\H({\P_{c}(t_1)}^{\epsilon_1}\cdots {\P_c(t_M)}^{\epsilon_M}))\Big|_{q=1} \\ &=q^{\ell(\pi)-\ell(s\pi s)}(q-1)^{-\ell_{\RR}(\pi)}[\T_\pi]\H({\P_{c}(t_1)}^{\epsilon_1}\cdots {\P_c(t_M)}^{\epsilon_M})\Big|_{q=1} \\  &=(q-1)^{-\ell_{\RR}(\pi)}[\T_\pi]\H({\P_{c}(t_1)}^{\epsilon_1}\cdots {\P_c(t_M)}^{\epsilon_M})\Big|_{q=1}, 
\end{align*}
as desired. 

We now prove the claim that for each $z\in Z\setminus\{\pi^{-1}\}$, the coefficient $b_z(q)$ is divisible by $(q-1)^{\ell_\RR(\pi)-\ell_\RR(z)+1}$.
We have already seen that every coefficient $b_z(q)$ with $z\in Z\setminus\{\pi^{-1}\}$ is divisible by $q-1$, so we may assume $\ell_\RR(z)\leq\ell_\RR(\pi)-1$. Since $\ell_\RR(s\pi^{-1}s)=\ell_\RR(\pi^{-1})=\ell_\RR(\pi)$, this assumption forces $z$ to be either $s\pi^{-1}$ or $\pi^{-1}s$. 

We have $\ell_\RR(s\pi^{-1})=\ell_\RR(\pi^{-1}s)=\ell_\RR(\pi^{-1})-1=\ell_\RR(\pi)-1$. Hence, $\ell_\RR(\pi s)=\ell_\RR(\pi)-1$. Let $y=c^{-1}\pi^{-1}$ so that $c^{-1}=y\pi$ and $\ell_\RR(c^{-1})=\ell_{\RR}(y)+\ell_\RR(\pi)$ (because $\pi\in\NC(W,c^{-1})$). Let $J=S\setminus\{s\}$, and let $W_J$ be the standard parabolic subgroup of $W$ generated by $J$. We have assumed that $s$ is a left descent of $c$, so $c^{-1}s$ is a standard Coxeter element of $W_J$. We have \[c^{-1}s=y(\pi s)\quad\text{and}\quad\ell_\RR(c^{-1}s)=\ell_\RR(c^{-1})-1=\ell_{\RR}(y)+\ell_\RR(\pi)-1=\ell_{\RR}(y)+\ell_\RR(\pi s),\] so $\pi s\in\NC(W_J,c^{-1}s)$. In particular, $\pi s$ is in $W_J$, so its inverse $s\pi^{-1}$ is in $W_J$ as well. This implies that $\ell(\pi^{-1})=\ell(s\pi^{-1}s)=\ell(s\pi^{-1})+1$. As a result, $\T_s\T_{s\pi^{-1}s}\T_s^{-1}=\T_{\pi^{-1}}$, so $b_z(q)$ is $0$, which is certainly divisible by $(q-1)^{\ell_\RR(\pi)-\ell_\RR(z)+1}$. 
\end{proof}

\subsection{Signed Subwords}
We now prove our main theorem. Recall the definition of the signed subword count $\Subb_\RR^{\protect\vv\epsilon}(\protect\vv{t},\pi)$ from \eqref{eq:sub}. 

\begin{theorem}\label{thm:main}
Fix a sequence $\vect=(t_1,\ldots,t_M)\in T^M$ of reflections and the corresponding sequence ${\vectt=(\tt_1,\ldots,\tt_M)\in\mathbb T^M}$ of squares of $c$-dual lifts. Fix a sign vector $\vece=(\epsilon_1,\ldots,\epsilon_M)\in\{\pm 1\}^M$.
For each $\pi\in\NC(W,c^{-1})$, we have 
\[(q-1)^{-\ell_{\RR}(\pi)}[\T_\pi]\H(\tt_1^{\epsilon_1}\cdots \tt_M^{\epsilon_M})\Big|_{q=1}=\Subb_\RR^{\protect\vv\epsilon}(\protect\vv{t},\pi).\]
\end{theorem}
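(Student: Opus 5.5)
The plan is to use \cref{lem:dual=standard}, which already reduces the problem to the simpler product
\[
X:=(1+\epsilon_1(q-1)\T_{t_1})\cdots(1+\epsilon_M(q-1)\T_{t_M}),
\]
where $\T_{t}$ denotes the standard basis element indexed by the reflection $t$ (not the Hecke image of the dual lift). What remains is to show that $(q-1)^{-\ell_T(\pi)}[\T_\pi]X\big|_{q=1}=\Subb_T^{\vece}(\vect,\pi)$ for $\pi\in\NC(W,c^{-1})$. First I expand $X$ as a sum over subsets $I=\{i_1<\cdots<i_k\}\subseteq[M]$, with $I$ contributing
\[
\Bigl(\prod_{i\in I}\epsilon_i\Bigr)(q-1)^{k}\,\T_{t_{i_1}}\cdots\T_{t_{i_k}}.
\]

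Next I control each term. By \cref{lem:reflection_length_divisibility} applied with $v_j=t_{i_j}$ and all exponents $+1$, the coefficient $[\T_\pi](\T_{t_{i_1}}\cdots\T_{t_{i_k}})$ is divisible by $(q-1)^{\ell_T(\pi t_{i_k}\cdots t_{i_1})}$. So the term for $I$ is divisible by $(q-1)^{k+\ell_T(\pi t_{i_k}\cdots t_{i_1})}$. Since $\ell_T(\pi t_{i_k}\cdots t_{i_1})\ge \ell_T(\pi)-k$, with equality only if $t_{i_1}\cdots t_{i_k}\le_T\pi$, the exponent is at least $\ell_T(\pi)$. It equals $\ell_T(\pi)$ exactly when $k=\ell_T(\pi)$ and $t_{i_1}\cdots t_{i_k}=\pi$. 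Indeed, if $k>\ell_T(\pi)$ then the exponent is at least $k>\ell_T(\pi)$. If $k\le \ell_T(\pi)$, the exponent is $k+\ell_T(\pi u^{-1})$ with $u=t_{i_1}\cdots t_{i_k}$, and $\ell_T(\pi u^{-1})\ge \ell_T(\pi)-\ell_T(u)\ge\ell_T(\pi)-k$. Equality forces $\ell_T(u)=k$, $u\le_T \pi$, and, after the further accounting in the last step, $u=\pi$. Thus only subsets $I$ indexing subwords in $\Sub_T(\vect,\pi)$ can survive after dividing by $(q-1)^{\ell_T(\pi)}$ and setting $q=1$.

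Finally, for such $I$ I evaluate $[\T_\pi](\T_{t_{i_1}}\cdots\T_{t_{i_k}})\big|_{q=1}$. At $q=1$ the Hecke algebra specializes to $\mathbb Z[W]$, and each $\T_{t}$ specializes to the group element $t$. The product therefore specializes to $t_{i_1}\cdots t_{i_k}=\pi$, and the coefficient is $1$. Each surviving $I$ thus contributes exactly its sign $\prod_{i\in I}\epsilon_i$, and summing gives $\Subb_T^{\vece}(\vect,\pi)$.

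The one subtle point, and the main obstacle, is the case $k=\ell_T(\pi)$ with $u\ne\pi$. This case really needs no extra accounting. Then $\pi u^{-1}\neq e$, so $\ell_T(\pi u^{-1})\ge 1$ and the exponent exceeds $\ell_T(\pi)$, and the term vanishes. So the equality case is exactly $u=\pi$ with $k=\ell_T(\pi)$. The noncrossing hypothesis on $\pi$ is used only through \cref{lem:dual=standard}. All the genuinely hard cancellation, namely the vanishing of the other $a^c_{t,w}(1)$ via \cref{lem:join-irreducible}, is already packaged there.
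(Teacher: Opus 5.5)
Your equality analysis has a genuine gap. You correctly show that the exponent $k+\ell_T(\pi u^{-1})$ can equal $\ell_T(\pi)$ only if $\ell_T(u)=k$ and $u\le_T\pi$, but this does not force $k=\ell_T(\pi)$. Suppose $(t_{i_1},\ldots,t_{i_k})$ is a reduced $T$-word for some $u$ with $u<_T\pi$ strictly. Then $\ell_T(\pi u^{-1})=\ell_T(\pi)-k$, so the exponent is exactly $\ell_T(\pi)$. In that case \cref{lem:reflection_length_divisibility} only says that $[\T_\pi](\T_{t_{i_1}}\cdots\T_{t_{i_k}})$ is divisible by $(q-1)^{\ell_T(\pi)-k}$, and nothing in your argument makes its leading coefficient vanish. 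Your ``further accounting'' only disposes of the easy case $k=\ell_T(\pi)$, $u\neq\pi$.

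Such terms really do survive in general. Take the Remark following \cref{cor:main}, with $\nu=(1324)\in\mathfrak S_4$ and $\vect=((14),(13))$. The subset $I=\{1,2\}$ has $k=2<3=\ell_T(\nu)$ and $(14)(13)<_T\nu$. Yet it contributes $1$ after dividing by $(q-1)^3$ and setting $q=1$, although $\Sub_T(\vect,\nu)=\emptyset$. Your last two steps never use that $\pi$ is noncrossing, so they would prove the identity for this $\nu$ as well, which is false. Your closing claim is therefore wrong: the noncrossing hypothesis must be used a second time, precisely to kill the contributions of subwords representing proper absolute-order predecessors of $\pi$. Given \cref{lem:dual=standard}, this vanishing is essentially equivalent to the theorem itself, so it is the real content rather than a formality.

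The paper avoids expanding the product globally. It inducts on $M$ and first reduces to the case where $t_M=s$ is a simple reflection. This reduction uses \cref{lem:conjugation} together with Reading's sequence of conjugations by initial letters, which keeps $\pi$ in $\NC(W,c_j^{-1})$ and preserves signed subword counts. \cref{lem:conjugation} is proved for the genuine pure braid images, where conjugation by $\mathbf{s}$ acts exactly on dual lifts, and its proof uses that $\pi$ is noncrossing.

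Once $t_M=s$ is simple, $[\T_\pi](\mathrm{y}\T_s)$ involves only $[\T_\pi]\mathrm{y}$ and $[\T_{\pi s}]\mathrm{y}$, with explicit coefficients, so there is no cascading collapse:
\begin{itemize}
\item the first piece is killed by the extra factor of $q-1$;
\item the second piece vanishes by divisibility if $\ell_T(\pi s)>\ell_T(\pi)$;
\item if $\pi s<_T\pi$, so that $\pi s$ is again noncrossing, the induction hypothesis identifies the second piece with the signed count of reduced words for $\pi$ ending in a take.
\end{itemize}
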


\begin{proof}
If $M=1$, then the result follows from \cref{lem:dual_square_expansion}. Thus, we may assume $M\geq 2$ and proceed by induction on $M$. 

Consider simple reflections $s_1,\ldots,s_k$, and for $0\leq j\leq k$, let $v_j=s_1\cdots s_j$. In \cite[Lemma 1.7]{reading2007clusters} (see also  \cite[Theorem 1.2]{bernstein1973coxeter}), Reading
proved that we can choose $s_1,\ldots,s_k$ so that the following hold: 
\begin{itemize}
\item For all $0\leq j\leq k-1$, the Coxeter element $c_{j}:=v_j^{-1}c v_j$ has $s_{j+1}$ as a left descent. 
\item The reflection $v_k^{-1}t_M v_k$ is simple. 
\end{itemize}
These conditions ensure that for all $0 \leq j\leq k$, the Coxeter element $c_j$ is standard and the element $v_j^{-1}\pi v_j$ belongs to $\NC(W,c_j^{-1})$. Because reflection length is invariant under conjugation, we have $\ell_{\RR}(v_j^{-1}\pi v_j)=\ell_{\RR}(\pi)$ for all $j$. Moreover, if we let $\vect^{(j)}=(v_j^{-1}t_1v_j,\ldots,v_j^{-1}t_M v_j)$, then \[\Subb_\RR^{\protect\vv\epsilon}(\vect^{(j)},v_j^{-1}\pi v_j)=\Subb_\RR^{\protect\vv\epsilon}(\protect\vv{t},\pi)\] since conjugation by $v_j$ provides a bijection from $\Sub_\RR(\vect^{(j)},v_j^{-1}\pi v_j)$ to $\Sub_\RR(\protect\vv{t},\pi)$. Combined with \cref{lem:conjugation}, this discussion shows that it suffices to prove the desired result when $t_M$ is a simple reflection. Thus, we assume in what follows that $t_M=s\in S$.   

Let $\vect\,'=(t_1,\ldots,t_{M-1})$ and $\protect\vv\epsilon\,'=(\epsilon_1,\ldots,\epsilon_{M-1})$. According to \cref{lem:dual=standard}, we have \begin{align*}
&\hphantom{=}\hspace{0.15cm}(q-1)^{-\ell_{\RR}(\pi)}[\T_\pi]\H(\tt_1^{\epsilon_1}\cdots \tt_M^{\epsilon_M})\Big|_{q=1} \\ &=(q-1)^{-\ell_{\RR}(\pi)}[\T_\pi]\left((1+\epsilon_1(q-1)\T_{t_1})\cdots (1+\epsilon_M(q-1)\T_{t_M})\right)\Big|_{q=1} \\  
&=(q-1)^{-\ell_{\RR}(\pi)}[\T_\pi]\left((1+\epsilon_1(q-1)\T_{t_1})\cdots(1+\epsilon_{M-1}(q-1)\T_{t_{M-1}})\right)\Big|_{q=1} \\ &\hspace{0.4cm}+\epsilon_M(q-1)^{-\ell_{\RR}(\pi)+1}[\T_\pi](1+\epsilon_1(q-1)\T_{t_1})\cdots(1+\epsilon_{M-1}(q-1)\T_{t_{M-1}})\T_s\Big|_{q=1}.
\end{align*}
We know by \cref{lem:dual=standard} and induction on $M$ that 
\[(q-1)^{-\ell_{\RR}(\pi)}[\T_\pi]\left((1+\epsilon_1(q-1)\T_{t_1})\cdots(1+\epsilon_{M-1}(q-1)\T_{t_{M-1}})\right)\Big|_{q=1}=\Subb_\RR^{\protect\vv\epsilon\,'}(\vect\,',\pi),\] and this is the signed count of subwords of $\vect$ that are reduced $\RR$-words for $\pi$. Therefore, we must show that \begin{equation}\label{eq:main_theorem_1}
\epsilon_M(q-1)^{-\ell_{\RR}(\pi)+1}[\T_\pi]\left((1+\epsilon_1(q-1)\T_{t_1})\cdots(1+\epsilon_{M-1}(q-1)\T_{t_{M-1}})\T_s\right)\Big|_{q=1} 
\end{equation} is the signed count of subwords of $\vect$ that are reduced $\RR$-words for $\pi$ and that end with takes. Let $b_\pi(q)$ and $b_{\pi s}(q)$ be the coefficients of $\T_\pi$ and $\T_{\pi s}$, respectively, in \[(1+\epsilon_1(q-1)\T_{t_1})\cdots(1+\epsilon_{M-1}(q-1)\T_{t_{M-1}}).\] 
It follows from \cref{lem:dual=standard} that $b_\pi(q)$ is divisible by $(q-1)^{\ell_{\RR}(\pi)}$ and that $b_{\pi s}(q)$ is divisible by $(q-1)^{\ell_{\RR}(\pi s)}$. 
Because $s$ is simple, the defining relations of the Hecke algebra imply that 
\[[\T_\pi]\left((1+\epsilon_1(q-1)\T_{t_1})\cdots (1+\epsilon_{M-1}(q-1)\T_{t_{M-1}})\T_s\right)=b_\pi(q)[\T_\pi](\T_\pi \T_s)+b_{\pi s}(q)[\T_{\pi}](\T_{\pi s}\T_s).\] Hence, the expression in \eqref{eq:main_theorem_1} is equal to \[\epsilon_M(q-1)^{-\ell_{\RR}(\pi)+1}b_\pi(q)[\T_\pi](\T_\pi \T_s)\Big|_{q=1}+\epsilon_M(q-1)^{-\ell_{\RR}(\pi)+1}b_{\pi s}(q)[\T_\pi](\T_{\pi s}\T_s)\Big|_{q=1}.\] 
The first term in this sum is $0$ because $b_\pi(q)$ is divisible by $(q-1)^{\ell_{\RR}(\pi)}$. In addition, the coefficient $[\T_\pi](\T_{\pi s}\T_s)$ is either $1$ or $q$, depending on whether $s$ is a descent of $\pi$ or not. Consequently, the expression in \eqref{eq:main_theorem_1}
 is equal to 
\begin{equation}\label{eq:main_theorem_2}\epsilon_M(q-1)^{-\ell_{\RR}(\pi)+1}b_{\pi s}(q)\Big|_{q=1}.
 \end{equation} 
If $\ell_{\RR}(\pi s)\geq\ell_{\RR}(\pi)$, then this is $0$ because $b_{\pi s}(q)$ is divisible by $(q-1)^{\ell_{\RR}(\pi s)}$. In this case, the proof is complete because there are no reduced $\RR$-words for $\pi$ that end with the letter $t_M=s$. On the other hand, if $\ell_{\RR}(\pi s)=\ell_{\RR}(\pi)-1$, then it follows from \cref{lem:dual=standard} and induction on $M$ that the expression in \eqref{eq:main_theorem_2} is $\epsilon_M\,\Subb_\RR^{\protect\vv\epsilon\,'}(\vect\,',\pi s)$, which is also the signed count of subwords of $\vect$ that are reduced $\RR$-words for $\pi$ and that end with takes. 
\end{proof} 

Setting $\protect\vv\epsilon=(1,\ldots,1)$ in \cref{thm:main} yields the following corollary. 

\begin{corollary}\label{cor:main}
Fix a sequence $\vect=(t_1,\ldots,t_M)\in T^M$ of reflections and the corresponding sequence ${\vectt=(\tt_1,\ldots,\tt_M)\in\mathbb T^M}$ of squares of $c$-dual lifts. For each $\pi\in\NC(W,c^{-1})$, we have 
\[(q-1)^{-\ell_{\RR}(\pi)}[\T_\pi]\H(\tt_1\cdots \tt_M)\Big|_{q=1}=\Subb_\RR(\protect\vv{t},\pi).\]
\end{corollary}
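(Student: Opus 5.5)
This is the specialization of \cref{thm:main} to the all-positive sign vector $\vece=(1,\ldots,1)$. No new Hecke-algebra computation is needed; we only check that both sides of \cref{thm:main} reduce to the two sides of the corollary.

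On the left, every exponent $\epsilon_i=1$, so the braid $\tt_1^{\epsilon_1}\cdots\tt_M^{\epsilon_M}$ is just $\tt_1\cdots\tt_M$. The left-hand sides therefore agree verbatim.

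On the right, the sign $\mathrm{sgn}^{\vece}(\su)$ of any subword $\su$ is a product of entries of $\vece$, so it equals $1$. By \eqref{eq:sub}, $\Subb_\RR^{\vece}(\vect,\pi)$ is then the unsigned cardinality $|\Sub_\RR(\vect,\pi)|=\Subb_\RR(\vect,\pi)$.

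The hypotheses of \cref{thm:main} are exactly those of the corollary: $\vect$ is arbitrary and $\pi\in\NC(W,c^{-1})$. So the identity follows immediately. There is no real obstacle here; the only point to record is the bookkeeping that the empty product of signs is also $1$. That case arises exactly when $\pi=e$, where the unique contributing subword is the all-skip word and the count is $1$.
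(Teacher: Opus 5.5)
Your proposal is correct and is exactly the paper's argument: the corollary is obtained by setting $\vece=(1,\ldots,1)$ in \cref{thm:main}, so the braid becomes $\tt_1\cdots\tt_M$ and the signed count $\Subb_\RR^{\vece}(\vect,\pi)$ becomes the unsigned count $\Subb_\RR(\vect,\pi)$. Your remark about the empty product when $\pi=e$ is harmless extra bookkeeping.
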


\begin{remark}
Consider the set $\bigcup_{c}\NC(W,c^{-1})$, where the union is over all standard Coxeter elements $c$ of $W$. If $\pi\in\bigcup_{c}\NC(W,c^{-1})$, then an immediate consequence of \cref{lem:dual=standard,thm:main} is that 
\[\Subb_\RR^{\protect\vv\epsilon}(\protect\vv{t},\pi)=(q-1)^{-\ell_\RR(\pi)}[\T_\pi]\left((1+\epsilon_1(q-1)\T_{t_1})\cdots(1+\epsilon_M(q-1)\T_{t_M})\right)\Big|_{q=1}\] for all $\protect\vv{t}=(t_1,\ldots,t_M)\in \RR^M$ and $\protect\vv\epsilon=(\epsilon_1,\ldots,\epsilon_M)\in\{\pm 1\}^M$. This is quite strange, especially because this conclusion does not necessarily hold when $\pi\not\in\bigcup_{c}\NC(W,c^{-1})$. As an example, suppose $W=\mathfrak{S}_4$, and let $s_i\in \mathfrak{S}_4$ be the transposition $(i\,\,i+1)$. Let $\nu=s_1s_2s_3s_1s_2=(1324)$. Since $\nu$ is a non-standard Coxeter element, it is not in $\bigcup_{c}\NC(W,c^{-1})$. For $t_1=(14)$ and $t_2=(13)$, we have 
\[[\T_\nu]\left((1+(q-1)\T_{t_1})(1+(q-1)\T_{t_2})\right)=q(q-1)^3,\] 
so
\[(q-1)^{-\ell_\RR(\nu)}[\T_\nu]\left((1+(q-1)\T_{t_1})(1+(q-1)\T_{t_2})\right)\Big|_{q=1}=1,\] even though there are no subwords of $(t_1,t_2)$ with product $\nu$.  
\end{remark} 

\newpage

\part{Applications}\label{part:applications}

In this part of the paper, we study applications of our technique to algebraic combinatorics and knot invariants.  Recall that the \defn{rational Catalan numbers} and the \defn{rational parking numbers} for a finite Coxeter group $W$ are defined for a positive integer $p$ coprime to the Coxeter number $h$ as \begin{align}\Cat_p(W;q)&:=\prod_{i=1}^r \frac{[p+(p e_i\!\!\! \mod h)]_q}{[d_i]_q} &\text{ and }&&\Cat_p(W)&:=\Cat_p(W;1), \\ 
\Park_p(W;q)&:=[p]_q^r &\text{ and }& &\Park_p(W)&:=p^r.\end{align}
When $W$ is crystallographic, $\{pe_i \!\!\mod h\}_{i=1}^r= \{e_i\}_{i=1}^r$.   

\begin{itemize}
    \item In~\Cref{sec:noncrossing_lattice}, we use the full twist $\Delta^2$ to show that the noncrossing partition lattices are EL-shellable when edges are naturally labeled by reflections for any total ordering $\vect$ of $T$ such that $\Delta^2=\prod_{t \in \vect} \tt$.  We then use $\Delta^{-2}$ to compute the homotopy type of the order complex of $\NC(W,c^{-1})$ as the positive Catalan number $\Cat^+(W):=\Cat_{h-1}(W)$. 
    \item In~\Cref{sec:fuss}, we use the braids $\B(c)^{mh\pm1}$ to produce combinatorial models for the Fuss--Catalan ($p=mh+1$) and Fuss--Dogolon ($p=mh-1$) numbers.  The reason these models are so simple is that $\B(c)^{mh\pm1} = (\Delta^2)^m \B(c)^{\pm 1}$ is very close to a power of the full twist.
    \item By way of contrast, for $p\neq \pm 1 \pmod h$, the braid $\B(c)^p$ is not very close to a power of $\Delta^2$ and we need further tricks.  In~\Cref{sec:rational}, we use the fact that the standard Coxeter element $c$ and the (generally nonstandard) Coxeter element $c^p$ are conjugate in crystallographic type to produce rational models.
    \item In~\Cref{sec:parking}, we show how to extend our rational Catalan models to rational parking models.
    \item In \cref{sec:knot}, we give combinatorial interpretations of the $z=0$
specialization of the HOMFLYPT polynomial. 
\end{itemize}

\section{Noncrossing Partition Lattices}\label{sec:noncrossing_lattice}

In this section, we use our techniques applied to the full twist $\Delta^2$ written as a product of squares of dual lifts of reflections to recover and generalize the classic theorem of Athanasiadis--Brady--Watt that the noncrossing partition lattice $\NC(W,c^{-1})$ is EL-shellable~\cite{ABW}.  We further use $\Delta^{-2}$ to compute the homotopy type of the order complex of its proper part.

\subsection{EL-shelling Orders}
Suppose $P$ is a graded poset and $\lambda\colon \mathcal E(P)\to\Lambda$ is an edge-labeling, where $\mathcal E(P)$ is the set of edges in the Hasse diagram of $P$ and $\Lambda$ is a set. Let $\preceq$ be a total order on $\Lambda$. We say $\lambda$ is an \dfn{EL-labeling} with respect to $\preceq$ if every non-singleton interval has a unique $\preceq$-increasing maximal chain and this chain is
$\preceq$-lexicographically first among all maximal chains in the interval. 

Recall that the noncrossing partition lattice $\NC(W,c^{-1})$ has the natural reflection labeling that sends each cover relation $x\lessdot_{T}y$ to the reflection $x^{-1}y\in T$. Say a total order $\preceq$ on $T$ is an \dfn{EL-shelling order} if the natural reflection labeling is an EL-labeling with respect to $\preceq$. We can represent a total order $\preceq$ on $T$ as a word $(t_1,\ldots,t_N)$ in which each reflection appears exactly once (so $t_i\preceq t_j$ if and only if $i\leq j$). 

As usual, we fix the standard Coxeter element $c$ and, for $t\in T$, let $\t=\B_c(t)$ and $\tt=\P_c(t)$.

\begin{lemma}\label{lem:1word}
Let $\protect\vv{t}=(t_1,\ldots,t_N)$ be a $T$-word such that $\tt_1\cdots\tt_N=\Delta^2$. For every
$\pi\in \NC(W,c^{-1})$, we have 
\[
\Subb_T(\protect\vv{t},\pi)=1.
\]
\end{lemma}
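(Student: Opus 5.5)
By \cref{cor:main} applied to the word $\vect$, we have
\[
\Subb_T(\vect,\pi)=(q-1)^{-\ell_T(\pi)}[\T_\pi]\H(\Delta^2)\Big|_{q=1}.
\]
So the statement no longer depends on the particular factorization $\vect$. It therefore suffices to prove
\[
(q-1)^{-\ell_T(\pi)}[\T_\pi]\H(\Delta^2)\Big|_{q=1}=1 \qquad\text{for every } \pi\in\NC(W,c^{-1}).
\]
The plan is to evaluate this once, using one convenient factorization of $\Delta^2$. The natural choice is \cref{cor:wo}, which gives $\Delta^2=\tt_N\cdots\tt_1$ with $\inv({\sf w}_\circ(c))=(t_1,\ldots,t_N)$. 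By \cref{cor:main} again, the quantity equals $\Subb_T(\rev(\inv({\sf w}_\circ(c))),\pi)$. We must then show that the reversed inversion sequence of the $c$-sorting word for $w_\circ$ contains exactly one subword that is a reduced $T$-word for $\pi$.

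For this combinatorial count, I would use the known structure of $\inv({\sf w}_\circ(c))$ as a reflection order compatible with $c$. One classical fact (Athanasiadis--Brady--Watt, Reading) is that each $\pi\in\NC(W,c)$ has a unique reduced $T$-factorization that is increasing in the $c$-sorting reflection order. For $\pi\in\NC(W,c)$ this is Reading's $\mathrm{nc}_c$: the cover reflections of the $c$-sortable element, listed in $\inv({\sf w}(c))$-order. Reversing the order of the word corresponds to inverting: $(t_{i_1},\ldots,t_{i_k})$ is a subword of the reversed sequence with product $\pi$ if and only if its reverse is a subword of $\inv({\sf w}_\circ(c))$ with product $\pi^{-1}$. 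Since $\pi\in\NC(W,c^{-1})$ exactly when $\pi^{-1}\in\NC(W,c)$, the count becomes the number of $\inv({\sf w}_\circ(c))$-increasing reduced $T$-words for $\pi^{-1}\in\NC(W,c)$. That number is $1$ by the ABW/Reading uniqueness.

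If I wish to avoid citing ABW (since \cref{thm:EL} is meant to reprove it), I would instead compute $[\T_\pi]\H(\Delta^2)$ directly in the Hecke algebra, as the introduction suggests via Deodhar. Write $\H(\Delta^2)=\T_{w_\circ}\T_{w_\circ}$. By \cref{lem:coeff-identity}, $[\T_\pi]\H(\Delta^2)=q^{-\ell(\pi)}[\T_e]\T_{\pi^{-1}}\T_{w_\circ}\T_{w_\circ}$. This is a count of distinguished subwords (an $R$-polynomial-type expression). One would show its leading term is $(q-1)^{\ell_T(\pi)}$ times $1$ at $q=1$ for noncrossing $\pi$.

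The main obstacle is this direct evaluation without circularity. I would first try the induction of \cref{lem:conjugation}: conjugating by an initial letter $s$ of $c$ replaces $c$ by $scs$ and $\pi$ by $s\pi s$. Since $\Delta^2$ is central and conjugation-invariant, the evaluated coefficient is unchanged. This reduces the claim to the case where $\pi$ lies in a proper standard parabolic subgroup $W_J$ (with $\pi\le_T c^{-1}s$), where induction on rank applies. Alternatively, the case where $\pi$ is a Coxeter element of the full group is Deodhar's Richardson computation.
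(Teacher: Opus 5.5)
Your first step is the same as the paper's: by \cref{cor:main}, $\Subb_T(\vect,\pi)=(q-1)^{-\ell_T(\pi)}[\T_\pi]\H(\Delta^2)|_{q=1}$ does not depend on the $\mathbb T$-word chosen for $\Delta^2$. Your reversal bookkeeping for $\rev(\inv({\sf w}_\circ(c)))$, which reduces the count to $\inv({\sf w}_\circ(c))$-increasing reduced $T$-words for $\pi^{-1}\in\NC(W,c)$, is also correct.

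From there your main route is correct but takes a genuinely different, citation-based path. Uniqueness of such increasing words is exactly the Athanasiadis--Brady--Watt statement. You may quote it provided you have it for an arbitrary standard $c$; from a bipartite $c$ you could transport it by initial-letter conjugations, which preserve $\Delta^2$-words and subword counts. The cost is that this lemma is precisely the unique-increasing-chain half of EL-shellability. Your proof therefore turns \cref{thm:EL} into an extension of \cite{ABW} from one reflection order to all $\Delta^2$-orders, via the factorization-independence of \cref{cor:main}. It is no longer the independent Hecke-theoretic proof the paper is after.

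Your self-contained alternative does not close as written. Conjugation is the right idea, but the paper pushes it further. By \cite[Lemma 1.7]{reading2007clusters}, repeated conjugation by initial letters, applied simultaneously to $c$, $\pi$ and $\vect$, brings $\pi$ to a standard Coxeter element $\sigma=s_1\cdots s_k$ of a standard parabolic subgroup $W_J$, including the case $J=S$. The conjugated word is still a word for $\Delta^2$, because $\P_{ucu}(utu)=\B(u)^{-1}\P_c(t)\B(u)$ and $\Delta^2$ is central. For such $\sigma$, the evaluation you are missing follows in one line from \cref{lem:coeff-identity,lem:coeff-delta}:
\[ [\T_\sigma]\H(\Delta^2)=q^{-k}[\T_e]\bigl(\H(\Delta^2)\T_{\sigma^{-1}}\bigr)=(-1)^kq^N[\T_e]\bigl(\T_{s_1}^{-1}\cdots\T_{s_k}^{-1}\bigr)=q^{N-k}(q-1)^k. \]
The last equality holds because only the scalar term $-q^{-1}(q-1)$ of each $\T_{s_i}^{-1}$ can contribute to $\T_e$.

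This handles every rank at once, so no induction on rank is needed. As you state it, that induction lacks its key step: relating $[\T_\pi]\H_W(\Delta^2)$ to the corresponding coefficient of $\Delta_J^2$ in $\H_{W_J}$. One way would be to show that the restriction of a $\Delta^2$-word to $T\cap W_J$ is a $\Delta_J^2$-word for compatible dual lifts, and you do not address this. The appeal to Deodhar covers only $\pi=c^{-1}$ and computes nothing. The distinguished-subword analysis of $[\T_e]\T_{\pi^{-1}}\T_{w_\circ}^2$ for general $\pi$ is never needed.
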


\begin{proof}
We first prove the lemma in a special case. Let $d$ be a standard Coxeter element, let $\protect\vv{a}=(a_1,\ldots,a_N)$ be a $T$-word such that $\P_d(a_1)\cdots \P_d(a_N)=\Delta^2$, and let $\sigma\in \NC(W,d^{-1})$ be a standard Coxeter element of a standard parabolic subgroup $W_J$. Write $k=|J|=\ell_T(\sigma)$, and write $J=\{s_1,\ldots,s_k\}$ so that $\sigma=s_1\cdots s_k$. Then $(s_k,\ldots,s_1)$ is a reduced word for $\sigma^{-1}$.

By \cref{cor:main},
\[
\Subb_T(\protect\vv{a},\sigma)
=
(q-1)^{-k}[\T_\sigma]\H(\Delta^2)\Big|_{q=1}.
\]
Using \cref{lem:coeff-identity,lem:coeff-delta}, we compute
\[
[\T_\sigma]\H(\Delta^2)
=
q^{-k}[\T_e]\H(\Delta^2) \T_{\sigma^{-1}}
=
(-1)^k q^N [\T_e]\H(\B(\sigma^{-1})^{-1}).
\]
For each simple reflection $s$, the quadratic relation gives $\T_s^{-1}=q^{-1}\T_s-q^{-1}(q-1)$. In the product $\T_{s_1}^{-1}\cdots \T_{s_k}^{-1}$, the only contribution to the coefficient of $\T_e$ comes from choosing the scalar term $-q^{-1}(q-1)$ in every factor. Hence,
\[
[\T_e]\H(\B(\sigma^{-1})^{-1})
=
\bigl(-q^{-1}(q-1)\bigr)^k
=
(-1)^kq^{-k}(q-1)^k.
\]
It follows that $[\T_\sigma]\H(\Delta^2)=q^{N-k}(q-1)^k$, and therefore $\Subb_T(\protect\vv{a},\sigma)=1$. 

We now reduce the general case to the special case above. According to \cite[Lemma 1.7]{reading2007clusters} (see also  \cite[Theorem 1.2]{bernstein1973coxeter}), there exist simple reflections $u_1,\ldots,u_M$ such that, if we set $c_0=c$ and $\pi_0=\pi$, and then define $c_j=u_jc_{j-1}u_j$ and $\pi_j=u_j\pi_{j-1}u_j$ for $1\le j\le M$, then each $u_j$ is an initial simple reflection of $c_{j-1}$, each $c_j$ is standard, each $\pi_j$ lies in $\NC(W,c_j^{-1})$, and $\pi_M$ is a standard Coxeter element of a standard parabolic subgroup. 

Set $t_i^{(0)}=t_i$, and for $1\le j\le M$, let $t_i^{(j)}=u_jt_i^{(j-1)}u_j$. Let $\protect\vv{t}^{\,(j)}=(t_1^{(j)},\ldots,t_N^{(j)})$. We claim that $\P_{c_j}(t^{(j)}_1)\cdots \P_{c_j}(t^{(j)}_N)=\Delta^2$ for every $j$. Indeed, the compatibility of dual lifts under rotation, which we saw in the proof of \cref{lem:conjugation}, gives
\[
\P_{c_j}(u_jtu_j)=\B(u_j)^{-1}\P_{c_{j-1}}(t)\B(u_j)
\]
for every reflection $t\in T$. Thus,
\[
\P_{c_j}(t^{(j)}_1)\cdots \P_{c_j}(t^{(j)}_N)
=
\B(u_j)^{-1}
\bigl(\P_{c_{j-1}}(t^{(j-1)}_1)\cdots \P_{c_{j-1}}(t^{(j-1)}_N)\bigr)
\B(u_j).
\]
By induction, this is $\B(u_j)^{-1}\Delta^2\B(u_j)=\Delta^2$, since $\Delta^2$ is central. 

Finally, for each $j$, conjugation by $u_j$ gives a bijection
$\Sub_T(\protect\vv{t}^{\,(j-1)},\pi_{j-1})\to \Sub_T(\protect\vv{t}^{\,(j)},\pi_j)$. Conjugation preserves reflection length, so this bijection preserves reduced reflection factorizations. Therefore,
$\Subb_T(\protect\vv{t},\pi)=\Subb_T(\protect\vv{t}^{\,(M)},\pi_M)$. The right-hand side is $1$ by the special case already proved applied with $d=c_M$, $\protect\vv{a}=\protect\vv{t}^{\,(M)}$, and $\sigma=\pi_M$. 
\end{proof}

\begin{theorem}\label{thm:EL}
Let $\vect=(t_1,\ldots,t_N)$ be a total order on $T$. If $\tt_1\cdots\tt_N=\Delta^2$, then $(t_1,\ldots,t_N)$ is an EL-shelling order of $\NC(W,c^{-1})$.  
\end{theorem}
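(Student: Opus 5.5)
We will deduce the EL property from \cref{lem:1word}, which says that every $\pi\in\NC(W,c^{-1})$ has exactly one subword of $\vect$ that is a reduced $T$-word for $\pi$. Since $\vect$ contains each reflection exactly once, subwords of $\vect$ correspond to strictly $\preceq$-increasing sequences of reflections. Consider a non-singleton interval $[x,y]$ in $\NC(W,c^{-1})$. Maximal chains $x=z_0\lessdot_T z_1\lessdot_T\cdots\lessdot_T z_k=y$ correspond bijectively to reduced $T$-words $(r_1,\ldots,r_k)$ for $x^{-1}y$ via $r_i=z_{i-1}^{-1}z_i$. This works because every prefix product $x r_1\cdots r_i$ lies in $[x,y]\subseteq \NC(W,c^{-1})$ whenever the word is reduced. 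The chain is $\preceq$-increasing exactly when its label word is a subword of $\vect$.

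The first step is to reduce to intervals of the form $[e,\pi]$. The element $\pi=x^{-1}y$ lies in $\NC(W,c^{-1})$: from $x\le_T y\le_T c^{-1}$ we get $x^{-1}y\le_T x^{-1}c^{-1}$, and $x^{-1}c^{-1}\in\NC(W,c^{-1})$ by the standard self-duality/Kreweras property of absolute order. This is standard, though if one prefers, we can use that intervals of $\NC$ are again noncrossing lattices of parabolic subgroups. The map $z\mapsto x^{-1}z$ is an isomorphism from $[x,y]$ onto $[e,\pi]$ in absolute order, and it preserves the natural reflection labels. Hence \cref{lem:1word} applied to $\pi$ gives exactly one increasing maximal chain in $[x,y]$.

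The remaining point, which I expect to be the main obstacle, is showing that this unique increasing chain is lexicographically first. Let $(r_1,\ldots,r_k)$ be the increasing chain and $(r'_1,\ldots,r'_k)$ another chain. Suppose they agree up to position $i-1$, and let $z=xr_1\cdots r_{i-1}$. Since subintervals of $[x,y]$ are again of this form, it suffices to show the following. In $[z,y]$, the first label of the unique increasing chain is the $\preceq$-minimum among all labels of atoms of $[z,y]$.

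Let $a$ be the $\preceq$-least reflection with $a\le_T z^{-1}y$. Some reduced $T$-word for $z^{-1}y$ begins with $a$. Complete it by taking the unique increasing chain of $[za,y]$, which exists by the previous step. Every label of that chain is a reflection below $a^{-1}z^{-1}y\le_T z^{-1}y$, hence at least $a$ in $\preceq$, and it is different from $a$ because the chain is reduced. So the concatenated chain is increasing in $[z,y]$, and by uniqueness it is the increasing chain. Therefore its first label is $a$, the minimum, so the increasing chain is strictly lexicographically smaller than any chain diverging at step $i$. This proves the EL-labeling.
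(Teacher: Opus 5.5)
Your proof is correct and follows the paper's argument. Uniqueness of the increasing chain comes from \cref{lem:1word} on lower intervals and is transported to $[x,y]$ by left multiplication by $x^{-1}$; you also check that $x^{-1}y\in\NC(W,c^{-1})$, which the paper leaves implicit. The one difference is the lexicographic condition: the paper cites Athanasiadis--Brady--Watt (for any total order on $T$, the lexicographically first chain is increasing), whereas you prove directly that the unique increasing chain begins with the $\preceq$-least atom label. Your argument rests on the same underlying facts: every letter of a reduced $T$-word for $w$ lies below $w$ in absolute order, and the letters of a reduced $T$-word are distinct.
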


\begin{proof}
Let $\preceq$ be the total order on $T$ defined by $t_1\prec \cdots\prec t_N$. First consider a lower interval $[e,\pi]$ in $\NC(W,c^{-1})$. A maximal
chain $e=x_0\lessdot x_1\lessdot \cdots\lessdot x_k=\pi$ in this interval has label word
\[
(x_0^{-1}x_1,\ x_1^{-1}x_2,\ \ldots,\ x_{k-1}^{-1}x_k),
\]
which is a reduced $T$-word for $\pi$. This chain is $\preceq$-increasing if and only if its label word appears as a
subword of $\protect\vv{t}$. By \cref{lem:1word}, there is exactly one such
subword. This proves that every lower interval $[e,\pi]$ has a unique $\preceq$-increasing maximal
chain.

Now let $[u,v]$ be an arbitrary interval in $\NC(W,c^{-1})$. Left
multiplication by $u$ gives a poset isomorphism from $[e,u^{-1}v]$ to $[u,v]$ that preserves the natural reflection labeling, so $[u,v]$ also has a unique $\preceq$-increasing maximal chain.

It remains to check the lexicographic condition.  A simple argument given by Athanasiadis--Brady--Watt~\cite[Theorem~3.5(i)]{ABW} shows that for any total order on $T$, the lexicographically first maximal chain in every interval is increasing. Since we have already shown that the $\preceq$-increasing maximal chain is unique, it must be the lexicographically
first maximal chain. Thus, $(t_1,\ldots,t_N)$ is an EL-shelling order. 
\end{proof} 

A parabolic subgroup is \dfn{$c^{-1}$-noncrossing} if it is the smallest parabolic subgroup containing some $c^{-1}$-noncrossing partition. 
Say that a total order on $T$ is \defn{$c^{-1}$-cyclic} if, for every rank-2 $c^{-1}$-noncrossing parabolic subgroup $W'$, the reflections in $W'$ appear in the same cyclic order as they do in $\inv(\mathsf{w}_\circ(c^{-1}))$.  H.~Thomas has given an inductive proof---modulo base cases in exceptional types, including a full check of $E_8$---that a total order on $T$ is an EL-shelling order for $\NC(W,c^{-1})$ if and only if it is $c^{-1}$-cyclic, nearly resolving a conjecture of the second author.    On the other hand, it is clear that the reduced words for the full twist in the squares of the dual generators give reflection orders that are $c^{-1}$-cyclic, or equivalently, whose reverses are $c$-cyclic.
We believe this is good evidence that the converse of \cref{thm:EL} should hold.

\begin{conjecture}
Let $\protect\vv{t}=(t_1,\ldots,t_N)$ be a total order on $T$. Then $\tt_1\cdots\tt_N=\Delta^2$ if and only if $(t_1,\ldots,t_N)$ is an EL-shelling order of $\NC(W,c^{-1})$.
\end{conjecture}

\subsection{Homotopy Type}
A computation similar to the one above gives the homotopy type of the order complex of the proper part $\overline{\NC}(W,c^{-1})=\NC(W,c^{-1})\setminus\{e,c^{-1}\}$ (see also~\Cref{sec:fuss}, where a similar computation is interpreted in terms of full-support clusters). The \defn{positive Catalan numbers} for $W$ are defined as \[\Cat^+(W;q):=\prod_{i=1}^r \frac{[h-1+e_i]_q}{[d_i]_q} \quad\text{and}\quad\Cat^+(W):=\Cat^+(W;1).\]

\begin{theorem}[{\cite[Corollary 4.3]{ABW},\cite{brady2008non}}]\label{thm:homotopy_type}
The order complex of $\overline{\NC}(W,c^{-1})$ is homotopic to a wedge of $\Cat^+(W)$ many spheres of dimension $r-2$.
\end{theorem}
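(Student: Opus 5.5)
The plan is to combine the EL-shelling of \Cref{thm:EL} with the standard consequence of lexicographic shellability (Bj\"orner--Wachs). If a bounded graded poset of rank $r$ admits an EL-labeling, then the order complex of its proper part is homotopy equivalent to a wedge of spheres of dimension $r-2$. The number of spheres equals the number of strictly decreasing maximal chains, which is $(-1)^r\mu(\hat 0,\hat 1)$.

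\emph{Step 1: the shelling order.} Let $\inv({\sf w}_\circ(c))=(t_1,\ldots,t_N)$. By \Cref{cor:wo}, $\Delta^2=\tt_N\cdots\tt_1$, so \Cref{thm:EL} says that the total order $(t_N,\ldots,t_1)$ is an EL-shelling order of $\NC(W,c^{-1})$. Consequently $\overline{\NC}(W,c^{-1})$ is homotopic to a wedge of $(r-2)$-spheres.

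\emph{Step 2: counting decreasing chains as subwords.} A maximal chain of $[e,c^{-1}]$ corresponds to the reduced $T$-word for $c^{-1}$ formed by its labels. The chain is decreasing for the order $(t_N,\ldots,t_1)$ exactly when this label word is a subword of $(t_1,\ldots,t_N)$. Hence the number of spheres is $\Subb_T\bigl((t_1,\ldots,t_N),c^{-1}\bigr)$. It remains to show that this count equals $\Cat^+(W)$.

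\emph{Step 3: converting to a Hecke coefficient.} Inverting $\Delta^2=\tt_N\cdots\tt_1$ gives $\Delta^{-2}=\tt_1^{-1}\cdots\tt_N^{-1}$. Apply \Cref{thm:main} with $\vec t=(t_1,\ldots,t_N)$, all signs $\epsilon_i=-1$, and $\pi=c^{-1}$. Every subword counted uses exactly $r$ letters, so each carries sign $(-1)^r$, and therefore
\[
(-1)^r\,\Subb_T\bigl((t_1,\ldots,t_N),c^{-1}\bigr)=(q-1)^{-r}[\T_{c^{-1}}]\H(\Delta^{-2})\Big|_{q=1}.
\]
By \Cref{lem:coeff-identity}, $[\T_{c^{-1}}]\H(\Delta^{-2})=q^{-r}[\T_e]\H(\Delta^{-2}\B(c))$. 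Since $\Delta^2=\B(c)^h$, we have $\Delta^{-2}\B(c)=\B(c)^{-(h-1)}$.

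\emph{Step 4: the trace evaluation.} Because $h-1$ is coprime to $h$, the trace formula of \cite{galashin2024rational} quoted in the introduction gives
\[
[\T_e]\H(\B(c)^{-(h-1)})=(-1)^rq^{-(h-1)r}(q-1)^r\Cat_{h-1}(W;q).
\]
Dividing by $(q-1)^r$ and setting $q=1$ cancels the sign $(-1)^r$ on both sides, so $\Subb_T\bigl((t_1,\ldots,t_N),c^{-1}\bigr)=\Cat_{h-1}(W)=\Cat^+(W)$. This is the claimed number of spheres.

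The step I expect to need the most care is Step 2. The chain/subword dictionary must be oriented correctly: decreasing chains for the shelling order built from $\Delta^2$ should correspond to subwords of the \emph{reversed} word, and that reversal is exactly what makes the inverse $\Delta^{-2}$ the relevant braid. A second point of care is tracking the signs $(-1)^r$ through \Cref{thm:main} and the trace formula, so that the final count comes out positive.
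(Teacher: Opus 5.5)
Your proposal is correct and follows essentially the same route as the paper. You get EL-shellability from \Cref{thm:EL}, use Bj\"orner--Wachs to reduce the homotopy type to counting decreasing chains, identify that count with $(q-1)^{-r}[\T_{c^{-1}}]\H(\Delta^{-2})|_{q=1}$ via \Cref{thm:main} with all signs $-1$, and evaluate it through \Cref{lem:coeff-identity} and the trace formula for $\B(c)^{-(h-1)}$ as $\Cat_{h-1}(W)=\Cat^+(W)$; your version only makes explicit the choice of $\mathbb T$-word from \Cref{cor:wo}, the reversal of the shelling order, and the sign bookkeeping that the paper leaves implicit.
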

\begin{proof}
We consider the braid $\bbeta=\B(c)^{-h+1}=\Delta^{-2} \B(c)$.  From~\cite{galashin2024rational}, we have that \[[\T_e]\H(\B(c)^{-h+1})  = (-q)^{-r(h-1)}(q-1)^r \Cat^+(W;q).\]  
On the other hand, we have \[[\T_e]\H(\B(c)^{-h+1}) = [\T_e] \H(\Delta^{-2} \B(c)) = q^r [\T_{c^{-1}}] \H(\Delta^{-2}),\] so writing $\Delta^{-2}$ as a product of 
inverses of elements of $\mathbb{T}_c$,
we conclude by~\Cref{thm:main} that there are $\Cat^+(W)$ decreasing chains in any word for the full twist in these generators.  These decreasing chains index homotopy spheres of dimension $r-2$ and hence give the homotopy type of the order complex of $\overline{\NC}(W,c^{-1})$~\cite{bjorner1996shellable}.
\end{proof}

More generally, the full twist $\Delta^2\B(c^{-1})$ gives us a $q$-analogue of the zeta function for the noncrossing partition lattice in the following way.  Write $\zeta_{\NC(W,c^{-1})}$ for the incidence algebra zeta function of $\NC(W,c^{-1})$. Write $\zeta_{\NC(W,c^{-1})}^{(m)}$ for the $m$th power of $\zeta_{\NC(W,c^{-1})}$ in the incidence algebra; in particular, $\zeta_{\NC(W,c^{-1})}^{(-1)}=\mu_{\NC(W,c^{-1})}$ is the M\"obius function.  Then the following is proved in the same way as~\Cref{thm:EL,thm:homotopy_type} via R.~Stanley's reciprocity theorem for zeta polynomials~\cite{stanley1975combinatorial,bjorner1996shellable}. 

\begin{theorem}\label{thm:zeta}
For $m \in \mathbb{Z}$ and $w\in W$,
\[(q-1)^{\ell_T(w)-r}[\T_w] \H(\Delta^{2m}\B(c^{-1})) \Big|_{q=1} = \begin{cases}\zeta^{(m)}_{\NC(W,c^{-1})}(w,c^{-1}) &\text{ if } w \in \NC(W,c^{-1}) \\ 0 &\text{otherwise}.\end{cases}\] 
\end{theorem}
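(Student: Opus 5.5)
Write $\Delta^{2m}\B(c^{-1}) = \brho\,\B(c^{-1})$ with $\brho=\Delta^{2m}$ pure. Using Corollary~\ref{cor:wo}, I will fix the factorization $\Delta^2=\tt_N\cdots\tt_1$, where $\inv({\sf w}_\circ(c))=(t_1,\ldots,t_N)$. Then $\brho$ is a product of $|m|N$ factors $\tt^{\pm1}$: all exponents are $+1$ when $m\ge 0$, and all are $-1$ (with the order reversed) when $m<0$. Let $\vect$ be the resulting $T$-word and $\vece$ the resulting sign vector.

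First, I will move $\B(c^{-1})$ into a coefficient shift. Since $\ell(c^{-1})=r$ and $c^{-1}$ has a reduced word, repeated use of the Hecke relations gives $\H(\brho)\T_{c^{-1}}$. I then need $[\T_w](\mathrm{x}\T_{c^{-1}})$. Expanding $\mathrm{x}=\H(\brho)=\sum_v x_v\T_v$, each $x_v$ is divisible by $(q-1)^{\ell_T(v)}$ by Lemma~\ref{lem:reflection_length_divisibility}. Moreover, $[\T_w](\T_v\T_{c^{-1}})$ is divisible by $(q-1)^{\ell_T(w c v^{-1})}$, again by that lemma. So the contribution of $v$ to $[\T_w]$ is divisible by $(q-1)^{\ell_T(v)+\ell_T(wcv^{-1})}\ge(q-1)^{\ell_T(wc)}$.

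Since $\ell_T(wc)\ge r-\ell_T(w)$, the normalized coefficient $(q-1)^{\ell_T(w)-r}[\T_w]$ can only be nonzero at $q=1$ when two equalities hold. These are $\ell_T(wc)=r-\ell_T(w)$, which means $w\le_T c^{-1}$, i.e.\ $w\in\NC(W,c^{-1})$, and $\ell_T(v)+\ell_T(wcv^{-1})=\ell_T(wc)$, which means $v\le_T wc$. This already yields the vanishing for $w\notin\NC(W,c^{-1})$.

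For $w\in\NC(W,c^{-1})$, the surviving terms are $v=w\pi'$ in the interval $[w,c^{-1}]$; writing $v^{-1}$ appropriately, they range over the interval. Each $v$ contributes a product of two leading coefficients:
\begin{itemize}
\item $(q-1)^{-\ell_T(v)}x_v|_{q=1}$, which equals $\Subb_T^{\vece}(\vect,v)$ by Theorem~\ref{thm:main};
\item $(q-1)^{-\ell_T(v^{-1}w)}[\T_w](\T_v\T_{c^{-1}})|_{q=1}$, which I expect to be $\pm1$ or a simple leading term.
\end{itemize}

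I would compute the second factor by the same mechanism as in the proof of Lemma~\ref{lem:1word}. Reducing via Lemma~\ref{lem:coeff-identity} turns it into a trace $[\T_e]\T_{w^{-1}}\T_v\T_{c^{-1}}$. Specialized at $q=1$ to leading order, this is a group-algebra computation; it should give $1$ exactly when $w^{-1}vc^{-1}=e$ in the appropriate sense. Here I must be careful about the order. Cleaner is to use Lemma~\ref{lem:coeff-identity} directly: $[\T_w](\mathrm{x}\T_{c^{-1}})$ relates to $[\T_e](\T_{w^{-1}}\mathrm{x}\T_{c^{-1}})$, then cyclically to $[\T_e](\T_{c^{-1}}\T_{w^{-1}}\mathrm{x})$. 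Expanding $\T_{c^{-1}}\T_{w^{-1}}$ to leading order, its leading term is $(q-1)^{\ell_T(\cdot)}$ times $\T$ of the Kreweras-type complement, with coefficient $1$ at $q=1$.

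The main obstacle is that single step: proving that the leading term of $\T_{c^{-1}}\T_{w^{-1}}$ on $\T_u$ with $u\in\NC$ is $1$ when $u=c^{-1}w^{-1}$ has the right length and $0$ otherwise. I would attempt this by induction along reduced words, as in Lemma~\ref{lem:first}.

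Granting that step, the normalized coefficient becomes $\Subb_T^{\vece}(\vect,\,w^{-1}c^{-1})$, suitably inverted; equivalently, after the left-multiplication isomorphism used in Theorem~\ref{thm:EL}, it is a signed count of reduced $T$-subwords of $\vect$ for the interval $[w,c^{-1}]$.

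Finally I identify this count with $\zeta^{(m)}(w,c^{-1})$.
\begin{itemize}
\item For $m\ge0$, the word $\vect$ is the concatenation of $m$ copies of a reduced $\PR$-word for $\Delta^2$. A reduced subword for an element of $[w,c^{-1}]$ splits into $m$ consecutive pieces. By Lemma~\ref{lem:1word} applied to each piece (after conjugation), each piece is the unique increasing chain of its interval. Hence the count equals the number of multichains $w=x_0\le\cdots\le x_m=c^{-1}$, which is $\zeta^{(m)}$.
\item For $m<0$, each copy contributes decreasing chains with sign $(-1)^{\text{length}}$. Via EL-shellability (Theorem~\ref{thm:EL}), this gives $\mu^{(|m|)}$, i.e.\ the $|m|$-fold convolution of Möbius functions, which is $\zeta^{(m)}$. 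This is Stanley's reciprocity applied as in Theorem~\ref{thm:homotopy_type}.
\end{itemize}
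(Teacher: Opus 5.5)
Your vanishing argument for $w\notin\NC(W,c^{-1})$ is correct. Your final counting step is also fine: multichains via \cref{lem:1word} for $m\ge 0$, and signed decreasing chains via \cref{thm:EL} for $m<0$. That is the counting the paper alludes to when it says the theorem is proved ``in the same way'' as \cref{thm:EL,thm:homotopy_type}. The gap is the middle reduction for $w\in\NC(W,c^{-1})$.

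Your first formulation has two errors.
\begin{enumerate}
\item The surviving $v$ satisfy $v\le_T wc$, and $wc\in\NC(W,c)$ rather than $\NC(W,c^{-1})$, so \cref{thm:main} does not compute $x_v$ as stated. This is repairable: $\H(\Delta^{2m})$ is fixed by the anti-involution $\T_x\mapsto\T_{x^{-1}}$, so $x_v=x_{v^{-1}}$ with $v^{-1}\in\NC(W,c^{-1})$.
\item The second factor is not $\pm1$. Already for $w=e$, \cref{lem:coeff-identity} gives $[\T_e](\T_v\T_{c^{-1}})=q^r\delta_{v,c}$.
\end{enumerate}
Your ``cleaner'' reformulation does identify the correct target: every $v\in[e,wc]_T\setminus\{wc\}$ must contribute zero at leading order. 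But you then grant it, and that is the entire content beyond \cref{thm:main} and \cref{lem:1word}.

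This step does not follow from \cref{lem:first} or \cref{lem:reflection_length_divisibility}, which only give divisibility by $(q-1)^{\ell_T(wcv^{-1})}$. You need one extra factor of $q-1$ for each $v\neq wc$. Leading terms at $q=1$ are not multiplicative in general (see the remark after \cref{cor:main}); compare how the analogous extra vanishing in \cref{lem:dual_square_expansion} needed \cref{lem:join-irreducible}.

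Here is what the missing step amounts to. Write $c=s_1\cdots s_r$ and follow right multiplication by $\T_{s_r},\ldots,\T_{s_1}$ at leading order, as in the proof of \cref{lem:reflection_length_divisibility}. The normalized coefficients then evolve by $\bar z'_y=\bar z_{ys}+\chi\,\bar z_y$, where $g$ is the current image in $W$ and $\chi=1$ if and only if $ys<y$ and $\ell_T(yg^{-1})=\ell_T(ysg^{-1})-1$. Working backwards from $w$, a term other than $v=wc$ enters if and only if some $\tau_j=s_1\cdots s_{j-1}s_js_{j-1}\cdots s_1\in\inv(c)$ satisfies both $\tau_j\le_T cw$ and $\ell(w\tau_j)<\ell(w)$.

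So the missing lemma is that these two conditions are incompatible for $w\in\NC(W,c^{-1})$. It cannot be avoided: for $m=1$ every normalized coefficient of $\H(\Delta^2)$ on $\NC(W,c)$ equals $1$, so the lemma is equivalent to the theorem in that case. In type $A$ with $c=(12\cdots n)$ a short block argument proves it: the first condition forces $j+1>w(1)$ and the second forces $j+1\le w(1)$. You still need a uniform argument, and without it the proof is incomplete.
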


\newcounter{listidx}
\newcounter{edgecounter}
\newcommand{\DrawLabeledKn}[3]{
    \begin{tikzpicture}
        \setcounter{listidx}{0}
        \foreach \item in {#3} {
            \stepcounter{listidx}
            \expandafter\xdef\csname edgeLabel\thelistidx\endcsname{\item}
        }
        \def\R{#1}
        \def\n{#2}
        \def\startAngle{0} 
        \draw[gray!20, thin] (0,0) circle (\R);
        \foreach \i in {1,...,\n} {
            \pgfmathsetmacro{\angle}{\startAngle + (\i-1)*360/\n}
            \node[circle, fill=black, inner sep=0pt, outer sep=0pt] (V\i) at (\angle:\R) {};
            \node at (\angle:\R+0.4) {\i};
        }
        \setcounter{edgecounter}{0}
        \foreach \i in {1,...,\n} {
            \pgfmathsetmacro{\next}{\i+1}
            \ifnum \i < \n
                \foreach \j in {\next,...,\n} {
                    \stepcounter{edgecounter}
                    \draw[gray!70, thick] (V\i) -- (V\j) 
                        node[midway, 
                             sloped, 
                             fill=white, 
                             inner sep=1.5pt, 
                             rounded corners=2pt,
                             font=\scriptsize\color{blue!40!black},
                             pos=0.5] 
                        {\csname edgeLabel\theedgecounter\endcsname}; 
                }
            \fi
        }
        \foreach \i in {1,...,\n} {
            \pgfmathsetmacro{\angle}{\startAngle + (\i-1)*360/\n}
            \node[circle, fill=black, inner sep=2pt, outer sep=0pt] at (V\i) {};
        }
    \end{tikzpicture}
}
\newcommand{\DrawKn}[3]{
    \begin{tikzpicture}
        \setcounter{listidx}{0}
        \foreach \item in {#3} {
            \stepcounter{listidx}
            \expandafter\xdef\csname edgeLabel\thelistidx\endcsname{\item}
        }
        \def\R{#1}
        \def\n{#2}
        \def\startAngle{0} 
        \draw[gray!20, thin] (0,0) circle (\R);
        \foreach \i in {1,...,\n} {
            \pgfmathsetmacro{\angle}{\startAngle + (\i-1)*360/\n}
            \node[circle, fill=black, inner sep=0pt, outer sep=0pt] (V\i) at (\angle:\R) {};
        }
        \setcounter{edgecounter}{0}
        \foreach \i in {1,...,\n} {
            \pgfmathsetmacro{\next}{\i+1}
            \ifnum \i < \n
                \foreach \j in {\next,...,\n} {
                    \stepcounter{edgecounter}
                    \draw[gray!70, thick] (V\i) -- (V\j) 
                        node[midway, 
                             sloped, 
                             fill=white, 
                             inner sep=1.5pt, 
                             rounded corners=2pt,
                             font=\scriptsize\color{blue!40!black},
                             pos=0.5] 
                        {\csname edgeLabel\theedgecounter\endcsname}; 
                }
            \fi
        }
        \foreach \i in {1,...,\n} {
            \pgfmathsetmacro{\angle}{\startAngle + (\i-1)*360/\n}
            \node[circle, fill=black, inner sep=2pt, outer sep=0pt] (V\i) at (\angle:\R) {};
        }
    \end{tikzpicture}
}

\subsection{Combinatorial Interpretation in Type A}\label{sec:full_twist_combo}
Drawing on a problem the second author wrote for the Enumerative Combinatorics workshop held in January 2026 at Oberwolfach, we can make the preceding discussion very concrete in type~$A$ when $c$ is the linear Coxeter element $(12\cdots n)$.  In this case, the base case of Thomas's argument is easy (since the join in the noncrossing partition lattice of any two reflections in type $A$ has rank at most 3, the base case requires only exhaustively checking $A_4$), so we know that $c^{-1}$-cyclic total orderings of $T$ coincide with EL-shelling orders of $\NC(W,c^{-1})$. 

Draw the complete graph $K_n$ with vertices $1,2,\ldots,n$ around a circle.  Let $X_n$ be the set of bijective labelings of the edges of $K_n$ by $1, 2, \dots, \binom{n}{2}$ such that every triangle increases counterclockwise. 
  Call the elements of $X_n$ \defn{cyclic labelings}.
  
Then each cyclic labeling has a unique spanning tree with edges that increase counterclockwise around each vertex, corresponding to the EL-shelling condition.  And each cyclic labeling has $\Cat^+(A_{n-1})=\mathrm{Cat}_{n-1}$ spanning trees with edges that increase clockwise, corresponding to the homotopy type of the order complex of $\overline{\NC}(W,c^{-1})$.

\begin{example}
  Here is an interestingly symmetric cyclic labeling for $K_5$:
\[\raisebox{-0.5\height}{\DrawLabeledKn{2}{5}{3,2,8,1,5,4,10,7,6,9}} \in X_5,\]
and here is the corresponding ordered list of all $10$ edges:
\[\mathbf{(15)}(13)(12)\mathbf{(24)}(23)(35)\mathbf{(34)}(14)(45)\mathbf{(25)}.\]
The bolded reflections correspond to the unique counterclockwise increasing spanning tree, and hence the unique factorization for $c^{-1}=(15432)$ in the reflection order.
\end{example}

We record here that 
\[|X_3| = 3, \quad |X_4| = 48,\quad  |X_5| = 5150,\quad \text{and}\quad |X_6| = 4,218,360.\]

\section{Fuss Models}\label{sec:fuss} 
In this section, we apply \cref{thm:main} to recover combinatorial models for Fuss--Catalan and Fuss--Dogolon numbers in terms of subwords.  

\subsection{Why the Fuss?}

We recall the definitions of Fuss--Catalan and Dogolon clusters and noncrossing partitions as subwords from~\cite{stump2025cataland}.
\begin{definition}[{\cite[Definition 5.3.1 and Section 7]{stump2025cataland}}]\label{def:associahedron}
    An \dfn{$m$-cluster} is a  subword of $\inv({\sf w}_\circ(c))^{m} \inv(c)$ that is a reduced $T$-word for $c^{-1}$. A \dfn{positive $m$-cluster} is a subword of $\inv({\sf w}_\circ(c))^{m}$ that is a reduced $T$-word for $c^{-1}$. 
\end{definition}

\begin{definition}[{\cite[Definition 4.2.4 and Section 7]{stump2025cataland}}]
\label{def:noncrossing}
    An \dfn{$m$-noncrossing partition} is a subword of
$\inv({\sf w}_\circ(c^{-1}))^{m+1}$ that is a reduced $T$-word for
$c^{-1}$. A \dfn{positive $m$-noncrossing partition} is a subword of
\[
\inv({\sf w}_\circ(c^{-1}))^m\inv_{c^{-1}}(cw_\circ)
\]
that is a reduced $T$-word for $c^{-1}$.
\end{definition}

\subsection{Fuss--Catalan Models}\label{sec:fuss-cat}
When $\bbeta=\B(c)^{mh+1}$, the positive model gives $m$-noncrossing partitions, while the negative model gives $m$-clusters~\cite{stump2025cataland}.  We actually get new subword models, since our technique only depends on 
$\mathbb T$-words for the full twist.

Our starting point is the following result, which is a consequence of \cite[Corollaries~5.4~\&~6.13]{galashin2024rational}. It produces the $q$-rational Catalan numbers as traces in the Hecke algebra.  

\begin{theorem}[{\cite{galashin2024rational}}]\label{thm:rational_Catalan_trace}
Let $p$ be a positive integer coprime to the Coxeter number $h$ of $W$. 
We have
\[
[\T_e]\T_{c}^{p+h}
=
q^N(q-1)^r\Cat_p(W;q)
\]
and
\[
[\T_e]\T_{c}^{-p}
=
(q^{-1}-1)^r\Cat_p(W;q^{-1})
=
(-1)^r q^{-rp}(q-1)^r\Cat_p(W;q).
\]
\end{theorem}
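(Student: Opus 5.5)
The statement is imported from~\cite{galashin2024rational}, so the plan is to take one of the two trace identities from there and derive the other inside the present framework. I would start with the negative trace $[\T_e]\T_c^{-p}$. By \cite[Corollaries~5.4 \&~6.13]{galashin2024rational}, this trace is, up to the normalization $(q^{-1}-1)^r$, the point count of the rational noncrossing braid variety attached to $\B(c)^{-p}$, and that count is $\Cat_p(W;q^{-1})$.

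\textbf{Step 1 (the input from~\cite{galashin2024rational}).} The underlying mechanism is the character expansion of the trace $\mathrm{x}\mapsto[\T_e]\mathrm{x}$ over irreducible $\H_W$-modules, weighted by inverse generic degrees. On each irreducible module, $\H(\B(c))$ acts semisimply. Its eigenvalues are determined by Springer's theory of the $h$-regular element $c$ together with the central scalar by which $\H(\Delta^2)=\T_c^h$ acts. Summing these eigenvalues to the power $-p$ against the generic degrees produces $\prod_i [p+(pe_i \bmod h)]_{q^{-1}}/[d_i]_{q^{-1}}$, after factoring out $(q^{-1}-1)^r$. I would cite this rather than reprove it, and I expect it to be the main obstacle if one insists on a self-contained argument. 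The coprimality of $p$ and $h$ enters exactly here: it is what makes the character sum collapse to a product.

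\textbf{Step 2 (rewriting the negative trace).} Next I would check that $(q^{-1}-1)^r\Cat_p(W;q^{-1})=(-1)^rq^{-rp}(q-1)^r\Cat_p(W;q)$. Multiplication by $p$ permutes the exponents modulo $h$, so $\sum_i (pe_i \bmod h)=\sum_i e_i$. Hence $\Cat_p(W;q)$ is a palindromic polynomial of degree
\[
\sum_i\bigl(p+(pe_i \bmod h)-d_i\bigr)=r(p-1),
\]
which gives $\Cat_p(W;q^{-1})=q^{-r(p-1)}\Cat_p(W;q)$. Also $(q^{-1}-1)^r=(-1)^rq^{-r}(q-1)^r$. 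Multiplying these two identities yields the second displayed form.

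\textbf{Step 3 (the positive trace).} Since $\B(c)^h=\Delta^2$, we have $\T_c^{p+h}=\H(\Delta^2\B(c)^p)$. Apply \Cref{lem:coeff-delta} with $\bbeta=\B(c)^p$, whose writhe is $\mathrm{wr}(\bbeta)=pr$:
\[
[\T_e]\T_c^{p+h}=(-1)^{pr}q^{N+pr}[\T_e]\T_c^{-p}=(-1)^{(p+1)r}q^N(q-1)^r\Cat_p(W;q).
\]
It remains to see that the sign is trivial, that is, that $(p+1)r$ is even. If $h$ is even, then $p$ is odd because it is coprime to $h$. If $h$ is odd, then $r=2N/h$ is even, since $h$ divides $2N$ and is odd. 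Either way the sign is $+1$, which gives the first identity.
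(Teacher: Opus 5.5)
Your argument is correct, and it differs from the paper's in a modest but real way.

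The paper gives no derivation. It imports both trace identities directly from \cite{galashin2024rational} (Corollaries 5.4 and 6.13). You import only the negative trace and derive the positive one from the reciprocity \Cref{lem:coeff-delta}, applied to $\bbeta=\B(c)^p$ with $\mathrm{wr}(\bbeta)=pr$. This uses $\B(c)^h=\Delta^2$ and the parity fact that $(p+1)r$ is even, which is the same parity argument the paper uses in the proof of \Cref{thm:rat_parking_negative}. You also check the second form of the negative trace directly from $[k]_{q^{-1}}=q^{1-k}[k]_q$.

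What your route buys:
\begin{itemize}
\item The external input shrinks to a single trace evaluation.
\item The signs and powers of $q$ relating the two formulas become explicit.
\item Your Step~1 no longer depends on which of the two identities the cited source states directly. \Cref{lem:coeff-delta} is an equivalence, so it can be run in either direction.
\end{itemize}
The paper's citation-only treatment is shorter but leaves this consistency implicit.

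One justification needs repair. In Step~2 you say that multiplication by $p$ permutes the exponents modulo $h$. That holds only for Weyl groups, as the paper itself notes, whereas the statement is for an arbitrary finite irreducible $W$. For example, in $H_3$ with $p=3$ the residues are $\{3,5,7\}$, not $\{1,5,9\}$.

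What you actually need is only $\sum_i (pe_i \bmod h)=\sum_i e_i=N$, and this holds in every type:
\begin{itemize}
\item The exponents pair up as $e_i+e_{r+1-i}=h$.
\item Since $\gcd(p,h)=1$ and $0<e_i<h$, we have $h\nmid pe_i$. Hence $(pe_i \bmod h)+(pe_{r+1-i}\bmod h)$ is a multiple of $h$ lying strictly between $0$ and $2h$, so it equals $h$.
\item In the self-paired case $e_i=h/2$, the integer $p$ is odd, so $pe_i\equiv h/2 \pmod h$.
\item Summing over $i$ gives $rh/2=N$.
\end{itemize}
With this replacement, your degree count $r(p-1)$ and the rest of the argument hold for all $W$.
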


\begin{proposition}\label{prop:Catmh+1}
Let $m$ be a positive integer, and consider a $T$-word $\protect\vv{t}=(t_1,\ldots,t_{(m+1)N})$ such that $\tt_1\cdots\tt_{(m+1)N}=\Delta^{2(m+1)}$. We have 
\[\Subb_\RR(\vect,c^{-1})=\Cat_{mh+1}(W).\]
\end{proposition}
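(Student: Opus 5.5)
We combine \cref{cor:main} applied to $\pi=c^{-1}$ with the trace evaluation of \cref{thm:rational_Catalan_trace} for $p=mh+1$. The key link is \cref{lem:coeff-identity}, which converts the coefficient of $\T_{c^{-1}}$ in $\H(\Delta^{2(m+1)})$ into a trace of a braid whose $q$-Catalan value is known.

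First, since $\tt_1\cdots\tt_{(m+1)N}=\Delta^{2(m+1)}$ and $c^{-1}\in\NC(W,c^{-1})$ with $\ell_T(c^{-1})=r$, \cref{cor:main} gives
\[
\Subb_T(\vect,c^{-1})=(q-1)^{-r}[\T_{c^{-1}}]\H(\Delta^{2(m+1)})\Big|_{q=1}.
\]
Next, we apply \cref{lem:coeff-identity} with $\mathrm{x}=\H(\Delta^{2(m+1)})$ and $w=c$, noting that $\ell(c)=r$ and $\T_c=\H(\B(c))$:
\[
[\T_{c^{-1}}]\H(\Delta^{2(m+1)})=q^{-r}[\T_e]\H\bigl(\Delta^{2(m+1)}\B(c)\bigr).
\]

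Then we need the standard fact $\B(c)^h=\Delta^2$ in $\B_W$, which the introduction uses when it factors $\Delta^2=\B(c)^h$. With it, $\Delta^{2(m+1)}\B(c)=\B(c)^{(m+1)h+1}=\B(c)^{p+h}$ for $p=mh+1$. Since $p$ is coprime to $h$, \cref{thm:rational_Catalan_trace} gives
\[
[\T_e]\T_c^{p+h}=q^N(q-1)^r\Cat_{mh+1}(W;q).
\]
Combining these,
\[
(q-1)^{-r}[\T_{c^{-1}}]\H(\Delta^{2(m+1)})=q^{N-r}\Cat_{mh+1}(W;q).
\]
Specializing at $q=1$ yields $\Cat_{mh+1}(W)$, as claimed.

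The only step that is not purely formal is the identity $\B(c)^h=\Delta^2$. This is a classical result of Brieskorn--Saito and Deligne for standard Coxeter elements, and we will cite it. Everything else is direct bookkeeping with the powers of $q$ and $(q-1)$.
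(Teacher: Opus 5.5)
Your proposal is correct and follows essentially the same argument as the paper. The paper also combines \cref{thm:rational_Catalan_trace} at $p=mh+1$, \cref{lem:coeff-identity} with $w=c$, and \cref{cor:main} at $\pi=c^{-1}$, using $\B(c)^h=\Delta^2$ implicitly to identify $\T_c^{(m+1)h}$ with $\H(\tt_1\cdots\tt_{(m+1)N})$; you simply run the chain of equalities in the opposite direction and state that identity explicitly.
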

\begin{proof}
Applying \cref{thm:rational_Catalan_trace} with $p=mh+1$ yields 
\[\Cat_{mh+1}(W)=(q-1)^{-r}[\T_e]\T_c^{(m+1)h+1}\Big|_{q=1}.\] 
Using \cref{lem:coeff-identity}, we can write 
\[[\T_e]\T_c^{(m+1)h+1}=q^r[\T_{c^{-1}}]\T_c^{(m+1)h}=q^r[\T_{c^{-1}}]\H(\tt_1\cdots\tt_{(m+1)N}).\] Applying \cref{cor:main} with $\pi=c^{-1}$ yields the desired result. 
\end{proof} 
Since $\Delta^2=\prod_{t \in \inv(\mathsf{w}_\circ(c^{-1}))} \tt$, \cref{prop:Catmh+1} allows us to recover the model of $m$-noncrossing partitions from~\Cref{def:noncrossing} when $\vect=\inv(\mathsf{w}_\circ(c^{-1}))^{m+1}$.   

\begin{proposition}\label{prop:Clusters_m}
Let $m$ be a positive integer, and consider a $T$-word $\protect\vv{t}=(t_1,\ldots,t_{mN})$ such that $\tt_{mN}\cdots\tt_{1}=\Delta^{2m}$.  We have 
\[\Subb_\RR(\vect\,\inv(c),c^{-1})=\Cat_{mh+1}(W).\]
\end{proposition}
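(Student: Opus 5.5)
We follow the negative model, mirroring the proof of \cref{prop:Catmh+1} but starting from the second trace formula of \cref{thm:rational_Catalan_trace} with $p=mh+1$. That formula gives
\[
[\T_e]\T_c^{-(mh+1)}=(-1)^rq^{-r(mh+1)}(q-1)^r\Cat_{mh+1}(W;q),
\]
so $(-1)^r(q-1)^{-r}[\T_e]\T_c^{-(mh+1)}\big|_{q=1}=\Cat_{mh+1}(W)$. The goal is to rewrite $\T_c^{-(mh+1)}$ as $\T_c$ times the Hecke image of a pure braid written in the generators $\mathbb T$, and then apply \cref{lem:coeff-identity} and \cref{thm:main} with $\pi=c^{-1}$.

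First, since $\B(c)^h=\Delta^2$ is central, we have
\[
\B(c)^{-(mh+1)}=\Delta^{-2m}\B(c)^{-1}=\Delta^{-2m}\bigl(\B(c)\B(c^{-1})\bigr)^{-1}\B(c).
\]
By \cref{lem:inversions} with $w=c$ (which is $c$-sortable, so $\inv_c(c)=\inv(c)$), writing $\inv(c)=(u_1,\ldots,u_r)$ gives $\B(c)\B(c^{-1})=\uu_r\cdots\uu_1$. Hence its inverse is $\uu_1^{-1}\cdots\uu_r^{-1}$. The hypothesis $\tt_{mN}\cdots\tt_1=\Delta^{2m}$ gives $\Delta^{-2m}=\tt_1^{-1}\cdots\tt_{mN}^{-1}$. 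Therefore
\[
\B(c)^{-(mh+1)}=\tt_1^{-1}\cdots\tt_{mN}^{-1}\,\uu_1^{-1}\cdots\uu_r^{-1}\,\B(c),
\]
so the pure part is indexed by the word $\vect\,\inv(c)$ with all signs equal to $-1$.

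Next, apply \cref{lem:coeff-identity} with $w=c$:
\[
[\T_e]\T_c^{-(mh+1)}=q^r[\T_{c^{-1}}]\H\bigl(\tt_1^{-1}\cdots\tt_{mN}^{-1}\uu_1^{-1}\cdots\uu_r^{-1}\bigr).
\]
Since $c^{-1}\in\NC(W,c^{-1})$ and $\ell_T(c^{-1})=r$, \cref{thm:main} with $\vece=(-1,\ldots,-1)$ evaluates $(q-1)^{-r}$ times this coefficient at $q=1$ as $\Subb_T^{\vece}(\vect\,\inv(c),c^{-1})$. Every subword counted is a reduced $T$-word for $c^{-1}$ and so uses exactly $r$ letters, each with sign $-1$; thus $\Subb_T^{\vece}=(-1)^r\Subb_T(\vect\,\inv(c),c^{-1})$. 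Combining this with the trace evaluation, the factors $(-1)^r$ cancel and we obtain $\Subb_T(\vect\,\inv(c),c^{-1})=\Cat_{mh+1}(W)$.

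The only delicate point is bookkeeping of orders: inverting a product reverses it. Because the hypothesis is stated in reversed order, $\tt_{mN}\cdots\tt_1=\Delta^{2m}$, its inverse reads $\vect$ forward. Likewise, inverting $\uu_r\cdots\uu_1$ yields $\inv(c)$ forward, placed after $\vect$. We must also make sure that the central factor $\Delta^{-2m}$ is placed to the left of $(\B(c)\B(c^{-1}))^{-1}$, so that the concatenated word is exactly $\vect\,\inv(c)$ as in the statement.
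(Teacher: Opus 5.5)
There is a genuine error in the central braid identity, although the overall strategy is the same as the paper's.

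You write $\B(c)^{-(mh+1)}=\Delta^{-2m}\B(c)^{-1}=\Delta^{-2m}\bigl(\B(c)\B(c^{-1})\bigr)^{-1}\B(c)$. The second equality is false. In fact $\bigl(\B(c)\B(c^{-1})\bigr)^{-1}\B(c)=\B(c^{-1})^{-1}\B(c)^{-1}\B(c)=\B(c^{-1})^{-1}$, which is not $\B(c)^{-1}$.

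Consequently, your displayed identity $\B(c)^{-(mh+1)}=\tt_1^{-1}\cdots\tt_{mN}^{-1}\uu_1^{-1}\cdots\uu_r^{-1}\B(c)$ already fails in $W$. The left side maps to $c^{-1}$ and the right side maps to $c$, and these differ once $r\geq 2$. Using $\B(c^{-1})^h=\Delta^2$, your right-hand side actually equals $\Delta^{-2m}\B(c^{-1})^{-1}=\B(c^{-1})^{-(mh+1)}$. This is exactly the kind of order bookkeeping you flagged as delicate.

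The repair is the one the paper makes: apply \cref{thm:rational_Catalan_trace} to the standard Coxeter element $c^{-1}$ rather than $c$. The starting point is then
\[
[\T_e]\T_{c^{-1}}^{-(mh+1)}=(-1)^rq^{-r(mh+1)}(q-1)^r\Cat_{mh+1}(W;q).
\]
Now $\B(c^{-1})^{-(mh+1)}=\Delta^{-2m}\bigl(\B(c)\B(c^{-1})\bigr)^{-1}\B(c)$ is a correct factorization ending in $\B(c)$. So \cref{lem:coeff-identity} with $w=c$ lands on $\T_{c^{-1}}$, as you intended.

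The trace formula gives the same value for $c$ and $c^{-1}$, so nothing downstream changes. The rest of your argument is correct and coincides with the paper's proof: \cref{lem:inversions} with $w=c$, inverting the hypothesis so that $\vect$ is read forward, and cancelling the two factors of $(-1)^r$ after \cref{thm:main}.

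You cannot instead keep $\B(c)^{-1}$ and write it as $\bigl(\B(c^{-1})\B(c)\bigr)^{-1}\B(c^{-1})$. That would produce the coefficient of $\T_c$, and $c\notin\NC(W,c^{-1})$, so \cref{thm:main} would not apply.
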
 
\begin{proof} 
Applying \cref{thm:rational_Catalan_trace} with $p=mh+1$ yields that  
\[\Cat_{mh+1}(W)=(-1)^r(q-1)^{-r}[\T_e]\T_{c^{-1}}^{-(mh+1)}\Big|_{q=1}.\] Using \cref{lem:coeff-identity}, we can write \[[\T_e]\T_{c^{-1}}^{-(mh+1)}=[\T_e](\H(\Delta^{-2m})\T_{c^{-1}}^{-1}\T_{c}^{-1}\T_{c})=q^r[\T_{c^{-1}}]\left(\H(\Delta^{-2m})\T_{c^{-1}}^{-1}\T_{c}^{-1}\right).\] 
Letting $\inv(c)=(u_1,\ldots,u_r)$, we know by \cref{lem:inversions} that 
$\B(c)\B(c^{-1})=\uu_r\cdots\uu_1$, so 
\[\T_{c^{-1}}^{-1}\T_{c}^{-1}=\H(\B(c)\B(c^{-1}))^{-1}=\H(\uu_r\cdots\uu_1)^{-1}.\] Moreover, $\H(\Delta^{-2m})=\H(\tt_1^{-1}\cdots\tt_{mN}^{-1})$. Putting this all together shows that 
\[[\T_e]\T_{c^{-1}}^{-(mh+1)}=q^r[\T_{c^{-1}}]\H(\tt_1^{-1}\cdots\tt_{mN}^{-1}\uu_1^{-1}\cdots\uu_r^{-1}).\] Now let $\protect\vv\epsilon=(\epsilon_1,\ldots,\epsilon_{mN+r})$, where $\epsilon_i=-1$ for all $i$. Applying \cref{thm:main} with $\pi=c^{-1}$, with the word $\vect\,\inv(c)$, and with the sign vector $\protect\vv\epsilon$ yields that 
\[
\Cat_{mh+1}(W)=(-1)^r\Subb_{\RR}^{\protect\vv\epsilon}(\vect\,\inv(c),c^{-1}). 
\]
Because every entry of $\protect\vv\epsilon$ is $-1$ and every reduced $\RR$-word for $c^{-1}$ uses exactly $r$ reflections, we have 
\[(-1)^r\Subb_{\RR}^{\protect\vv\epsilon}(\vect\,\inv(c),c^{-1})=\Subb_{\RR}(\vect\,\inv(c),c^{-1}),\] as desired.  
\end{proof}  

Since $\Delta^2=\prod_{t \in \inv(\mathsf{w}_\circ(c^{-1}))} \tt$, \cref{prop:Clusters_m} allows us to recover the model of $m$-clusters from~\Cref{def:associahedron} when $\vect=\inv(\mathsf{w}_\circ(c))^{m}$.

\subsection{Fuss--Dogolon Models}\label{sec:dog}
When $\bbeta=\B(c)^{mh-1}$, the positive model gives positive $m$-noncrossing partitions, while the negative model gives positive $m$-clusters~\cite{stump2025cataland}.  

\begin{proposition}
For every positive integer $m$, we have
\[
\left|
\operatorname{Sub}_T\!\left(
\operatorname{inv}({\sf w}_\circ(c^{-1}))^m
\operatorname{inv}_{c^{-1}}(cw_\circ),
c^{-1}
\right)
\right|
=
\operatorname{Cat}_{mh-1}(W).
\] 
\end{proposition}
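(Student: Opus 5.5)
The plan is to mirror the proof of \cref{prop:Catmh+1}: realize $\Cat_{mh-1}(W)$ as a trace, convert the trace into a noncrossing coefficient of a pure braid via \cref{lem:coeff-identity}, factor that pure braid into squares of dual lifts, and apply \cref{cor:main}. Since the word in the statement is built from $\inv({\sf w}_\circ(c^{-1}))$, I will work throughout with $c^{-1}$-dual lifts $\P_{c^{-1}}(t)$, so that \cref{cor:main} (with $c$ replaced by $c^{-1}$) computes coefficients $\T_\pi$ for $\pi\in\NC(W,c)$. The step I expect to be the main obstacle is making this orientation consistent: the target element must be $c^{-1}$, and the inversion word must come out as exactly $\inv_{c^{-1}}(cw_\circ)$. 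Step one is \cref{thm:rational_Catalan_trace} with $p=mh-1$, which gives
\[
\Cat_{mh-1}(W)=(q-1)^{-r}q^{-N}[\T_e]\,\H\bigl(\B(c)^{(m+1)h-1}\bigr)\Big|_{q=1},
\qquad
\B(c)^{(m+1)h-1}=\Delta^{2m}\,\B(c)^{h-1}.
\]

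Next I will rewrite $\B(c)^{h-1}=\Delta^2\B(c)^{-1}$ in the shape ``pure braid of the form $\B(v)\B(v^{-1})$ times an Artin lift of length $r$''. Writing $\Delta=\B(x)\B(c)$ with $x=w_\circ c^{-1}$, I get $\B(c)^{h-1}=\Delta\,\B(x)$. Splitting $\Delta=\B(w)\B(w^{-1}w_\circ)$ with $w=cw_\circ=x^{-1}$ then yields
\[
\Delta\,\B(x)=\B(w)\,\B(w_\circ c^{-1}w_\circ)\,\B(x)
=\B(w)\,\Delta\,\B(c^{-1})\,\Delta^{-1}\,\B(x).
\]
Two facts make the next manipulation legitimate. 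First, the trace $[\T_e]\H(\cdot)$ is invariant under conjugation. Second, $\Delta^{2m}$ is central and conjugation by $\Delta$ acts on Artin lifts by $w_\circ$-conjugation. Using these, I move $\Delta^{-1}\B(x)$ around to the front and conjugate through, which turns the trace into
\[
[\T_e]\H\bigl(\Delta^{2m}\,\B(v)\B(v^{-1})\,\B(c^{\pm1})\bigr)
\]
for $v$ equal to $cw_\circ$ or its $w_\circ$-conjugate. The sign in $\B(c^{\pm1})$ is decided by the orientation bookkeeping. If necessary, I will instead start from the trace of $\B(c^{-1})^{(m+1)h-1}$, which by the symmetry $c\leftrightarrow c^{-1}$ of the formula in \cref{thm:rational_Catalan_trace} has the same value. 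I will choose whichever of these versions places the final factor so that \cref{lem:coeff-identity} produces exactly the coefficient $q^{r}[\T_{c^{-1}}]$ of a pure braid, with $c^{-1}$ lying in the relevant noncrossing lattice.

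With the trace in that form, I will factor the two pieces of the pure braid. For the central piece, \cref{cor:wo} for the Coxeter element $c^{-1}$ writes $\Delta^{2m}$ as the product over $\inv({\sf w}_\circ(c^{-1}))^m$; this matches the prefix of the word in the statement. For the other piece, \cref{lem:inversions} writes $\B(v)\B(v^{-1})$ as the reversed product of the squares $\P_{c^{-1}}(t)$ over $t\in\inv_{c^{-1}}(v)$, so I need to check that the reversal and the choice of $v$ yield precisely the suffix $\inv_{c^{-1}}(cw_\circ)$. All exponents are $+1$, so \cref{cor:main} applied with $\pi=c^{-1}$ gives the count in the statement. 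Dividing by $(q-1)^r$ and setting $q=1$ (the powers of $q$ disappear) identifies that count with $\Cat_{mh-1}(W)$.
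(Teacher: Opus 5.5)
Your skeleton (trace formula, then \cref{lem:coeff-identity}, then a factorization into dual squares, then \cref{cor:main}) is the paper's. However, you resolve the orientation step the wrong way, and you leave it open while flagging it as the main obstacle. As written, the last step fails.

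\begin{itemize}
\item If you use the $c^{-1}$-dual squares $\P_{c^{-1}}(t)$, then \cref{thm:main} (for $c^{-1}$) only controls coefficients of $\T_\pi$ with $\pi\in\NC(W,c)$. For $r\geq 2$ we have $c^{-1}\notin\NC(W,c)$, since $\ell_T(c^{-1})=\ell_T(c)$ but $c^{-1}\neq c$. So ``\cref{cor:main} with $\pi=c^{-1}$'' is not available.
\item Your factorization claims are reversed. \cref{cor:wo} for $c^{-1}$ gives $\Delta^2=\prod^{\longleftarrow}_{t\in\inv({\sf w}_\circ(c^{-1}))}\P_{c^{-1}}(t)$. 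Its word $\rev(\inv({\sf w}_\circ(c^{-1})))$ is commutation equivalent to $\inv({\sf w}_\circ(c))$, not to the prefix in the statement.
\item The forward-order product is in general not $\Delta^2$. In $\mathfrak S_3$ with $c=s_1s_2$, the word $\inv({\sf w}_\circ(c^{-1}))=((23),(13),(12))$ contains the two reduced $T$-words $((23),(13))$ and $((13),(12))$ for $c$. This would contradict \cref{lem:1word} applied to $c^{-1}$.
\item Likewise, \cref{lem:inversions} for $c^{-1}$ yields the word $\rev(\inv_{c^{-1}}(v))$, not $\inv_{c^{-1}}(v)$.
\item The computation you actually carry out, starting from $\B(c)^{(m+1)h-1}$, ends at $q^r[\T_c]\H\bigl(\Delta^{2m}\B(c^{-1}w_\circ)\B(w_\circ c)\bigr)$. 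The target is $c$, and the pure braid is built from $c^{-1}w_\circ$ rather than $cw_\circ$. So it cannot produce the word in the statement.
\end{itemize}

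The missing idea concerns where $\inv({\sf w}_\circ(c^{-1}))$ in the statement comes from. Write $\sim$ for commutation equivalence. It comes from \emph{reversing} \cref{cor:wo} for $c$, via $\rev(\inv({\sf w}_\circ(c)))\sim\inv({\sf w}_\circ(c^{-1}))$, while keeping the $c$-dual squares $\P_c(t)$. With those squares, $c^{-1}\in\NC(W,c^{-1})$ is a legitimate target.

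The paper proceeds as follows.
\begin{enumerate}
\item Start from \cref{thm:rational_Catalan_trace} for $c^{-1}$: $[\T_e]\T_{c^{-1}}^{(m+1)h-1}=q^N(q-1)^r\Cat_{mh-1}(W;q)$. This is your second, undeveloped option.
\item Use $\B(c^{-1})\B(cw_\circ)=\B(w_\circ c^{-1})\B(c)=\Delta$ and the centrality of $\Delta^2$ to get $\Delta^2\bigl(\B(c)\B(c^{-1})\bigr)^{-1}=\B(cw_\circ)\B(w_\circ c^{-1})$.
\item Hence $\B(c^{-1})^{(m+1)h-1}=\Delta^{2m}\B(cw_\circ)\B(w_\circ c^{-1})\B(c)$, and the trace equals $q^r[\T_{c^{-1}}]\H\bigl(\Delta^{2m}\B(cw_\circ)\B(w_\circ c^{-1})\bigr)$.
\item Write $\Delta^2$ as the forward product of the $\P_c(t)$ over $\inv({\sf w}_\circ(c^{-1}))$.
\item Use \cref{lem:inversions} to write $\B(cw_\circ)\B(w_\circ c^{-1})$ as a product over $\rev(\inv_c(cw_\circ))\sim\inv_{c^{-1}}(cw_\circ)$.
\item Apply \cref{cor:main} with $\pi=c^{-1}$.
\end{enumerate}

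Your $c^{-1}$-dual setup could alternatively be salvaged. The anti-automorphism of $\B_W$ fixing each $\s$ sends $\P_c(t)$ to $\P_{c^{-1}}(t)$ and induces $\T_w\mapsto\T_{w^{-1}}$ on $\H_W$. You would then count reduced words for $c$ in the reversed word and translate back, but your proposal does not contain this step.
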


\begin{proof}
Applying \cref{thm:rational_Catalan_trace} with the standard Coxeter element $c^{-1}$ and
$p=mh-1$ gives
\[
\operatorname{Cat}_{mh-1}(W)
=
\left.
(q-1)^{-r}
[\T_e]\T_{c^{-1}}^{(m+1)h-1}
\right|_{q=1}.
\]
Since $\ell(cw_\circ)=\ell(w_\circ c^{-1})=N-r$, we have
\[
\B(c^{-1})\B(cw_\circ)=
\B(w_\circ c^{-1})\B(c)=\Delta.
\]
Using the fact that $\Delta^2$ is central, we obtain
\begin{align}
\nonumber \mathbf{B}(cw_\circ)\mathbf{B}(w_\circ c^{-1})
\mathbf{B}(c)\mathbf{B}(c^{-1})
&=
\mathbf{B}(cw_\circ)\Delta\mathbf{B}(c^{-1})\\
\nonumber &=
\mathbf{B}(c^{-1})^{-1}\Delta^2\mathbf{B}(c^{-1})\\
\label{eq:prop_6.6}&=
\Delta^2.
\end{align}
It follows that $\Delta^2
\bigl(\mathbf{B}(c)\mathbf{B}(c^{-1})\bigr)^{-1}
=
\mathbf{B}(cw_\circ)\mathbf{B}(w_\circ c^{-1})$, so 
\[
\begin{aligned}
\B(c^{-1})^{(m+1)h-1}
&=
\Delta^{2(m+1)}\B(c^{-1})^{-1}\\
&=
\Delta^{2m}
\left(
\Delta^2
\bigl(\B(c)\B(c^{-1})\bigr)^{-1}
\right)
\B(c)\\
&=
\Delta^{2m}
\B(cw_\circ)\B(w_\circ c^{-1})
\B(c).
\end{aligned}
\]
\cref{lem:coeff-identity} now tells us that
\[
[\T_e]\T_{c^{-1}}^{(m+1)h-1}
=
q^r
[\T_{c^{-1}}]
\H\!\left(
\Delta^{2m}
\B(cw_\circ)\B(w_\circ c^{-1})
\right).
\]

By \cref{cor:wo}, there is a $\mathbb T$-word for $\Delta^2$ that is
commutation equivalent to
$\inv({\sf w}_\circ(c^{-1}))$.
\cref{lem:inversions} implies that one can replace the reflections in the $T$-word $
\mathrm{rev}(
\inv_c(cw_\circ)
)$ with the corresponding squares of $c$-dual lifts in order to obtain a $\mathbb{T}$-word for $\B(cw_\circ)\B(w_\circ c^{-1})$; moreover, $
\mathrm{rev}(
\inv_c(cw_\circ)
)$ is commutation equivalent to $\inv_{c^{-1}}(cw_\circ)$.
Hence, replacing the reflections in  $\inv({\sf w}_\circ(c^{-1}))^m
\inv_{c^{-1}}(cw_\circ)$ with the squares of their $c$-dual lifts yields a $\mathbb T$-word for 
\[
\Delta^{2m}
\B(cw_\circ)\B(w_\circ c^{-1}).
\]
Applying \cref{cor:main} with $\pi=c^{-1}$ gives
\[
\begin{aligned}
\Cat_{mh-1}(W)
&=
\left.
(q-1)^{-r}
[\T_{c^{-1}}]
\mathrm{H}\!\left(
\Delta^{2m}
\B(cw_\circ)\B(w_\circ c^{-1})
\right)
\right|_{q=1}\\
&=
\Subb_T\!\left(
\inv({\sf w}_\circ(c^{-1}))^m
\inv_{c^{-1}}(cw_\circ),
c^{-1}
\right),
\end{aligned}
\]
as desired.
\end{proof}

\begin{proposition}\label{prop:positive}
For each positive integer $m$, we have  
\[\Subb_\RR(\inv({\sf w}_\circ(c))^m,c^{-1})=\Cat_{mh-1}(W).\]
\end{proposition}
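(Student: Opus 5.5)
We will use the negative trace model, as in \cref{prop:Clusters_m}, applied to the braid $\B(c^{-1})^{-(mh-1)}$. By \cref{thm:rational_Catalan_trace} with the standard Coxeter element $c^{-1}$ and $p=mh-1$,
\[
\Cat_{mh-1}(W)=(-1)^r(q-1)^{-r}[\T_e]\T_{c^{-1}}^{-(mh-1)}\Big|_{q=1}.
\]
Since $\B(c^{-1})^h=\Delta^2$, we have
\[
\B(c^{-1})^{-(mh-1)}=\Delta^{-2m}\B(c^{-1})=\Delta^{-2m}\B(c^{-1})\B(c)\,\B(c)^{-1}.
\]
By \cref{lem:coeff-identity} with $w=c^{-1}$, applied to $\mathrm{x}\T_{c^{-1}}$ with $\mathrm{x}=\H(\Delta^{-2m})$, this gives
\[
[\T_e]\T_{c^{-1}}^{-(mh-1)}=q^r[\T_{c}]\H(\Delta^{-2m}).
\]
However, $\T_c$ is the wrong coefficient: we want $\T_{c^{-1}}$. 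So instead we write $\B(c^{-1})^{-(mh-1)}=\Delta^{-2m}\B(c^{-1})$ and use the trace property to move the factor. The cleanest route is to conjugate. The trace of $\H(\Delta^{-2m}\B(c^{-1}))$ equals the trace of $\H(\Delta^{-2m}\B(c))$, because $c$ and $c^{-1}$ have conjugate Artin lifts. Indeed, $\B(c^{-1})=\Delta^{-1}\B(c')\Delta$ for $c'=w_\circ c^{-1}w_\circ$, and $\B(c')$ is conjugate to $\B(c)$ by cyclic rotations of a standard Coxeter word; alternatively one can check this directly via $w_\circ$-conjugation. With $\Delta^{-2m}$ central, we then get
\[
[\T_e]\H(\Delta^{-2m}\B(c))=q^r[\T_{c^{-1}}]\H(\Delta^{-2m}).
\]

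Next we factor $\Delta^{-2m}$. By \cref{cor:wo}, $\Delta^2=\tt_N\cdots\tt_1$ where $\inv({\sf w}_\circ(c))=(t_1,\ldots,t_N)$. Hence
\[
\Delta^{-2}=\tt_1^{-1}\cdots\tt_N^{-1},
\]
and $\Delta^{-2m}$ is the product over the word $\inv({\sf w}_\circ(c))^m$ with all signs equal to $-1$. Applying \cref{thm:main} with $\pi=c^{-1}$ gives
\[
(q-1)^{-r}[\T_{c^{-1}}]\H(\Delta^{-2m})\Big|_{q=1}=\Subb_T^{\vece}\bigl(\inv({\sf w}_\circ(c))^m,c^{-1}\bigr)=(-1)^r\Subb_T\bigl(\inv({\sf w}_\circ(c))^m,c^{-1}\bigr),
\]
because every reduced $T$-word for $c^{-1}$ has exactly $r$ letters. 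Combining this with the trace identity, the factor $q^r$ evaluates to $1$ and the two signs $(-1)^r$ cancel, which yields $\Cat_{mh-1}(W)$.

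The main obstacle is the first step: identifying $[\T_e]\T_{c^{-1}}^{-(mh-1)}$ with $q^r[\T_{c^{-1}}]\H(\Delta^{-2m})$. This requires showing that $\B(c)$ and $\B(c^{-1})$ are conjugate in $\B_W$, or else using trace invariance under the anti-automorphism $\T_w\mapsto\T_{w^{-1}}$. I would try the latter first. The reversal anti-automorphism of $\H_W$, $\T_s\mapsto\T_s$, preserves $[\T_e]$, fixes $\H(\Delta^{2})$ (since ${\sf w}_\circ$ reversed is still reduced), and sends $\T_{c^{-1}}$ to $\T_c$. This immediately gives
\[
[\T_e]\H(\Delta^{-2m})\T_{c^{-1}}=[\T_e]\H(\Delta^{-2m})\T_c,
\]
and then \cref{lem:coeff-identity} finishes the identification. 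Alternatively, one can simply invoke \cref{thm:rational_Catalan_trace} directly with the Coxeter element $c$ rather than $c^{-1}$, which avoids the issue altogether.
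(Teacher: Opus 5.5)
Your proposal is correct, and once you take your own closing suggestion it is exactly the paper's argument: apply \cref{thm:rational_Catalan_trace} to $c$ itself, write $\B(c)^{-(mh-1)}=\Delta^{-2m}\B(c)$, use \cref{lem:coeff-identity} to pass to $q^r[\T_{c^{-1}}]\H(\Delta^{-2m})$, factor $\Delta^{-2m}$ via \cref{cor:wo} with every sign equal to $-1$, and apply \cref{thm:main}. The detour through $c^{-1}$ is valid, since both the braid-conjugacy of Artin lifts of standard Coxeter elements and the anti-automorphism $\T_w\mapsto\T_{w^{-1}}$ work; it is unnecessary only because \cref{thm:rational_Catalan_trace} holds for every standard Coxeter element.
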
 
\begin{proof}
\cref{thm:rational_Catalan_trace} tells us that 
\[\Cat_{mh-1}(W)=(-1)^r(q-1)^{-r}[\T_e]\T_{c}^{-(mh-1)}\Big|_{q=1}.\] Using \cref{lem:coeff-identity}, we can write \[[\T_e]\T_{c}^{-(mh-1)}=[\T_e](\H(\Delta^{-2m})\T_{c})=q^r[\T_{c^{-1}}]\H(\Delta^{-2m}).\] 
Letting $\inv({\sf w}_\circ(c))=(t_1,\ldots,t_N)$, we have $\Delta^2=\tt_N\cdots \tt_1$, so $\Delta^{-2m}=\left(\tt_1^{-1}\cdots \tt_N^{-1}\right)^m$. 
Now let $\protect\vv{t}=\inv({\sf w}_\circ(c))^m$, and let $\protect\vv\epsilon=(\epsilon_1,\ldots,\epsilon_{mN})$, where $\epsilon_i=-1$ for all $i$. Applying \cref{thm:main} with $\pi=c^{-1}$ and with these choices of $\protect\vv{t}$ and $\protect\vv\epsilon$ yields that 
\begin{align*}
\Cat_{mh-1}(W)&=(-1)^r(q-1)^{-r}q^r[\T_{c^{-1}}]\H\!\left(\tt_1^{-1}\cdots \tt_N^{-1}\right)^m\Big|_{q=1} \\ 
&=(-1)^r\Subb_{\RR}^{\protect\vv\epsilon}(\inv({\sf w}_\circ(c))^m,c^{-1}). 
\end{align*}
Because every entry of $\protect\vv\epsilon$ is $-1$ and every reduced $\RR$-word for $c^{-1}$ uses exactly $r$ reflections, we have 
\[(-1)^r\Subb_{\RR}^{\protect\vv\epsilon}(\inv({\sf w}_\circ(c))^m,c^{-1})=\Subb_{\RR}(\inv({\sf w}_\circ(c))^m,c^{-1}),\] as desired.  
\end{proof}

\section{Rational Models}\label{sec:rational}
We now construct reflection subword models for rational $W$-Catalan numbers. 

Given a standard Coxeter element $c$, we know by \cref{thm:rational_Catalan_trace} that the trace $[\T_e] \T_c^{p+h}$ is the rational $q$-Catalan number, up to multiplication by powers of $q$ and $q-1$. Thus, one might hope to use \cref{cor:main} to find, for each $p>0$ coprime to $h$ and each choice of $c$, a $\RR$-word $\vect$ such that $\Subb_\RR(\vect,c^{-1})=\Cat_p(W)$ (as we did when $p\equiv \pm 1\pmod h$ in \cref{sec:fuss}). Note that it suffices to solve this problem for a particular standard Coxeter element $c$ since we can then easily solve it for all standard Coxeter elements by conjugating. Indeed, it is well known that all standard Coxeter elements are conjugate to each other~\cite{humphreys1992reflection}. 

The trouble is that $c^p$ is not generally a \emph{standard} Coxeter element.  In contrast to~\Cref{prop:dual_artin}---where we can simply take the Hurwitz orbit of the standard generators---we do not have a systematic way to write the lifts of the reflections in the dual braid group for a nonstandard Coxeter element in terms of the standard Artin generators (since a nonstandard Coxeter element does not have a reduced reflection expression using only simple reflections).  Thus, even though we still have from~\Cref{lem:coeff-identity} that \[[\T_e] \T_c^{p+h} = q^{\ell(c^p)} [\T_{c^{-p}}] \H(\Delta^2\B(c)^p \B(c^p)^{-1}),\] we cannot directly use \Cref{cor:main} to compute the coefficient of $\T_{c^{-p}}$.

All, however, is not lost.  When $W$ is a Weyl group and $p$ is coprime to $h$, Springer proved that the elements $c^p$ and $c$ are conjugate.  

\begin{theorem}[{\cite[Theorem 4.2, Proposition 4.7]{springer1974regular}}] Let $c$ be a standard Coxeter element of a Weyl group $W$.  For $p$ coprime to the Coxeter number $h$, there exists $g \in W$ such that \begin{equation}g c^p g^{-1} = c.\label{eq:springer}\end{equation}  Moreover, the set of such elements $g$ is a left coset of the cyclic subgroup $\{c^i:0\leq i<h\}$. 
\label{thm:springer}
\end{theorem}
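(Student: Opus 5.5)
The plan is to follow Springer's theory of regular elements rather than any combinatorics from earlier in the paper, since the statement is really a fact about Weyl groups.

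\emph{Step 1: $c^p$ is a Coxeter element.} By definition $c$ is $h$-regular: it has a regular eigenvector $v\in V^{\mathrm{reg}}$ with eigenvalue $\zeta=e^{2\pi i/h}$, a primitive $h$th root of unity. Then $c^p v=\zeta^p v$. Since $\gcd(p,h)=1$, the scalar $\zeta^p$ is again a primitive $h$th root of unity. So $v$ is a regular eigenvector of $c^p$ with primitive $h$th-root eigenvalue, and $c^p$ is $h$-regular, i.e.\ a Coxeter element.

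\emph{Step 2: conjugacy.} We invoke Springer's theorem that, for each $d$, all $d$-regular elements with eigenvalue a given primitive $d$th root $\zeta'$ are conjugate. The underlying reason is that the eigenspace $V(w,\zeta')$ of a regular element is a maximal $\zeta'$-eigenspace, and all maximal $\zeta'$-eigenspaces are $W$-conjugate. This gives conjugacy of $c^p$ with some element having eigenvalue $\zeta^p$, which is not yet conjugacy with $c$ itself.

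The main obstacle is that $c$ has eigenvalue $\zeta$ while $c^p$ has eigenvalue $\zeta^p$, a different primitive root. Here crystallographicity enters. For a Weyl group the reflection representation is defined over $\mathbb Q$, so the characteristic polynomial of $c$ has rational coefficients. Hence the Galois automorphism $\sigma_p\colon\zeta\mapsto\zeta^p$ of $\mathbb Q(\zeta)$ sends the $\zeta$-eigenspace of $c$ to its $\zeta^p$-eigenspace, and these have equal dimension. Applying $\sigma_p$ to the regular eigenvector $v$ gives a $\zeta^p$-eigenvector of $c$ that is still regular, because the root hyperplanes are defined over $\mathbb Q$. So $c$ is itself $h$-regular with eigenvalue $\zeta^p$. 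By the conjugacy statement of Step 2, $c$ and $c^p$ are conjugate, giving $g$ with $gc^pg^{-1}=c$. One can alternatively note that the Coxeter elements form a single conjugacy class in crystallographic type, as recalled in \Cref{subsec:coxeter}, and combine this with Step 1.

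\emph{Step 3: the coset statement.} The set of $g$ with $gc^pg^{-1}=c$ is a left coset $g_0C_W(c^p)$ of the centralizer of $c^p$. Springer shows that the centralizer of a $d$-regular element is a complex reflection group acting on its $\zeta$-eigenspace, with degrees those $d_i$ divisible by $d$. For $d=h$ only $d_r=h$ qualifies, so $C_W(c)$ is cyclic of order $h$. It contains $\langle c\rangle$, which has order $h$, so $C_W(c)=\langle c\rangle$. Since $c^p$ generates the same cyclic group, $C_W(c^p)=C_W(c)=\{c^i:0\le i<h\}$. Therefore the set of $g$ is the left coset $g_0\{c^i\}$.
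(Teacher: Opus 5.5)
Your argument is correct and follows the same route as the paper, which gives no proof of its own and simply cites Springer's Theorem~4.2 and Proposition~4.7. Your three ingredients are exactly the content of those results: conjugacy of regular elements that share an eigenvalue, the description of their centralizers as reflection groups on the eigenspace (giving $C_W(c)=\langle c\rangle$), and the Galois argument available because $W$ is a Weyl group. One detail is worth stating explicitly. $\sigma_p$ should be applied to a vector of the line $V(c,\zeta)$ whose coordinates lie in $\mathbb Q(\zeta)$, with respect to a basis in which $W$ acts by rational matrices; alternatively, extend $\sigma_p$ to an automorphism of $\mathbb C$. This is harmless, because that eigenline is defined over $\mathbb Q(\zeta)$ and every nonzero vector on it is regular.
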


In light of the preceding theorem, we define a \defn{Coxeter $(c,p)$-conjugator} to be an element $g\in W$ satisfying $gc^pg^{-1}=c$. If $g$ is a Coxeter $(c,p)$-conjugator, then the set of all Coxeter $(c,p)$-conjugators is \[\Conj_W(c,p):=\{gc^i : 0 \leq i< h\}.\] For $g\in \Conj_W(c,p)$, we can use \cref{lem:coeff-identity} and the fact that traces are invariant under conjugation to compute that \[[\T_e] \T_c^p = [\T_e] (\T_g \T_c^p \T_g^{-1}) = q^r[\T_{c^{-1}}] (\T_g \T_c^p \T_g^{-1}\T_c^{-1}).\] This form is more desirable since it involves a coefficient of $\T_{c^{-1}}$ and $c^{-1}$ is a standard Coxeter element. 

\begin{example} Take $W=\mathfrak{S}_5$. For $c=s_1s_2s_3s_4=(12345)$ and $p=2$, the element $g=s_2s_3s_1$ satisfies $g c^2 g^{-1} = c$.  We have \[\Conj_{\mathfrak{S}_5}\big((12345),2\big) = \{s_2s_3s_1, s_3s_4s_2,s_1s_2s_3s_4s_2s_3s_1,s_4s_1s_2s_3s_2,s_2s_3s_4s_3s_1s_2s_1\}.\qedhere\]
\end{example}

Suppose $g\in\Conj_W(c,p)$. While $g c^p g^{-1} c^{-1}$ is the identity in the Weyl group $W$, we only know that $\T_g \T_c^p \T_g^{-1}\T_c^{-1}$ is the image of some pure braid in the Hecke algebra---and we still need to express this pure braid as a product of \emph{positive} squares of the dual braid generators in order to obtain a combinatorial reflection subword model for rational Catalan objects via \cref{cor:main}.

One difficulty comes from the fact that Coxeter $(c,p)$-conjugators depend on $c$, and it is not clear how to find the ``correct'' lift of an element of $\Conj_W(c,p)$ to $\B_W$.  For example, even when $p\equiv -1 \pmod h$ so that both $c$ and $c^p=c^{-1}$ are standard Coxeter elements, there is a very reasonable lift of $g$ to $\B_W$, which comes from conjugating by a sequence of initial reflections, as in~\cite{reading2007clusters}.  This is easy and type-uniform for bipartite Coxeter elements, but it is not as straightforward for other Coxeter elements. 

For $g\in \Conj_W(c,p)$, choose a braid lift $\mathbf g\in \B_W$ of $g$, and define
\[\XX
    :=
    \mathbf g \B(c)^p \mathbf g^{-1} \B(c)^{-1}.
\] 
Note that $\XX$ is a pure braid and $\XX\B(c)$ is an $h$th root of $\Delta^{2p}$. 
In order to apply \cref{cor:main}, we need to factor $\XX$ as 
\[
    \XX
    =
    \P_c(t_1)\cdots \P_c(t_M)
\]
for some $\vect=(t_1,\ldots,t_M)\in\RR^M$, where $M=r(p-1)/2$. If such a factorization exists, we call the lift $\mathbf g$ a \dfn{positive dual $(c,p)$-conjugator}. Note that if one exhibits a factorization of the form 
\[
    \XX
    =
    \P_c(t_1)^{\epsilon_1}\cdots
    \P_c(t_M)^{\epsilon_M}
\]
with $\epsilon_i\in\{\pm 1\}$ for all $i$, then \cref{thm:main} still gives a signed reflection subword model. The positivity
condition is precisely what distinguishes an honest counting model
from a signed one. 

\begin{example}
Let $W=\mathfrak{S}_5$ so that $\B_W=\B_5$ is the $5$-strand braid group. Let $c=s_1s_2s_3s_4$ and $p=2$. Suppose we choose $g=s_2s_3s_1 \in \mathfrak{S}_5$ and $\mathbf{g}=\s_2\s_3\s_1 \in B_5$. Then
\[\mathbf{g}\B(c)^p \mathbf{g}^{-1}=(\s_2\s_3\s_1) (\s_1\s_2\s_3\s_4)^2 (\s_1^{-1}\s_3^{-1}\s_2^{-1}) = (\s_2\s_1\s_1\s_2) \cdot \s_1\s_2\s_3\s_4 = \XX \cdot \B(c),\]
where $\XX=\s_2\s_1\s_1\s_2$.  By~\Cref{lem:inversions}, $\XX=\tt_{(23)}\tt_{(13)}$, where we are taking the $c$-lifts of the reflections to the dual braid group. 

On the other hand, if we instead choose $g=s_2s_3s_4s_3s_1s_2s_1$ and $\mathbf{g}=\s_2\s_3\s_4\s_3\s_1\s_2\s_1$, then 
\begin{align*}\mathbf{g} \B(c)^p \mathbf{g}^{-1} &= \left(\s_3^{-1} \s_1^{-1} \s_2^{-1} \s_2^{-1} \s_1^{-1} \s_3  \s_4 \s_2 \s_3 \s_2 \s_2 \s_3 \s_2 \s_4\right) \cdot \s_1 \s_2 \s_3 \s_4 \\
&=  \left(\tt_{(12)}^{-1} \tt_{(14)}^{-1} \tt_{(45)} \tt_{(23)} \tt_{(35)} \tt_{(25)} \right) \cdot \s_1 \s_2 \s_3 \s_4.
\end{align*}
In this case, the element $\XX=\tt_{(12)}^{-1} \tt_{(14)}^{-1} \tt_{(45)} \tt_{(23)} \tt_{(35)} \tt_{(25)}$ uses both positive and negative powers of the squares of the $c$-dual lifts of reflections. Hence, this choice only gives a signed subword model. 
\end{example}

\begin{theorem}\label{thm:rational-subword-models}
Let $W$ be a Weyl group of rank $r$, let $c$ be a standard Coxeter
element, and let $p>0$ be coprime to the Coxeter number $h$. Choose
$g_+,g_-\in W$ such that $g_+c^pg_+^{-1}=g_-c^{-p}g_-^{-1}=c$, and choose
a braid lift $\mathbf g_+$ of $g_+$ and a braid lift $\mathbf g_-$ of $g_-$. Define
\[
\mathbbm c_{p,\mathbf g_+}
=
\mathbf g_+\B(c)^p\mathbf g_+^{-1}\B(c)^{-1}
\quad\text{and}\quad
\mathbbm c_{-p,\mathbf g_-}
=
\mathbf g_-\B(c)^{-p}\mathbf g_-^{-1}\B(c)^{-1}.
\]
Suppose we have factorizations 
\[
\mathbbm c_{p,\mathbf g_+}
=
\prod_{i=1}^{M_+}\P_c(t_i)^{\epsilon_i},
\qquad
\mathbbm c_{-p,\mathbf g_-}
=
\prod_{j=1}^{M_-}\P_c(v_j)^{\delta_j},
\qquad
\Delta^2
=
\prod_{k=1}^{N}\P_c(u_k),
\]
where\[
\vect=(t_1,\ldots,t_{M_+})\in T^{M_+},\qquad
\protect\vv{v}=(v_1,\ldots,v_{M_-})\in T^{M_-},\qquad
\protect\vv{u}=(u_1,\ldots,u_N)\in T^N,
\]
\[\protect\vv{\epsilon}=(\epsilon_1,\ldots,\epsilon_{M_+})\in\{\pm 1\}^{M_+},\qquad\protect\vv{\delta}=(\delta_1,\ldots,\delta_{M_-})\in\{\pm 1\}^{M_-}.\]
Let $(1^N,\protect\vv{\epsilon})\in\{\pm 1\}^{M_++N}$ be obtained by concatenating the vector $(1,\ldots,1)\in\{\pm 1\}^N$ with $\protect\vv{\epsilon}$. 
Then
\[
\Subb_T^{(1^N,\protect\vv{\epsilon})}
(\vv{u\vphantom{t}}\vect,c^{-1})
=
\Cat_p(W)
=
(-1)^r
\Subb_T^{\protect\vv\delta}
(\protect\vv{v},c^{-1}). 
\] 
\end{theorem}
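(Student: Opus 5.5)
We prove the two equalities separately, each by the same three moves: rewrite a trace from \cref{thm:rational_Catalan_trace} as a coefficient of $\T_{c^{-1}}$ via conjugation invariance and \cref{lem:coeff-identity}, substitute the assumed factorization into squares of $c$-dual lifts, and evaluate with \cref{thm:main} at $\pi=c^{-1}\in\NC(W,c^{-1})$ (with $\ell_T(c^{-1})=r$).

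For the positive model, start from $[\T_e]\T_c^{p+h}=q^N(q-1)^r\Cat_p(W;q)$. Since $\B(c)^h=\Delta^2$, we have $\T_c^{p+h}=\H(\Delta^2\B(c)^p)$. Because $\Delta^2$ is central and the trace is conjugation invariant,
\[
[\T_e]\H(\Delta^2\B(c)^p)=[\T_e]\H(\Delta^2\mathbf g_+\B(c)^p\mathbf g_+^{-1})=[\T_e]\H(\Delta^2\mathbbm c_{p,\mathbf g_+}\B(c)).
\]
\Cref{lem:coeff-identity} with $w=c$ then gives $q^r[\T_{c^{-1}}]\H(\Delta^2\mathbbm c_{p,\mathbf g_+})$. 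Next substitute $\Delta^2=\prod_k\P_c(u_k)$ and $\mathbbm c_{p,\mathbf g_+}=\prod_i\P_c(t_i)^{\epsilon_i}$. The pure braid $\Delta^2\mathbbm c_{p,\mathbf g_+}$ is then the product over the concatenated word $\vv{u\vphantom{t}}\vect$ with sign vector $(1^N,\vece)$. Dividing by $(q-1)^r$, setting $q=1$, and applying \cref{thm:main} with $\pi=c^{-1}$ yields $\Subb_T^{(1^N,\vece)}(\vv{u\vphantom{t}}\vect,c^{-1})=\Cat_p(W)$; the stray powers $q^N$ and $q^r$ become $1$ at $q=1$.

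For the negative model, start from $[\T_e]\T_c^{-p}=(-1)^rq^{-rp}(q-1)^r\Cat_p(W;q)$. Conjugating by $\mathbf g_-$ gives
\[
[\T_e]\H(\B(c)^{-p})=[\T_e]\H(\mathbbm c_{-p,\mathbf g_-}\B(c))=q^r[\T_{c^{-1}}]\H(\mathbbm c_{-p,\mathbf g_-}).
\]
Inserting the factorization $\prod_j\P_c(v_j)^{\delta_j}$ and applying \cref{thm:main} gives $(-1)^r\Cat_p(W)=\Subb_T^{\vv\delta}(\vv v,c^{-1})$. Multiplying both sides by $(-1)^r$ gives the second equality.

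All steps are formal consequences of earlier results, so no step is a real obstacle. The point needing most care is the bookkeeping of the identity $\T_c^{p+h}=\H(\Delta^2\B(c)^p)$ together with centrality of $\Delta^2$, which is what lets us conjugate only the $\B(c)^p$ factor by $\mathbf g_+$. Crystallographicity enters only through the existence of $g_\pm$, which the statement assumes.
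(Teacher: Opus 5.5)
Your proposal is correct and matches the paper's proof essentially step for step. In both the positive and negative models you rewrite the trace from \cref{thm:rational_Catalan_trace} using $\B(c)^h=\Delta^2$, centrality of $\Delta^2$, conjugation invariance of the trace, and \cref{lem:coeff-identity} as $q^r[\T_{c^{-1}}]$ of the relevant pure braid, and then apply \cref{thm:main} at $\pi=c^{-1}$; your sign and $q$-power bookkeeping is correct.
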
 

\begin{proof}
By \cref{thm:rational_Catalan_trace} and the identity $\B(c)^h=\Delta^2$, we have
\[
[\T_e]\H(\Delta^2\B(c)^p)
=
q^N(q-1)^r\Cat_p(W;q).
\]
Using the centrality of $\Delta^2$, the invariance of the trace under
conjugation, and the definition of $\mathbbm c_{p,\mathbf g_+}$, we
obtain
\begin{align*}
[\T_e]\H(\Delta^2\B(c)^p)
&=
[\T_e]\H\bigl(\Delta^2\mathbf g_+\B(c)^p\mathbf g_+^{-1}\bigr)\\
&=
[\T_e]\H\bigl(\Delta^2\mathbbm c_{p,\mathbf g_+}\B(c)\bigr)\\
&=
q^r[\T_{c^{-1}}]\H\bigl(\Delta^2\mathbbm c_{p,\mathbf g_+}\bigr),
\end{align*}
where the last equality follows from \cref{lem:coeff-identity}. Dividing by
$(q-1)^r$, specializing at $q=1$, and applying \cref{thm:main} gives that 
\[
\Cat_p(W)
=
\Subb_T^{(1^N,\protect\vv{\epsilon})}
(\vv{u\vphantom{t}}\vect,c^{-1}).
\]

\cref{thm:rational_Catalan_trace} also tells us that 
\[
[\T_e]\H(\B(c)^{-p})
=
(-1)^r q^{-rp}(q-1)^r\Cat_p(W;q).
\]
Using the invariance of the trace under conjugation and the definition
of $\mathbbm c_{-p,\mathbf g_-}$, we obtain
\begin{align*}
[\T_e]\H(\B(c)^{-p})
&=
[\T_e]\H\bigl(\mathbf g_-\B(c)^{-p}\mathbf g_-^{-1}\bigr)\\
&=
[\T_e]\H\bigl(\mathbbm c_{-p,\mathbf g_-}\B(c)\bigr)\\
&=
q^r[\T_{c^{-1}}]\H\bigl(\mathbbm c_{-p,\mathbf g_-}\bigr).
\end{align*}
Dividing by $(q-1)^r$, specializing at $q=1$, and applying
\cref{thm:main} now yields the identity 
\[
(-1)^r\Cat_p(W)
=
\Subb_T^{\protect\vv\delta}(\protect\vv{v},c^{-1}),
\]
which is equivalent to the second equality.
\end{proof}

Let us say a positive dual $(c,p)$-conjugator ${\bf g}$ is \dfn{realizable} if there exists $x\in W$ such that 
\[\XX=\mathbf g \B(c)^p \mathbf g^{-1} \B(c)^{-1}=\B(x)\B(x^{-1}).\] 
Computing an appropriate factorization of $\XX$ can be tricky in general, but it is easy when ${\bf g}$ is realizable because we can apply \cref{lem:inversions}.  
One might hope to use this idea to describe subword models for rational Catalan numbers in a type-uniform manner. Sadly, realizable positive dual $(c,p)$-conjugators do not always exist. For example, we exhaustively checked that they do not exist in type $E_7$ for $p=5$ (for any $c$).  Thus, we must content ourselves with constructing positive dual $(c,p)$-conjugators in a type-by-type manner. Let us stress again that it suffices to do so for a particular standard Coxeter element $c$. 

The next proposition allows us to restrict our attention to the case where $1\leq p< h$. 

\begin{proposition}\label{prop:p-range}
Let $c$ be a standard Coxeter element of a Weyl group $W$. Suppose $p>0$ is an integer coprime to the Coxeter number $h$. Write $p=dh+\overline p$, where $d$ and $\overline p$ are integers satisfying $d\geq 0$ and $1\leq \overline p< h$. If ${\bf g}$ is a positive dual $(c,\overline p)$-conjugator, then it is also a positive dual $(c,p)$-conjugator. More precisely, 
\[{\bf g}\B(c)^p{\bf g}^{-1}\B(c)^{-1}=\Delta^{2d}{\bf g}\B(c)^{\overline p}{\bf g}^{-1}\B(c)^{-1}.\]
\end{proposition}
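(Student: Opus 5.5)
The plan is to prove the displayed identity directly, using only two standard facts: $\B(c)^h=\Delta^2$, and $\Delta^2$ is central in $\B_W$. The positivity statement then follows by prepending a positive factorization of $\Delta^{2d}$.

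First, write $\B(c)^p=\B(c)^{dh}\B(c)^{\overline p}=\Delta^{2d}\B(c)^{\overline p}$. Substituting this into ${\bf g}\B(c)^p{\bf g}^{-1}\B(c)^{-1}$ and moving $\Delta^{2d}$ past ${\bf g}$ by centrality gives
\[{\bf g}\B(c)^p{\bf g}^{-1}\B(c)^{-1}=\Delta^{2d}\,{\bf g}\B(c)^{\overline p}{\bf g}^{-1}\B(c)^{-1},\]
which is the displayed identity. I should also check the hypotheses of the definition.
\begin{itemize}
\item Since $c^h=e$, we have $c^p=c^{\overline p}$ in $W$. So $g$ is a Coxeter $(c,p)$-conjugator exactly when it is a Coxeter $(c,\overline p)$-conjugator.
\item Since $p\equiv\overline p \pmod h$, the integer $\overline p$ is also coprime to $h$.
\end{itemize}

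Next, take a positive factorization ${\bf g}\B(c)^{\overline p}{\bf g}^{-1}\B(c)^{-1}=\P_c(t_1)\cdots\P_c(t_{M'})$ with $M'=r(\overline p-1)/2$. By \cref{cor:wo}, $\Delta^2=\tt_N\cdots\tt_1$ where $\inv({\sf w}_\circ(c))=(t_1,\ldots,t_N)$. Concatenating $d$ copies of this word for $\Delta^2$ in front of the factorization gives a product of positive generators $\P_c(\cdot)$ of total length $dN+r(\overline p-1)/2$. Using $N=rh/2$, this length equals
\[\frac{r\,dh}{2}+\frac{r(\overline p-1)}{2}=\frac{r(p-1)}{2}=M,\]
which is exactly the length the definition of a positive dual $(c,p)$-conjugator requires.

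There is no real obstacle here. The only point needing care is matching the required factor count $M=r(p-1)/2$, and the arithmetic above handles it.
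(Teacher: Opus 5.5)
Your proof is correct and follows the paper's argument: you write $\B(c)^p=\Delta^{2d}\B(c)^{\overline p}$, pull $\Delta^{2d}$ out using centrality, and prepend the positive $\mathbb{T}_c$-word for $\Delta^2$ from \cref{cor:wo}. Your explicit check that the total length is $dN+r(\overline p-1)/2=r(p-1)/2$ is a small addition the paper leaves implicit; that count is in any case forced by the writhe, since each $\P_c(t)$ has writhe $2$.
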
 
\begin{proof}
Since $\B(c)^h=\Delta^2$ and $\Delta^2$ is central, we have
\[
\begin{aligned}
    {\bf g}\B(c)^p{\bf g}^{-1}\B(c)^{-1}
    &=
    {\bf g}\Delta^{2d}\B(c)^{\overline p} {\bf g}^{-1}\B(c)^{-1} \\
    &=
    \Delta^{2d}
    {\bf g}\B(c)^{\overline p} {\bf g}^{-1}\B(c)^{-1}.
\end{aligned}
\] 
If ${\bf g}$ is a positive dual $(c,\overline p)$-conjugator, then, by definition, ${\bf g}\B(c)^{\overline p}{\bf g}^{-1}\B(c)^{-1}$ is in the monoid generated by $\mathbb T_c$. Then ${\bf g}\B(c)^{p}{\bf g}^{-1}\B(c)^{-1}$ also belongs to this monoid since, by \cref{cor:wo}, $\Delta^2$ belongs to this monoid. 
\end{proof}

In the following subsections, we will freely assume that $1\leq p\leq h-1$.

We will also make use of the following lemma, which comes from Bessis--Brou\'e--Michel Springer theory for braid groups~\cite[Theorem~12.4]{bessis2015finite}. 

\begin{lemma}[{\cite[Proposition 5.24]{broue2010introduction}}]\label{lem:Bessis_Springer}
Let $c$ be a standard Coxeter element of a Weyl group $W$. Suppose $p>0$ is an integer coprime to the Coxeter number $h$. If $\bbeta\in \B_W$ satisfies $\bbeta^h=\Delta^{2p}$, then $\bbeta$ is conjugate to $\B(c)^p$ in the braid group $\B_W$. 
\end{lemma}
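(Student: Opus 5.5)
The lemma is the Bessis--Brou\'e--Michel Springer theory for regular braids, and I would derive it from Bessis's theorem that roots of powers of the full twist are conjugate. The route is to identify $\bbeta$ as a periodic element of the braid group with the right period data, and then invoke uniqueness of such roots up to conjugacy.

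The key steps, in order, are as follows.

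\textbf{Step 1: $\B(c)^p$ is itself such a root.} Since $\B(c)^h=\Delta^2$ is central, we have $(\B(c)^p)^h=\Delta^{2p}$.

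\textbf{Step 2: cite the uniqueness theorem.} Bessis \cite[Theorem~12.4]{bessis2015finite} proves the following for any well-generated complex reflection group, in particular any Weyl group. Let $d$ be a regular number and $\boldsymbol\rho$ a $d$th root of $\Delta^{2}$ (more generally, an element whose $d$th power is $\Delta^{2k}$ with $\gcd(k,d)=1$). Then all such elements are conjugate in $\B_W$, and each is conjugate to the lift of a $d/\gcd$-regular element obtained from a regular eigenvector path. Apply this with $d=h$ and exponent $p$. Since $\gcd(p,h)=1$, the element $\bbeta$ is an $h$th root of $\Delta^{2p}$.

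\textbf{Step 3: reduce the exponent to $1$.} The cleanest route is to choose integers $a,b$ with $ap+bh=1$ and set $\boldsymbol\gamma=\bbeta^a\Delta^{2b}$. Every $h$th root of a central element commutes with the centre, so
\[
\boldsymbol\gamma^h=\Delta^{2ap+2bh}=\Delta^2 .
\]
Moreover, $\bbeta$ is recovered from $\boldsymbol\gamma$: indeed $\boldsymbol\gamma^p=\bbeta^{ap}\Delta^{2bp}=\bbeta^{1-bh}\Delta^{2bp}=\bbeta$, because $\bbeta^{h}=\Delta^{2p}$. By Bessis's theorem applied to $h$th roots of $\Delta^2$, the element $\boldsymbol\gamma$ is conjugate to $\B(c)$. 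Raising to the $p$th power, $\bbeta=\boldsymbol\gamma^p$ is conjugate to $\B(c)^p$.

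\textbf{Main obstacle.} The main obstacle is Bessis's uniqueness of roots of $\Delta^2$. Its proof uses the dual braid monoid Garside structure and the geometry of the quotient $V^{\mathrm{reg}}/W$ via Lyashko--Looijenga coverings, and it is far beyond the scope of this paper. I would cite it, as the paper does via \cite[Proposition~5.24]{broue2010introduction}, rather than reprove it. The only care needed is the reduction in Step 3, checking that $\boldsymbol\gamma^p=\bbeta$ exactly.
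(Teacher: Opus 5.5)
Your proposal is correct and rests on the same input as the paper, which gives no argument beyond citing Bessis's conjugacy theorem for roots of the full twist \cite[Theorem~12.4]{bessis2015finite} via \cite[Proposition~5.24]{broue2010introduction}. Your B\'ezout reduction $\boldsymbol\gamma=\bbeta^{a}\Delta^{2b}$ with $ap+bh=1$ is valid: $\boldsymbol\gamma^h=\Delta^2$ and $\boldsymbol\gamma^p=\bbeta$ using only the centrality of $\Delta^2$, so you need only the $p=1$ case of that theorem, and the loosely stated generalization in your Step~2 can be dropped.
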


\subsection{Folding}\label{subsec:folding}
The standard technique of \emph{folding} will help us produce positive dual conjugators in one type from positive dual conjugators in another type. 

Let $(W,S)$ be a finite irreducible Coxeter system. Suppose $\theta\colon{S}\to{S}$ is an automorphism of the Coxeter graph such that each orbit of $\theta$ consists of pairwise-commuting simple reflections. Let $\mathrm{Orb}_\theta$ be the set of orbits of $\theta$. Given $\mathcal O\in\mathrm{Orb}_\theta$, let $s_\mathcal O=\prod_{s\in \mathcal O}s$. Let $S^{\fold}$ be the set of elements of the form $s_{\mathcal O}$ such that $\mathcal O$ is a $\theta$-orbit. Let $W^{\fold}$ be the subgroup of $W$ generated by $S^{\fold}$. Then $(W^{\fold},S^{\fold})$ is a Coxeter system. We say $(W^{\fold},S^{\fold})$ is obtained from $({W},{S})$ by \dfn{folding}.   

There is an induced injection \[\iota:\B_{W^{\fold}}\hookrightarrow \B_{W}
\]
defined on the Artin generators by
\[
    \iota(\mathbf{s}_{\mathcal O})
    =
    \prod_{{s}\in\mathcal O}\mathbf{s}.
\]
The factors in this product commute, so the order is irrelevant.

Let $c^{\fold}=s_{\mathcal O_1}s_{\mathcal O_2}\cdots s_{\mathcal O_{r'}}$ be a standard Coxeter element of $W^{\fold}$. Let us write $c$ for the same element $c^{\fold}$ viewed as a standard Coxeter element of $W$. Then
\[
    \iota(\B(c^{\fold}))=\B(c).
\] The Coxeter groups $W$ and $W^{\fold}$ have the same Coxeter number. 

Now consider a reflection $t^{\fold}\in\RR_{W^{\fold}}$ of $W^{\fold}$. Then there exist $w\in W^{\fold}$ and $\mathcal O\in\mathrm{Orb}_\theta$ such that $t^{\fold}=ws_{\mathcal O}w^{-1}$. This yields the set 
\[\mathrm{Orb}_\theta(t^{\fold}):=\{wsw^{-1}:s\in\mathcal O\},\] which consists of pairwise-commuting reflections in $W$. 

We claim that the embedding $\iota$ sends the $c^{\fold}$-dual lift of $t^{\fold}$ to the product of $c$-dual lifts of the reflections in $\mathrm{Orb}_\theta(t^{\fold})$. That is, 
\begin{equation}\label{eq:iota}
\iota(\B_{c^{\fold}}(t^{\fold}))=\prod_{t\in\mathrm{Orb}_\theta(t^{\fold})}\B_{c}(t).
\end{equation} 
In addition, 
\begin{equation}\label{eq:iota2}
\iota(\P_{c^{\fold}}(t^{\fold}))=\prod_{t\in\mathrm{Orb}_\theta(t^{\fold})}\P_{c}(t).
\end{equation} 
To prove this, we use the original definition of dual lifts coming from \cref{thm:Bessis_Hurwitz}. Starting with the reduced word $(s_{\mathcal O_1},s_{\mathcal O_2},\ldots,s_{\mathcal O_{r'}})$ for $c^{\fold}$, we obtain a reduced word ${\mathsf{c}}$ for $c$ by replacing each letter $s_{\mathcal O_i}$ with a word consisting of the elements of $\mathcal O_i$ (each used exactly once). Thus, we view ${\mathsf{c}}$ as a concatenation of $r'$ blocks of simple reflections from $S$. We can then produce the corresponding braid words by replacing simple reflections by their Artin lifts. By \cref{thm:Bessis_Hurwitz}, we can perform a sequence of Hurwitz moves to transform the braid word $(\s_{\mathcal O_1},\s_{\mathcal O_2},\ldots,\s_{\mathcal O_{r'}})$ into a word containing $\B_{c^{\fold}}(t^{\fold})$. This sequence of Hurwitz moves corresponds to a sequence of Hurwitz moves in $\B_{W}$ starting with the word ${\mathsf c}$. Indeed, each of the original Hurwitz moves corresponds to a sequence of Hurwitz moves that moves a block of reflections past another block. This latter sequence of Hurwitz moves transforms ${\mathsf c}$ into a word containing the lifts in the set $\{\B_{c}(t):t\in\Orb_\theta(t^{\fold})\}$ as a consecutive block. This implies \eqref{eq:iota}. At each step, the lifts of reflections in a particular block all get conjugated by the same element. Thus, the elements $\B_{c}(t)$ for $t\in\mathrm{Orb}_\theta(t^{\fold})$ are obtained by conjugating the Artin lifts of the simple reflections in $\mathcal O$ by the same element. It follows that the elements $\B_{c}(t)$ for $t\in\mathrm{Orb}_\theta(t^{\fold})$ all commute with each other, so \eqref{eq:iota2} follows from squaring both sides of \eqref{eq:iota}.

\begin{proposition}\label{prop:folding}
Suppose $W^{\fold}$ is a folding of $W$. Let $c^{\fold}$ be a standard Coxeter element of $W^{\fold}$, and let $c$ be the Coxeter element $c^{\fold}$ viewed as an element of $W$. Let $h$ be the Coxeter number of $W$ and $W^{\fold}$. Let $p$ be a positive integer such that $\gcd(p,h)=1$. Suppose ${\bf g}^{\fold}\in\B_{W^{\fold}}$ is a positive dual $(c^{\fold},p)$-conjugator and that we can write 
\[
    \mathbbm{c}^{\fold}_{p,{\bf g}^{\fold}}
    :=
    {\bf g}^{\fold} \B(c^{\fold})^p({\bf g}^{\fold})^{-1}\B(c^{\fold})^{-1}=\P_{c^{\fold}}(t_1^{\fold})\cdots\P_{c^{\fold}}(t_M^{\fold})
\]
for some $(t_1^{\fold},\ldots,t_M^{\fold})\in T_{W^{\fold}}^M$. Then the element ${\bf g}=\iota({\bf g}^{\fold})$ is a positive dual $(c,p)$-conjugator. More precisely, \[
    \mathbbm{c}_{p,{\bf g}}
    :=
    {\bf g} \B(c)^p{\bf g}^{-1}\B(c)^{-1}=\iota(\mathbbm{c}^{\fold}_{p,{\bf g}^{\fold}})=\prod_{k=1}^M\prod_{t\in\mathrm{Orb}_\theta(t_k^{\fold})}\P_c(t). 
\]
\end{proposition}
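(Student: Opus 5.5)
The argument is a direct computation: push everything through the homomorphism $\iota$ and use \eqref{eq:iota} and \eqref{eq:iota2}. Since $\iota$ is a group homomorphism $\B_{W^{\fold}}\to\B_W$ with $\iota(\B(c^{\fold}))=\B(c)$, we get
\[
\iota\bigl(\mathbbm{c}^{\fold}_{p,{\bf g}^{\fold}}\bigr)
=\iota({\bf g}^{\fold})\,\iota(\B(c^{\fold}))^p\,\iota({\bf g}^{\fold})^{-1}\,\iota(\B(c^{\fold}))^{-1}
={\bf g}\B(c)^p{\bf g}^{-1}\B(c)^{-1}=\mathbbm{c}_{p,{\bf g}}.
\]
Applying $\iota$ to the given factorization and using \eqref{eq:iota2} factor by factor then yields
\[
\mathbbm{c}_{p,{\bf g}}=\prod_{k=1}^M\iota\bigl(\P_{c^{\fold}}(t_k^{\fold})\bigr)=\prod_{k=1}^M\prod_{t\in\Orb_\theta(t_k^{\fold})}\P_c(t).
\]
This is a product of positive elements of $\mathbb T_c$, which is the displayed identity.

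To conclude that ${\bf g}$ is a positive dual $(c,p)$-conjugator, two further points are needed. First, ${\bf g}$ must lift a Coxeter $(c,p)$-conjugator of $W$. The image of ${\bf g}$ in $W$ is the element $g^{\fold}\in W^{\fold}\subseteq W$, because $\iota$ is compatible with the projections to $W^{\fold}\subseteq W$: each $\s_{\mathcal O}$ maps to $s_{\mathcal O}=\prod_{s\in\mathcal O}s$. The relation $g^{\fold}c^p(g^{\fold})^{-1}=c$ holds in $W^{\fold}$, hence also in $W$.

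Second, the number of factors must equal $r(p-1)/2$, where $r=|S|$ is the rank of $W$. The factorization has $\sum_k|\Orb_\theta(t_k^{\fold})|$ factors, and I would count these by writhe. The writhe is additive, and $\mathrm{wr}(\P_c(t))=2$ because $\B_c(t)$ is conjugate to an Artin generator. Writhe is also conjugation invariant, so
\[
\mathrm{wr}(\mathbbm{c}_{p,{\bf g}})=\mathrm{wr}(\B(c)^{p-1})=r(p-1).
\]
Hence a positive factorization into elements of $\mathbb T_c$ automatically has $r(p-1)/2$ factors. Alternatively, one can note that $\iota$ multiplies writhe orbitwise, which gives the same count.

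I expect no real obstacle, since \eqref{eq:iota2} has already been established. The only points to state with care are that $\iota$ is a homomorphism with $\iota(\B(c^{\fold}))=\B(c)$ and that $\iota$ intertwines the quotient maps to $W$.
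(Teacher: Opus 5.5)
Your proposal is correct and follows the paper's proof. The paper likewise applies the homomorphism $\iota$, uses $\iota(\B(c^{\fold}))=\B(c)$ to identify $\iota(\mathbbm{c}^{\fold}_{p,{\bf g}^{\fold}})$ with $\mathbbm{c}_{p,{\bf g}}$, and then applies \eqref{eq:iota2} factor by factor. Your two extra checks are left implicit in the paper but are correct: that ${\bf g}$ projects to $g^{\fold}$, which is a Coxeter $(c,p)$-conjugator in $W$, and that the writhe $r(p-1)$ forces exactly $r(p-1)/2$ positive factors.
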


\begin{proof}
We use the fact that $\iota$ is a homomorphism. The identity $
    {\bf g} \B(c)^p{\bf g}^{-1}\B(c)^{-1}=\iota(\mathbbm{c}^{\fold}_{p,{\bf g}^{\fold}})$ follows from the fact that $\iota(\B(c^{\fold}))=\B(c)$. The identity \[\iota(\mathbbm{c}^{\fold}_{p,{\bf g}^{\fold}})=\prod_{k=1}^M\prod_{t\in\mathrm{Orb}_\theta(t_k^{\fold})}\P_c(t)\] follows from \eqref{eq:iota2}.  
\end{proof}

\subsection{Type A}\label{subsec:Type_A}

Let $W=\mathfrak{S}_n$. Let $S=\{s_1,\ldots,s_{n-1}\}$, where $s_i=(i\,\,i+1)$. Let
\[c=s_1s_2\cdots s_{n-1}=(12\cdots n)\] be the linear Coxeter element. The Coxeter number is $h=n$. 

Fix an integer $p$ with $\gcd(p,n)=1$ and $1\leq p< n$. Let $p'$ be the unique integer satisfying $1\leq p'<n$ and
$pp'\equiv 1 \pmod n$. Let $g\in \mathfrak{S}_n$ be the permutation that acts on $[n]$ by multiplication by $p'$ (modulo $n$). That is, $g(i)$ is the unique element of $[n]$ congruent to $ip'$ modulo $n$. A straightforward computation shows that $g c^p g^{-1}=c.$

For
$1\leq i< p$, set
$
    k_i=\left\lfloor \frac{in}{p}\right\rfloor.$ Since $p<n$, we have $1\leq k_1<k_2<\cdots<k_{p-1}< n$. Define the $S$-words
    \[\mathsf{v}_k=(s_k,s_{k-1},\ldots,s_1)\quad\text{and}\quad \mathsf{x}_p=\mathsf{v}_{k_1}\mathsf{v}_{k_2}\cdots\mathsf{v}_{k_{p-1}},\] and let 
    \[v_k=s_ks_{k-1}\cdots s_1 \quad \text{and}
    \quad x_p=v_{k_1}v_{k_2}\cdots v_{k_{p-1}}
\] be the permutations represented by them. 

\begin{lemma}\label{lem:reduced_x_rho}
The word $\mathsf{x}_p=\mathsf{v}_{k_1}\mathsf{v}_{k_2}\cdots\mathsf{v}_{k_{p-1}}$
is a reduced word for $x_p$. Moreover, 
\[
    \mathrm{Inv}(x_p)
    =
    \{(a\,k_j+1): 1\leq j< p,\ 1\leq a\leq k_j\}.
\]
\end{lemma}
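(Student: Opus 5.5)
The plan is to compute the inversion sequence of the word $\mathsf{x}_p$ directly and check that it consists of $\sum_j k_j$ distinct reflections. A word is reduced exactly when its inversion sequence has no repeated entries, and in that case the set of entries is $\mathrm{Inv}(x_p)$. (In the paper's left convention, $t_i=s_1\cdots s_i\cdots s_1$, so we are describing $\{t:\ell(tx)<\ell(x)\}$.)

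First I would record the action of a single block. The word $\mathsf{v}_k=(s_k,\ldots,s_1)$ is the cycle $v_k=(k{+}1\;k\;\cdots\;1)$, which sends $1\mapsto k+1$ and $i\mapsto i-1$ for $2\le i\le k+1$. Its inversion sequence is $((k\,k{+}1),(k{-}1\,k{+}1),\ldots,(1\,k{+}1))$.

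Next I would process the blocks in order. Set $y_j=v_{k_1}\cdots v_{k_{j-1}}$. The inversions contributed by block $j$ are $y_j\,(a\;k_j{+}1)\,y_j^{-1}=(y_j(a)\;y_j(k_j{+}1))$ for $1\le a\le k_j$. The claim then reduces to showing that conjugation by $y_j$ fixes the set $\{(a\,k_j{+}1):1\le a\le k_j\}$ setwise. Because $k_1<k_2<\cdots$, each earlier cycle $v_{k_i}$ with $i<j$ moves only letters in $[1,k_i+1]\subseteq[1,k_j]$. Hence $y_j$ fixes $k_j+1$ and permutes $[1,k_j]$, which gives the required invariance. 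One caveat: this needs $k_i+1\le k_j$, i.e.\ $k_i<k_j$. That holds because $k_{i+1}-k_i\ge\lfloor n/p\rfloor\ge1$ when $p<n$.

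So block $j$ contributes exactly the $k_j$ reflections $(a\,k_j{+}1)$ with $a\le k_j$. Reflections coming from different blocks are distinct, since their larger entries $k_j+1$ are distinct. The full inversion sequence therefore has $\sum_j k_j$ distinct entries, which equals the length of $\mathsf{x}_p$. This shows that $\mathsf{x}_p$ is reduced and that $\mathrm{Inv}(x_p)$ is as stated. I would cite the standard criterion (e.g.\ Bj\"orner--Brenti) that a word is reduced if and only if its left inversion sequence has distinct entries, and that the entries then form the left inversion set.

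The only delicate point is the conjugation step, and the strict increase of the $k_j$ is what makes it work. Everything else is bookkeeping.
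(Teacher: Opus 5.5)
Your proof is correct and rests on the same key observation as the paper's: because $k_1<k_2<\cdots<k_{p-1}$, the earlier blocks fix $k_j+1$ and only permute $[1,k_j]$. The paper tracks the one-line notation of the partial products, sliding $k_j+1$ to the front, and reads off $\ell(x_p)=\sum_j k_j$ directly, whereas you compute the conjugated inversion sequence and use the distinct-entries criterion for reducedness; this is only a difference in bookkeeping.
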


\begin{proof}
For $1\leq i\leq p-1$, let $y_i=v_{k_1}v_{k_2}\cdots v_{k_i}$. We claim that the one-line notation of
$y_i$ begins with the decreasing sequence $k_i+1, k_{i-1}+1,\ldots,k_1+1$ and then lists the remaining elements of $[n]$ in increasing order. This is clear when $i=0$ since $y_0=e$. Now assume $i\geq 1$, and proceed by induction on $i$. The one-line notation of $y_{i-1}$ consists of the decreasing sequence $k_{i-1}+1, k_{i-2}+1,\ldots,k_1+1$ followed by the remaining elements of $[n]$ in increasing order. Because $k_1<\cdots<k_{i-1}$, the number $k_i+1$ is in position $k_i+1$. Since $y_i=y_{i-1}v_{k_i}=y_{i-1}s_{k_i}s_{k_i-1}\cdots s_1$, we obtain the one-line notation of $y_i$ from that of $y_{i-1}$ by sliding the number $k_i+1$ all the way to the left. This proves the claim. 

Now, the inversions of $x_p=y_{p-1}$ are exactly the transpositions
$(a\,k_j+1)$ with $1\leq a\leq k_j$. It follows that $\ell(x_p)=k_1+\cdots+k_{p-1}$,
so the word $\mathsf{x}_p$ is reduced. 
\end{proof} 

\begin{example}
Let $n=11$ and $p=8$.  The $\frac{r(p-1)}{2}=\frac{10\cdot 7}{2}=35$ inversions of $x_p$ appear as the shaded boxes in~\Cref{fig:a10}---we have actually displayed each inversion twice, the second time reversed, so the $p-1$ inversions in each orbit of the reflections under conjugation by $c$ are more easily visible (as the shaded boxes in each row). 
\end{example}

\begin{figure}[htbp]
\begin{center}
\scalebox{0.7}{
\begin{tikzpicture}[x={(0.7cm,-0.7cm)}, y={(0.7cm,0.7cm)}]
\def\getcolor#1{
  \def\boxcolor{white}
  \ifnum#1=2 \def\boxcolor{blue!40}\fi
  \ifnum#1=3 \def\boxcolor{blue!40}\fi
  \ifnum#1=5 \def\boxcolor{blue!40}\fi
  \ifnum#1=6 \def\boxcolor{blue!40}\fi
  \ifnum#1=7 \def\boxcolor{blue!40}\fi
  \ifnum#1=9 \def\boxcolor{blue!40}\fi
  \ifnum#1=10 \def\boxcolor{blue!40}\fi
}

\foreach \j in {2,...,11} {
  \pgfmathtruncatemacro{\jminus}{\j-1}
  \foreach \i in {1,...,\jminus} {
    \getcolor{\j}
    \draw[gray!50, fill=\boxcolor, rounded corners=2pt] 
      (\i-0.45, \j-0.45) -- (\i+0.45, \j-0.45) -- 
      (\i+0.45, \j+0.45) -- (\i-0.45, \j+0.45) -- cycle;
    \node at (\i, \j) {\scriptsize $(\i\,\,\j)$};
  }
}

\foreach \j in {2,...,11} {
  \pgfmathtruncatemacro{\jminus}{\j-1}
  \foreach \i in {1,...,\jminus} {
    \getcolor{\j}
    \pgfmathtruncatemacro{\v}{11+\i}
    \draw[gray!50, fill=\boxcolor, rounded corners=2pt] 
      (\j-0.45, \v-0.45) -- (\j+0.45, \v-0.45) -- 
      (\j+0.45, \v+0.45) -- (\j-0.45, \v+0.45) -- cycle;
    \node at (\j, \v) {\scriptsize $(\j\,\,\i)$};
  }
}
\end{tikzpicture}}
\end{center}
\caption{The $\frac{r(p-1)}{2}=\frac{10\cdot 7}{2}=35$ inversions of $x_p$ (for $n=11$ and $p=8$), listed twice so that rows are the orbits of the reflections under conjugation by $c$.  It is interesting to observe that the diagonal lengths encode the run lengths of the associated rational Dyck path.}
\label{fig:a10}
\end{figure}

Now let $\mathbf{g}=\B(g)$. Let us write $t_{(ij)}$ for the transposition $(ij)$. For $1\leq i<j\leq n$, the $c$-dual lift of $t_{(ij)}$ is 
\[
    \t_{(ij)}=\mathbf{s}_i\mathbf{s}_{i+1}\cdots \mathbf{s}_{j-2}
    \mathbf{s}_{j-1}
    \mathbf{s}_{j-2}^{-1}\cdots \mathbf{s}_{i+1}^{-1}\mathbf{s}_i^{-1}.
\]
We use the standard dual braid relations, which state that 
\[
\t_{(ij)}\t_{(k\ell)}=\t_{(k\ell)}\t_{(ij)}
\]
whenever the chords $(i,j)$ and $(k,\ell)$ are disjoint and noncrossing and that 
\[
    \t_{(ij)}\t_{(jk)}
    =
    \t_{(jk)}\t_{(ik)}
    =
    \t_{(ik)}\t_{(ij)}
\] for $i<j<k$. 
We will also use the cyclic conjugation relation
\[
    \B(c) \t_{(ij)}\B(c)^{-1}=\t_{(i+1\,\, j+1)},
\]
where the indices are interpreted modulo $n$. Let 
\[\XX=\mathbf{g} \B(c)^p \mathbf{g}^{-1}\B(c)^{-1}.\]

By \cref{lem:inversions}, we have
\[
    \B(v_k)\B(v_k^{-1})
    =
    \tt_{k,k+1}\tt_{k-1,k+1}\cdots\tt_{1,k+1}.
\]
We will prove that 
\[
    \mathbf{g} \B(c)^p\mathbf{g}^{-1}\B(c)^{-1}
    =
    \B(x_p)\B(x_p^{-1}).
\] 

\begin{lemma}\label{lem:B(u)B(c)}
For $u\in \mathfrak{S}_n$, we have $\B(u)\B(c)=\B(v_M)\B(v_M^{-1})\B(uc)$, where $M=u(1)-1$. 
\end{lemma}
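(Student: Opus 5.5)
The plan is to factor the Artin lift of $u$ through a prefix that moves the letter $M+1=u(1)$ to the front, and then absorb $\B(c)$ using a direct length computation. We work in one-line notation, with $c=s_1s_2\cdots s_{n-1}=(12\cdots n)$.

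Right multiplication by $c$ acts on one-line notation by $uc = u(2)\,u(3)\cdots u(n)\,u(1)$. This rotates the first entry to the end. The value $u(1)=M+1$ is greater than exactly $M$ of the other values, so moving it to the last position destroys the $M$ inversions in which it occurs as the larger value at the front. It also creates $n-1-M$ new inversions, one for each larger value now standing before it. Hence
\[\ell(uc)=\ell(u)-M+(n-1-M).\]
It follows that $\B(u)\B(c)$ is not simply $\B(uc)$, and we have to account for a discrepancy of $2M$ in length.

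The key step is to write $u=v_M^{-1}u'$ with $\ell(u)=\ell(v_M^{-1})+\ell(u')$, where $v_M^{-1}=s_1s_2\cdots s_M$. We will check that $u'=v_Mu$ satisfies $u'(1)=1$, since $v_M=s_M\cdots s_1$ sends $M+1$ to $1$. We will then verify that lengths add. A left descent $s_i$ of $u$ corresponds to the value $i+1$ appearing before $i$ in one-line notation. Since $M+1$ is in position $1$, peeling $s_1,\dots,s_M$ successively from the left of $u$ is reduced: each step swaps the values $j$ and $j+1$, where $j+1$ currently sits at the front. This gives $\B(u)=\B(v_M^{-1})\B(u')$.

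Next, since $u'(1)=1$, rotation gives $u'c=u'(2)\cdots u'(n)\,1$. By the same counting as above, $\ell(u'c)=\ell(u')+(n-1)$, so $\B(u')\B(c)=\B(u'c)$. We also have $u'c=v_Muc$. In one-line notation $uc$ ends with $M+1$, and this value is larger than the $M$ values $1,\dots,M$ that precede it, while $v_M$ acts on values. From this we will check that $\ell(uc)=\ell(v_M)+\ell(v_Muc)$, equivalently that $\ell(u'c)=\ell(uc)-M$, which is consistent with the length formulas. Hence $\B(uc)=\B(v_M)\B(u'c)$, that is, $\B(u'c)=\B(v_M)^{-1}\B(uc)$.

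Assembling these gives
\[\B(u)\B(c)=\B(v_M^{-1})\B(u')\B(c)=\B(v_M^{-1})\B(v_M)^{-1}\B(uc).\]
The final identification uses $\B(v_M^{-1})\B(v_M)^{-1}=\B(v_M)\B(v_M^{-1})$, which holds if and only if $\B(v_M^{-1})\B(v_M)=\B(v_M)\B(v_M^{-1})$. Both of these braids are the pure braids $\s_1\cdots\s_M\s_M\cdots\s_1$ and $\s_M\cdots\s_1\s_1\cdots\s_M$. I expect this commutation to be the main obstacle, and that it needs care rather than being automatic.

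To address it, I will try the standard-parabolic picture. Inside $\B_{M+1}$, the first braid is the pure generator $A_{1,M+1}$, in which strand $M+1$ loops around strand $1$ in front of the others. The second is $A_{1,M+1}$ loop with strand $1$ going around the other side. Both should equal $\Delta_{M+1}^2\Delta_M^{-2}$, using the standard decompositions of the full twist $\Delta_{M+1}^2=\Delta_M^2\,(\s_M\cdots\s_1\s_1\cdots\s_M)$ and $\Delta_{M+1}^2=\Delta_{M}'^2\,(\s_1\cdots\s_M\s_M\cdots\s_1)$. If these are tricky to match, I will fall back on the paper's own tool, \cref{lem:inversions}, which expresses $\B(v_M)\B(v_M^{-1})$ as the product of the $\tt$'s over the inversions of $v_M$. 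I would then recompute $\B(v_M^{-1})^{-1}$ differently: replace the decomposition $u=v_M^{-1}u'$ by one arranged so that $\B(v_M)\B(v_M^{-1})$ appears directly, peeling the first entry rightward through the rotation.
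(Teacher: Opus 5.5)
\textbf{There is a genuine gap: the factorization of $u$ points the wrong way, and the later steps inherit the error.}

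In the conventions you yourself use (right multiplication acts on positions, so $c=s_1\cdots s_{n-1}=(12\cdots n)$ and $uc=u(2)\cdots u(n)\,u(1)$), the permutation $v_M=s_M\cdots s_1$ has one-line notation $(M+1)\,1\,2\cdots M\,(M+2)\cdots n$. Thus $v_M(1)=M+1$ and $v_M(M+1)=M$, and it is $v_M^{-1}$, not $v_M$, that sends $M+1$ to $1$. So $u'=v_Mu$ need not fix $1$, and $u=v_M^{-1}u'$ is not length-additive in general. For $n=3$ and $u=312$ (so $M=2$), you get $v_2u=231$ and $\ell(u)=2\neq 4=\ell(v_2^{-1})+\ell(v_2u)$. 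Indeed $s_1$ is not a left descent of $312$, so $s_1$ cannot be peeled first. Your verbal justification (strip a letter so that $j+1$, sitting at the front, is swapped with $j$) actually describes stripping $s_M$, then $s_{M-1}$, \dots, then $s_1$. That computes $\sigma=s_1\cdots s_Mu=v_M^{-1}u$, i.e.\ the length-additive factorization $u=v_M\sigma$ with $\sigma(1)=1$, which is the one the paper uses. Your next step is also a non sequitur. The equality $\ell(uc)=\ell(v_M)+\ell(v_Muc)$ describes the factorization $uc=v_M^{-1}\cdot(v_Muc)$ and would give $\B(uc)=\B(v_M^{-1})\B(v_Muc)$. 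By contrast, $\B(v_M)\B(u'c)$ projects to $v_M^2uc\neq uc$ once $M\geq 2$.

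Consequently the step you flag as the main obstacle is not a delicate point but a false identity. The braid $\B(v_M^{-1})\B(v_M)^{-1}$ has writhe $0$, and for $M\geq 2$ it is not even pure, since it projects to $v_M^{-2}$. Meanwhile $\B(v_M)\B(v_M^{-1})$ has writhe $2M$. So your assembled formula $\B(u)\B(c)=\B(v_M^{-1})\B(v_M)^{-1}\B(uc)$ contradicts your own count $\ell(uc)=\ell(u)+n-1-2M$. The reduction to a commutation is an algebra slip, and that commutation fails anyway: for $M=2$, $\s_1\s_2^2\s_1=\s_2^{-2}\Delta_3^2\neq\s_1^{-2}\Delta_3^2=\s_2\s_1^2\s_2$. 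No braid picture can repair this.

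With the correct factorization $u=v_M\sigma$, no commutation is needed:
\begin{itemize}
\item Since $\sigma(1)=1$, we have $\ell(\sigma c)=\ell(\sigma)+n-1$, so $\B(\sigma)\B(c)=\B(\sigma c)$.
\item We have $\sigma c=v_M^{-1}\cdot uc$ with $\ell(\sigma c)=\ell(u)-M+n-1=M+\ell(uc)$, so $\B(\sigma c)=\B(v_M^{-1})\B(uc)$.
\end{itemize}
Hence $\B(u)\B(c)=\B(v_M)\B(\sigma c)=\B(v_M)\B(v_M^{-1})\B(uc)$.

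The paper reaches the same conclusion slightly differently. It splits $\B(c)=\B(v_M^{-1})\s_{M+1}\cdots\s_{n-1}$ and moves $\B(\sigma)$ across $\B(c)$ using $\B(c)^{-1}\s_i\B(c)=\s_{i-1}$. The repaired length-additivity route avoids that conjugation step and is a bit more direct.
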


\begin{proof}
Let $\sigma=v_M^{-1}u$. Since $v_M(1)=M+1=u(1)$, we have $\sigma(1)=1$. Hence,
\[
    \sigma\in \langle s_2,s_3,\ldots,s_{n-1}\rangle,
\]
and the
factorization $u=v_M\sigma$
is length-additive. Therefore, $\B(u)=\B(v_M)\B(\sigma)$. Now set $\sigma'=c^{-1}\sigma c$. Since conjugation by $c^{-1}$ sends $s_i$ to $s_{i-1}$ for
$2\leq i\leq n-1$, we have
\[
    \sigma'\in \langle s_1,s_2,\ldots,s_{n-2}\rangle.
\]
We also know that $\B(c)^{-1}\mathbf{s}_i\B(c)=\mathbf{s}_{i-1}$ for all $2\leq i\leq n-1$.
It follows that 
\[
    \B(\sigma)\B(c)=\B(c) \B(\sigma').
\]

Observing that $v_Mc=s_{M+1}s_{M+2}\cdots s_{n-1}$, we obtain the identity $\B(v_Mc)=\s_{M+1}\s_{M+2}\cdots \s_{n-1}$.
Now,
\[
    uc=v_M\sigma c=v_Mc(c^{-1}\sigma c)=v_Mc\,\sigma'.
\]
The permutation $v_Mc$ has no descents in $\{s_1,\ldots,s_{n-2}\}$. Because $\sigma'\in\langle s_1,\ldots,s_{n-2}\rangle$, we have
\[
    \B(uc)=\s_{M+1}\s_{M+2}\cdots\s_{n-1}\B(\sigma').
\]
Putting these identities together gives
\[
\begin{aligned}
    \B(u)\B(c)
    &=
    \B(v_M)\B(\sigma)\B(c) \\
    &=
    \B(v_M)\B(c)\B(\sigma') \\
    &=
    \B(v_M)\B(v_{M}^{-1})\s_{M+1}\s_{M+2}\cdots\s_{n-1}\B(\sigma') \\
    &=
    \B(v_M)\B(v_{M}^{-1})\B(uc),
\end{aligned}
\]
as desired. 
\end{proof} 

\begin{theorem}\label{lem:conjugator_rho}
Fix $1\leq p\leq n-1$ with $\gcd(p,n)=1$, and let $p'$ be the integer satisfying $1\leq p'<n$ and $pp'\equiv 1\pmod n$. Let $g\in \mathfrak{S}_n$ be the permutation defined so that $g(i)\equiv ip'\pmod n$ for all $i\in[n]$. Let ${\bf g}=\B(g)$. We have
\[
    {\bf g}\B(c)^p {\bf g}^{-1}\B(c)^{-1}
    =
    \B(x_p)\B(x_p^{-1}).
\] In particular, ${\bf g}$ is a realizable positive dual $(c,p)$-conjugator. 
\end{theorem}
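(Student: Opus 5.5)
We will prove the equivalent identity
\[
\B(g)\B(c)^p=\B(x_p)\B(x_p^{-1})\B(c)\B(g),
\]
by applying \cref{lem:B(u)B(c)} $p$ times. Define $u_0=g$ and $u_i=u_{i-1}c$ for $1\le i\le p$, so $u_i=gc^i$ and $u_p=gc^p=cg$. Set $M_i=u_{i-1}(1)-1$. Iterating \cref{lem:B(u)B(c)} gives
\[
\B(g)\B(c)^p=\Bigl(\prod_{i=1}^{p}\B(v_{M_i})\B(v_{M_i}^{-1})\Bigr)\B(cg).
\]
The product is taken in increasing order of $i$.

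Next we identify the values $M_i$. Since $u_{i-1}(1)=gc^{i-1}(1)=g(i)$, which is congruent to $ip'$ modulo $n$, we have $M_1=g(1)-1=p'-1$, and in general $M_i\equiv ip'-1 \pmod n$. We want to compare these with $k_j=\lfloor jn/p\rfloor$. Suppose $ip'\equiv a\pmod n$ with $1\le a\le n$. Then $i\equiv ap\pmod n$, which means $a$ is the unique solution in $[n]$ of $ap\equiv i \pmod n$. The claim to verify is that, as $i$ runs over $1,\dots,p$, the values $M_i$ are exactly $0$ together with $k_1,\dots,k_{p-1}$. The value $0$ comes from $i$ with $g(i)=1$, where $v_0$ is trivial. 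We also need to understand the order in which these values appear.

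Because they appear in a scrambled order, we expect the product $\prod_i \B(v_{M_i})\B(v_{M_i}^{-1})$ not to be literally $\B(x_p)\B(x_p^{-1})$. The cleaner route is to apply \cref{lem:B(u)B(c)} in a different way. The relation $\B(g)\B(c)^p\B(g)^{-1}\B(c)^{-1}$ can be rewritten using the positive braid $\B(cg)\B(g)^{-1}$ and the cyclic conjugation relation. Alternatively, we rewrite $\B(x_p)\B(x_p^{-1})$ via \cref{lem:inversions} as $\prod \tt$ over $\inv_c(x_p)$. Each block $\B(v_k)\B(v_k^{-1})=\tt_{k,k+1}\cdots\tt_{1,k+1}$ is then conjugated by the appropriate power of $\B(c)$ using $\B(c)\t_{(ij)}\B(c)^{-1}=\t_{(i+1\,j+1)}$. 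We then show that the resulting products of chord generators agree, using the dual braid relations (commuting noncrossing disjoint chords and the triangle relations). The bookkeeping in the variable $jn/p$ is what should make the blocks line up; the identity $\lfloor (j+1)n/p\rfloor-\lfloor jn/p\rfloor\in\{\lfloor n/p\rfloor,\lceil n/p\rceil\}$ controls the shifts.

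The main obstacle is this reordering step: matching the product of the $p$ pure blocks produced by \cref{lem:B(u)B(c)} with $\B(x_p)\B(x_p^{-1})$. There are two natural ways to attack it. One is to check directly that the reordered product is the Artin-lift product of a reduced word for $x_p$ followed by its reverse. The other is to verify $\ell$-additivity along the chain $u_i$, so that the intermediate positive braids telescope into $\B(x_p)\B(x_p^{-1})$.

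We will first try to choose the factorization so that the $\B(v_M)$ factors accumulate on the left and the $\B(v_M^{-1})$ factors on the right. This means proving $\B(v_{M_1})\cdots\B(v_{M_p})=\B(x_p)$ up to commuting through $\B(c)$-conjugates, using the one-line description of $x_p$ from \cref{lem:reduced_x_rho}. Once the identity holds, the factorization from \cref{lem:inversions} has $\ell(x_p)=\sum_j k_j=r(p-1)/2$ positive $\mathbb T$-factors. This shows that $\mathbf g$ is a realizable positive dual $(c,p)$-conjugator.
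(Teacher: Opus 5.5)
Your setup matches the paper's: iterating \cref{lem:B(u)B(c)} along $g,gc,\ldots,gc^{p}$ gives
\[
\B(g)\B(c)^p=\Bigl(\prod_{i=1}^{p-1}\B(v_{g(i)-1})\B(v_{g(i)-1}^{-1})\Bigr)\B(cg),
\]
where the $i=p$ factor is trivial because $g(p)=1$. Two steps you leave open are easy to close.
\begin{itemize}
\item $\B(cg)=\B(c)\B(g)$, because $g(n)=n$ forces $\ell(cg)=\ell(g)+n-1$.
\item $\{g(1),\ldots,g(p-1)\}=\{k_1+1,\ldots,k_{p-1}+1\}$, because $r_j=(k_j+1)p-jn$ lies in $[1,p-1]$ and satisfies $g(r_j)=k_j+1$.
\end{itemize}

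The genuine gap is the reordering step. You flag it as the main obstacle but do not resolve it, and you steer away from the route that works. The product in scrambled order \emph{is} equal to $\B(x_p)\B(x_p^{-1})$ as a braid.

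The missing idea is that each block is a ratio of nested full twists:
\[
\B(v_b)\B(v_b^{-1})=\s_b\cdots\s_1\s_1\cdots\s_b=\Delta_b^{-2}\Delta_{b+1}^2,
\]
where $\Delta_m^2$ is the full twist on the first $m$ strands. Both $\Delta_b^2$ and $\Delta_{b+1}^2$ commute with every element of $\langle\s_1,\ldots,\s_{b-1}\rangle$. Hence block $b$ commutes with every block $a<b$, and with $\B(v_a)$ and $\B(v_a^{-1})$ for $a<b$.

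So you may first sort the blocks by increasing index. Then slide each $\B(v_{k_i}^{-1})$ to the right past all later blocks. This yields $\B(v_{k_1})\cdots\B(v_{k_{p-1}})\B(v_{k_{p-1}}^{-1})\cdots\B(v_{k_1}^{-1})$, which is $\B(x_p)\B(x_p^{-1})$ by \cref{lem:reduced_x_rho}. No dual-braid bookkeeping or $\B(c)$-conjugation is needed.

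By contrast, the concrete plan you commit to is false. You propose to show $\B(v_{M_1})\cdots\B(v_{M_p})=\B(x_p)$ with the $M_i$ in their scrambled order. For $n=5$ and $p=4$ one has $(M_1,M_2,M_3)=(3,2,1)$, and $v_3v_2v_1=s_3s_2s_1s_2$ has length $4$, whereas $x_4=v_1v_2v_3$ has length $6$. The positive halves can only be collected after the blocks themselves have been sorted.
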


\begin{proof}
For $0\leq j\leq p-1$, set $u_j=g c^{p-1-j}$.
We first claim that $\B(c){\bf g}=\B(u_0)\B(c)$.
Indeed, since $g c^p g^{-1}=c$, we have $cg=g c^p=u_0c$.
It remains to check that $\ell(cg)=\ell(c)+\ell(g)=\ell(u_0)+\ell(c)$. The permutation
$g$ fixes $n$, so left multiplication by $c$ preserves all inversions
among the first $n-1$ entries of the one-line notation of $g$ and creates
exactly $n-1$ new inversions involving the last position. Hence,
\[
    \ell(cg)=\ell(g)+n-1=\ell(g)+\ell(c).
\]
Similarly,
\[
    u_0(1)=g c^{p-1}(1)=g(p)=1,
\]
so multiplying $u_0$ on the right by $c$ moves this initial $1$ to the last position. Consequently,
\[
    \ell(u_0c)=\ell(u_0)+n-1=\ell(u_0)+\ell(c).
\]
This proves the claim.

Let $\Delta_M^2$ denote the full twist in the parabolic braid group generated by
$\mathbf{s}_1,\ldots,\mathbf{s}_{M-1}$. For $1\leq b\leq n-1$, we use the standard identity \[
\B(v_b)\B(v_b^{-1})=\Delta_b^{-2}\Delta_{b+1}^2.
\] Since $\Delta_b^2$ and $\Delta_{b+1}^2$ both commute with $\mathbf{s}_1,\ldots,\mathbf{s}_{b-1}$, it follows that $\B(v_b)\B(v_b^{-1})$ 
commutes with both $\B(v_a)$ and $\B(v_a^{-1})$ for all $0\leq a<b$. 

For $1\leq j\leq p-1$, set
$M_j=u_j(1)-1$.
Applying \cref{lem:B(u)B(c)} to $u_j$ gives
\[
    \B(v_{M_j})\B(v_{M_j}^{-1})\B(u_{j-1})=\B(u_j)\B(c).
\]
Repeatedly using this identity, together with the fact that $\B(c){\bf g}=\B(u_0)\B(c)$, we obtain
\[
\begin{aligned}
    \B(v_{M_1})\B(v_{M_1}^{-1})\B(c){\bf g}
    &=
    \B(u_1)\B(c)^2, \\
    \B(v_{M_2})\B(v_{M_2}^{-1})
    \B(v_{M_1})\B(v_{M_1}^{-1})\B(c){\bf g}
    &=
    \B(u_2)\B(c)^3, \\
    &\hspace{0.2cm}\vdots \\
    \B(v_{M_{p-1}})\B(v_{M_{p-1}}^{-1})\cdots
    \B(v_{M_1})\B(v_{M_1}^{-1})\B(c){\bf g}
    &=
    \B(u_{p-1})\B(c)^p.
\end{aligned}
\]
Since $u_{p-1}=g$, this gives
\[
\begin{aligned}
    {\bf g}\B(c)^p {\bf g}^{-1}\B(c)^{-1}
    =
    \B(v_{M_{p-1}})\B(v_{M_{p-1}}^{-1})\cdots
    \B(v_{M_1})\B(v_{M_1}^{-1}).
\end{aligned}
\]

We now identify the indices $M_j$. As $j$ runs from $1$ to $p-1$, the
numbers $u_j(1)$ are precisely
\[
    g(1),g(2),\ldots,g(p-1),
\]
in some order. We claim that
\begin{equation}\label{eq:sets} 
\{g(1),g(2),\ldots,g(p-1)\}
    =
    \{k_1+1,k_2+1,\ldots,k_{p-1}+1\}.
    \end{equation}
Indeed, for $1\leq i\leq p-1$, we have
\[
    k_i<\frac{in}{p}<k_i+1.
\]
Therefore, $0<(k_i+1)p-in<p$. Let $r_i=(k_i+1)p-in$.
Note that $1\leq r_i\leq p-1$ and that
${(k_i+1)p\equiv r_i \pmod n}$. 
Since $g$ is multiplication by $p'$ modulo $n$, this means $g(r_i)=k_i+1$.
This proves \eqref{eq:sets}. 

Using the commutation fact above, we may reorder the factors
$\B(v_{M_j})\B(v_{M_j}^{-1})$ by increasing value of $M_j$. Hence,
\[
\begin{aligned}
    {\bf g}\B(c)^p {\bf g}^{-1}\B(c)^{-1}
    &=
    \B(v_{k_1})\B(v_{k_1}^{-1})
    \B(v_{k_2})\B(v_{k_2}^{-1})
    \cdots
    \B(v_{k_{p-1}})\B(v_{k_{p-1}}^{-1}).
\end{aligned}
\]
Using the same commutation fact once more, we can move each
$\B(v_{k_i}^{-1})$ past all later factors
$\B(v_{k_j})\B(v_{k_j}^{-1})$ with $j>i$. Therefore,
\[
{\bf g}\B(c)^p {\bf g}^{-1}\B(c)^{-1}=
    \B(v_{k_1})\B(v_{k_2})\cdots \B(v_{k_{p-1}})
    \B(v_{k_{p-1}}^{-1})\cdots \B(v_{k_2}^{-1})\B(v_{k_1}^{-1}).
\]
By \cref{lem:reduced_x_rho}, the word $v_{k_1}v_{k_2}\cdots v_{k_{p-1}}$ is reduced and
represents $x_p$, so \[\B(v_{k_1})\B(v_{k_2})\cdots \B(v_{k_{p-1}})
    \B(v_{k_{p-1}}^{-1})\cdots \B(v_{k_2}^{-1})\B(v_{k_1}^{-1})=\B(x_p)\B(x_p^{-1}).\] 
This proves the desired identity. 
\end{proof}

\subsection{Type B}\label{subsec:Type_B}
We now describe an explicit construction of positive dual $(c,p)$-conjugators in type~$B_n$. Throughout this subsection, $W$ is the Coxeter group of type $B_n$
with simple reflections $s_1,s_2,\ldots,s_n$, where $(s_1s_2)^4=e$ and
$(s_is_{i+1})^3=e$ for all $2\leq i\leq n-1$. We choose the standard Coxeter
element $c=s_ns_{n-1}\cdots s_1$. The Coxeter number is $h=2n$. By
\cref{prop:p-range}, it suffices to treat integers $p$ satisfying
$1\leq p<2n$ and $\gcd(p,2n)=1$. In particular, $p$ is odd.

Let ${\sf c}=(s_n,s_{n-1},\ldots,s_1)$. The $c$-sorting word for $w_\circ$ is
${\sf w}_\circ(c)={\sf c}^n=(s_n,s_{n-1},\ldots,s_1)^n$. Let us write
${\sf c}^n=(u_1,u_2,\ldots,u_{n^2})$ and
$\inv({\sf c}^n)=(t_1,t_2,\ldots,t_{n^2})$. Thus,
$t_k=u_1u_2\cdots u_{k-1}u_ku_{k-1}\cdots u_2u_1$. We identify the
reflections $t_1,\ldots,t_{n^2}$ with the cells in a parallelogram-shaped
grid $\mathtt{Grid}_n$, as illustrated in~\cref{fig:type-b-grids}.

\begin{figure}[htbp]
\centering
\newcommand{\gridbox}[3][white]{%
  \pgfmathsetmacro{\cx}{#2 - 0.5*#3}%
  \pgfmathsetmacro{\cy}{0.5*#3}%
  \draw[gray!50, fill=#1, rounded corners=1.5pt]
    (\cx,\cy+0.45) -- (\cx+0.45,\cy) --
    (\cx,\cy-0.45) -- (\cx-0.45,\cy) -- cycle;
}
\newcommand{\gridcell}[3]{%
  \gridbox{#1}{#2}%
  \pgfmathsetmacro{\cx}{#1 - 0.5*#2}%
  \pgfmathsetmacro{\cy}{0.5*#2}%
  \node[grid label] at (\cx,\cy) {$#3$};
}

\tikzset{grid label/.style={font=\scriptsize}}
\begin{tikzpicture}[scale=1]
  \gridcell{0}{0}{(1,1)}
  \gridcell{1}{0}{(1,2)}
  \gridcell{2}{0}{(1,3)}
  \gridcell{3}{0}{(1,4)}
  \gridcell{4}{0}{(1,5)}
  \gridcell{0}{-1}{(2,1)}
  \gridcell{1}{-1}{(2,2)}
  \gridcell{2}{-1}{(2,3)}
  \gridcell{3}{-1}{(2,4)}
  \gridcell{4}{-1}{(2,5)}
  \gridcell{0}{-2}{(3,1)}
  \gridcell{1}{-2}{(3,2)}
  \gridcell{2}{-2}{(3,3)}
  \gridcell{3}{-2}{(3,4)}
  \gridcell{4}{-2}{(3,5)}
  \gridcell{0}{-3}{(4,1)}
  \gridcell{1}{-3}{(4,2)}
  \gridcell{2}{-3}{(4,3)}
  \gridcell{3}{-3}{(4,4)}
  \gridcell{4}{-3}{(4,5)}
  \gridcell{0}{-4}{(5,1)}
  \gridcell{1}{-4}{(5,2)}
  \gridcell{2}{-4}{(5,3)}
  \gridcell{3}{-4}{(5,4)}
  \gridcell{4}{-4}{(5,5)}
\end{tikzpicture}
\caption{The parallelogram-shaped $\mathtt{Grid}_5$, with the cell $(a,b)$ in row $a$ and down-right diagonal $b$.}
\label{fig:type-b-grids}
\end{figure}

We coordinatize this grid by horizontal rows and down-right diagonals (i.e.,
diagonals of slope $-1$). The rows are indexed $1,\ldots,n$ from top to
bottom, while the down-right diagonals are indexed $1,\ldots,n$ from left
to right. The cell $(a,b)\in\mathtt{Grid}_n$ lies in row $a$ and down-right
diagonal $b$, and it corresponds to the reflection $t_{(b-1)n+a}$. The
inversion sequence $\inv({\sf w}_\circ(c))=(t_1,\ldots,t_{n^2})$ induces a
total ordering on $\mathtt{Grid}_n$; namely, $(a,b)$ precedes $(a',b')$ if
$(b-1)n+a<(b'-1)n+a'$. For a set $I\subseteq\mathtt{Grid}_n$, let
\[
    \mathbb X_I=\prod_{(a,b)\in I}^{\longleftarrow}
    \P_c(t_{(b-1)n+a}),
\]
where the notation indicates that we take the product in the reverse of
this total order. By \cref{cor:wo}, we have
$\mathbb X_{\mathtt{Grid}_n}=\Delta^2$. We also have
$\B(c)\P_c(t_i)\B(c)^{-1}=\P_c(t_{i+n})$, where the indices are read
modulo $n^2$.

Let us first assume $1\leq p<n$. For $1\leq j\leq(p-1)/2$, set
$\eta_j=\left\lfloor 2nj/p\right\rfloor$, and define
\[
    I_j^{+}=
    \{(a,n-p+2j-a+1):1\leq a\leq \eta_j\}\quad\text{and}\quad
    I_j^{-}=
    \{(a,n-2j+1):\eta_j<a\leq n\}.
\]
Then set $I_j=I_j^+\sqcup I_j^-$ and
$J_p=I_1\sqcup I_2\sqcup\cdots\sqcup I_{(p-1)/2}$. Since $p<n$, we have $2j\leq\eta_j$, while
$n-p+2j-2nj/p=(n-p)(p-2j)/p>0$ gives
$\eta_j<n-p+2j$. Thus, all of these pairs are cells of
$\mathtt{Grid}_n$. The sets $I_j$ are pairwise disjoint. Indeed, if
$j<k$, then pieces of the same sign cannot meet because their second
coordinates differ. An intersection $I_j^+\cap I_k^-$ would force
$a=2(j+k)-p$ and $a\leq\eta_j<\eta_k<a$, while an intersection
$I_j^-\cap I_k^+$ would force
$a=2(j+k)-p\leq 2j-1\leq\eta_j<a$. Each $I_j$ contains exactly one
cell in each row. Hence, $J_p$ contains exactly
$(p-1)/2$ cells in each row. Geometrically, $I_j$ is the union of a
down-left segment of length $\eta_j$ and a down-right segment of length
$n-\eta_j$. We are interested in the pure braid
\[
    \mathbb X_{J_p}=
    \prod_{(a,b)\in J_p}^{\longleftarrow}
    \P_c(t_{(b-1)n+a}).
\]

\begin{example}
Fix $n=19$ and $p=11$ so that $W=B_{19}$ and $h=2n=38$. The values of
$\eta_j=\left\lfloor 2nj/p\right\rfloor$ for $j=1,2,3,4,5$ are
$3,6,10,13,17$. The set $J_p=J_{11}$ is shaded in \cref{fig:B19}.
\end{example}

\begin{figure}
\begin{center}\scalebox{0.8}{
\begin{tikzpicture}[scale=0.6]

\newcommand{\mybox}[3][white]{
  \pgfmathsetmacro{\cx}{#2 - 0.5*#3}
  \pgfmathsetmacro{\cy}{0.5*#3}
  \draw[gray!50, fill=#1, rounded corners=2pt]
    (\cx, \cy+0.45) -- (\cx+0.45, \cy) -- (\cx, \cy-0.45) -- (\cx-0.45, \cy) -- cycle;
}

\foreach \y in {0,...,-18} {
  \foreach \x in {0,...,18} {
    \mybox{\x}{\y}
  }
}

\foreach \i in {0,...,16} { \mybox[blue!40]{17-\i}{-\i} }
\foreach \i in {0,...,12} { \mybox[blue!40]{15-\i}{-\i} }
\foreach \i in {0,...,9}  { \mybox[blue!40]{13-\i}{-\i} }
\foreach \i in {0,...,5}  { \mybox[blue!40]{11-\i}{-\i} }
\foreach \i in {0,...,2}  { \mybox[blue!40]{9-\i}{-\i} }

\foreach \y in {-18,...,-3}  { \mybox[red!40]{17}{\y} }
\foreach \y in {-18,...,-6}  { \mybox[red!40]{15}{\y} }
\foreach \y in {-18,...,-10} { \mybox[red!40]{13}{\y} }
\foreach \y in {-18,...,-13} { \mybox[red!40]{11}{\y} }
\foreach \y in {-18,...,-17} { \mybox[red!40]{9}{\y} }

\end{tikzpicture}}
\end{center}
\caption{The set $J_{11}$ in $\mathtt{Grid}_{19}$. Cells from
$I_1^+\sqcup I_2^+\sqcup I_3^+\sqcup I_4^+\sqcup I_5^+$ are shaded blue,
while those in $I_1^-\sqcup I_2^-\sqcup I_3^-\sqcup I_4^-\sqcup I_5^-$
are shaded red.}
\label{fig:B19}
\end{figure}

The sets $J_p$ have a useful interpretation as inversion sets when
$p<n$. Realize $W$ as the group of signed permutations in one-line
notation, where $s_1$ changes the sign of the first entry and $s_i$
interchanges the entries in positions $i-1$ and $i$ for $2\leq i\leq n$.
We identify each reflection with its positive root and use
\[
    \Phi^+=\{e_i:1\leq i\leq n\}
    \sqcup\{e_v-e_u,e_v+e_u:1\leq u<v\leq n\}.
\]

\begin{lemma}\label{lem:type-B-inversion-set}
Assume $1\leq p<n$. For $0\leq k\leq p-1$, set
$\lambda_k=\left\lfloor nk/p\right\rfloor+1$. Define a signed permutation
$x_{n,p}$ by
\[
    x_{n,p}(\lambda_k)=(-1)^k(p-k)
    \qquad(0\leq k\leq p-1),
\]
and fill the remaining positions from left to right with
$p+1,p+2,\ldots,n$. Then
\[
    \Inv(x_{n,p})=
    \{t_{(b-1)n+a}:(a,b)\in J_p\}.
\]
Consequently, $\mathbb X_{J_p}=\B(x_{n,p})\B(x_{n,p}^{-1})$.
\end{lemma}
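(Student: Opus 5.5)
The plan has three parts: identify the reflection attached to each grid cell, compute $\Inv(x_{n,p})$ directly from the one-line notation, and match the two sets. The ``consequently'' clause then follows from \cref{thm:BB=Z}. Indeed, if $\Inv(x_{n,p})$ equals the set of reflections indexed by $J_p$, then restricting $\inv({\sf w}_\circ(c))=\inv({\sf c}^n)$ to $\Inv(x_{n,p})$ gives the cells of $J_p$ in the grid total order. \Cref{lem:inversions} then says $\B(x_{n,p})\B(x_{n,p}^{-1})$ is the product of the corresponding $\P_c(t)$ in reverse order, which is exactly $\mathbb X_{J_p}$.

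\emph{Step 1: root of each cell.} The word is ${\sf c}=(s_n,\ldots,s_1)$, and $c$ acts on signed permutations. I will show by induction along ${\sf c}^n$ that the cell $(a,b)$ corresponds to the following root:
\begin{itemize}
\item $e_v-e_u$ or $e_v+e_u$ for specific $u<v$ when $a\ne b$;
\item a short root $e_i$ when $a=b$.
\end{itemize}
One way is to compute $u_1\cdots u_{k-1}(\alpha_{u_k})$ within the first copy of ${\sf c}$, getting the roots of diagonal $b=1$. Then apply $\B(c)\P_c(t_i)\B(c)^{-1}=\P_c(t_{i+n})$, i.e.\ $t_{i+n}=ct_ic^{-1}$, to move from diagonal $b$ to diagonal $b+1$. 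In the first copy of ${\sf c}$ the roots are $e_n-e_{n-1},\,e_n-e_{n-2},\ldots,e_n-e_1,\,e_n$, so row $a$ of diagonal $1$ involves $e_n$ and $e_{n-a}$ (with $a=n$ giving $e_n$). Since $c$ acts as a signed $n$-cycle with one sign change, the root $c^{b-1}$ applied to these has a closed form. I expect the pattern to be: the cell $(a,b)$ records the pair of positions $\{n-b+1,\ldots\}$ in a rotated fashion, with a sign flip of $e_u$ once the rotation wraps past position $1$. Pinning down this formula, including which cells are the $e_v+e_u$ and which are the $e_i$, is bookkeeping. I will verify it on small $n$ first.

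\emph{Step 2: inversions of $x_{n,p}$.} For signed permutations in this convention:
\begin{itemize}
\item $e_i$ is an inversion iff $x(i)<0$;
\item $e_v-e_u$ ($u<v$) is an inversion iff $x(u)>x(v)$;
\item $e_v+e_u$ is an inversion iff $x(u)+x(v)<0$.
\end{itemize}
The construction of $x_{n,p}$ is explicit. Position $\lambda_k$ carries $\pm(p-k)$, with alternating signs and decreasing absolute value. All other positions carry the large positive values $p+1,\ldots,n$ in increasing order.

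The inversions then split as follows:
\begin{itemize}
\item Pairs of a filler position and a filler position contribute nothing.
\item Pairs of a filler position and a marked position $\lambda_k$ contribute exactly the $e_v-e_u$ with $u=\lambda_k<v$ a filler position. The sum $e_v+e_u$ never becomes an inversion, since $|x(\lambda_k)|<p<$ filler values.
\item Pairs among marked positions are determined by the signs $(-1)^k$ and the magnitudes $p-k$.
\item The short roots $e_{\lambda_k}$ are inversions exactly for odd $k$.
\end{itemize}
This yields an explicit list, and its total count should be $n(p-1)/2$, matching $|J_p|$.

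\emph{Step 3: matching.} Using Step 1, I will translate $I_j^-$ and $I_j^+$ into roots. The vertical segment $I_j^-$ lies on the fixed diagonal $n-2j+1$ in rows $a>\eta_j$. The antidiagonal segment $I_j^+$ has $a+b=n-p+2j+1$ with $a\le\eta_j$. The identity $\eta_j=\lfloor 2nj/p\rfloor$ should match the breakpoints $\lambda_{k}$ with $k\in\{2j-1,2j\}$, since $\lambda_{2j}-1=\lfloor 2nj/p\rfloor$. So I expect the pair $I_j=I_j^+\sqcup I_j^-$ to account for the inversions involving the two marked positions $\lambda_{2j-1},\lambda_{2j}$ against everything else. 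The row-per-cell structure of $I_j$ (one cell per row, shown before the lemma) should match the fact that each reflection orbit under $c$ contributes $(p-1)/2$ inversions.

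I will prove containment of the $J_p$-roots in $\Inv(x_{n,p})$ cell by cell, then conclude equality by cardinality. I expect the main obstacle to be the explicit root formula in Step 1, together with the floor-function inequalities showing that each cell of $I_j^\pm$ really is an inversion. The wrap-around sign changes of $c$ make the case analysis fiddly, so I would first fix conventions on the example $n=19$, $p=11$.
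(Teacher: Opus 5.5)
Your architecture (root of each cell, inversions of $x_{n,p}$, matching, then \cref{lem:inversions}) is the paper's. The gap is in Step~2: you compute the wrong inversion set, so Step~3 cannot succeed.

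\textbf{What goes wrong.} The set in the lemma is $\mathrm{Inv}(x)=\{t\in T:\ell(tx)<\ell(x)\}$, the \emph{left} inversion set. It is the set that $\inv_c(x)$ restricts to, hence the one \cref{lem:inversions} needs. In the paper's convention, $x(e_k)=e_{x(k)}$ with $e_{-m}=-e_m$, so that $xs_i$ swaps the entries in positions $i-1,i$. Then $t_\alpha\in\mathrm{Inv}(x)$ if and only if $x^{-1}(\alpha)<0$. So the criteria must be applied to $x^{-1}$, i.e., to \emph{values} rather than positions:
\begin{itemize}
\item $e_u$ is an inversion if and only if $x^{-1}(u)<0$;
\item for $u<v$, the root $e_v-e_u$ is an inversion if and only if $x^{-1}(v)<x^{-1}(u)$;
\item for $u<v$, the root $e_v+e_u$ is an inversion if and only if $x^{-1}(v)<-x^{-1}(u)$.
\end{itemize}
Your conditions $x(i)<0$, $x(u)>x(v)$, $x(u)+x(v)<0$ describe $\{t:\ell(xt)<\ell(x)\}$, and that set is not the $J_p$-set.

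Concretely, take $n=5$, $p=3$, so $x_{5,3}=[3,-2,4,1,5]$. The cells of $J_3$ are $(1,4),(2,3),(3,2),(4,4),(5,4)$, carrying the roots $e_2-e_1,e_3-e_1,e_4-e_1,e_2+e_3,e_2$. This is exactly the left inversion set. Your criterion instead gives $e_2,e_2-e_1,e_4-e_1,e_4-e_3,e_2+e_4$.

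In general, your short-root inversions $e_{\lambda_k}$ ($k$ odd) are indexed by positions, while the correct ones are $e_{p-k}$ ($k$ odd), i.e., $e_{2j}$. Separately, your filler/marked bullet is reversed even under your own criterion: a marked position $u=\lambda_k$ followed by a filler position $v$ has $x(u)<x(v)$, so it is not an inversion.

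\textbf{How to repair it.} Finishing Step~1 gives the cell roots:
\begin{itemize}
\item for $a<n$ and $a+b\le n$, the cell $(a,b)$ carries $e_{n-b+1}-e_{n-a-b+1}$;
\item for $a<n$ and $a+b>n$, it carries $e_{n-b+1}+e_{2n-a-b+1}$;
\item the cell $(n,b)$ carries $e_{n-b+1}$.
\end{itemize}
Hence, with $u=p-2j$, $I_j^+$ gives $\{e_{u+a}-e_u:1\le a\le\eta_j\}$. Also $I_j^-$ gives $e_{2j}$ together with $e_{2j}+e_v$ for $2j<v\le n-\eta_j+2j-1$.

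So $I_j$ is attached not to the positions $\lambda_{2j-1},\lambda_{2j}$, as you guessed, but to the \emph{values} $p-2j$ and $2j$. These sit, with signs $+$ and $-$, at positions $\lambda_{2j}=\eta_j+1$ and $\lambda_{p-2j}=n-\eta_j$. The second identity is the one you are missing; it uses $2nj/p\notin\mathbb Z$.

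Now sort inversions by their smaller index $u$:
\begin{itemize}
\item For $u=p-2j$, the values $v>u$ with $x^{-1}(v)<\eta_j+1$ are exactly $u+1,\ldots,u+\eta_j$. There are no sum-root inversions, because every negative value exceeding $u$ sits before position $\eta_j+1$.
\item For $u=2j$, the values $v>2j$ with $x^{-1}(v)<n-\eta_j$ are exactly $2j+1,\ldots,n-\eta_j+2j-1$. There are no difference-root inversions.
\item Values $u\ge p$ contribute nothing.
\end{itemize}
Since $\{p-2j\}\sqcup\{2j\}$ exhausts $[p-1]$, this proves equality directly, with no separate cardinality count. Your deduction of the final assertion from \cref{lem:inversions} is fine.
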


\begin{proof}
The integers $\lambda_0,\lambda_1,\ldots,\lambda_{p-1}$ are strictly
increasing and belong to $[n]$, so the stated rule defines a signed
permutation. Fix $1\leq j\leq(p-1)/2$, and write
$\eta=\eta_j$ and $u=p-2j$. We have
$\lambda_{2j}=\eta+1$ and $\lambda_{p-2j}=n-\eta$; the second identity
uses the fact that $2nj/p$ is not an integer.

Since $c(e_1)=-e_n$ and $c(e_i)=e_{i-1}$ for $2\leq i\leq n$, a direct
calculation from the inversion sequence of ${\sf c}^n$ shows that the
roots corresponding to the cells in $I_j^+$ are
\[
    \{e_{u+a}-e_u:1\leq a\leq\eta\},
\]
while the roots corresponding to the cells in $I_j^-$ are
\[
    \{e_{2j}\}\sqcup
    \{e_{2j}+e_v:2j<v\leq n-\eta+2j-1\}.
\]
Indeed, the cell $(a,n-p+2j-a+1)$ contributes $e_{u+a}-e_u$. If
$\eta<a<n$, then the cell $(a,n-2j+1)$ contributes
$e_{2j}+e_{n-a+2j}$, while the cell $(n,n-2j+1)$ contributes $e_{2j}$.

The usual inversion criterion says that
$e_u$ is an inversion of $x$ if and only if $x^{-1}(u)<0$, while, for $u<v$,
the roots $e_v-e_u$ and $e_v+e_u$ are inversions of $x$ if and only if
$x^{-1}(v)<x^{-1}(u)$ and $x^{-1}(v)<-x^{-1}(u)$, respectively.

The positive value $u=p-2j$ occurs in position $\eta+1$ of $x_{n,p}$.
The selected entries before this position have absolute values
$u+1,u+2,\ldots,p$, and the filler entries before it have values
$p+1,p+2,\ldots,u+\eta$. Thus, the values larger than $u$ that occur
before this position are exactly $u+1,u+2,\ldots,u+\eta$. The inversion
criterion shows that the inversions whose smaller index is $u$ are precisely
$e_{u+1}-e_u,\ldots,e_{u+\eta}-e_u$; there are no sum-root inversions
involving $u$.

The value $2j$ occurs with negative sign in position $n-\eta$. The selected
entries before this position have absolute values $2j+1,2j+2,\ldots,p$,
and the filler entries before it have values
$p+1,p+2,\ldots,n-\eta+2j-1$. Hence, the values larger than $2j$ that
occur before this position are exactly
$2j+1,2j+2,\ldots,n-\eta+2j-1$. The inversion criterion shows that the
inversions whose smaller index is $2j$ are precisely the short root $e_{2j}$
and the sum roots $e_{2j}+e_v$ for
$2j<v\leq n-\eta+2j-1$; there are no difference-root inversions involving
$2j$.

As $j$ ranges from $1$ to $(p-1)/2$, the values $p-2j$ are the odd
integers in $[p-1]$, while the values $2j$ are the even integers in
$[p-1]$. The entries $p+1,p+2,\ldots,n$ are positive and occur in
increasing order. Thus, the roots listed above are all of the inversions of
$x_{n,p}$, and their union is exactly the set of roots indexed by $J_p$.
The final assertion follows from \cref{lem:inversions}.
\end{proof}

We now define $J_p$ in the remaining range $n<p<2n$. Let
$p^*=2n-p$, so $1\leq p^*<n$ and $\gcd(p^*,2n)=1$. The preceding
construction gives a subset $J_{p^*}$ of $\mathtt{Grid}_n$. The inequality
${\eta_j<n-p^*+2j}$ shows that $J_{p^*}$ is disjoint from the leftmost
down-right diagonal
\[\mathcal C=\{(a,1):1\leq a\leq n\}.\] Define
\[
    J_p=\mathtt{Grid}_n\setminus(J_{p^*}\sqcup\mathcal C).
\]
The set $J_p$ contains
$n-(p^*-1)/2-1=(p-1)/2$ cells in each row. Thus, for every admissible
$p$, the braid $\mathbb X_{J_p}$ is a product of exactly
$n(p-1)/2$ positive pure dual generators.

We next prove the power identity needed to apply
\cref{lem:Bessis_Springer}. Let $\widetilde W$ be the Coxeter group of type
$A_{2n-1}$ that unfolds $W$, and let
$\iota:\B_W\hookrightarrow\B_{\widetilde W}$ be the folding embedding from \cref{subsec:folding}. Let $\widetilde c$ be the standard Coxeter element of $\widetilde W$
obtained by unfolding $c$, so $\iota(\B(c))=\B(\widetilde c)$. Label the entries in the cycle of
$\widetilde c$ by $v_k$ for $k\in\mathbb Z/2n\mathbb Z$ so that
$\widetilde c(v_k)=v_{k+1}$, and let $t_{k,\ell}$ denote the transposition
that interchanges $v_k$ and $v_\ell$.

\begin{lemma}\label{lem:type-B-unfolding}
For $(a,b)\in\mathtt{Grid}_n$, with all subscripts read modulo $2n$, we
have
\[
\iota\bigl(\P_c(t_{(b-1)n+a})\bigr)=
\begin{cases}
\P_{\widetilde c}(t_{b-1,b-1+a})
\P_{\widetilde c}(t_{b-1+n,b-1+n+a}) & \text{if }a<n,\\
\P_{\widetilde c}(t_{b-1,b-1+n}) & \text{if }a=n.
\end{cases}
\]
The two factors in the first case commute.
\end{lemma}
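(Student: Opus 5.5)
The plan is to combine the folding formula \eqref{eq:iota2} with an explicit identification of which orbit of type~$A_{2n-1}$ reflections corresponds to each type~$B_n$ reflection $t_{(b-1)n+a}$. By \eqref{eq:iota2}, $\iota(\P_c(t))=\prod_{t'\in\Orb_\theta(t)}\P_{\widetilde c}(t')$, and the factors commute; that disposes of the commutation claim. It therefore remains to compute $\Orb_\theta(t_{(b-1)n+a})$ as a set of transpositions of the labels $v_k$.

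First I will index the unfolding concretely. The diagram automorphism $\theta$ of $A_{2n-1}$ fixes the middle node, which gives $s_1$ of $B_n$, and pairs the remaining nodes into orbits giving $s_2,\dots,s_n$. The word ${\sf c}=(s_n,\dots,s_1)$ unfolds to a reduced word $\widetilde{\sf c}$ for $\widetilde c$: each $s_i$ with $i\ge2$ becomes a commuting pair, and $s_1$ becomes the middle generator. The word ${\sf c}^n$ then unfolds to $\widetilde{\sf c}^{\,n}$, which has length $(2n-1)n=\binom{2n}{2}$. I claim it is the $\widetilde c$-sorting word for $\widetilde w_\circ$; this holds because $\iota(\Delta)=\widetilde\Delta$, since $w_\circ(B_n)$ unfolds to the longest element of $A_{2n-1}$.

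The inversion sequence of the unfolded word is obtained block by block. The $k$th letter $u_k$ of ${\sf c}^n$ unfolds to a block whose inversions are the reflections $u_1\cdots u_{k-1}\,s\,u_{k-1}\cdots u_1$ for $s\in\mathcal O$. This set is exactly $\Orb_\theta(t_k)$, since $w=u_1\cdots u_{k-1}\in W^{\fold}$.

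The main work is then bookkeeping of these reflections as transpositions $t_{k,\ell}$. I will use that conjugation by $\widetilde c$ sends $t_{k,\ell}$ to $t_{k+1,\ell+1}$, which is a relabeling of the fact that $\widetilde c(v_k)=v_{k+1}$. Moreover, $t_{(b-1)n+a}$ is obtained from $t_a$ by conjugating by $c^{b-1}$, and this operation is compatible with folding. So it suffices to treat the first diagonal $b=1$, that is, $\inv({\sf c})=(t_1,\dots,t_n)$, and then shift all subscripts by $b-1$.

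For $b=1$, I will choose the labeling $v_k$ so that the orbit of $t_a$ is $\{t_{0,a},t_{n,n+a}\}$ for $a<n$. The last letter $s_1$ is the middle node, whose single reflection is the antipodal transposition $t_{0,n}$. I will verify this directly in the one-line notation of $\mathfrak S_{2n}$, noting that the two reflections in each orbit are swapped by the rotation $k\mapsto k+n$, which is the central element $\widetilde c^{\,n}$ realizing the folding.

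The obstacle I expect is fixing the labeling $v_k$ consistently, so that both the prefix inversions of $\widetilde{\sf c}$ and the rotation convention come out with these exact subscripts. I will first try a small case, $n=2$ ($B_2\subset A_3$), to pin down the normalization, and then argue by induction on $a$. The induction step is that multiplying by the next pair of letters extends each chord by one step around the cycle.
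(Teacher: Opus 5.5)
Your proposal is correct and follows the paper's proof: apply \eqref{eq:iota2} (which also gives the commutation), identify the $\theta$-orbits on the first diagonal as $\{t_{0,a},t_{n,n+a}\}$ for $a<n$ and $\{t_{0,n}\}$ for $a=n$, and pass to general $b$ by rotation. The only difference is that you rotate the reflections by $c^{b-1}$ before invoking \eqref{eq:iota2}, whereas the paper conjugates the resulting braid identity by $\B(c)^{b-1}$, which amounts to the same thing. The check you defer does go through: taking $(n\;\,n+1)$ as the middle node unfolding $s_1$, one gets $\widetilde c=(1\;2\;\cdots\;n\;\;2n\;\;2n-1\;\cdots\;n+1)$, and the normalization $v_0=1$ works. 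Two minor points: the detour through $\widetilde{\sf c}^{\,n}$ being the sorting word for the long element is unnecessary, and $\widetilde c^{\,n}=\widetilde w_\circ$ is central in the folded group $B_n$ but not in $\mathfrak S_{2n}$.
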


\begin{proof}
For $b=1$, the cell $(a,1)$ corresponds to the root
$e_n-e_{n-a}$ when $a<n$ and to the short root $e_n$ when $a=n$. Under
the folding, these roots lift, respectively, to the two antipodal
transpositions $t_{0,a},t_{n,n+a}$ and to the transposition $t_{0,n}$.
Thus, the formula for $b=1$ follows from the pure-generator folding identity
in~\eqref{eq:iota2}. The two reflections in the first case are supported on
disjoint antipodal pairs, so their dual lifts commute. The formula for
arbitrary $b$ follows by conjugating by $\B(c)^{b-1}$.
\end{proof}

\begin{proposition}\label{prop:type-B-rational-cut}
Assume $1\leq p<n$. The braid
$\iota(\mathbb X_{J_p})\B(\widetilde c)$ is conjugate to
$\B(\widetilde c)^p$ in $\B_{\widetilde W}$.
\end{proposition}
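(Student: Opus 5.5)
The plan is to apply \cref{lem:Bessis_Springer} in the braid group $\B_{\widetilde W}$ of type $A_{2n-1}$, a Weyl group with Coxeter number $2n$ coprime to $p$, taking $\bbeta=\iota(\mathbb X_{J_p})\B(\widetilde c)$. It then suffices to prove
\[
\bigl(\iota(\mathbb X_{J_p})\B(\widetilde c)\bigr)^{2n}=\Delta_{\widetilde W}^{2p}.
\]
Since $\iota$ is an injective homomorphism with $\iota(\B(c))=\B(\widetilde c)$ and $\iota(\Delta_W^2)=\Delta_{\widetilde W}^2$ (because $\Delta_W^2=\B(c)^{2n}$), this is equivalent to the folded identity
\[
\bigl(\mathbb X_{J_p}\B(c)\bigr)^{2n}=\Delta_W^{2p}\quad\text{in }\B_W.
\]
The unfolded reformulation will only be needed if it is convenient to do the bookkeeping with transpositions $t_{k,\ell}$ via \cref{lem:type-B-unfolding}.

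To compute the power, I will push all copies of $\B(c)$ to the right:
\[
(\mathbb X_{J_p}\B(c))^{2n}=\Bigl(\prod_{i=0}^{2n-1}\B(c)^i\mathbb X_{J_p}\B(c)^{-i}\Bigr)\B(c)^{2n}.
\]
The last factor is $\Delta^2$, so it remains to show that the product of the $2n$ conjugates is $\Delta^{2(p-1)}$. By the relation $\B(c)\P_c(t_i)\B(c)^{-1}=\P_c(t_{i+n})$, conjugation by $\B(c)$ shifts each cell of $\mathtt{Grid}_n$ one diagonal to the right, wrapping cyclically. Each conjugate $\B(c)^i\mathbb X_{J_p}\B(c)^{-i}$ is therefore $\mathbb X$ of a shifted copy of $J_p$, with the product order twisted at the wraparound. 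Since $J_p$ has exactly $(p-1)/2$ cells in each row and each row has $n$ cells, the $2n$ shifts cover every cell exactly $p-1$ times. The target is then to regroup the product into $p-1$ consecutive blocks, each a full reversed product $\mathbb X_{\mathtt{Grid}_n}=\Delta^2$ (by \cref{cor:wo}), possibly conjugated by powers of $\B(c)$, which is harmless because $\Delta^2$ is central.

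The regrouping is the main obstacle. My first attempt will exploit the structure of $J_p$: the sets $I_j$ decompose into staircase pieces $I_j^{\pm}$, and the breakpoints $\eta_j=\lfloor 2nj/p\rfloor$ mirror the $k_i=\lfloor in/p\rfloor$ of the type $A$ construction. The idea is to use \cref{lem:type-B-inversion-set}, $\mathbb X_{J_p}=\B(x_{n,p})\B(x_{n,p}^{-1})$, to carry out an analogue of the telescoping argument of \cref{lem:conjugator_rho}. One would find a $g$ and a chain $u_j=gc^{p-1-j}$ with factorizations $\B(u_j)\B(c)=(\text{pure factor})\,\B(u_{j-1})$, analogous to \cref{lem:B(u)B(c)}. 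That would exhibit an explicit conjugation and bypass the power computation entirely.

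If that fails, I will fall back on the unfolded picture and work in $\B_{2n}$ with the explicit dual braid relations for chords: commutation of noncrossing disjoint chords, the triangle relations, and $\B(\widetilde c)\t_{(ij)}\B(\widetilde c)^{-1}=\t_{(i+1\,\,j+1)}$. In that setting I will verify the $2n$-th power identity by sorting chords row by row, that is, by $c$-orbit, so that each sorted block becomes an inversion-sequence product for $\Delta^2$.
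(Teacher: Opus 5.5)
Your reduction is logically fine, and you correctly avoid citing \cref{prop:XJp}, which the paper deduces from this very proposition. But it is only a reformulation. Since $\B(\widetilde c)^{2n}=\Delta_{\widetilde W}^2$ is central, every conjugate of $\B(\widetilde c)^p$ has $2n$th power $\Delta_{\widetilde W}^{2p}$, so via \cref{lem:Bessis_Springer} the proposition is \emph{equivalent} to $(\mathbb X_{J_p}\B(c))^{2n}=\Delta^{2p}$. All of the content therefore lies in the step you call the main obstacle, and the proposal does not prove it.

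The observation that the $2n$ shifts of $J_p$ cover each cell of $\mathtt{Grid}_n$ exactly $p-1$ times only checks the identity in the abelianization of the pure braid group, where each $\P_c(t)$ maps to the basis vector of its hyperplane. It gives no way to reorder the product of the conjugates $\B(c)^i\mathbb X_{J_p}\B(c)^{-i}$ into $p-1$ copies of $\mathbb X_{\mathtt{Grid}_n}$. The relations you list do not do this either. The triangle relations hold for the half twists $\B_c(t)$ but not for their squares: already in $\P_3$ one has $\tt_{(12)}\tt_{(23)}\neq\tt_{(23)}\tt_{(13)}$, since their abelianization images differ. You also give no mechanism for moving the wrapped-around cells into place for general $n$ and $p$.

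Your first alternative is likewise only a hope. No conjugator $g$ is proposed, and the telescoping in \cref{lem:conjugator_rho} relies on \cref{lem:B(u)B(c)}, which is specific to the linear Coxeter element of type~$A$. You state no type~$B$ analogue. The unfolded $\widetilde c$ is not the linear Coxeter element of $A_{2n-1}$, so that lemma does not apply upstairs as stated.

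The missing idea is the one the paper uses: it proves conjugacy directly and obtains the power identity afterwards as a consequence.
\begin{itemize}
\item View $\B_{\widetilde W}$ as the braid group on $2n$ cyclically arranged points. Then $\B(\widetilde c)^p$ is the $(2n,p)$-torus braid cut along a standard meridian.
\item Cutting the same closed braid along a different meridian yields a conjugate braid, since the two loops differ by conjugation by the path moving one cut to the other.
\item Cut along the centrally symmetric staircase following the segment from $(0,0)$ to $(2n,p)$. The crossings at heights $j$ and $p-j$ form $n$ antipodal orbits. Their endpoints depend on whether $a\le\eta_j$ or $a>\eta_j$, because the segment at height $j$ lies strictly between $\eta_j$ and $\eta_j+1$.
\item \cref{lem:type-B-unfolding} identifies these orbits with the images under $\iota$ of the pure generators for the cells of $I_j^+$ and $I_j^-$. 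They are met in reverse grid order and followed by one copy of $\B(\widetilde c)$.
\end{itemize}
This exhibits $\iota(\mathbb X_{J_p})\B(\widetilde c)$ as a conjugate of $\B(\widetilde c)^p$ with no rewriting among pure generators. It also explains where the breakpoints $\eta_j=\lfloor 2nj/p\rfloor$ in the definition of $J_p$ come from. Without this, or an explicit conjugator, your argument does not establish the proposition.
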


\begin{proof}
View $\B_{\widetilde W}$ as the braid group on the $2n$ points
$v_0,v_1,\ldots,v_{2n-1}$ arranged cyclically. The braid
$\B(\widetilde c)^p$ is obtained by cutting the $(2n,p)$-torus braid along
a standard meridian. Cutting the same closed braid along a different
meridian changes the resulting braid by conjugation: moving the cut gives
a path between the two base configurations, and the two resulting loops
differ by conjugation by this path. We use the centrally symmetric
staircase meridian that follows the line segment from $(0,0)$ to $(2n,p)$.

At height $j$, where $1\leq j\leq(p-1)/2$, the line has horizontal
coordinate $2nj/p$, which lies strictly between $\eta_j$ and
$\eta_j+1$. Pairing height $j$ with the antipodal height $p-j$, the $n$
antipodal orbits of crossings are indexed by $a\in[n]$. Their endpoints
are as follows:
\[
\begin{array}{c|c|c}
 &1\leq a\leq\eta_j&\eta_j<a\leq n\\ \hline
\text{first endpoint}
&v_{n-p+2j-a}&v_{n-2j}\\
\text{second endpoint}
&v_{n-p+2j}&v_{n-2j+a}
\end{array}.
\]
In the first case, this is the antipodal orbit associated by
\cref{lem:type-B-unfolding} with the cell
\[(a,n-p+2j-a+1)\in I_j^+.\] In the second case, it is the orbit associated
with \[(a,n-2j+1)\in I_j^-.\] When $a=n$, the orbit consists of one
antipodal chord, as in the second case of \cref{lem:type-B-unfolding}.
The staircase encounters these crossings in decreasing order of
$(b-1)n+a$, and after the pure crossing factors it contributes the final
cyclic motion $\B(\widetilde c)$. Hence, this cut gives
$\iota(\mathbb X_{J_p})\B(\widetilde c)$, while the standard cut gives
$\B(\widetilde c)^p$.
\end{proof}

\begin{proposition}\label{prop:XJp}
For every integer $p$ satisfying $1\leq p<2n$ and $\gcd(p,2n)=1$, we have
\[
    \left(\mathbb X_{J_p}\B(c)\right)^{2n}=\Delta^{2p}.
\]
\end{proposition}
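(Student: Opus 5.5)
The plan is to push everything into type $A_{2n-1}$ through the folding embedding $\iota$, which is injective, and to use that $\iota(\Delta_W^2)=\Delta_{\widetilde W}^2$. The latter holds because $\iota(\B(c))=\B(\widetilde c)$, $\B(c)^{2n}=\Delta_W^2$ and $\B(\widetilde c)^{2n}=\Delta_{\widetilde W}^2$, the Coxeter numbers both being $2n$. It therefore suffices to prove
\[
\bigl(\iota(\mathbb X_{J_p})\B(\widetilde c)\bigr)^{2n}=\Delta_{\widetilde W}^{2p}
\]
in $\B_{\widetilde W}$.

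\textbf{Case $1\le p<n$.} By \cref{prop:type-B-rational-cut}, the braid $\iota(\mathbb X_{J_p})\B(\widetilde c)$ is conjugate to $\B(\widetilde c)^p$. Hence its $2n$th power is conjugate to $\B(\widetilde c)^{2np}=\Delta_{\widetilde W}^{2p}$. Since this element is central, the $2n$th power equals it exactly. Injectivity of $\iota$ then gives $(\mathbb X_{J_p}\B(c))^{2n}=\Delta^{2p}$ in $\B_W$. (Alternatively, one could argue directly from \cref{lem:type-B-inversion-set}, but the unfolding route is cleaner.)

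\textbf{Case $n<p<2n$.} Set $p^*=2n-p$. The idea is to relate $\mathbb X_{J_p}$ to $\mathbb X_{J_{p^*}}$ through the factorization $\Delta^2=\mathbb X_{\mathtt{Grid}_n}$ of \cref{cor:wo}. Since $\mathtt{Grid}_n=J_p\sqcup J_{p^*}\sqcup\mathcal C$, the first step is to show that, in the reverse total order, the product over $\mathtt{Grid}_n$ can be rearranged, using commutations and Hurwitz-type relations among pure dual generators, into
\[
\Delta^2=\mathbb X_{J_p}\,\B(c)\,\bigl(\mathbb X_{J_{p^*}}\B(c)\bigr)'\,\B(c)^{-1}\cdots,
\]
or more concretely into an identity of the form
\[
\Delta^2=\mathbb X_{J_p}\B(c)\cdot \mathbf y\,\B(c^{-1})\mathbf y^{-1},
\]
where $\mathbf y\,\B(c^{-1})\mathbf y^{-1}$ is a conjugate of $(\mathbb X_{J_{p^*}}\B(c))^{-1}\Delta^2$. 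Note that $\mathbb X_{\mathcal C}$ is exactly the product of the $\P_c(t_a)$ over the first diagonal, i.e.\ over $\inv(c^{-1})$-type reflections, so $\mathbb X_{\mathcal C}$ should equal $\B(c)\B(c^{-1})$ by \cref{lem:inversions}.

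The cleanest version of this step is again to work in type $A_{2n-1}$ and use the staircase-meridian picture from the proof of \cref{prop:type-B-rational-cut}. The complement of the staircase for slope $p^*/2n$, reflected, is the staircase for slope $p/2n$, and the diagonal $\mathcal C$ accounts for one extra turn $\B(\widetilde c)$. So I would aim to show directly that cutting the $(2n,p)$-torus braid along the centrally symmetric staircase from $(0,0)$ to $(2n,p)$ yields $\iota(\mathbb X_{J_p})\B(\widetilde c)$. This is the same argument as in \cref{prop:type-B-rational-cut}, but for $p>n$ the crossing orbits are counted as the complement of those for $p^*$, and the $\mathcal C$ orbits are absorbed into the final cyclic motion. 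With that conjugacy in hand, the conclusion follows exactly as in the first case.

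The main obstacle is this complement bookkeeping for $n<p<2n$. One has to verify that the cells of $\mathtt{Grid}_n\setminus(J_{p^*}\sqcup\mathcal C)$, read in decreasing order of $(b-1)n+a$, match the antipodal crossing orbits met by the slope-$p/2n$ staircase. The approach is to check this row by row: each row contains $(p-1)/2$ cells of $J_p$, matching the $(p-1)/2$ antipodal height pairs. The check should also confirm that the ordering agrees, using that the staircase line at height $j$ sits between consecutive integers, which holds because $\gcd(p,2n)=1$.
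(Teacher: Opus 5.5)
Your case $1\le p<n$ is exactly the paper's argument: conjugacy from \cref{prop:type-B-rational-cut}, centrality of $\B(\widetilde c)^{2n}=\iota(\Delta^2)$, and injectivity of $\iota$.

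\textbf{The case $n<p<2n$ has a genuine gap.} Your ``concrete'' identity, $\Delta^2=\mathbb X_{J_p}\B(c)\cdot\mathbf y\,\B(c^{-1})\mathbf y^{-1}$ with $\mathbf y\,\B(c^{-1})\mathbf y^{-1}$ conjugate to $(\mathbb X_{J_{p^*}}\B(c))^{-1}\Delta^2$, cannot hold. Any conjugate of $\B(c^{-1})$ has $2n$th power $\Delta^2$. By contrast, $(\mathbb X_{J_{p^*}}\B(c))^{-1}\Delta^2$ has $2n$th power $\Delta^{4n-2p^*}=\Delta^{2p}$. Either description alone also gives the wrong conclusion. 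Writing $\mathbb X_{J_p}\B(c)=\Delta^2Y^{-1}$, you would get $(\mathbb X_{J_p}\B(c))^{2n}=\Delta^{4n-2}$ in the first case, which is $\Delta^{2p}$ only when $p=2n-1$. In the second case you would get $\Delta^{4n-2p}=\Delta^{2p^*}$, which is never $\Delta^{2p}$ here.

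Your fallback, rerunning the staircase cut for slope $p/2n>1/2$, is only asserted. For $p>n$, the set $J_p$ is not given by the $I_j^{\pm}$ formulas, which use $p<n$ (for $p>n$ one even has $\eta_j<2j$). Instead it is defined as the complement $\mathtt{Grid}_n\setminus(J_{p^*}\sqcup\mathcal C)$. You would have to show that this complement, taken in reverse grid order, is exactly the sequence of antipodal crossing orbits met by the new staircase. The order matters because pure dual generators do not commute in general. Counting $(p-1)/2$ cells per row does not establish this, and this verification is the whole content of the case.

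\textbf{The missing idea} is an exact algebraic identity linking $J_p$ to $J_{p^*}$, using the realizability already available for $p^*<n$. Let $\dagger$ be the automorphism of $\B_W$ inverting every Artin generator, so $\B(w)^\dagger=\B(w^{-1})^{-1}$ and $\Delta^\dagger=\Delta^{-1}$. With $x=x_{n,p^*}$, \cref{lem:type-B-inversion-set} gives $\mathbb X_{J_{p^*}}=\B(x)\B(x^{-1})$. In type $B_n$ we have $w_\circ=-1$, so $\Delta$ is central and $\mathrm{Inv}(w_\circ x)=T\setminus\mathrm{Inv}(x)$. Therefore
\[
\Delta^2\bigl(\mathbb X_{J_{p^*}}\B(c)\bigr)^\dagger=\Delta\B(x^{-1})^{-1}\,\Delta\B(x)^{-1}\,\B(c^{-1})^{-1}=\B(w_\circ x)\B(w_\circ x^{-1})\B(c^{-1})^{-1}=\mathbb X_{J_p}\mathbb X_{\mathcal C}\B(c^{-1})^{-1}=\mathbb X_{J_p}\B(c).
\]
The third equality is \cref{lem:inversions}: the cells of $\mathcal C$ come last in the reverse grid order, so $\mathbb X_{\mathtt{Grid}_n\setminus J_{p^*}}=\mathbb X_{J_p}\mathbb X_{\mathcal C}$. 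The last equality uses \cref{lem:inversions} again, since $\mathcal C$ indexes $\inv(\mathsf c)$ (not $\inv(c^{-1})$ as you wrote), giving $\mathbb X_{\mathcal C}=\B(c)\B(c^{-1})$. Raising to the $2n$th power and applying $\dagger$ to the already-proved case $p^*$ gives $\Delta^{4n}(\Delta^{2p^*})^\dagger=\Delta^{4n-2p^*}=\Delta^{2p}$. No new geometry is needed.
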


\begin{proof}
Assume first that $p<n$. By \cref{prop:type-B-rational-cut}, there exists
$\widetilde{\bf g}\in\B_{\widetilde W}$ such that
\[
    \iota(\mathbb X_{J_p}\B(c))
    =\widetilde{\bf g}\B(\widetilde c)^p\widetilde{\bf g}^{-1}.
\]
Since $\B(\widetilde c)^{2n}=\iota(\Delta^2)$ is central, it follows that
\[
    \iota\left(\left(\mathbb X_{J_p}\B(c)\right)^{2n}\right)
    =\B(\widetilde c)^{2np}
    =\iota(\Delta^{2p}).
\]
The map $\iota$ is injective, so the result follows in this case.

Now assume that $n<p<2n$, and let $p^*=2n-p$. Let $\dagger$ be the
automorphism of $\B_W$ that sends each Artin generator to its inverse.
For every $w\in W$, we have
$\B(w)^\dagger=\B(w^{-1})^{-1}$; in particular,
$\Delta^\dagger=\Delta^{-1}$. Set $x=x_{n,p^*}$. Since $\Delta$ is
central in type~$B_n$, \cref{lem:type-B-inversion-set} gives
\[
\begin{aligned}
    \Delta^2\left(\mathbb X_{J_{p^*}}\B(c)\right)^\dagger
    &=\Delta^2\B(x^{-1})^{-1}\B(x)^{-1}\B(c^{-1})^{-1}\\
    &=\B(w_\circ x)\B(w_\circ x^{-1})\B(c^{-1})^{-1}.
\end{aligned}
\]
Indeed, we have 
$\Delta\B(x^{-1})^{-1}=\B(w_\circ x)$ and
$\Delta\B(x)^{-1}=\B(w_\circ x^{-1})$.

The long element acts as $-1$ in type~$B_n$, so
$\Inv(w_\circ x)=T\setminus\Inv(x)$. Moreover,
$(w_\circ x)^{-1}=w_\circ x^{-1}$. Hence, \cref{lem:inversions} yields
\[
    \B(w_\circ x)\B(w_\circ x^{-1})
    =\mathbb X_{\mathtt{Grid}_n\setminus J_{p^*}}
    =\mathbb X_{J_p}\mathbb X_{\mathcal C}.
\]
The second equality uses the definition of $J_p$ and the fact that the
cells in $\mathcal C$ come last in the reverse grid order. Since
$\mathcal C$ indexes the inversion sequence of ${\sf c}$,
\cref{lem:inversions} also gives
$\mathbb X_{\mathcal C}=\B(c)\B(c^{-1})$. Therefore,
\[
    \Delta^2\left(\mathbb X_{J_{p^*}}\B(c)\right)^\dagger
    =\mathbb X_{J_p}\B(c).
\]
Using the result already proved for $p^*$ and the centrality of $\Delta^2$,
we conclude that
\[
\begin{aligned}
    \left(\mathbb X_{J_p}\B(c)\right)^{2n}
    &=\Delta^{4n}
      \left(\left(\mathbb X_{J_{p^*}}\B(c)\right)^{2n}\right)^\dagger\\
    &=\Delta^{4n}\left(\Delta^{2p^*}\right)^\dagger
      =\Delta^{4n-2p^*}
      =\Delta^{2p}
\end{aligned}
\] as desired. 
\end{proof}

\begin{theorem}\label{thm:conjugator_B}
Let $W$ be of type $B_n$, and let $c=s_ns_{n-1}\cdots s_1$. Let $p$ be
an integer such that $1\leq p\leq 2n-1$ and $\gcd(p,2n)=1$. There exists
a positive dual $(c,p)$-conjugator ${\bf g}$ such that
\[
    \XX
    :={\bf g}\B(c)^p{\bf g}^{-1}\B(c)^{-1}
    =\mathbb X_{J_p}
    =\prod_{(a,b)\in J_p}^{\longleftarrow}
    \P_c(t_{(b-1)n+a}).
\]
If $p<n$, then this conjugator is realizable in the sense that
$\mathbb X_{J_p}=\B(x_{n,p})\B(x_{n,p}^{-1})$.
\end{theorem}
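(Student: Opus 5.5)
The plan is to combine \cref{prop:XJp} with the braid-group Springer theory of \cref{lem:Bessis_Springer}, and then to adjust the resulting conjugator so that it is a genuine lift of a Coxeter $(c,p)$-conjugator.

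By \cref{prop:XJp}, the braid $\bbeta:=\mathbb X_{J_p}\B(c)$ satisfies $\bbeta^{2n}=\Delta^{2p}$, and $h=2n$. Hence \cref{lem:Bessis_Springer} gives ${\bf g}\in\B_W$ with
\[
{\bf g}\B(c)^p{\bf g}^{-1}=\mathbb X_{J_p}\B(c),
\]
which rearranges to $\mathbbm c_{p,{\bf g}}=\mathbb X_{J_p}$.

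Next I check that ${\bf g}$ is a lift of a Coxeter $(c,p)$-conjugator. Let $g$ be the image of ${\bf g}$ in $W$. Since $\mathbb X_{J_p}$ is a product of pure braids, it maps to $e$ in $W$. Projecting the displayed identity therefore gives $gc^pg^{-1}=c$, so $g\in\Conj_W(c,p)$ and ${\bf g}$ is a braid lift of it.

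For positivity, $\mathbb X_{J_p}$ is by definition a product of positive pure dual generators $\P_c(t)$. I count them: $J_p$ has $(p-1)/2$ cells in each of the $n$ rows, for $p<n$ by the disjointness argument and for $n<p<2n$ by the complement count. This gives $M=n(p-1)/2=r(p-1)/2$ factors, matching the definition of a positive dual $(c,p)$-conjugator. In the case $p<n$, realizability is exactly the last assertion of \cref{lem:type-B-inversion-set}, namely $\mathbb X_{J_p}=\B(x_{n,p})\B(x_{n,p}^{-1})$.

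The substantive input is the power identity of \cref{prop:XJp}, which is already established. The only remaining point needing care is confirming that \cref{lem:Bessis_Springer} applies verbatim in type $B_n$, which is a Weyl group with $\gcd(p,2n)=1$. It does, so the proof reduces to the straightforward assembly above.
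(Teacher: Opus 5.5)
Your proposal is correct and follows the paper's proof essentially verbatim. You apply \cref{prop:XJp} to get $(\mathbb X_{J_p}\B(c))^{2n}=\Delta^{2p}$, invoke \cref{lem:Bessis_Springer} to produce ${\bf g}$, count the $n(p-1)/2$ positive pure dual factors, and cite \cref{lem:type-B-inversion-set} for realizability when $p<n$. Your extra step of projecting to $W$ to confirm that the image of ${\bf g}$ lies in $\Conj_W(c,p)$ is a harmless detail that the paper leaves implicit.
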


\begin{proof}
Let $\bbeta=\mathbb X_{J_p}\B(c)$. By \cref{prop:XJp}, we have
$\bbeta^{2n}=\Delta^{2p}$. Since $h=2n$, \cref{lem:Bessis_Springer}
implies that there exists ${\bf g}\in\B_W$ such that
${\bf g}\B(c)^p{\bf g}^{-1}=\bbeta$. Hence,
${\bf g}\B(c)^p{\bf g}^{-1}\B(c)^{-1}=\mathbb X_{J_p}$. The expression
defining $\mathbb X_{J_p}$ is a product of $n(p-1)/2$ positive pure dual
generators, so ${\bf g}$ is a positive dual $(c,p)$-conjugator. The final
assertion follows from \cref{lem:type-B-inversion-set}.
\end{proof}

\subsection{Type D}\label{subsec:Type_D}
Let $W$ be the Coxeter group of type
$D_n$ with simple reflections $s_0,s_1,\ldots,s_{n-1}$, where $s_0$ and $s_1$ are the two fork vertices, both
adjacent to $s_2$ in the Coxeter graph. Thus, we have $(s_0s_2)^3=e$ and $(s_is_{i+1})^3=e$ for all $1\leq i\leq n-2$. The Coxeter number of $W$ is $h=2n-2$. 

Let $\theta$ be the automorphism of the Coxeter graph of $W$ that swaps $s_0$ and $s_1$ and fixes
all other simple reflections. The folded Coxeter group $W^{\fold}$ is of type $B_{n-1}$. It has simple reflections $s_1^{\fold},s_2^{\fold},\ldots,s_{n-1}^{\fold}$, 
where $s_1^{\fold}=s_0s_1$ and $s_i^{\fold}=s_i$ for $2\leq i\leq n-1$. This folding induces the embedding $\iota:\B_{W^{\fold}}\hookrightarrow \B_{W}$. 

Consider the standard Coxeter element
$c^{\fold}=s^{\fold}_{n-1}s^{\fold}_{n-2}\cdots s^{\fold}_1$ of $W^{\fold}$. We can view $c^{\fold}$ as the standard Coxeter element $c=s_{n-1}s_{n-2}\cdots s_1s_0$ of $W$. 

Suppose $p$ is an integer coprime to $2n-2$ satisfying $1\leq p<2n-2$. Let $J_p$ be the subset of $\mathtt{Grid}_{n-1}$ defined in \cref{subsec:Type_B}. In this subsection, let us write $t^{\fold}_{(a,b)}$ for the reflection in $W^{\fold}$ corresponding to the cell $(a,b)$ of $\mathtt{Grid}_{n-1}$. \cref{thm:conjugator_B} ensures that there is a positive dual $(c^{\fold},p)$-conjugator ${\bf g}^{\fold}$ such that
\[
    \mathbbm{c}^{\fold}_{p,{\bf g}^{\fold}}:=
    {\bf g}^{\fold}\B(c^{\fold})^p({\bf g}^{\fold})^{-1}\B(c^{\fold})^{-1}
    =
    \mathbb{X}_{J_p}=
    \prod_{(a,b)\in J_p}^{\longleftarrow} \P_{c^{\fold}}(t^{\fold}_{(b-1)(n-1)+a}).
\] The following corollary is immediate from \cref{prop:folding}. 

\begin{corollary}
Let $p$ be an integer with $\gcd(p,2n-2)=1$ and $1\leq p<2n-2$. Let ${\bf g}^{\fold}$ be a $(c^{\fold},p)$-conjugator such that
\[
    \mathbbm{c}^{\fold}_{p,{\bf g}^{\fold}}:=
    {\bf g}^{\fold}\B(c^{\fold})^p({\bf g}^{\fold})^{-1}\B(c^{\fold})^{-1}
    =
    \mathbb{X}_{J_p}=
    \prod_{(a,b)\in J_p}^{\longleftarrow} \P_{c^{\fold}}(t^{\fold}_{(b-1)(n-1)+a}).
\] 
The element ${\bf g}=\iota({\bf g}^{\fold})$ is a positive dual $(c,p)$-conjugator, and \[
    \mathbbm{c}_{p,{\bf g}}
    :=
    {\bf g} \B(c)^p{\bf g}^{-1}\B(c)^{-1}=\iota(\mathbbm{c}^{\fold}_{p,{\bf g}^{\fold}})=\prod_{(a,b)\in J_p}^{\longleftarrow}\prod_{t\in\mathrm{Orb}_\theta(t_{(b-1)(n-1)+a}^{\fold})}\P_c(t). 
\]
\end{corollary}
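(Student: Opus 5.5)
The corollary is a direct instance of \cref{prop:folding} with $W$ of type $D_n$, the graph automorphism $\theta$ swapping $s_0$ and $s_1$, and $W^{\fold}$ of type $B_{n-1}$. The plan is to check the hypotheses of that proposition one at a time and then read off the conclusion.

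First I would verify the folding setup. The only nontrivial $\theta$-orbit is $\{s_0,s_1\}$, and $s_0$ and $s_1$ commute, so $(W^{\fold},S^{\fold})$ with $s_1^{\fold}=s_0s_1$ is the folding described in \cref{subsec:folding}. Next, the word $c^{\fold}=s^{\fold}_{n-1}\cdots s^{\fold}_1$, viewed in $W$, equals $s_{n-1}\cdots s_2s_1s_0=c$. Hence $\iota(\B(c^{\fold}))=\B(c)$, because $\iota(\s_1^{\fold})=\s_0\s_1$. Both groups have Coxeter number $2n-2$; indeed $h(B_{n-1})=2(n-1)$. So the coprimality hypothesis $\gcd(p,h)=1$ is the same on both sides.

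Second, I would supply the positive factorization required by \cref{prop:folding}. \cref{thm:conjugator_B}, applied with $n-1$ in place of $n$, produces ${\bf g}^{\fold}$ satisfying
\[
\mathbbm{c}^{\fold}_{p,{\bf g}^{\fold}}=\mathbb X_{J_p}=\prod_{(a,b)\in J_p}^{\longleftarrow}\P_{c^{\fold}}\bigl(t^{\fold}_{(b-1)(n-1)+a}\bigr).
\]
This is a product of positive pure dual generators, which is exactly the hypothesis of \cref{prop:folding}. Applying \cref{prop:folding} then gives the stated identity for $\mathbbm{c}_{p,{\bf g}}$. That identity exhibits $\mathbbm{c}_{p,{\bf g}}$ as a product of positive $\P_c(t)$'s, so ${\bf g}=\iota({\bf g}^{\fold})$ is a positive dual $(c,p)$-conjugator.

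The only step that needs real care is the identity $\iota(\P_{c^{\fold}}(t^{\fold}))=\prod_{t\in\Orb_\theta(t^{\fold})}\P_c(t)$ from \eqref{eq:iota2}. In particular, the reduced word for $c$ obtained by unfolding must be compatible with the one used to define the $c$-dual lifts; here it is $(s_{n-1},\ldots,s_2,s_0,s_1)$, and any block ordering works because $s_0$ and $s_1$ commute. Since \eqref{eq:iota2} was established for a general folding, this is already covered, and no new obstacle arises.

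A last bookkeeping check is the count of factors. Each $\theta$-orbit of a reflection has size one or two, so the length of the factorization is $\sum_{(a,b)\in J_p}|\Orb_\theta(t^{\fold})|$. Writhe considerations force this to equal $r(p-1)/2=n(p-1)/2$, and nothing further is needed for the corollary as stated.
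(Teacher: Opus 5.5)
Your proposal is correct and matches the paper, which simply declares the corollary immediate from \cref{prop:folding}. Your checks are exactly the implicit content of that one line: the commuting orbit $\{s_0,s_1\}$, $\iota(\B(c^{\fold}))=\B(c)$, the common Coxeter number $2n-2$, and the writhe count of $n(p-1)/2$ factors (one per cell of $J_p$ off row $n-1$, two per cell in row $n-1$). Invoking \cref{thm:conjugator_B} is harmless but unnecessary, since the factorization of $\mathbbm{c}^{\fold}_{p,{\bf g}^{\fold}}$ is already a hypothesis of the corollary.
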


\subsection{Exceptional Crystallographic Types}\label{sec:exceptional} 
In this section, we give the results of our computations for exceptional crystallographic types. We consider $p$ such that $2\leq p\leq h-2$ and $\gcd(p,h)=1$ (we ignore the cases $p\equiv\pm 1\pmod h$ since those are handled in \cref{sec:fuss}). We handle types $F_4, E_6, E_7,$ and $E_8$ (we can ignore $G_2$ since its Coxeter number is $h=6$ and there is no $p$ coprime to $6$ satisfying $2\leq p\leq 6-2$).  As before, we performed explicit calculations in each type for a particular standard Coxeter element since one can then conjugate in order to obtain similar results for all other standard Coxeter elements. Our GAP3 with CHEVIE~\cite{GH96} code is available at~\cite{code}; as an example, we were able to produce a reflection subword model for rational Catalan objects in type $E_8$ with $p=7$ in about a second.

Write $\mathrm{inv}({\sf w}_\circ(c))=(t_1,\ldots,t_N)$.  For each type and each $p$ coprime to $h$ with $2\leq p\leq h-2$, we found an element $g \in W$ such that $gc^pg^{-1}=c$, we chose a braid lift $\mathbf{g}$ of $g$, and we implemented the Reidemeister--Schreier method to rewrite the pure braid $\XX=\mathbf{g}\bc^p\mathbf{g}^{-1}\bc^{-1}$ using the generators in $\mathbb T$. These factorizations of $\XX$ are listed in~\Cref{tab:exceptional}, formatted so that $k$ represents the generator $\tt_k$ and $\textcolor{barclr}{\overline{k}}$ stands for $\tt_k^{-1}$.  When possible, we tried to find lifts that gave positive expressions resulting in unsigned reflection subword models; we did not succeed in type~$E_7$ for $p=5$ or in type~$E_8$, although there are almost certainly simplifications that could be applied to these non-positive expressions.

\begin{figure}[htbp]
\centering
\includegraphics[height=62.422mm]{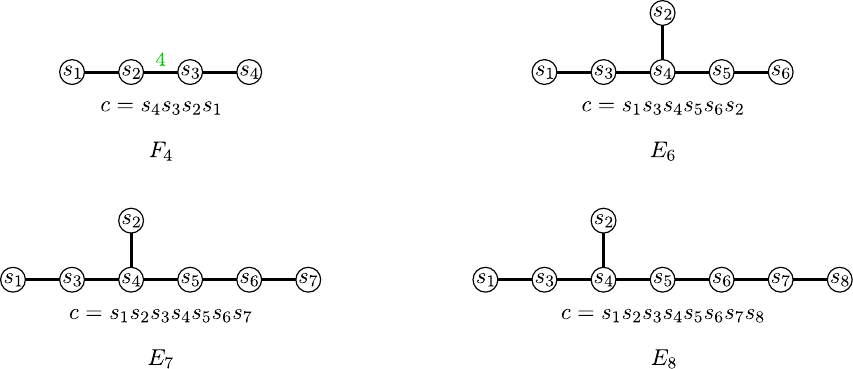}
\caption{Coxeter graphs and Coxeter elements in~\Cref{ex:f4} and~\Cref{tab:exceptional}. Edges without visible labels are assigned the label 3. }
\label{fig:exceptional-dynkin-conventions}
\end{figure}

\begin{example}\label{ex:f4}
We give an in-depth illustration of our calculations in type $F_4$ for $c=s_4s_3s_2s_1$.  In this case, $h=12$, so we take $p$ to be either $5$ or $7$. 
\begin{itemize}
\item For $p=5$, we have the realizable positive dual $(c,p)$-conjugator
\[\mathbf{g}=\s_3\s_2\s_1\s_3\s_2\s_4\s_3\s_2\s_1\s_4^{-1}\s_3^{-1}\s_4^{-1},\] and
\[\XX =\mathbf{g} \bc^5 \mathbf{g}^{-1} \bc^{-1}= \B(x)\cdot \B(x^{-1})
=\tt_{19} \tt_{17} \tt_{16} \tt_{14} \tt_8 \tt_7 \tt_6 \tt_5,\]
where $x=s_3s_2s_1s_3s_2s_4s_3s_2$. 
\item For $p=7$, we have the realizable positive dual $(c,p)$-conjugator
\[\mathbf{g}=\s_3\s_2\s_1\s_3\s_2\s_4\s_3\s_2\] and
\[\XX=\mathbf{g} \bc^7 \mathbf{g}^{-1} \bc^{-1}=\B(x)\cdot \B(x^{-1})
=\tt_{24} \tt_{22} \tt_{19} \tt_{17}\tt_{16} \tt_{15} \tt_{14} \tt_{13} \tt_{8} \tt_7 \tt_6 \tt_5,\]
where $x=s_1s_3s_2s_1s_3s_2s_3s_4s_3s_2s_1s_3$. \qedhere
\end{itemize}
\end{example}

\begin{table}[htbp]
\centering
\renewcommand{\arraystretch}{1.3}
\begin{tabular}{ll>{\RaggedRight\arraybackslash\footnotesize}p{10cm}}
\toprule
$(W,c)$ & $p$ & $\XX$\\
\midrule
$F_4,4321$ & 5 & \Xword{19\pbsep 17\pbsep 16\pbsep 14\pbsep 8\pbsep 7\pbsep 6\pbsep 5}\\
 & 7 & \Xword{24\pbsep 22\pbsep 19\pbsep 17\pbsep 16\pbsep 15\pbsep 14\pbsep 13\pbsep 8\pbsep 7\pbsep 6\pbsep 5}\\
\midrule
$E_6,134562$ & 5 & \Xword{35\pbsep 34\pbsep 29\pbsep 27\pbsep 26\pbsep 25\pbsep 17\pbsep 16\pbsep 15\pbsep 12\pbsep 13\pbsep 8}\\
 & 7 & \Xword{35\pbsep 34\pbsep 29\pbsep 28\pbsep 27\pbsep 26\pbsep 25\pbsep 24\pbsep 20\pbsep 17\pbsep 16\pbsep 15\pbsep 13\pbsep 12\pbsep 11\pbsep 9\pbsep 8\pbsep 7}\\
\midrule
$E_7,1234567$ & 5 & \Xword{\ov{63}\pbgap 18\pbsep 6\pbsep 5\pbsep 1\pbsep 2\pbsep 63\pbsep 62\pbsep 60\pbsep 59\pbsep 57\pbsep 47\pbsep 42\pbsep 41\pbsep 40\pbgap \ov{41}\pbsep \ov{47}\pbsep \ov{1}\pbgap 41\pbsep 35\pbsep 38\pbsep 36\pbsep 31\pbsep 7\pbsep 43\pbsep 30\pbgap \ov{38}\pbsep \ov{42}\pbsep \ov{43}\pbsep \ov{6}}\\
 & 7 & \Xword{41\pbsep 39\pbsep 37\pbsep 35\pbsep 33\pbsep 31\pbsep 29\pbsep 21\pbsep 20\pbsep 19\pbsep 18\pbsep 16\pbsep 15\pbsep 10\pbsep 2\pbsep 63\pbsep 62\pbsep 61\pbsep 60\pbsep 57\pbsep 52}\\
 & 11 & \Xword{43\pbsep 37\pbsep 32\pbsep 26\pbsep 20\pbsep 14\pbsep 30\pbsep 31\pbsep 29\pbsep 24\pbsep 18\pbsep 12\pbsep 6\pbsep 63\pbsep 22\pbsep 16\pbsep 11\pbsep 5\pbsep 62\pbsep 56\pbsep 10\pbsep 8\pbsep 2\pbsep 3\pbsep 60\pbsep 54\pbsep 48\pbsep 42\pbsep 58\pbsep 53\pbsep 41\pbsep 57\pbsep 52\pbsep 40\pbsep 28}\\
 & 13 & \Xword{1\pbsep 55\pbsep 60\pbsep 58\pbsep 54\pbsep 50\pbsep 63\pbsep 59\pbsep 49\pbsep 48\pbsep 47\pbsep 46\pbsep 45\pbsep 44\pbsep 42\pbsep 41\pbsep 39\pbsep 37\pbsep 35\pbsep 33\pbsep 27\pbsep 43\pbsep 38\pbsep 32\pbsep 30\pbsep 26\pbsep 24\pbsep 21\pbsep 20\pbsep 19\pbsep 18\pbsep 22\pbsep 16\pbsep 14\pbsep 36\pbsep 17\pbsep 13\pbsep 11\pbsep 5\pbsep 9\pbsep 15\pbsep 10}\\
\bottomrule
\end{tabular}
\caption{Pure braids giving reflection subword models for rational Catalan noncrossing partitions in exceptional types.  With
$\inv({\sf w}_\circ(c))=(t_1,\ldots,t_N)$, the integer
$k$ represents the pure generator $\tt_k$ associated with $t_k$, while $\textcolor{barclr}{\overline{k}}$ represents $\tt_k^{-1}$.  The longer
$E_8$ words (with negative generators) are supplied as machine-readable ancillary files, available at~\cite{code}.}
\label{tab:exceptional}
\end{table}

\subsection{Noncrystallographic Discussion}\label{sec:noncrystallographic}
In this section, we consider $W$ of noncrystallographic type ($I_2(m)$, $H_3$, or $H_4$), and $p$ coprime to the Coxeter number $h$ with $p \not\equiv \pm 1 \pmod h$.

Since the multiset of eigenvalues of $c$ is $\{e^{2\pi i e_i/h}\}_{i=1}^r$, the multiset of eigenvalues of $c^p$ is $\{e^{2\pi i p e_i/h}\}_{i=1}^r$.  In contrast with crystallographic type, these two multisets are not always equal, so $c$ and $c^p$ are not always conjugate.\footnote{As a simple example, take $W=I_2(5)$ so that $h=5$, $e_1=1$, and $e_2=4$. Take $p$ to be $2$ or $3$ so that a standard Coxeter element $c$ has eigenvalues $\zeta,\zeta^-1$ while $c^p$ has eigenvalues $\zeta^p,\zeta^{-p}$.}  This is the core of the problem.
\begin{itemize}
\item For a \emph{standard} Coxeter element $c$---by identifying the lifts of the simple reflections in both the dual braid group and the Artin braid group---we understand how to lift all reflections from $W$ and thus how to descend down to the Hecke algebra. 
\item When $c^p$ is not standard but remains \emph{conjugate} to a standard Coxeter element, we were able to simply lift the conjugation to an \emph{inner} automorphism of the Artin braid group in order to pass to the Hecke algebra. 
\item When $c^p$ is not conjugate to a standard Coxeter element, we have no way to relate the dual braid group to the usual braid group, so we cannot pass to the Hecke algebra.
\end{itemize}

\subsubsection{Failure of Galois twists}
One might try a suitable \defn{Galois twist} $\sigma$ that sends the reflection representation $V$ to the twisted representation $V^\sigma$, mapping $c$ to a matrix that \emph{is} conjugate to $c^p$.  This acts on the reflections as a reflection automorphism of the Coxeter group, and it induces an isomorphism of the noncrossing partition lattices $\NC(W,c)\simeq \NC(W,c)^\sigma= \NC(W,c^p)$~\cite{reiner2017non}.  All remains well at the level of the Coxeter group and the Hecke algebra (where we can also permute representations~\cite{opdam1998complex}).  We can even symbolically rewrite the presentation of the $c$-dual braid group using $c^p$ since it is presented using the data of the noncrossing partition lattice. 

However, we do not understand how (or if!) the Galois twist acts on the Artin braid group.  Certainly we cannot see how it acts when we view the Artin braid group as a fundamental group since $\mathbb{C}$ is too rigid to have a nontrivial twist by $\sigma$.

Because we do not know how to relate the $c^p$-dual braid group (for $p\not\equiv \pm 1 \pmod h$) directly to the standard Artin braid group, we cannot relate the $c^p$-dual braid group to the Hecke algebra. Thus, we cannot think of $c^p$ as a standard Coxeter element for some choice of simple reflections, so we cannot apply~\Cref{thm:main} to compute our trace.  That is, we do not know how to determine the image in the Hecke algebra of a $c^p$-dual lift of a reflection.

\subsubsection{Failure of folding}
Another idea is to work in a crystallographic group that \emph{folds} to our noncrystallographic group; for example, $A_{m-1}$ folds to $I_2(m)$, or $E_8$ folds to $H_4$.\footnote{This is a more general notion of folding than the one considered in \cref{subsec:folding}.}  Again, everything works perfectly at the level of the braid groups, dual braid groups (for bipartite $c$), and Coxeter groups.  But we again run into problems when we pass to the Hecke algebra quotient of the group algebra of the braid group---even for folding from $A_3$ to $B_2$, one sees that $(\T_1\T_3)^2 = (q-1)^2\T_{13} + q(q-1)(\T_1+\T_3)+q^2$ expands using undesirable terms.

\subsubsection{Further examples}
We checked computationally that, for every standard Coxeter element
$c$ of $H_3$, there is no triple
$(t_1,t_2,t_3)\in T^3$ such that
\[
[\T_e]\H(
\Delta^2\tt_1\tt_2\tt_3
\B(c))
=
[\T_e]\H(
\Delta^2\B(c)^3).
\]
Thus, we do not obtain a positive
reflection subword model for $\mathrm{Cat}_3(H_3)$. We also do
not currently know how to obtain such a model in type $H_4$. 

For the dihedral groups, we can find braids in $\B_{I_2(m)}$ whose
images in $I_2(m)$ are equal to $c$ and whose Hecke traces agree with
that of $\B(c)^p$. These braids cannot be conjugate to
$\B(c)^p$ because their images in $I_2(m)$ are $c$, rather
than $c^p$. We omit the details because the resulting constructions
do not appear to be particularly illuminating.

\section{Parking Models}\label{sec:parking}

\subsection{Parking Traces}
The following theorem is a consequence of results in the articles~\cite{galashin2024rational,trinh2026partial}.  As with Catalan objects, we will obtain two different parking models using~\Cref{lem:coeff-delta} and $\bc^{\pm p}$.
\begin{theorem}[{\cite{galashin2024rational,trinh2026partial}}]\label{thm:GLTW_parking}
For a standard Coxeter element $c$ of $W$ and for $p>0$ coprime to $h$,
\begin{align*}[\T_e]\H\!\left( \sum_{w \in W} q^{-\ell(w)} \B(w)\B(w^{-1}) \bc^p\right) &= (q-1)^r\Park_p(W;q),\\
[\T_e]\H\!\left( \sum_{w \in W} q^{-\ell(w)} \B(w)\B(w^{-1}) \bc^{-p}\right) &= (-q)^{-rp}(q-1)^r\Park_p(W;q). 
\end{align*}
\end{theorem}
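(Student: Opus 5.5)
The plan is to derive both identities from the rational trace formulas already in the literature, combined with the standard fact that $\sum_{w\in W}q^{-\ell(w)}\T_w\T_{w^{-1}}$ is, up to normalization, the projection onto the trivial part of the trace. Concretely, the element $\mathrm{z}:=\sum_{w}q^{-\ell(w)}\T_w\T_{w^{-1}}$ is central in $\H_W$. For any $\mathrm{x}\in\H_W$ one has
\[
[\T_e](\mathrm{z}\,\mathrm{x})=\sum_{\chi}\frac{P_W(q)}{D_\chi(q)}\,\chi_q(\mathrm{x})\,\frac{\chi_q(1)\cdots}{}\!,
\]
and in practice we rewrite the sum over $\chi$ through the Schur elements. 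The first step is therefore to recall the expansion of the trace $[\T_e]$ in irreducible characters, $[\T_e]=\sum_\chi \chi_q/s_\chi(q)$. We then observe that multiplication by the central element $\mathrm{z}$ acts on the $\chi$-isotypic component by the scalar $P_W(q)\,\chi(1)/(s_\chi(q)\,\cdots)$; equivalently, $[\T_e](\mathrm{z}\,\cdot)$ is a multiple of the ``unnormalized'' trace $\sum_\chi \chi(1)\chi_q$, i.e. the $q$-deformation of the regular-representation character evaluation $\mathrm{x}\mapsto \sum_\chi \chi(1)\chi_q(\mathrm{x})$.

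The second step is to evaluate this on $\H(\B(c)^{p})$. Here I would invoke the results of \cite{galashin2024rational,trinh2026partial} exactly as cited: they compute $\chi_q(\T_c^{p})$ via the Broué--Michel/Springer theory of regular elements (the eigenvalue of $\B(c)^h=\Delta^2$ on $\chi$ being $q^{N+N^*-\text{(fake degree data)}}$, and $\T_c^{p}$ acting through a $p$-th power of an $h$-th root). Summing with weights $\chi(1)$ collapses, by the Springer-regular character formula, to the $q$-count of $c^p$-fixed points in the coinvariant/parking representation, giving $(q-1)^r[p]_q^r=(q-1)^r\Park_p(W;q)$. This is essentially a $q$-analogue of the fact that $V/pQ$ has $p^r$ points.

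The third step, for the second identity, applies the same computation with $\B(c)^{-p}$. Alternatively, one can obtain it from the first by the anti-automorphism $\varpi$ from the proof of \cref{lem:coeff-delta}, together with $q\mapsto q^{-1}$ symmetry. Since $\varpi$ fixes $\mathrm{z}$ up to the appropriate power of $q$, using $\varpi(\T_w\T_{w^{-1}})=q^{2\ell(w)}\T_w^{-1}\T_{w^{-1}}^{-1}$ and that $\mathrm{wr}(\B(c)^p)=rp$, this should produce the factor $(-q)^{-rp}$ and the palindromicity $[p]_{q^{-1}}^r=q^{-r(p-1)}[p]_q^r$.

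The main obstacle is the second step: correctly matching the character-theoretic evaluation of $\T_c^{p}$, including Lusztig's families and the signs and powers of $q$ in the eigenvalues of $\Delta^2$, with the weights produced by $\mathrm{z}$. Since the statement is attributed to prior work, I would in practice rely on \cite[Thm.~1.8]{trinh2026partial} and the rational Catalan computations of \cite{galashin2024rational} for this input, rather than reprove it.
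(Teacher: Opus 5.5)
The paper gives no proof of this statement; it is imported from \cite{galashin2024rational,trinh2026partial}. Falling back on citation therefore matches the paper. Note, though, that \cite[Theorem~1.8]{trinh2026partial} is the type-$A$ HOMFLYPT statement used in \cref{sec:knot}, not a parking trace for general $W$.

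As an argument, your sketch has a genuine gap at Step~2 and an error at Step~3.

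Step~1 is right in substance but misstated. $\mathrm z$ (the paper's $\PF$) acts on $V_\chi$ by $\chi(1)s_\chi(q)$, so the Schur elements cancel against $[\T_e]=\sum_\chi\chi_q/s_\chi$. Hence $[\T_e](\mathrm z\mathrm x)=\sum_\chi\chi(1)\chi_q(\mathrm x)$ exactly, not merely up to a multiple. More simply, \cref{lem:coeff-identity} gives $q^{-\ell(w)}[\T_e](\T_w\T_{w^{-1}}\mathrm x)=[\T_{w^{-1}}](\T_{w^{-1}}\mathrm x)$. So $[\T_e](\mathrm z\mathrm x)=\sum_v[\T_v](\T_v\mathrm x)=\mathrm{Tr}(R_{\mathrm x})$, the trace of right multiplication by $\mathrm x$ on $\H_W$.

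In Step~2 the ``collapse'' is only asserted, and the mechanism you name does not produce it. The parking character $w\mapsto p^{\dim V^w}$ equals $1$ at the regular element $c^p$; $p^r$ is just the $q=1$ count. The missing argument is short:
\begin{enumerate}
\item $\T_c^h=\H(\Delta^2)$ acts on $V_\chi$ by $q^{m_\chi}$, where $m_\chi=N+\sum_{t\in T}\chi(t)/\chi(1)\in\mathbb Z$. By Tits deformation, $\chi_q(\T_c^k)=q^{km_\chi/h}\chi(c^k)$.
\item For $k$ coprime to $h$, $\chi(c^k)=\sigma_k(\chi(c))$ with $\sigma_k\colon\zeta_h\mapsto\zeta_h^k$. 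The map $\chi\mapsto\sigma_k\circ\chi$ permutes the irreducibles and preserves $\chi(1)$ and $m_\chi$. Hence $\mathrm{Tr}(R_{\T_c^k})=F(q^k)$ with $F(q)=\mathrm{Tr}(R_{\T_c})$, as Laurent polynomials in $q^{1/h}$.
\item $F$ is immediate. The coefficient $[\T_v](\T_v\T_{s_1}\cdots\T_{s_r})$ vanishes unless you take the $(q-1)\T_v$ term at every step, since no nonempty subword of $s_1\cdots s_r$ multiplies to $e$. That forces $\Des(v)=S$, so only $v=w_\circ$ contributes and $F(q)=(q-1)^r$.
\item Taking $k=\pm p$ gives $(q^p-1)^r=(q-1)^r\Park_p(W;q)$ and $(q^{-p}-1)^r=(-1)^rq^{-rp}(q-1)^r\Park_p(W;q)$. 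Finally $(-1)^r=(-1)^{rp}$, as in the proof of \cref{thm:rat_parking_negative}.
\end{enumerate}
No families, fake degrees, or crystallographic hypothesis are needed.

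Your Step~3 alternative rests on a false claim: $\varpi(\mathrm z)$ is not a power of $q$ times $\mathrm z$. Use $\varpi(\H(\bbeta))=(-q)^{\mathrm{wr}(\bbeta)}\H(\bbeta^{-1})$ and $(\B(w)\B(w^{-1}))^{-1}=\B(v)\B(v^{-1})\Delta^{-2}$ for $v=ww_\circ$. This gives $\varpi(\mathrm z)=q^N\,\mathrm z\,\H(\Delta^{-2})$.

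The route is still salvageable. Insert $\mathrm y=\mathrm z\T_c^{P}$ into the identity $[\T_e](\H(\Delta^2)\mathrm y)=q^N[\T_e]\varpi(\mathrm y)$ from the proof of \cref{lem:coeff-delta}. The two factors $\H(\Delta^{\pm 2})=\T_c^{\pm h}$ shift the exponents by $h$, and $rh=2N$ then yields $[\T_e](\mathrm z\T_c^{P})=(-q)^{rP}[\T_e](\mathrm z\T_c^{-P})$ for every integer $P$. This deduces the second identity from the first, with no palindromicity needed.
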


\begin{definition}
Define the element $\PF$ of the Hecke algebra $\H_W$ by
\[\PF = \H\!\left(\sum_{w \in W} q^{-\ell(w)} \B(w) \B(w^{-1})\right).\]
\end{definition}

\begin{lemma}\label{lem:PF_central}
The element $\PF$ is in the center of $\H_W$. 
\end{lemma}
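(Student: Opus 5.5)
The plan is to prove centrality by showing that $\PF$ commutes with each generator $\T_s$ for $s\in S$. Since these generate $\H_W$ as an algebra over $\mathbb Z[q^{\pm1}]$, this suffices. First we rewrite $\PF$ in the standard basis. Since $\B(w^{-1})=\B(w)^{-1}$ is false in general but $\H(\B(w^{-1}))=\T_{w^{-1}}$, we have $\PF=\sum_{w}q^{-\ell(w)}\T_w\T_{w^{-1}}$. The natural route is to recognize this as the image of a known central element. Consider the symmetrizing trace form $\tau(\mathrm{x})=[\T_e]\mathrm{x}$. By \cref{lem:coeff-identity}, $\tau(\T_x\T_y)=q^{\ell(x)}\delta_{y,x^{-1}}$. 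So $\{\T_w\}$ and $\{q^{-\ell(w)}\T_{w^{-1}}\}$ are dual bases for the nondegenerate symmetric form $(\mathrm{x},\mathrm{y})\mapsto\tau(\mathrm{xy})$.

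The key step is the standard fact for symmetric algebras. If $\{b_i\}$ and $\{b_i^\vee\}$ are dual bases with respect to $\tau(\mathrm{xy})$, then the Casimir-type element $\sum_i b_i\,\mathrm{y}\,b_i^\vee$ is central for every $\mathrm{y}$. Taking $\mathrm{y}=1$ and $b_w=\T_w$, $b_w^\vee=q^{-\ell(w)}\T_{w^{-1}}$ gives exactly $\PF$. To prove the fact, fix $\mathrm{a}\in\H_W$ and expand
\[
\mathrm{a}b_i=\sum_j\tau(\mathrm{a}b_ib_j^\vee)\,b_j
\qquad\text{and}\qquad
b_j^\vee\mathrm{a}=\sum_i\tau(b_j\,b_j^\vee{}'\ldots)
\]
more carefully as $b_j^\vee\mathrm{a}=\sum_i\tau(b_j^\vee\mathrm{a}\,b_i)\,b_i^\vee$. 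Both expansions follow from duality. Then
\[
\sum_i\mathrm{a}b_ib_i^\vee=\sum_{i,j}\tau(\mathrm{a}b_ib_j^\vee)\,b_jb_i^\vee
\qquad\text{and}\qquad
\sum_j b_jb_j^\vee\mathrm{a}=\sum_{i,j}\tau(b_j^\vee\mathrm{a}b_i)\,b_jb_i^\vee.
\]
The coefficients agree because $\tau$ is a trace: $\tau(\mathrm{a}b_ib_j^\vee)=\tau(b_j^\vee\mathrm{a}b_i)$. Hence $\mathrm{a}\PF=\PF\mathrm{a}$.

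The only inputs needed are that $\tau$ is a trace, which is stated in \cref{subsec:braid_hecke}, and the dual-basis computation, which is \cref{lem:coeff-identity} applied to $\mathrm{x}=\T_y$. That lemma gives $[\T_e](\T_y\T_x)=q^{\ell(x)}[\T_{x^{-1}}]\T_y=q^{\ell(x)}\delta_{y,x^{-1}}$.

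I expect the main obstacle to be bookkeeping rather than anything conceptual. One must check that the duality is two-sided in the correct order, that is, $\tau(b_j^\vee b_i)=\delta_{ij}$ as well as $\tau(b_ib_j^\vee)=\delta_{ij}$. This holds by the trace property. One must also check that the expansion coefficients are taken over $\mathbb Z[q^{\pm1}]$, where $q^{-\ell(w)}$ is invertible, so the dual basis genuinely lives in $\H_W$. As an optional direct check, for a single $s$ one could verify $\T_s\PF=\PF\T_s$ by pairing $w$ with $sw$, but the Casimir argument avoids this case analysis.
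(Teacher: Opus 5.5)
Your proposal is correct and is essentially the paper's proof. The paper also uses \cref{lem:coeff-identity} to see that $\{\T_w\}$ and $\{q^{-\ell(w)}\T_{w^{-1}}\}$ are dual bases for the trace $\mathrm{x}\mapsto[\T_e]\mathrm{x}$, expands $\mathrm{x}\T_w$ and $\T_{y^{-1}}\mathrm{x}$ in these bases, and matches the resulting double sums via the cyclic trace property, exactly as in your Casimir-element argument. (Two points are cosmetic: the garbled first attempt at the expansion of $b_j^\vee\mathrm{a}$ is superseded by the correct corrected version, and the announced plan to check only the generators $\T_s$ is never needed, since you prove commutation with an arbitrary $\mathrm{a}$ directly.)
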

\begin{proof}
Recall that the map $\mathrm{x}\mapsto[\T_e]\mathrm{x}$ is a trace on $\H_W$. It follows from \cref{lem:coeff-identity} that $\{\T_w\}_{w \in W}$ and $\{q^{-\ell(w)}\T_{w^{-1}}\}_{w \in W}$ are dual bases with respect to this trace. 

Fix some element $\mathrm{x} \in \H_W$.  We first expand \[\mathrm{x}\, \T_w = \sum_{y \in W} q^{-\ell(y)} [\T_e](\mathrm{x}\, \T_w \T_{y^{-1}}) \T_y.\] Multiplying on the right by $q^{-\ell(w)}\T_{w^{-1}}$ and summing over $w$ yields 
\[
\mathrm{x}\,\PF = \sum_{w \in W} q^{-\ell(w)} \mathrm{x}\, \T_w \T_{w^{-1}} = \sum_{w \in W} \sum_{y \in W} q^{-\ell(y)-\ell(w)}[\T_e](\mathrm{x}\, \T_w \T_{y^{-1}}) \T_y \T_{w^{-1}}.
\]
By the trace property, we have $[\T_e](\mathrm{x}\, \T_w \T_{y^{-1}}) = [\T_e](\T_{y^{-1}} \,\mathrm{x}\, \T_w)$. Substituting this and reversing the summation order, we obtain that
\[
\mathrm{x}\,\PF = \sum_{y \in W} \T_y \left( \sum_{w \in W} q^{-\ell(y)-\ell(w)}[\T_e](\T_{y^{-1}}\, \mathrm{x}\, \T_w) \T_{w^{-1}} \right).
\]
The inner sum is exactly the expansion of $q^{-\ell(y)}\T_{y^{-1}}\,\mathrm{x}$ in the basis $q^{-\ell(w)}\T_{w^{-1}}$, so \[\sum_{w \in W} q^{-\ell(y)-\ell(w)} [\T_e](\T_{y^{-1}}\, \mathrm{x}\, \T_w) \T_{w^{-1}} = q^{-\ell(y)} \T_{y^{-1}}\, \mathrm{x}.\] Thus,
\[
\mathrm{x}\,\PF = \sum_{y \in W}q^{-\ell(y)} \T_y (\T_{y^{-1}}\,\mathrm{x}) = \left(\sum_{y \in W} q^{-\ell(y)} \T_y \T_{y^{-1}}\right)\,\mathrm{x} = \PF\,\mathrm{x}.
\]
This confirms that $\PF$ is central in $\H_W$. 
\end{proof}

\subsection{Fuss--Catalan Parking Models}

We now specialize \cref{thm:GLTW_parking} to the case when $p=mh+1$. As in \cref{sec:fuss},
the positive and negative trace evaluations produce two different
reflection subword models. The positive trace gives a model analogous
to $m$-noncrossing partitions, while the negative trace gives a model
analogous to $m$-clusters.

\begin{theorem}\label{thm:Fuss_parking_1}
Let $m$ be a positive integer, and let
$\vect=(t_1,\ldots,t_M)\in T^M$. If
$\tt_M\cdots\tt_1=\Delta^{2m}$,
then
\[
\sum_{w\in W}
\operatorname{sub}_T\!\left(
\operatorname{rev}(\vect)\,
\operatorname{rev}(\operatorname{inv}_c(w)),
c^{-1}
\right)
=
\operatorname{Park}_{mh+1}(W).
\]
\end{theorem}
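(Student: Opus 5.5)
We start from the first identity of \cref{thm:GLTW_parking} with $p=mh+1$. Since $\B(c)^{mh+1}=\Delta^{2m}\B(c)$ and $\Delta^2$ is central, it reads
\[
(q-1)^r\Park_{mh+1}(W;q)=\sum_{w\in W}q^{-\ell(w)}[\T_e]\H\bigl(\B(w)\B(w^{-1})\Delta^{2m}\B(c)\bigr).
\]
For each $w$ we move $\B(c)$ to the end and apply \cref{lem:coeff-identity}. This gives
\[
[\T_e]\H\bigl(\B(w)\B(w^{-1})\Delta^{2m}\B(c)\bigr)=q^r[\T_{c^{-1}}]\H\bigl(\B(w)\B(w^{-1})\Delta^{2m}\bigr).
\]
Centrality of $\Delta^{2m}$ lets us rewrite the pure braid as $\Delta^{2m}\B(w)\B(w^{-1})$. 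We will commit to this order, because it makes the $\Delta^{2m}$ factors come first in the product, as the statement requires.

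Next we factor the pure braid into positive generators of $\mathbb T$.
\begin{itemize}
\item By hypothesis $\Delta^{2m}=\tt_M\cdots\tt_1$. This is the product of the squares of dual lifts of the word $\rev(\vect)=(t_M,\ldots,t_1)$, read left to right.
\item By \cref{lem:inversions}, $\B(w)\B(w^{-1})=\prod^{\longleftarrow}_{t\in\inv_c(w)}\tt$. This is the left-to-right product over $\rev(\inv_c(w))$.
\end{itemize}
Hence $\Delta^{2m}\B(w)\B(w^{-1})$ is the left-to-right product of $\tt$ over the concatenated word $\rev(\vect)\,\rev(\inv_c(w))$, with all exponents $+1$.

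We then apply \cref{cor:main} with $\pi=c^{-1}$, which lies in $\NC(W,c^{-1})$ and has $\ell_T(c^{-1})=r$. Each summand becomes
\[
(q-1)^{-r}q^{-\ell(w)}q^r[\T_{c^{-1}}]\H(\cdots)\Big|_{q=1}=\Subb_T\bigl(\rev(\vect)\,\rev(\inv_c(w)),c^{-1}\bigr).
\]
The powers of $q$ disappear at $q=1$. Dividing the trace identity by $(q-1)^r$ and specializing at $q=1$ turns the left side into $\Park_{mh+1}(W;1)=(mh+1)^r=\Park_{mh+1}(W)$.

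The sum over $w$ is finite, so exchanging it with division by $(q-1)^r$ and specialization at $q=1$ is harmless. The only real care point is the ordering. We must check that $\prod^{\longleftarrow}_{t\in\inv_c(w)}$ really corresponds to reading $\rev(\inv_c(w))$ left to right. We must also use centrality to place the $\Delta^{2m}$ factor to the left of $\B(w)\B(w^{-1})$ rather than to the right, since the statement's word lists $\rev(\vect)$ first. Everything else is a direct application of earlier results, so no real obstacle is expected.
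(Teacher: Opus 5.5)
Your proposal is correct and follows essentially the same route as the paper. Both arguments apply the first identity of \cref{thm:GLTW_parking} with $p=mh+1$, write $\B(c)^{mh+1}=\Delta^{2m}\B(c)$, remove $\T_c$ via \cref{lem:coeff-identity}, use centrality of $\Delta^2$ to place $\Delta^{2m}$ before $\B(w)\B(w^{-1})$, factor with the hypothesis and \cref{lem:inversions} to get the word $\rev(\vect)\,\rev(\inv_c(w))$, and conclude with \cref{cor:main} at $\pi=c^{-1}$.
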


\begin{proof}
By \cref{lem:coeff-identity,thm:GLTW_parking},
\[
\begin{aligned}
(q-1)^r\operatorname{Park}_{mh+1}(W;q)
&=
[\T_e]\left(\PF\,\T_c^{mh+1}\right)\\
&=
[\T_e]\left(
\PF\,\H(\Delta^{2m})\T_c
\right)\\
&=
q^r[\T_{c^{-1}}]\left(
\PF\,\H(\Delta^{2m})
\right)\\
&=
q^r\sum_{w\in W}q^{-\ell(w)}
[\T_{c^{-1}}]\H\left(
\Delta^{2m}
\B(w)\B(w^{-1})
\right),
\end{aligned}
\]
where we used the centrality of $\Delta^2$ in the last equality.

By hypothesis and \cref{lem:inversions}, we have
\[
\begin{aligned}
\Delta^{2m}\mathbf{B}(w)\mathbf{B}(w^{-1})
&=
\tt_M\cdots\tt_1
\overleftarrow{\prod}_{u\in\inv_c(w)}
\uu.
\end{aligned}
\]
The underlying reflection word of this factorization is $\mathrm{rev}(\vect)\,
\mathrm{rev}(\mathrm{inv}_c(w))$, so \cref{cor:main} gives
\[
(q-1)^{-r}
[\T_{c^{-1}}]\H\!\left(
\Delta^{2m}
\mathbf{B}(w)\mathbf{B}(w^{-1})
\right)
\Big|_{q=1}=
\operatorname{sub}_T\!\left(
\operatorname{rev}(\vect)\,
\operatorname{rev}(\operatorname{inv}_c(w)),
c^{-1}
\right),  
\] as desired.   
\end{proof}

\begin{theorem}\label{thm:Fuss_parking_2}
Let $m$ be a positive integer, and let
$\vect=(t_1,\ldots,t_M)\in T^M$. If
$\tt_M\cdots\tt_1=\Delta^{2m-2}$,
then
\[
\sum_{w\in W}
\operatorname{sub}_T\!\left(
\operatorname{inv}_c(w)\,\vect\,\operatorname{inv}(c),
c^{-1}
\right)
=
\operatorname{Park}_{mh+1}(W).
\]    
\end{theorem}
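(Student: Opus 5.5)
We use the negative trace in \cref{thm:GLTW_parking} with $p=mh+1$, mirroring the proof of \cref{prop:Clusters_m}, but with $\PF$ inserted. By \cref{thm:GLTW_parking} applied to the standard Coxeter element $c^{-1}$, we have $[\T_e](\PF\,\T_{c^{-1}}^{-(mh+1)})=(-q)^{-r(mh+1)}(q-1)^r\Park_{mh+1}(W;q)$. Here we use that $\PF$ does not depend on $c$, and that $\B(c^{-1})^{-(mh+1)}=\Delta^{-2m}\B(c^{-1})^{-1}$.

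Next we rewrite the braid. Since $\Delta^{-2}=\B(c^{-1})^{-h}$, we have
$\B(c^{-1})^{-(mh+1)}=\Delta^{-2(m-1)}\Delta^{-2}\B(c^{-1})^{-1}$. Proceeding as in \cref{prop:Clusters_m}, we write $\T_{c^{-1}}^{-1}=\T_{c^{-1}}^{-1}\T_c^{-1}\T_c$. \cref{lem:coeff-identity} then moves $\T_c$ out at the cost of $q^r$ and converts the trace into the coefficient of $\T_{c^{-1}}$ in
\[
\PF\,\H(\Delta^{-2(m-1)})\,\H(\Delta^{-2})\,\H(\B(c)\B(c^{-1}))^{-1}.
\]
This produces $m$ negative powers of $\Delta^2$, one too many compared with the hypothesis $\Delta^{2m-2}$. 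To absorb the extra $\Delta^{-2}$, we would expand $\PF=\sum_w q^{-\ell(w)}\H(\B(w)\B(w^{-1}))$ and pair each summand with $\Delta^{-2}$. The identity $\Delta=\B(w)\B(w^{-1}w_\circ)$ gives $\Delta^{-2}\B(w)\B(w^{-1})=\B(w_\circ w^{-1})^{-1}\B(ww_\circ)^{-1}$, up to centrality and $\Delta$-conjugation. Reindexing by $w\mapsto w_\circ w$ or $ww_\circ$ turns this into an inverse of a braid of the form $\B(u)\B(u^{-1})$.

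By \cref{lem:inversions}, that inverse equals $\prod_{t\in\inv_c(u)}\tt^{-1}$ in forward order. We also have $\Delta^{-2(m-1)}=\tt_1^{-1}\cdots\tt_M^{-1}$ and $(\B(c)\B(c^{-1}))^{-1}=\prod_{t\in\inv(c)}\tt^{-1}$. Assembling these, all factors are negative, and the reflection word is $\inv_c(u)\,\vect\,\inv(c)$. Centrality of $\Delta^2$, and of $\PF$ by \cref{lem:PF_central}, lets us order the factors freely. The $q$-weights $q^{-\ell(w)}$ and $q^{\pm r}$ all become $1$ at $q=1$. Applying \cref{thm:main} with the all-$(-1)$ sign vector and $\pi=c^{-1}$ then gives $(-1)^r$ times the unsigned count, and this sign cancels the $(-1)^r$ coming from $(-q)^{-r(mh+1)}$ at $q=1$. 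Summing over $u$ yields the claim.

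The main obstacle is the bookkeeping in the second step. We must check that $\Delta^{-2}\B(w)\B(w^{-1})$ really becomes $(\B(u)\B(u^{-1}))^{-1}$ for a bijectively reindexed $u$, so that the sum over $u$ runs over all of $W$. We must also check that its inverse factorization appears in the order $\inv_c(u)$, not reversed, which is forced by reversing the product in \cref{lem:inversions}. If the direct manipulation proves awkward, we would first try $u=ww_\circ$ and use that $\Delta$-conjugation acts on the generators by $s\mapsto w_\circ s w_\circ$. If needed, we would then invoke \cref{thm:BB=Z} with a reduced word for $w_\circ$ adapted to $u$ to get the correct order.
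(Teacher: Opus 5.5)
Your proposal is correct and follows the paper's proof essentially step for step: you use the negative trace for $c^{-1}$, pull out $\T_c$ via \cref{lem:coeff-identity}, absorb one $\Delta^{-2}$ into each summand of $\PF$ by reindexing with $u=ww_\circ$, and then apply \cref{thm:main} with all signs equal to $-1$. The step you flag as the main obstacle is in fact exact: writing $\Delta=\B(w_\circ w^{-1})\B(w)=\B(w^{-1})\B(ww_\circ)$ and using centrality of $\Delta^2$ gives $\B(w)\B(w^{-1})\Delta^{-2}=(\B(u)\B(u^{-1}))^{-1}$ with no conjugation needed, and the final sign cancellation uses $(-1)^{r(mh+1)}=(-1)^r$, which holds because $rh=2N$.
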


\begin{proof}
Apply \cref{thm:GLTW_parking} with the standard
Coxeter element $c^{-1}$. We have
\[
\begin{aligned}
\mathbf{B}(c^{-1})^{-(mh+1)}
&=
\Delta^{-2m}\mathbf{B}(c^{-1})^{-1}\\
&=
\Delta^{-2m}
\left(
\mathbf{B}(c)\mathbf{B}(c^{-1})
\right)^{-1}
\mathbf{B}(c).
\end{aligned}
\]
Hence, \cref{lem:coeff-identity,thm:GLTW_parking} give
\[
\begin{aligned}
(-q)^{-r(mh+1)}(q-1)^r\!
\operatorname{Park}_{mh+1}(W;q)&=
[\T_e]\left(
\PF\,T_{c^{-1}}^{-(mh+1)}
\right)\\
&=
q^r\sum_{v\in W}q^{-\ell(v)}
[\T_{c^{-1}}]\H\!\left(
\mathbf{B}(v)\mathbf{B}(v^{-1})
\Delta^{-2m}
\left(
\mathbf{B}(c)\mathbf{B}(c^{-1})
\right)^{-1}
\right).
\end{aligned}
\]
For every $v\in W$, we have
$\B(w_\circ v^{-1})\B(v)=\B(v^{-1})\B(vw_\circ)=\Delta$. Consequently,
\[
\begin{aligned}
\mathbf{B}(v)\mathbf{B}(v^{-1})\Delta^{-2}
&=
\mathbf{B}(w_\circ v^{-1})^{-1}
\mathbf{B}(vw_\circ)^{-1}\\
&=
\left(
\mathbf{B}(vw_\circ)
\mathbf{B}(w_\circ v^{-1})
\right)^{-1}.
\end{aligned}
\]
Set $w=vw_\circ$ so that $w^{-1}=w_\circ v^{-1}$ and $\ell(v)=N-\ell(w)$. 
Reindexing the preceding sum therefore gives
\[
\begin{aligned}
&(-q)^{-r(mh+1)}(q-1)^r
\operatorname{Park}_{mh+1}(W;q)\\
&\qquad=
q^r\sum_{w\in W}q^{\ell(w)-N}
[T_{c^{-1}}]\mathrm{H}\!\left(
\left(
\mathbf{B}(w)\mathbf{B}(w^{-1})
\right)^{-1}
\Delta^{-2m+2}
\left(
\mathbf{B}(c)\mathbf{B}(c^{-1})
\right)^{-1}
\right).
\end{aligned}
\]

The hypothesis $\tt_M\cdots\tt_1=\Delta^{2m-2}$
implies that
$\Delta^{-2m+2}
=
\tt_1^{-1}\cdots\tt_M^{-1}$.
Moreover, \cref{lem:inversions} tells us
\[
\left(
\mathbf{B}(w)\mathbf{B}(w^{-1})
\right)^{-1}
=
\prod_{u\in\operatorname{inv}_c(w)}
\uu^{-1}
\quad\text{and}\quad
\left(
\mathbf{B}(c)\mathbf{B}(c^{-1})
\right)^{-1}
=
\prod_{u\in\operatorname{inv}(c)}
\uu^{-1}.
\]
It follows that
\[
\left(
\mathbf{B}(w)\mathbf{B}(w^{-1})
\right)^{-1}
\Delta^{-2m+2}
\left(
\mathbf{B}(c)\mathbf{B}(c^{-1})
\right)^{-1}=
\left(
\prod_{u\in\operatorname{inv}_c(w)}
\uu^{-1}
\right)
\left(
\prod_{i=1}^M\tt_i^{-1}
\right)
\left(
\prod_{u\in\operatorname{inv}(c)}
\uu^{-1}
\right).
\]
Thus, the underlying reflection word is
$\operatorname{inv}_c(w)\,\vect\,\operatorname{inv}(c)$,
and every exponent in the corresponding signed factorization is
$-1$. \cref{thm:main}
implies that
\[
\begin{aligned}
&\left.
(q-1)^{-r}
[T_{c^{-1}}]\H\!\left(
\left(
\mathbf{B}(w)\mathbf{B}(w^{-1})
\right)^{-1}
\Delta^{-2m+2}
\left(
\mathbf{B}(c)\mathbf{B}(c^{-1})
\right)^{-1}
\right)
\right|_{q=1}\\
&\qquad=
(-1)^r
\left|
\operatorname{Sub}_T\left(
\operatorname{inv}_c(w)\,
\vect\,
\operatorname{inv}(c),
c^{-1}
\right)
\right|
\end{aligned}
\]
because every reduced $T$-word for $c^{-1}$ contains exactly $r$
reflections.

Finally, $r(mh+1)\equiv r\pmod 2$
because $rh=2N$. Dividing the trace identity by $(q-1)^r$ and setting
$q=1$, we find that the sign on the left is $(-1)^r$, which cancels the
sign supplied by \cref{thm:main}. We deduce the desired identity. 
\end{proof}

Taking $\vect=\operatorname{inv}({\sf w}_\circ(c))^m$
in \cref{thm:Fuss_parking_1} and
$\vect=\operatorname{inv}({\sf w}_\circ(c))^{m-1}$
in \cref{thm:Fuss_parking_2} yields the canonical models
\begin{equation}\label{eq:Parkmh+1_1}
\sum_{w\in W}
\operatorname{sub}_T\!\left(
\operatorname{inv}({\sf w}_\circ(c^{-1}))^m\operatorname{inv}_{c^{-1}}(w)
,
c^{-1}
\right)
=
\operatorname{Park}_{mh+1}(W)
\end{equation} 
and
\begin{equation}\label{eq:Parkmh+1_2} 
\sum_{w\in W}
\operatorname{sub}_T\!\left(
\operatorname{inv}_c(w)
\operatorname{inv}({\sf w}_\circ(c))^{m-1}
\operatorname{inv}(c),
c^{-1}
\right)
=
\operatorname{Park}_{mh+1}(W).
\end{equation}

\begin{remark}
The formulas in \eqref{eq:Parkmh+1_1} and \eqref{eq:Parkmh+1_2} decompose
\[
\Park_{mh+1}(W)=(mh+1)^r
\]
into sums indexed by $W$. Since $(mh+1)^r$ is also the number of
regions of the $m$-Shi arrangement, it is tempting to regard the
summand indexed by $w$ as a noncrossing analogue of the collection
of $m$-Shi regions lying inside the finite Weyl chamber indexed by
$w$. One notable difference is that the summands in \eqref{eq:Parkmh+1_1} and \eqref{eq:Parkmh+1_2} can
depend on the standard Coxeter element $c$, whereas the $m$-Shi
arrangement does not involve such a choice. 
\end{remark}

\subsection{Fuss--Dogolon Parking Models} 
We now specialize \cref{thm:GLTW_parking} to the case when $p=mh-1$. Once again, positive and negative trace evaluations produce two different
reflection subword models. The positive trace gives a model analogous
to positive $m$-noncrossing partitions, while the negative trace gives a model
analogous to positive $m$-clusters. 

\begin{theorem}\label{thm:dogolon_parking1}
Let $m$ be a positive integer, and let $\vect=(t_1,\ldots,t_M)\in T^M$.
If \[\tt_M\cdots\tt_1=
\Delta^{2m}
\bigl(\B(c)\B(c^{-1})\bigr)^{-1},\]
then
\[
\sum_{w\in W}
\mathrm{sub}_T\!\left(
\mathrm{rev}(\vect)\,
\mathrm{rev}\bigl(\mathrm{inv}_c(w)\bigr),
c^{-1}
\right)
=
\mathrm{Park}_{mh-1}(W).
\]
\end{theorem}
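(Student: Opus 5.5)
The plan is to copy the proof of \cref{thm:Fuss_parking_1}, with the power $\Delta^{2m}$ replaced by the hypothesized factorization of $\Delta^{2m}(\B(c)\B(c^{-1}))^{-1}$.

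First, apply \cref{thm:GLTW_parking} with $p=mh-1$. We need $\B(c)^{mh-1}$ in the form (pure braid)$\cdot\B(c)$. Since $\B(c)^h=\Delta^2$, we have
\[
\B(c)^{mh-1}=\Delta^{2m}\B(c)^{-1}=\Delta^{2m}\bigl(\B(c)\B(c^{-1})\bigr)^{-1}\B(c).
\]
This uses $\B(c)^{-1}=\B(c^{-1})^{-1}\B(c)^{-1}\B(c)$, which is immediate. By hypothesis the pure braid in front equals $\tt_M\cdots\tt_1$. Writing $\PF\,\T_c^{mh-1}=\PF\,\H(\tt_M\cdots\tt_1)\T_c$, \cref{lem:coeff-identity} gives
\[
(q-1)^r\Park_{mh-1}(W;q)=q^r[\T_{c^{-1}}]\bigl(\PF\,\H(\tt_M\cdots\tt_1)\bigr).
\]

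Next, expand $\PF$ as $\sum_w q^{-\ell(w)}\H(\B(w)\B(w^{-1}))$. To put $\PF$ on the right of $\H(\tt_M\cdots\tt_1)$, use \cref{lem:PF_central} (centrality of $\PF$). Note that $\Delta^2$ alone being central is not enough here, since $(\B(c)\B(c^{-1}))^{-1}$ need not commute with $\B(w)\B(w^{-1})$. This gives
\[
(q-1)^r\Park_{mh-1}(W;q)=q^r\sum_{w\in W}q^{-\ell(w)}[\T_{c^{-1}}]\H\bigl(\tt_M\cdots\tt_1\,\B(w)\B(w^{-1})\bigr).
\]
By \cref{lem:inversions}, $\B(w)\B(w^{-1})=\prod^{\longleftarrow}_{u\in\inv_c(w)}\uu$. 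So each braid inside the sum is a positive $\mathbb T$-word with underlying reflection word $\rev(\vect)\,\rev(\inv_c(w))$.

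Finally, divide by $(q-1)^r$ and set $q=1$; the powers of $q$ become $1$. Then \cref{cor:main} with $\pi=c^{-1}$ turns each summand into $\Subb_T(\rev(\vect)\,\rev(\inv_c(w)),c^{-1})$. This is the claimed identity, and $\Park_{mh-1}(W;1)=(mh-1)^r$.

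The only step needing care is the reordering of $\PF$ past the non-central factor $(\B(c)\B(c^{-1}))^{-1}$, which is handled by \cref{lem:PF_central} rather than the centrality of $\Delta^2$ used in \cref{thm:Fuss_parking_1}. Everything else is the same computation as in that proof.
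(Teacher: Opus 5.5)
\paragraph{A false braid identity in the first step.} You call the identity $\B(c)^{-1}=\B(c^{-1})^{-1}\B(c)^{-1}\B(c)$ immediate, but it is false. Its right-hand side simplifies to $\B(c^{-1})^{-1}$, which differs from $\B(c)^{-1}$ as soon as $r\geq 2$: the two braids already have different images $c^{-1}\neq c$ in $W$, since $h\geq 3$. Hence
\[
\Delta^{2m}\bigl(\B(c)\B(c^{-1})\bigr)^{-1}\B(c)=\Delta^{2m}\B(c^{-1})^{-1}=\B(c^{-1})^{mh-1},
\]
not $\B(c)^{mh-1}$. So your equation $\PF\,\T_c^{mh-1}=\PF\,\H(\tt_M\cdots\tt_1)\T_c$ does not hold.

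The argument also cannot be run with $c$ itself. Peeling a trailing $\T_c$ off $\T_c^{mh-1}$ leaves $\B(c)^{mh-2}$, whose image $c^{-2}$ is not the identity. So there is no pure braid to which \cref{cor:main} could be applied.

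\paragraph{The repair.} Apply \cref{thm:GLTW_parking} to the standard Coxeter element $c^{-1}$ instead of $c$. This is legitimate because that theorem holds for every standard Coxeter element, and its right-hand side does not depend on the choice. Since $\B(c^{-1})^h=\Delta^2$ as well, you get
\[
\B(c^{-1})^{mh-1}=\Delta^{2m}\B(c^{-1})^{-1}=\Delta^{2m}\bigl(\B(c)\B(c^{-1})\bigr)^{-1}\B(c)=\tt_M\cdots\tt_1\,\B(c).
\]
Then \cref{lem:coeff-identity} gives
\[
(q-1)^r\Park_{mh-1}(W;q)=[\T_e]\bigl(\PF\,\T_{c^{-1}}^{mh-1}\bigr)=q^r[\T_{c^{-1}}]\bigl(\PF\,\H(\tt_M\cdots\tt_1)\bigr).
\]
From this point your argument goes through verbatim and is exactly the paper's proof: centrality of $\PF$ via \cref{lem:PF_central}, the factorization from \cref{lem:inversions}, and \cref{cor:main} with $\pi=c^{-1}$. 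Because the trailing factor is $\B(c)$, the coefficient you extract is that of $\T_{c^{-1}}$, which is the element of $\NC(W,c^{-1})$ that \cref{cor:main} requires for products of the $c$-dual squares $\tt_i$.
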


\begin{proof}
Applying \cref{thm:GLTW_parking} with the
standard Coxeter element $c^{-1}$ gives
\[
(q-1)^r\,\mathrm{Park}_{mh-1}(W;q)
=
[\T_e]\left(
\PF\,\T_{c^{-1}}^{mh-1}
\right).
\]
Since $\B(c^{-1})^h=\Delta^2$, we have
\[
\B(c^{-1})^{mh-1}
=
\Delta^{2m}
\bigl(\B(c)\B(c^{-1})\bigr)^{-1}
\B(c).
\]
\cref{lem:coeff-identity} therefore yields
\[
\begin{aligned}
(q-1)^r\mathrm{Park}_{mh-1}(W;q)
&=
q^r[\T_{c^{-1}}]\left(
\PF\cdot
\H\!\left(
\Delta^{2m}
\bigl(\B(c)\B(c^{-1})\bigr)^{-1}
\right)
\right).
\end{aligned}
\]
Since $\PF$ is central by \cref{lem:PF_central}, this is equal to 
\[
\begin{aligned}
q^r
\sum_{w\in W}q^{-\ell(w)}
[\T_{c^{-1}}]\,
\H\!\left(
\Delta^{2m}
\bigl(\B(c)\B(c^{-1})\bigr)^{-1}
\B(w)\B(w^{-1})
\right).
\end{aligned}
\]

By hypothesis,
$\Delta^{2m}
\bigl(\B(c)\B(c^{-1})\bigr)^{-1}
=
\tt_M\cdots\tt_1$.
Moreover, \cref{lem:inversions} tells us that 
\[
\B(w)\B(w^{-1})
=
\overleftarrow{\prod}_{u\in\mathrm{inv}_c(w)}
\uu.
\]
Thus, the underlying reflection word of the pure braid
$\Delta^{2m}
\bigl(\B(c)\B(c^{-1})\bigr)^{-1}
\B(w)\B(w^{-1})$
is
\[
\mathrm{rev}(\vect)\,
\mathrm{rev}\bigl(\mathrm{inv}_c(w)\bigr).
\]
\cref{cor:main} now implies that 
\[
\begin{aligned}
&\left.
(q-1)^{-r}
[\T_{c^{-1}}]\,
\H\!\left(
\Delta^{2m}
\bigl(\B(c)\B(c^{-1})\bigr)^{-1}
\B(w)\B(w^{-1})
\right)
\right|_{q=1}\\
&\qquad=
\mathrm{sub}_T\!\left(
\mathrm{rev}(\vect)\,
\mathrm{rev}\bigl(\mathrm{inv}_c(w)\bigr),
c^{-1}
\right),
\end{aligned}
\]
as desired. 
\end{proof}

\begin{theorem}\label{thm:dogolon_parking2}
Let $m$ be a positive integer, and let $\vect=(t_1,\ldots,t_M)\in T^M$. If
$\tt_M\cdots\tt_1
=
\Delta^{2m-2}$,
then
\[
\sum_{w\in W}
\left|
\mathrm{Sub}_T\left(
\mathrm{inv}_c(w)\,\vect,
c^{-1}
\right)
\right|
=
\mathrm{Park}_{mh-1}(W).
\]    
\end{theorem}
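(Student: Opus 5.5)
We mirror the proof of \cref{thm:Fuss_parking_2}, but with $p=mh-1$ and the negative trace evaluation from \cref{thm:GLTW_parking}. Using the standard Coxeter element $c$ itself (rather than $c^{-1}$), we start from
\[
(-q)^{-r(mh-1)}(q-1)^r\Park_{mh-1}(W;q)=[\T_e]\bigl(\PF\,\T_c^{-(mh-1)}\bigr).
\]
Since $\B(c)^h=\Delta^2$, we have $\B(c)^{-(mh-1)}=\Delta^{-2m}\B(c)$. \Cref{lem:coeff-identity} then rewrites the right side as
\[
q^r[\T_{c^{-1}}]\bigl(\PF\,\H(\Delta^{-2m})\bigr)=q^r\sum_{v\in W}q^{-\ell(v)}[\T_{c^{-1}}]\H\bigl(\B(v)\B(v^{-1})\Delta^{-2m}\bigr).
\]

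Next we absorb one copy of $\Delta^{-2}$ into the parking factor, exactly as in \cref{thm:Fuss_parking_2}. The identity
\[
\B(v)\B(v^{-1})\Delta^{-2}=\bigl(\B(vw_\circ)\B(w_\circ v^{-1})\bigr)^{-1}
\]
lets us reindex by $w=vw_\circ$, so $q^{-\ell(v)}=q^{\ell(w)-N}$. Each summand becomes
\[
[\T_{c^{-1}}]\H\Bigl(\bigl(\B(w)\B(w^{-1})\bigr)^{-1}\Delta^{-2m+2}\Bigr).
\]
By \cref{lem:inversions}, $(\B(w)\B(w^{-1}))^{-1}=\prod_{u\in\inv_c(w)}\uu^{-1}$ (forward order). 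The hypothesis gives $\Delta^{-2m+2}=\tt_1^{-1}\cdots\tt_M^{-1}$. So the pure braid is a product of inverse pure dual generators with underlying reflection word $\inv_c(w)\,\vect$ and all signs $-1$.

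Now apply \cref{thm:main} with $\pi=c^{-1}$ and $\vece=(-1,\ldots,-1)$. Every reduced $T$-word for $c^{-1}$ has length $r$, so the signed count is $(-1)^r|\Sub_T(\inv_c(w)\,\vect,c^{-1})|$. We then divide the trace identity by $(q-1)^r$ and set $q=1$; the powers of $q$ become $1$. The remaining sign on the left is $(-1)^{r(mh-1)}$, which equals $(-1)^r$ because $rh=2N$ is even. This cancels the $(-1)^r$ coming from \cref{thm:main}, and summing over $w$ gives $\Park_{mh-1}(W)$.

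The only step that needs real care is the sign and normalization bookkeeping. We must check that applying \cref{thm:GLTW_parking} with $c$ rather than $c^{-1}$ keeps the coefficient at $\T_{c^{-1}}$, which is the coefficient \cref{thm:main} handles. We must also check that the factor order $\inv_c(w)$ followed by $\vect$ matches the order in which the braid is multiplied. Since $\PF$ is central (\cref{lem:PF_central}) and $\Delta^2$ is central, placing the parking factor on the left is legitimate, and that placement fixes the word as $\inv_c(w)\,\vect$.
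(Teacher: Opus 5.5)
Your proposal is correct and matches the paper's proof essentially step for step. It uses the negative parking trace for $c$, then \cref{lem:coeff-identity} to pass to the coefficient of $\T_{c^{-1}}$, then the reindexing $w=vw_\circ$ turning $\B(v)\B(v^{-1})\Delta^{-2}$ into $(\B(w)\B(w^{-1}))^{-1}$, then \cref{lem:inversions} with the hypothesis to get the all-negative word $\inv_c(w)\,\vect$, and finally \cref{thm:main} with the parity $(-1)^{r(mh-1)}=(-1)^r$; as a minor remark, centrality of $\PF$ is not actually needed here, since $\PF$ already sits to the left of $\T_c^{-(mh-1)}$ in the trace formula.
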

\begin{proof}
\cref{thm:GLTW_parking} tells us that 
\[
(-q)^{-r(mh-1)}(q-1)^r
\mathrm{Park}_{mh-1}(W;q)=
[\T_e]\left(
\PF\,\T_c^{-(mh-1)}
\right).
\] 
Since $\B(c)^{-(mh-1)}
=
\Delta^{-2m}\B(c)$, it follows from \cref{lem:coeff-identity} that
\[
(-q)^{-r(mh-1)}(q-1)^r
\Park_{mh-1}(W;q)=
q^r
\sum_{v\in W}q^{-\ell(v)}
[\T_{c^{-1}}]\,
\H\!\left(
\B(v)\B(v^{-1})\Delta^{-2m}
\right).
\]
We wish to reindex the sum by setting $w=vw_\circ$. We have
$\Delta=\B(w^{-1})\B(v)=\B(v^{-1})\B(w)$, so we can use the centrality of $\Delta^2$ to find that 
\[
\begin{aligned}
\B(v)\B(v^{-1})\Delta^{-2m}
&=
\B(w^{-1})^{-1}
\Delta^2
\B(w)^{-1}
\Delta^{-2m}\\
&=
\B(w^{-1})^{-1}\B(w)^{-1}\Delta^{-2m+2}\\
&=
\bigl(\B(w)\B(w^{-1})\bigr)^{-1}\Delta^{-2m+2}.
\end{aligned}
\]
Since $q^{-\ell(v)}=q^{\ell(w)-N}$,
we can reindex the sum to find that 
\[
(-q)^{-r(mh-1)}(q-1)^r
\mathrm{Park}_{mh-1}(W;q)=
q^r
\sum_{w\in W}q^{\ell(w)-N}
[\T_{c^{-1}}]\,
\H\!\left(
\bigl(\B(w)\B(w^{-1})\bigr)^{-1}
\Delta^{-2m+2}
\right). 
\]

\cref{lem:inversions} tells us that
$\bigl(\B(w)\B(w^{-1})\bigr)^{-1}
=
\prod_{u\in\mathrm{inv}_c(w)}
\uu^{-1}$, and we know by hypothesis that $\Delta^{-2m+2}
=
\tt_1^{-1}\cdots
\tt_M^{-1}$.
It follows that
\[
\bigl(\B(w)\B(w^{-1})\bigr)^{-1}
\Delta^{-2m+2}=
\left(
\prod_{u\in \inv_c(w)}
\uu^{-1}
\right)
\tt_1^{-1}\cdots
\tt_M^{-1}.
\]
The underlying reflection word for this factorization is $\inv_c(w)\,\vect$,
and every exponent in the corresponding signed factorization is
$-1$. Therefore, since every reduced $T$-word for $c^{-1}$ uses exactly $r$ reflections, \cref{thm:main} implies that 
\[
(q-1)^{-r}
[\T_{c^{-1}}]\,
\H\!\left(
\bigl(\B(w)\B(w^{-1})\bigr)^{-1}
\Delta^{-2m+2}
\right)
\Big|_{q=1}=
(-1)^r
\Subb_T\!\left(
\inv_c(w)\,\vect,
c^{-1}
\right).
\]
Finally, $r(mh-1)\equiv r\pmod 2$ since $rh=2N$. 
After dividing the preceding trace identity by $(q-1)^r$ and setting
$q=1$, we obtain a factor of $(-1)^r$ on the left-hand side. This
cancels the factor $(-1)^r$ on the right-hand side, which yields the desired result. 
\end{proof} 

It follows from \eqref{eq:prop_6.6} that
\[\Delta^{2m}
\bigl(\B(c)\B(c^{-1})\bigr)^{-1}
=
\Delta^{2m-2}
\B(cw_\circ)\B(w_\circ c^{-1}).
\]
Consequently, \cref{lem:inversions,cor:wo} show that one may take
$\vect
=
\mathrm{inv}({\sf w}_\circ(c))^{m-1}
\mathrm{inv}_c(cw_\circ)$
in \cref{thm:dogolon_parking1} to obtain the model
\[
\sum_{w\in W}
\Subb_T\!\left(
\mathrm{inv}_{c^{-1}}(cw_\circ)\,
\mathrm{inv}({\sf w}_\circ(c^{-1}))^{m-1}\mathrm{inv}_{c^{-1}}(w)
,
c^{-1}
\right)
=
\mathrm{Park}_{mh-1}(W).
\]
Taking $\vect
=
\mathrm{inv}({\sf w}_\circ(c))^{m-1}$
in \cref{thm:dogolon_parking2} yields the model 
\[
\sum_{w\in W}
\Subb_T\!\left(
\mathrm{inv}_c(w)\,
\mathrm{inv}({\sf w}_\circ(c))^{m-1},
c^{-1}
\right)
=
\mathrm{Park}_{mh-1}(W).
\] 

\subsection{Rational Parking}
We have just seen how to obtain subword models for Fuss--Catalan parking numbers and Fuss--Dogolon parking numbers. Similar techniques also work for more general rational models. The following fact allows us to reuse our positive dual conjugators from~\Cref{sec:rational} without additional work.

\begin{proposition}\label{prop:8.9}  
For all $\bbeta,\mathbf{v} \in \B_W$, we have 
\[[\T_e]\H\!\left(\sum_{w \in W} q^{-\ell(w)} \B(w)\B(w^{-1}) \mathbf{v} \bbeta \mathbf{v}^{-1}\right)=[\T_e]\H\!\left(\sum_{w \in W} q^{-\ell(w)}\B(w)\B(w^{-1}) \bbeta\right).\]
\end{proposition}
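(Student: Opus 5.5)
The identity follows from two facts already available: the element $\PF$ is central (\cref{lem:PF_central}), and $\mathrm{x}\mapsto[\T_e]\mathrm{x}$ is a trace on $\H_W$. The plan is to rewrite the left side as a single trace of a product in $\H_W$, move $\PF$ past $\H(\mathbf v)$, and then cancel $\H(\mathbf v)$ against $\H(\mathbf v^{-1})$ by cyclicity.

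First, observe that $\H$ is an algebra homomorphism from $\mathbb Z[q^{\pm1}][\B_W]$. By linearity,
\[
\H\!\left(\sum_{w \in W} q^{-\ell(w)} \B(w)\B(w^{-1})\, \mathbf{v} \bbeta \mathbf{v}^{-1}\right)=\PF\cdot\H(\mathbf v)\,\H(\bbeta)\,\H(\mathbf v)^{-1}.
\]
Here $\H(\mathbf v^{-1})=\H(\mathbf v)^{-1}$ because $\mathbf v$ is invertible in the group algebra. This inverse exists in $\H_W$, since each $\T_s$ is invertible there.

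Next, apply centrality to write $\PF\,\H(\mathbf v)=\H(\mathbf v)\,\PF$. The left side then becomes
\[
[\T_e]\bigl(\H(\mathbf v)\cdot \PF\,\H(\bbeta)\,\H(\mathbf v)^{-1}\bigr).
\]
The trace property $[\T_e](\mathrm{xy})=[\T_e](\mathrm{yx})$, applied with $\mathrm x=\H(\mathbf v)$, moves $\H(\mathbf v)$ to the far right. There it cancels $\H(\mathbf v)^{-1}$, leaving $[\T_e](\PF\,\H(\bbeta))$. Expanding $\PF$ by linearity gives exactly the right side of the statement.

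There is no real obstacle: centrality of $\PF$ has already been established, and the trace property was recorded in \cref{subsec:braid_hecke}. In fact, centrality is not even strictly needed. Cyclicity alone gives $[\T_e](\PF\,\mathrm{A}\mathrm{B}\mathrm{A}^{-1})=[\T_e](\mathrm{A}^{-1}\PF\,\mathrm{A}\mathrm{B})$ with $\mathrm A=\H(\mathbf v)$ and $\mathrm B=\H(\bbeta)$, and centrality is only used to simplify $\mathrm{A}^{-1}\PF\,\mathrm{A}$ to $\PF$.
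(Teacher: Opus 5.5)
Your argument is correct and is essentially the paper's proof: use centrality of $\PF$ (\cref{lem:PF_central}) to move $\PF$ past $\H(\mathbf v)$, then use cyclicity of $[\T_e]$ to bring $\H(\mathbf v)$ around and cancel it against $\H(\mathbf v^{-1})$. One quibble: your closing remark that centrality is ``not even strictly needed'' is misleading. The step $\H(\mathbf v)^{-1}\,\PF\,\H(\mathbf v)=\PF$ is exactly where centrality (commutation with $\H(\mathbf v)$) is used, and the identity can fail if $\PF$ is replaced by an element that does not commute with $\H(\mathbf v)$.
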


\begin{proof}
We can use the fact that $\PF$ is central (\cref{lem:PF_central}) and the trace property to compute that 
\begin{align*}[\T_e]\left(\PF\cdot\H(\mathbf{v}\bbeta\mathbf{v}^{-1})\right)&=[\T_e]\left(\H(\mathbf{v})\cdot\PF \cdot\H(\bbeta \mathbf{v}^{-1})\right) \\ 
&= [\T_e]\left(\PF \cdot\H(\bbeta \mathbf{v}^{-1})\H(\mathbf{v})\right) \\
&= [\T_e]\left(\PF \cdot\H(\bbeta)\right). \end{align*} This is equivalent to the desired identity. 
\end{proof} 

In the next theorems, for $\protect\vv\epsilon=(\epsilon_1,\ldots,\epsilon_M)\in\{\pm 1\}^M$ and $k\geq 0$, let 
\[
(1^k,\protect\vv\epsilon)
=
(\underbrace{1,\ldots,1}_{k\text{ times}},
\epsilon_1,\ldots,\epsilon_M).
\] 

\begin{theorem}\label{thm:rat_parking}
Let $W$ be a Weyl group of rank $r$, and fix a standard Coxeter element $c$. Let
$p>0$ be coprime to $h$. Choose an element $g_+\in W$ such that
$g_+c^pg_+^{-1}=c$,
and choose a braid lift $\mathbf g_+\in\B_W$ of $g_+$. Define the
pure braid
\[
\mathbbm c_{p,\mathbf g_+}
=
\mathbf g_+\B(c)^p\mathbf g_+^{-1}\B(c)^{-1}.
\]
Suppose that $\vect=(t_1,\ldots,t_M)\in T^M$ and
$\protect\vv\epsilon=(\epsilon_1,\ldots,\epsilon_M)\in\{\pm1\}^M$
satisfy
\[
\mathbbm c_{p,\mathbf g_+}
=
\P_c(t_1)^{\epsilon_1}\cdots\P_c(t_M)^{\epsilon_M}.
\]
Then
\[
\sum_{w\in W}
\Subb_T^{(1^{\ell(w)},\protect\vv\epsilon)}
\bigl(\rev(\inv_c(w))\vect,c^{-1}\bigr)
=
\Park_p(W).
\]
\end{theorem}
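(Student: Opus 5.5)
We begin from the first identity of \cref{thm:GLTW_parking}, which gives $(q-1)^r\Park_p(W;q)=[\T_e]\bigl(\PF\cdot\H(\B(c)^p)\bigr)$. By \cref{prop:8.9} with $\mathbf v=\mathbf g_+$, we may replace $\B(c)^p$ by $\mathbf g_+\B(c)^p\mathbf g_+^{-1}=\mathbbm c_{p,\mathbf g_+}\B(c)$. Applying \cref{lem:coeff-identity} to pull off the factor $\T_c$ then yields
\[
(q-1)^r\Park_p(W;q)=q^r\sum_{w\in W}q^{-\ell(w)}[\T_{c^{-1}}]\H\bigl(\B(w)\B(w^{-1})\,\mathbbm c_{p,\mathbf g_+}\bigr).
\]
Here we expanded $\PF$ as a sum and kept each term $\B(w)\B(w^{-1})$ on the left. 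No centrality is needed for this step, since $\PF$ already sits in front.

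Next we factor each pure braid. By \cref{lem:inversions}, $\B(w)\B(w^{-1})=\prod^{\longleftarrow}_{u\in\inv_c(w)}\uu$, which is a product of $\ell(w)$ positive pure dual generators read along $\rev(\inv_c(w))$. The hypothesis gives $\mathbbm c_{p,\mathbf g_+}=\P_c(t_1)^{\epsilon_1}\cdots\P_c(t_M)^{\epsilon_M}$. So the braid $\B(w)\B(w^{-1})\mathbbm c_{p,\mathbf g_+}$ is the signed product along the concatenated reflection word $\rev(\inv_c(w))\vect$, with sign vector $(1^{\ell(w)},\vece)$.

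We then divide by $(q-1)^r$ and set $q=1$. The prefactors $q^{r-\ell(w)}$ become $1$. Since $\ell_T(c^{-1})=r$ and $c^{-1}\in\NC(W,c^{-1})$, \cref{thm:main} applies to each summand and turns its normalized coefficient into $\Subb_T^{(1^{\ell(w)},\vece)}(\rev(\inv_c(w))\vect,c^{-1})$. Summing over $w$ and using $\Park_p(W;1)=p^r=\Park_p(W)$ gives the claim.

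The only delicate step is the bookkeeping of the order of factors. We must check that \cref{prop:8.9} really permits conjugating only $\B(c)^p$, and that after applying \cref{lem:coeff-identity} the $\B(w)\B(w^{-1})$ factor sits to the left of $\mathbbm c_{p,\mathbf g_+}$. That order is what makes the word $\rev(\inv_c(w))\vect$, rather than $\vect\,\rev(\inv_c(w))$, and what places the $1^{\ell(w)}$ block in the sign vector first. If the order came out the other way, we would instead use the centrality of $\PF$ (\cref{lem:PF_central}) to move the sum back to the left before factoring.
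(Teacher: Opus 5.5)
Your proposal is correct and follows the paper's proof essentially step for step. You use \cref{thm:GLTW_parking} and \cref{prop:8.9} to replace $\B(c)^p$ by $\mathbbm c_{p,\mathbf g_+}\B(c)$, then \cref{lem:coeff-identity} to pass to the coefficient of $\T_{c^{-1}}$, expand $\PF$ with $\B(w)\B(w^{-1})$ on the left, factor via \cref{lem:inversions}, and apply \cref{thm:main} at $q=1$. Your order bookkeeping is also right, so the fallback to \cref{lem:PF_central} in your last paragraph is never needed.
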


\begin{proof}
By \cref{thm:GLTW_parking,prop:8.9}, we have
\begin{align*}
(q-1)^r\Park_p(W;q)
&=
[\T_e]\bigl(
\mathrm{PF}\,
\H(\mathbf g_+\B(c)^p\mathbf g_+^{-1})
\bigr)\\
&=
[\T_e]\bigl(
\mathrm{PF}\,
\H(\mathbbm c_{p,\mathbf g_+}\B(c))
\bigr)\\
&=
q^r[\T_{c^{-1}}]\bigl(
\mathrm{PF}\,\H(\mathbbm c_{p,\mathbf g_+})
\bigr)\\
&=
q^r\sum_{w\in W}q^{-\ell(w)}
[\T_{c^{-1}}]
\H\bigl(
\B(w)\B(w^{-1})\mathbbm c_{p,\mathbf g_+}
\bigr),
\end{align*}
where the third equality follows from \cref{lem:coeff-identity}. For each $w\in W$,
\cref{lem:inversions} and the assumed factorization of
$\mathbbm c_{p,\mathbf g_+}$ give a factorization of
$\B(w)\B(w^{-1})\mathbbm c_{p,\mathbf g_+}$ using elements of $\mathbb T$ and their inverses; the underlying reflection word is
$\mathrm{rev}(\inv_c(w))\vect$,
and the sign vector is $(1^{\ell(w)},\protect\vv\epsilon)$. Since
$\ell_T(c^{-1})=r$, \cref{thm:main} tells us that
\[
(q-1)^{-r}
[\T_{c^{-1}}]
\H\bigl(
\B(w)\B(w^{-1})\mathbbm c_{p,\mathbf g_+}
\bigr)
\Big|_{q=1}
=
\operatorname{sub}^{(1^{\ell(w)},\protect\vv\epsilon)}_T
\bigl(
\operatorname{rev}(\operatorname{inv}_c(w))\vec t,
c^{-1}
\bigr).
\]
Dividing the preceding trace identity by $(q-1)^r$ and specializing
$q=1$ proves the result.
\end{proof}

\begin{theorem}\label{thm:rat_parking_negative}
Let $W$ be a Weyl group of rank $r$, and fix a standard Coxeter element $c$. Let
$p>0$ be coprime to $h$. Choose an element $g_-\in W$ such that
\(
g_-c^{-p}g_-^{-1}=c,
\)
and choose a braid lift $\mathbf g_-\in\B_W$ of $g_-$. Define the
pure braid
\[
\mathbbm c_{-p,\mathbf g_-}
=
\mathbf g_-\B(c)^{-p}\mathbf g_-^{-1}\B(c)^{-1}.
\]
Suppose that $\vect=(t_1,\ldots,t_M)\in T^M$ and
$\protect\vv\epsilon=(\epsilon_1,\ldots,\epsilon_M)\in\{\pm1\}^M$
satisfy
\[
\mathbbm c_{-p,\mathbf g_-}
=
\P_c(t_1)^{\epsilon_1}\cdots\P_c(t_M)^{\epsilon_M}.
\]
Then
\[
(-1)^r
\sum_{w\in W}
\Subb_T^{(1^{\ell(w)},\protect\vv\epsilon)}
\bigl(\rev(\inv_c(w))\vect,c^{-1}\bigr)
=
\Park_p(W). 
\]
\end{theorem}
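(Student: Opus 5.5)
The argument will follow the proof of \cref{thm:rat_parking} step for step, replacing the positive trace with the negative one from \cref{thm:GLTW_parking}. I will start from the second identity there,
\[
(-q)^{-rp}(q-1)^r\Park_p(W;q)=[\T_e]\bigl(\PF\,\H(\B(c)^{-p})\bigr).
\]
By \cref{prop:8.9} applied with $\bbeta=\B(c)^{-p}$ and $\mathbf v=\mathbf g_-$, I may replace $\B(c)^{-p}$ by its conjugate $\mathbf g_-\B(c)^{-p}\mathbf g_-^{-1}=\mathbbm c_{-p,\mathbf g_-}\B(c)$. \cref{lem:coeff-identity} with $w=c$ then turns the right-hand side into
\[
q^r[\T_{c^{-1}}]\bigl(\PF\,\H(\mathbbm c_{-p,\mathbf g_-})\bigr).
\]
Expanding $\PF$ from its definition, this equals
\[
q^r\sum_{w\in W}q^{-\ell(w)}[\T_{c^{-1}}]\H\bigl(\B(w)\B(w^{-1})\mathbbm c_{-p,\mathbf g_-}\bigr).
\]
Because $\PF$ is multiplied on the left here, I do not need its centrality at this step.

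Next, for each $w$ I will factor the pure braid inside. \cref{lem:inversions} gives $\B(w)\B(w^{-1})=\prod^{\longleftarrow}_{u\in\inv_c(w)}\uu$, whose reflection word is $\rev(\inv_c(w))$ and all of whose exponents are $+1$. The hypothesis supplies the factorization of $\mathbbm c_{-p,\mathbf g_-}$, with reflection word $\vect$ and signs $\vece$. Concatenating these gives a signed $\mathbb T$-word for $\B(w)\B(w^{-1})\mathbbm c_{-p,\mathbf g_-}$ with reflection word $\rev(\inv_c(w))\vect$ and sign vector $(1^{\ell(w)},\vece)$. Since $c^{-1}\in\NC(W,c^{-1})$ and $\ell_T(c^{-1})=r$, \cref{thm:main} evaluates $(q-1)^{-r}$ times each summand's coefficient at $q=1$ as $\Subb_T^{(1^{\ell(w)},\vece)}(\rev(\inv_c(w))\vect,c^{-1})$.

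Finally, I will divide the trace identity by $(q-1)^r$ and set $q=1$. The prefactors $q^r$ and $q^{-\ell(w)}$ become $1$, and the left-hand side becomes $(-1)^{rp}\Park_p(W)$. The only point needing care is this sign: I have to check that $(-1)^{rp}=(-1)^r$. When $h$ is even, $p$ is coprime to $h$ and therefore odd, so $rp\equiv r\pmod 2$. When $h$ is odd, a Weyl group is of type $A_{r}$ with $h=r+1$, so $r$ is even and both signs equal $1$. Moving the resulting $(-1)^r$ to the subword side gives exactly the stated identity. I expect no real obstacle beyond this parity check, since every other step is taken directly from the positive case.
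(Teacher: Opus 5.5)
Your proposal is correct and follows the paper's proof essentially line for line: it uses the negative trace from \cref{thm:GLTW_parking}, conjugation via \cref{prop:8.9}, \cref{lem:coeff-identity} to pass to the coefficient of $\T_{c^{-1}}$, and then \cref{lem:inversions} and \cref{thm:main}. The only difference is in the parity check $(-1)^{rp}=(-1)^r$: the paper avoids the classification by observing that if $r$ is odd then $rh=2N$ forces $h$ to be even and hence $p$ to be odd, whereas your case split (odd $h$ occurs only in type $A_r$ with $r$ even) is also valid.
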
 

\begin{proof}
Using the second identity in \cref{thm:GLTW_parking} together with \cref{prop:8.9}, we obtain
\begin{align*}
(-q)^{-rp}(q-1)^r\Park_p(W;q)
&=
[\T_e]\bigl(
\mathrm{PF}\,
\H(\mathbf g_-\B(c)^{-p}\mathbf g_-^{-1})
\bigr)\\
&=
[\T_e]\bigl(
\mathrm{PF}\,
\H(\mathbbm c_{-p,\mathbf g_-}\B(c))
\bigr)\\
&=
q^r\sum_{w\in W}q^{-\ell(w)}
[\T_{c^{-1}}]
\H\bigl(
\B(w)\B(w^{-1})\mathbbm c_{-p,\mathbf g_-}
\bigr).
\end{align*}
As in the proof of \cref{thm:rat_parking}, we can invoke \cref{lem:inversions,thm:main} to find that 
\[
(q-1)^{-r}
[\T_{c^{-1}}]
\H\bigl(
\B(w)\B(w^{-1})\mathbbm c_{-p,\mathbf g_-}
\bigr)
\Big|_{q=1}
=
\operatorname{sub}^{(1^{\ell(w)},\protect\vv\epsilon)}_T
\bigl(
\operatorname{rev}(\operatorname{inv}_c(w))\vect,
c^{-1}
\bigr).
\]
Dividing the trace identity by $(q-1)^r$ and setting $q=1$ then implies that
\[
\sum_{w\in W}
\operatorname{sub}^{(1^{\ell(w)},\protect\vv\epsilon)}_T
\bigl(
\operatorname{rev}(\operatorname{inv}_c(w))\vect,
c^{-1}
\bigr)
=
(-1)^{rp}\Park_p(W).
\]
Finally, $rp\equiv r\pmod 2$. Indeed, this is immediate if $r$ is
even; if $r$ is odd, then the identity $rh=2N$ forces $h$ to be even, so
the fact that $\gcd(p,h)=1$ forces $p$ to be odd. Hence $(-1)^{rp}=(-1)^r$, which
proves the result.
\end{proof}

\section{Applications to Knot Theory}\label{sec:knot}

Let $K$ be a knot. In this section, we show how to compute the
specialization $\HOMFLY(K;a,z)|_{z=0}$ using signed reflection
subwords. At the end of the section, we explain the corresponding
statement for a link $L$ with $d$ components; in that setting, the natural
specialization is
\[
\left.z^{d-1}\HOMFLY(L;a,z)\right|_{z=0}.
\]

We specialize to linear type~$A$. Let
$c=(12\cdots n)\in\mathfrak S_n$ be the usual long cycle.
Let $\s_1,\ldots,\s_{n-1}$ be the Artin generators of $\B_n$, and
write
\[
\t_{(ij)}
=
\s_i\cdots\s_{j-2}\s_{j-1}
\s_{j-2}^{-1}\cdots\s_i^{-1}
\]
for the $c$-dual lift of the transposition $(ij)$. Note that
$\t_{(ij)}$ is the classical Birman--Ko--Lee dual generator~\cite{birman1998new}.
Let
\[
\tt_{(ij)}
=
\t_{(ij)}^2
=
\s_i\s_{i+1}\cdots\s_{j-2}\s_{j-1}^2
\s_{j-2}^{-1}\cdots\s_i^{-1}.
\]
Geometrically, $\t_{(ij)}$ is the positive half twist interchanging
the $i$th and $j$th points, and it projects to the transposition
$(ij)$. Its square $\tt_{(ij)}$ is the corresponding pure full twist
around the hyperplane $x_i=x_j$. 

Despite an easy, explicit formula for the Hecke expansion of $\H(\tt_{(ij)})$, the multiplication and
re-expansion of the resulting terms in the Hecke algebra leaves $\NC(\mathfrak{S}_n,c^{-1})$ and so does not allow us to give the full $q$-expansion purely in terms of noncrossing combinatorics. We therefore content ourselves with the leading $q=1$ evaluation.

Code to compute the results of this section is available at~\cite{code}.

\subsection{Knots to Braids}
Let $K$ be a knot, and let $\bbeta_K = \prod_{k=1}^\ell \s_{i_k}^{\epsilon_k} \in \B_n$ be a braid whose
braid closure is equal to $K$;
write $\mathrm{wr}(\bbeta_K) = \sum_{k=1}^\ell \epsilon_k$ for the writhe of $\bbeta_K$. 

\begin{definition}
The \defn{HOMFLYPT polynomial} $\HOMFLY(L;a,z)\in\mathbb{Z}[a^{\pm1},z^{\pm1}]$ of an oriented link $L$
is determined by the skein recurrence
\[
  a\,\HOMFLY(L_+) \;-\; a^{-1}\,\HOMFLY(L_-) \;=\; z\,\HOMFLY(L_0),
\]
together with the normalization $\HOMFLY(\bigcirc)=1$ on the unknot, where $L_+,L_-,L_0$ differ
only inside a small disk as a positive crossing, a negative crossing, and the oriented smoothing,
respectively.  For a Laurent polynomial $F$ in $a$, we write $[a^m]F$ for the
coefficient of $a^m$ in $F$. Throughout this section, we use the
substitution
\(
z=q^{1/2}-q^{-1/2}.
\) 
\end{definition}

Write $\mathfrak{S}_{n,k} := \big\{u \in \mathfrak{S}_{n} : \Des(u)=\{s_1,\ldots,s_k\}\big\}$ (so $\mathfrak{S}_{n,0}=\{e\}$ and $\mathfrak{S}_{n,n-1}=\{w_\circ\}$), and for $u\in \mathfrak{S}_n$ let
$\ell(u)$ denote its Coxeter (inversion) length. The leading Hecke coefficients of a braid recover
the $a$-coefficients of $\HOMFLY$. 

\begin{theorem}[{\cite[Theorem~1.8]{trinh2026partial}}]
\label{thm:homfly-trace} 
Let $L$ be an oriented link with $d$ components, and let
$\boldsymbol{\beta}_L\in \B_n$ be a braid whose closure is $L$. For every
$0\leq k\leq n-1$, we have
\begin{align*}
&\,\,\,\,\,\,\,
\left.
[a^{-\operatorname{wr}(\boldsymbol{\beta}_L)-n+1+2k}]
\Bigl(z^{d-1}\homfly(L;a,z)\Bigr)
\right|_{z=0} \\ 
&=
(-1)^{n-1+k}
\left.
(q-1)^{-(n-d)}
[\T_e]\H\!\left(
\sum_{u\in\mathfrak S_{n,k}}
q^{-\ell(u)}
\B(u)\B(u^{-1})\boldsymbol{\beta}_L
\right)
\right|_{q=1}.
\end{align*}   
\end{theorem}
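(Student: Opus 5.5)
The plan is to go through the Jones--Ocneanu construction of $\homfly$ from the Hecke algebra and then match the $a$-graded pieces of the Ocneanu trace with the traces $[\T_e]\bigl(\sum_{u}q^{-\ell(u)}\H(\B(u)\B(u^{-1}))\,\mathrm{x}\bigr)$.

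\textbf{Step 1 (character decomposition).} First recall that $\homfly(L;a,z)$ equals, up to an explicit monomial in $a$ depending on $\mathrm{wr}(\bbeta_L)$ and $n$, the Ocneanu trace $\tau(\H(\bbeta_L))$ with Markov parameter tied to $a$. Then write $\tau=\sum_{\lambda\vdash n}\omega_\lambda(a,q)\,\chi^\lambda_q$, where $\chi_q^\lambda$ are the irreducible characters of $\H_n$ and the weights $\omega_\lambda$ are (specializations of) Schur functions. I would isolate the $a$-degrees and show that, after multiplying by $z^{d-1}$ and setting $z=0$ (that is, $q=1$), only hook shapes $\lambda=(n-k,1^k)$ survive. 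The hook $(n-k,1^k)$ should contribute exactly to the coefficient of $a^{-\mathrm{wr}-n+1+2k}$. This is the classical observation that $\omega_\lambda$ vanishes to order at least $n-1$ in $z$ unless $\lambda$ is a hook, with order exactly $n-1$ for hooks.

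\textbf{Step 2 (hook characters as parabolic traces).} Next I would express the hook characters through the elements $\sum_{u\in\mathfrak S_{n,k}}q^{-\ell(u)}\T_u\T_{u^{-1}}$. Since $\{\T_w\}$ and $\{q^{-\ell(w)}\T_{w^{-1}}\}$ are dual bases for $[\T_e]$ (Lemma~\ref{lem:PF_central}), the argument there shows the following: for a set $X$ of minimal coset representatives for a parabolic $W_J$, the functional $\mathrm{x}\mapsto[\T_e]\bigl(\sum_{u\in X}q^{-\ell(u)}\T_u\,\mathrm{x}\,\T_{u^{-1}}\bigr)$ is the character induced from the trace on the parabolic subalgebra $\H_J$. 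The sets $\{u:\Des(u)\subseteq\{s_1,\dots,s_k\}\}$ are such representative sets. Inclusion--exclusion over descent sets, in the spirit of Solomon's descent algebra, then turns the induced characters into the sum over the exact descent class $\mathfrak S_{n,k}$. By the Hecke analogue of the identity $\chi^{(n-k,1^k)}=\sum_{j\le k}(-1)^{k-j}\mathrm{Ind}\bigl(\mathrm{sgn}_{j+1}\otimes\mathrm{triv}\bigr)$, this sum is, to leading order in $(q-1)$, the appropriately signed hook character. The trace property then lets me move $\T_{u^{-1}}$ to the left, giving the form $\T_u\T_{u^{-1}}\,\mathrm{x}$ that appears in the statement.

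\textbf{Step 3 (leading-order bookkeeping).} Finally, I would track leading orders. The image of $\bbeta_L$ in $\mathfrak S_n$ has $d$ cycles, hence reflection length $n-d$. By Lemma~\ref{lem:reflection_length_divisibility}, each trace $[\T_e]\H(\B(u)\B(u^{-1})\bbeta_L)$ is divisible by $(q-1)^{n-d}$. I would match this with the factor $z^{d-1}$ against the $z^{-(n-1)}$-type denominators from Step 1, and then read off the sign $(-1)^{n-1+k}$ from the inclusion--exclusion together with the normalization of $\tau$.

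\textbf{Main obstacle.} I expect the hardest part to be this last step. One must show that the leading coefficient of $\tau$ at $z=0$ in each $a$-degree is exactly the stated trace, not merely proportional to it. This means controlling the Markov normalization (powers of $a$, $q^{1/2}$) and showing that the non-hook contributions vanish to strictly higher order. I would first test the identity on the unknot in $\B_1$ and on the torus links $\B(c)^p$, where both sides are known from Theorem~\ref{thm:GLTW_parking}, and use these cases to fix the constants.
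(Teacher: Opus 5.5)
The paper gives no proof of this statement; it imports it from \cite[Theorem~1.8]{trinh2026partial}. Your argument therefore has to stand on its own, and as written the mechanism you use to separate the $a$-degrees does not work.

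\textbf{Step~1 is false as stated.} The Ocneanu weights do not vanish off hooks. In the usual normalization $\omega_\lambda=\prod_{x\in\lambda}\bigl(a^{-1}q^{c(x)/2}-aq^{-c(x)/2}\bigr)/\bigl(q^{h(x)/2}-q^{-h(x)/2}\bigr)$, every $\omega_\lambda$, and every $a$-coefficient of it, has a pole of order exactly $n$ at $z=0$. This is because the numerator specializes to $(a^{-1}-a)^n$ at $q=1$. Already for $n=2$, the hook $(2)$ contributes to both $a$-degrees of $\homfly$ of the closure of $\s_1^{p}$, with a pole that cancels only against the contribution of $(1,1)$. So no single $\lambda$ contributes exactly one $a$-degree; the leading term comes from cancellation among all $\lambda$.

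\textbf{Step~2 fails too.} Identifying $\mathrm{x}\mapsto[\T_e]\bigl(\sum_{u\in X}q^{-\ell(u)}\T_u\mathrm{x}\T_{u^{-1}}\bigr)$ with an induced trace form is fine, and so is the inclusion--exclusion over descent sets. But what gets induced is $[\T_e]|_{\H_J}$, which specializes at $q=1$ to $\delta_e$ on $W_J$, not to $\mathrm{sgn}\otimes\mathrm{triv}$. Each induced functional is therefore $[W:W_J]\,\delta_e$ at $q=1$, and your alternating sum is $\binom{n-1}{k}\delta_e$, not a hook character. Moreover, the quantity you must compute, $(q-1)^{-(n-d)}[\T_e]\H\bigl(\sum_{u\in\mathfrak S_{n,k}}q^{-\ell(u)}\B(u)\B(u^{-1})\bbeta_L\bigr)|_{q=1}$, is not a function of the permutation of $\bbeta_L$. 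In \Cref{tab:knots}, $4_1$ and $6_3$ are closures of $3$-strand braids of writhe $0$ (both permutations are $3$-cycles), yet they give different values. So this quantity cannot be a hook character to leading order.

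\textbf{Step~3} then defers the whole identity to fitting constants on examples, which is not a proof.

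\textbf{The missing idea} is that the braid wedges are Jucys--Murphy elements, which makes the statement an exact identity rather than a leading-order one.
\begin{itemize}
\item By the one-line description in \cref{lem:reduced_x_rho}, $\mathfrak S_{n,k}$ consists exactly of the products $v_{m_1}\cdots v_{m_k}$ with $1\le m_1<\cdots<m_k\le n-1$.
\item The commutation argument in the proof of \Cref{lem:conjugator_rho} gives $\B(u)\B(u^{-1})=\prod_j\B(v_{m_j})\B(v_{m_j}^{-1})$.
\item Hence $\sum_{u\in\mathfrak S_{n,k}}q^{-\ell(u)}\H(\B(u)\B(u^{-1}))=e_k(L_2,\ldots,L_n)$. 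Here $L_{m+1}=q^{-m}\T_{s_m}\cdots\T_{s_1}\T_{s_1}\cdots\T_{s_m}$ are the multiplicative Jucys--Murphy elements, which act on $V_\lambda$ with eigenvalues $q^{c(x)}$.
\item The hook-length formulas for $\omega_\lambda$ and for the Schur elements $s_\lambda$ give $\omega_\lambda s_\lambda=(\text{monomial})\cdot(q-1)^{-n}\prod_{x\in\lambda}(q^{c(x)}-a^2)$, in a suitable normalization of $a$.
\item So the Ocneanu trace equals a monomial times $(q-1)^{-n}[\T_e]\bigl(\mathrm{x}\prod_{i=1}^n(L_i-a^2)\bigr)$, exactly.
\item Since $L_1=1$, the factor $1-a^2$ cancels the unknot normalization. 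Expanding $\prod_{i=2}^n(L_i-a^2)$ in $a$ then expresses each $a$-coefficient of $\homfly$ as $(-1)^{n-1-k}(q-1)^{-(n-1)}[\T_e]\bigl(e_k(L_2,\ldots,L_n)\H(\bbeta_L)\bigr)$, up to powers of $q^{1/2}$, for all $q$. Note that $(-1)^{n-1-k}=(-1)^{n-1+k}$.
\item Multiplying by $z^{d-1}=q^{-(d-1)/2}(q-1)^{d-1}$ and setting $q=1$ gives the statement, sign included.
\end{itemize}

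Hooks never need to be isolated. They appear only as the $q=1$ shadow of $s_\mu[1-a^2]=0$ for non-hook $\mu$ in the Kronecker expansion of $\omega_\lambda$. The Hecke algebra sees this only through $e_k$ of the Jucys--Murphy elements.
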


Recall that for $\protect\vv\epsilon=(\epsilon_1,\ldots,\epsilon_M)\in\{\pm 1\}^M$ and $k\geq 0$, we write  
\[
(1^k,\protect\vv\epsilon)
=
(\underbrace{1,\ldots,1}_{k\text{ times}},
\epsilon_1,\ldots,\epsilon_M).
\] 

\begin{theorem}\label{thm:knots}
Let $\bbeta_K\in\B_n$ be a braid whose closure is a knot $K$. Let $\beta_K$ be the image of $\bbeta_K$ in $\mathfrak S_n$. Choose $g\in\mathfrak S_n$
such that $g\beta_Kg^{-1}=c$,
and choose a braid lift $\mathbf g\in\B_n$ of $g$. Factor the pure braid $\brho_K
:=
\mathbf g\bbeta_K\mathbf g^{-1}\B(c)^{-1}$ as
\[
\brho_K
=
\P_c(t_1)^{\epsilon_1}\cdots\P_c(t_M)^{\epsilon_M}
\]
for some $\vect=(t_1,\ldots,t_M)\in T^M$ and
$\protect\vv\epsilon=(\epsilon_1,\ldots,\epsilon_M)
\in\{\pm1\}^M$.
We have
\[
(-1)^{n-1}
\Subb_T^{\protect\vv\epsilon}(\vect,c^{-1})
=
\left.[a^{-\mathrm{wr}(\bbeta_K)-n+1}]
\homfly(K;a,z)\right|_{z=0}.
\]
More generally,
\[
\homfly(K;a,z)\Big|_{z=0}
=
\sum_{k=0}^{n-1}
(-1)^{n-1+k}
\sum_{u\in \mathfrak S_{n,k}}
\Subb_T^{(1^{\ell(u)},\protect\vv\epsilon)}
\bigl(\rev(\inv_c(u))\vect,c^{-1}\bigr)
a^{-\mathrm{wr}(\bbeta_K)-n+1+2k}.
\]
\end{theorem}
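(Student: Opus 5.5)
The plan is to combine \cref{thm:homfly-trace} (with $d=1$, so the factor $z^{d-1}$ disappears and the exponent of $q-1$ is $n-1=r$) with \cref{thm:main}, exactly parallel to the proof of \cref{thm:rat_parking}. Fix $0\le k\le n-1$. By \cref{thm:homfly-trace},
\[
\left.[a^{-\mathrm{wr}(\bbeta_K)-n+1+2k}]\homfly(K;a,z)\right|_{z=0}
=(-1)^{n-1+k}(q-1)^{-(n-1)}\sum_{u\in\mathfrak S_{n,k}}q^{-\ell(u)}[\T_e]\H\bigl(\B(u)\B(u^{-1})\bbeta_K\bigr)\Big|_{q=1}.
\]

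The first step is to replace $\bbeta_K$ by its conjugate $\mathbf g\bbeta_K\mathbf g^{-1}=\brho_K\B(c)$, termwise in $u$. Unlike in \cref{prop:8.9}, we cannot invoke centrality of $\PF$, because the sum runs only over the descent class $\mathfrak S_{n,k}$. So the real issue is whether the element $\mathrm{x}_k:=\H\bigl(\sum_{u\in\mathfrak S_{n,k}}q^{-\ell(u)}\B(u)\B(u^{-1})\bigr)$ is central in $\H_n$. I expect this is true. It should follow from \cite{trinh2026partial}, where these wedge sums play the role of central idempotent-like elements; alternatively, the very form of \cref{thm:homfly-trace} forces it, since the left side depends only on the closure $K$, and hence only on the conjugacy class of $\bbeta_K$. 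Indeed, the invariance of the right side under arbitrary conjugation of $\bbeta_K$ in $\B_n$, applied to $\mathbf g$, is all we need. This holds because the closure of $\mathbf g\bbeta_K\mathbf g^{-1}$ is again $K$, and conjugation does not change the writhe (since $\mathrm{wr}$ is a homomorphism to $\mathbb Z$). I will therefore apply \cref{thm:homfly-trace} directly to the braid $\mathbf g\bbeta_K\mathbf g^{-1}$; this sidesteps any centrality claim.

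Next, by \cref{lem:coeff-identity}, $[\T_e]\H(\B(u)\B(u^{-1})\brho_K\B(c))=q^{r}[\T_{c^{-1}}]\H(\B(u)\B(u^{-1})\brho_K)$. By \cref{lem:inversions}, $\B(u)\B(u^{-1})=\prod^{\longleftarrow}_{t\in\inv_c(u)}\tt$, so $\B(u)\B(u^{-1})\brho_K$ has a signed $\mathbb T$-factorization with reflection word $\rev(\inv_c(u))\vect$ and sign vector $(1^{\ell(u)},\vece)$. Since $c^{-1}\in\NC(\mathfrak S_n,c^{-1})$ with $\ell_T(c^{-1})=n-1$, \cref{thm:main} gives
\[
(q-1)^{-(n-1)}[\T_{c^{-1}}]\H(\B(u)\B(u^{-1})\brho_K)\big|_{q=1}=\Subb_T^{(1^{\ell(u)},\vece)}(\rev(\inv_c(u))\vect,c^{-1}).
\]
The powers $q^{r-\ell(u)}$ evaluate to $1$ at $q=1$. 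Substituting yields the coefficient of $a^{-\mathrm{wr}(\bbeta_K)-n+1+2k}$, and summing over $k$ gives the general formula. Note that $\homfly(K;a,0)$ is supported in these exponents by \cref{thm:homfly-trace}, or otherwise we only claim the coefficients for each $k$. For $k=0$ we have $\mathfrak S_{n,0}=\{e\}$ and $\inv_c(e)$ is empty, which gives the first displayed identity with sign $(-1)^{n-1}$.

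The main obstacle is the conjugation step, i.e., justifying that the descent-class wedge sum in \cref{thm:homfly-trace} may be evaluated at the conjugated braid. My plan is to justify it topologically, via the conjugation invariance of closure and writhe, rather than algebraically. I will also double-check that $\mathbf g\bbeta_K\mathbf g^{-1}$ really equals $\brho_K\B(c)$, which holds by the definition of $\brho_K$.
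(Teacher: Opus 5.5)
Your proof is correct. It differs from the paper's only in how the conjugation step is justified.

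The paper asserts that $Z_{n,k}=\sum_{u\in\mathfrak S_{n,k}}q^{-\ell(u)}\H\bigl(\B(u)\B(u^{-1})\bigr)$ is central in $\H_n$, citing \cite{trinh2026partial}. It then uses the trace property to replace $\bbeta_K$ by $\mathbf g\bbeta_K\mathbf g^{-1}$. You instead apply \cref{thm:homfly-trace} directly to $\mathbf g\bbeta_K\mathbf g^{-1}$. This is legitimate for three reasons: that theorem holds for every $n$-strand braid whose closure is $K$; conjugation preserves the closure up to isotopy (Markov's first move); and $\mathrm{wr}$ is a homomorphism $\B_n\to\mathbb Z$, so the exponent $-\mathrm{wr}-n+1+2k$ is unchanged.

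What each route buys:
\begin{itemize}
\item Your route is more self-contained: it needs nothing beyond the statement of \cref{thm:homfly-trace}.
\item It shows only that the specialized quantity $(q-1)^{-(n-1)}[\T_e]\bigl(Z_{n,k}\H(\bbeta)\bigr)\big|_{q=1}$ is invariant under conjugating the braid $\bbeta$. It does not show that $Z_{n,k}$ is central, so your remark that the theorem ``forces'' centrality overstates what it gives.
\item The paper's centrality gives invariance of the full $q$-trace for arbitrary elements of $\H_n$, parallel to the treatment of $\PF$ in \cref{lem:PF_central,prop:8.9}.
\end{itemize}
For this theorem the weaker statement suffices. Your remaining steps match the paper's: \cref{lem:coeff-identity} with $\ell(c)=n-1$, then \cref{lem:inversions}, then \cref{thm:main} applied to each $u$ separately, using that each term is divisible by $(q-1)^{n-1}$.

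One small correction concerns the full polynomial identity. \cref{thm:homfly-trace} only computes the coefficients at the exponents $-\mathrm{wr}(\bbeta_K)-n+1+2k$ with $0\le k\le n-1$. So the vanishing of all other $a$-coefficients of $\homfly(K;a,z)|_{z=0}$ does not follow from it. It follows from the Morton--Franks--Williams inequality together with the evenness of the $a$-exponents for a knot. The paper's proof also uses this tacitly.
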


\begin{proof}
For $0\leq k\leq n-1$, set
\[
Z_{n,k}
=
\sum_{u\in \mathfrak S_{n,k}}
q^{-\ell(u)}
\H\bigl(\B(u)\B(u^{-1})\bigr).
\]
The element $Z_{n,k}$ is central in $\H_n$ by the description of
these elements in~\cite[Section~7]{trinh2026partial}.
Therefore, using the trace property, we find that 
\begin{align*}
[\T_e]\bigl(Z_{n,k}\H(\bbeta_K)\bigr)
&=
[\T_e]\bigl(
Z_{n,k}\H(\mathbf g\bbeta_K\mathbf g^{-1})
\bigr)\\
&=
[\T_e]\bigl(Z_{n,k}\H(\brho_K\B(c))\bigr)\\
&=
q^{n-1}
[\T_{c^{-1}}]\bigl(Z_{n,k}\H(\brho_K)\bigr),
\end{align*}
where the last equality follows from \cref{lem:coeff-identity}.

Fix $u\in \mathfrak S_{n,k}$. By \cref{lem:inversions},
if
\(
\rev(\inv_c(u))=(v_1,\ldots,v_{\ell(u)}),
\)
then
\[
\B(u)\B(u^{-1})
=
\P_c(v_1)\cdots\P_c(v_{\ell(u)}).
\]
Consequently,
\[
\B(u)\B(u^{-1})\brho_K
=
\P_c(v_1)\cdots\P_c(v_{\ell(u)})
\P_c(t_1)^{\epsilon_1}\cdots\P_c(t_M)^{\epsilon_M}.
\]
Invoking \cref{thm:main} allows us to deduce that 
\[
(q-1)^{-(n-1)}
[\T_{c^{-1}}]
\H\bigl(\B(u)\B(u^{-1})\brho_K\bigr)
\Big|_{q=1}
=
\Subb_T^{(1^{\ell(u)},\epsilon)}
\bigl(\rev(\inv_c(u))\vect,c^{-1}\bigr).
\]
Since all powers of $q$ in the preceding expressions specialize to
$1$, it follows that
\[
(q-1)^{-(n-1)}
[\T_e]\bigl(Z_{n,k}\H(\bbeta_K)\bigr)
\Big|_{q=1}
=
\sum_{u\in \mathfrak S_{n,k}}
\Subb_T^{(1^{\ell(u)},\epsilon)}
\bigl(\rev(\inv_c(u))\vect,c^{-1}\bigr).
\]
Combining this with \cref{thm:homfly-trace} proves the coefficient
formula for every $k$. Summing these coefficient formulas over
$0\leq k\leq n-1$ proves the second identity. The first identity is
the special case $k=0$ since $\mathfrak S_{n,0}=\{e\}$.
\end{proof} 

\Cref{tab:knots} illustrates~\Cref{thm:knots} for all knots up to eight crossings.

\subsection{Links}

Let $L$ be an oriented link with $d$ components, and let
$\bbeta_L\in\B_n$ be a braid whose closure is $L$. The image
${\beta_L}\in\mathfrak S_n$ has $d$ cycles; let
$\lambda=(\lambda_1\geq\cdots\geq\lambda_d)$ be its cycle type. Set
$m_0=0$ and $m_j=\lambda_1+\cdots+\lambda_j$ for $1\leq j\leq d$,
and define
\[
c_\lambda
=
\prod_{j=1}^d
(m_{j-1}+1\ \ m_{j-1}+2\ \ \cdots\ \ m_j).
\]
Thus, we have $c_\lambda\in\NC(\mathfrak S_n,c)$ and
$c_\lambda^{-1}\in\NC(\mathfrak S_n,c^{-1})$.

Choose $g\in\mathfrak S_n$ such that
$g{\beta_L}g^{-1}=c_\lambda$,
and choose a braid lift $\mathbf g\in\B_n$ of $g$. Suppose
\[
\brho_L
:=
\mathbf g\beta_L\mathbf g^{-1}\B(c_\lambda)^{-1}
=
\P_c(t_1)^{\epsilon_1}\cdots\P_c(t_M)^{\epsilon_M},
\]
where
\[
\vect=(t_1,\ldots,t_M)\in T^M\quad\text{and}\quad
\protect\vv\epsilon=(\epsilon_1,\ldots,\epsilon_M)\in\{\pm 1\}^M.
\]

\begin{theorem}\label{thm:homfly-links}
With the notation above, we have
\[
\left.z^{d-1}\homfly(L;a,z)\right|_{z=0}
=
\sum_{k=0}^{n-1}
(-1)^{n-1+k}
\sum_{u\in \mathfrak S_{n,k}}
\Subb_T^{(1^{\ell(u)},\protect\vv\epsilon)}
\bigl(
\rev(\inv_c(u))\vect,
c_\lambda^{-1}
\bigr)
a^{-\mathrm{wr}(\bbeta_L)-n+1+2k}.
\]
\end{theorem}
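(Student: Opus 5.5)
The proof will follow the proof of \cref{thm:knots} step by step, replacing $c$ by $c_\lambda$ and $r=n-1$ by $\ell_T(c_\lambda)=n-d$. Fix $0\leq k\leq n-1$ and set $Z_{n,k}=\sum_{u\in\mathfrak S_{n,k}}q^{-\ell(u)}\H(\B(u)\B(u^{-1}))$. This element is central by~\cite[Section~7]{trinh2026partial}. The trace property then gives
\[
[\T_e]\bigl(Z_{n,k}\H(\bbeta_L)\bigr)=[\T_e]\bigl(Z_{n,k}\H(\mathbf g\bbeta_L\mathbf g^{-1})\bigr)=[\T_e]\bigl(Z_{n,k}\H(\brho_L)\T_{c_\lambda}\bigr),
\]
using $\brho_L\B(c_\lambda)=\mathbf g\bbeta_L\mathbf g^{-1}$, where I read the $\beta_L$ in the definition of $\brho_L$ as the braid $\bbeta_L$. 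Since $\B(c_\lambda)$ is the Artin lift $\T_{c_\lambda}$, \cref{lem:coeff-identity} converts this into $q^{\ell(c_\lambda)}[\T_{c_\lambda^{-1}}]\bigl(Z_{n,k}\H(\brho_L)\bigr)$.

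Next, expand $Z_{n,k}$. For each $u\in\mathfrak S_{n,k}$, \cref{lem:inversions} writes $\B(u)\B(u^{-1})$ as the positive product of the $\P_c(v)$ over $v\in\rev(\inv_c(u))$. Multiplying by the given factorization of $\brho_L$ produces a signed $\mathbb T$-word with reflection word $\rev(\inv_c(u))\vect$ and sign vector $(1^{\ell(u)},\vece)$. The key check is that $c_\lambda^{-1}$ lies in $\NC(\mathfrak S_n,c^{-1})$, as noted just before the statement; this is what lets \cref{thm:main} apply with $\pi=c_\lambda^{-1}$. For that, $c_\lambda$ must be a product of consecutive-block cycles, which gives a noncrossing partition for $c$, and its inverse is then noncrossing for $c^{-1}$. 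Since $\ell_T(c_\lambda^{-1})=\sum_j(\lambda_j-1)=n-d$, \cref{thm:main} yields
\[
(q-1)^{-(n-d)}[\T_{c_\lambda^{-1}}]\H\bigl(\B(u)\B(u^{-1})\brho_L\bigr)\Big|_{q=1}=\Subb_T^{(1^{\ell(u)},\vece)}\bigl(\rev(\inv_c(u))\vect,c_\lambda^{-1}\bigr).
\]

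Summing over $u\in\mathfrak S_{n,k}$, the factors $q^{-\ell(u)}$ and $q^{\ell(c_\lambda)}$ specialize to $1$. This gives $(q-1)^{-(n-d)}[\T_e](Z_{n,k}\H(\bbeta_L))|_{q=1}$ as the inner sum in the statement. Comparing with \cref{thm:homfly-trace}, whose normalization is exactly $(q-1)^{-(n-d)}$ and whose sign is $(-1)^{n-1+k}$, produces the coefficient of $a^{-\mathrm{wr}(\bbeta_L)-n+1+2k}$ in $z^{d-1}\homfly(L;a,z)|_{z=0}$. Summing over $k$ then finishes the proof, provided these monomials exhaust the $a$-support; that is implicit in \cref{thm:homfly-trace}, since the HOMFLYPT $a$-degrees of an $n$-strand braid closure lie in this range and have the same parity.

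The only step that is not a verbatim transcription is the noncrossing membership together with the matching of the exponent $n-d$ in \cref{thm:homfly-trace}. I would verify the membership directly: the reduced $T$-word $((m_{j-1}+1\,\,m_{j-1}+2),\ldots)$ for each block cycle embeds as a prefix-compatible factorization of $c$.
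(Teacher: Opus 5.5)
Your proposal is correct and follows the paper's proof essentially verbatim. You use centrality of $Z_{n,k}$ to replace $\bbeta_L$ by $\brho_L\B(c_\lambda)$, then \cref{lem:coeff-identity} to pass to the coefficient of $\T_{c_\lambda^{-1}}$, then \cref{thm:main} with $\pi=c_\lambda^{-1}\in\NC(\mathfrak S_n,c^{-1})$ and $\ell_T(c_\lambda^{-1})=n-d$, and finally sum over $k$. Your reading of $\beta_L$ as $\bbeta_L$ in the definition of $\brho_L$ is the intended one, and your observation that summing over $k$ recovers the whole polynomial rests on the Morton--Franks--Williams bound and $a$-exponent parity is a point the paper leaves implicit.
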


\begin{proof}
By \cref{thm:homfly-trace}, for every $0\leq k\leq n-1$, we have
\begin{align*}
&[a^{-\mathrm{wr}(\bbeta_L)-n+1+2k}]
\left.z^{d-1}\homfly(L;a,z)\right|_{z=0}\\
&\qquad=
(-1)^{n-1+k}
(q-1)^{-(n-d)}
[\T_e]\H\!\left.\left(
\sum_{u\in \mathfrak S_{n,k}}
q^{-\ell(u)}
\B(u)\B(u^{-1})\bbeta_L
\right)\right|_{q=1}.
\end{align*}
In particular, the Laurent polynomial
\[
[\T_e]\H\!\left(
\sum_{u\in\mathfrak S_{n,k}}
q^{-\ell(u)}
\B(u)\B(u^{-1})\bbeta_L
\right)
\]
is divisible by $(q-1)^{n-d}$. 

As in the proof of \cref{thm:knots}, centrality allows us
to replace $\bbeta_L$ by
$\mathbf g\bbeta_L\mathbf g^{-1}
=
\brho_L\B(c_\lambda)$. 
Since
$\ell(c_\lambda)=\ell_T(c_\lambda)=n-d$,
\cref{lem:coeff-identity} converts the trace into the coefficient of
$\T_{c_\lambda^{-1}}$. Finally,
$c_\lambda^{-1}\in\NC(\mathfrak S_n,c^{-1})$, so
\cref{thm:main} applies to the signed factorization of $\B(u)\B(u^{-1})\brho_L$.
This gives
\[
(q-1)^{-(n-d)}
[\T_{c_\lambda^{-1}}]
\H\!\bigl(\B(u)\B(u^{-1})\brho_L\bigr)\Big|_{q=1}
=
\Subb_T^{(1^{\ell(u)},\protect\vv\epsilon)}
\bigl(
\rev(\inv_c(u))\vect,
c_\lambda^{-1}
\bigr).
\]
To complete the proof, we substitute this into the preceding trace formula and sum over
$k$. 
\end{proof}

\begin{landscape}
\renewcommand{\arraystretch}{1.4}
\begin{longtable}{@{} c c c c c c c c @{}}
\caption{For each knot $K$ up to eight crossings from the Rolfsen Knot Table, we use Sage's representative braid $\bbeta_K$ on $n$ strands with braid closure $K$~\cite{sage}, and we compute a conjugating braid $\mathbf{g}$ so that the projection of $\mathbf{g}\bbeta_K\mathbf{g}^{-1}$ to $\mathfrak S_n$ is the standard long cycle $c=(12\cdots n)$ in $\mathfrak S_n$.  We rewrite the resulting pure braid $\brho_K=\mathbf{g}\bbeta_K\mathbf{g}^{-1}\bc^{-1}$ as a product $\prod_{k=1}^m \tt_{(i_k\,j_k)}^{\epsilon_k}$.  The signed subword count given in~\Cref{thm:knots} computes the coefficients of the $(a,z)$-HOMFLYPT polynomial of $K$ evaluated at $z=0$. A list $[b_0,\ldots,b_{n-1}]$ in the final column denotes the
polynomial
$\sum_{k=0}^{n-1}
b_k a^{-\mathrm{wr}(\bbeta_K)-n+1+2k}$.} \\
\toprule
Knot $K$ & $\bbeta_K$ & $n$ & $\mathrm{wr}(\bbeta_K)$ & $\mathbf{g}$ & $\brho_K=\prod \tt_{(ij)}^{\pm 1}$ & $\HOMFLY(K;a,0)$ \\
\midrule
\endfirsthead

\multicolumn{8}{c}{{\tablename\ \thetable{} -- continued}} \\
\toprule
Knot $K$ & $\bbeta_K$ & $n$ & $\mathrm{wr}(\bbeta_K)$ & $\mathbf{g}$ & $\brho_K=\prod \tt_{(ij)}^{\pm 1}$ & $\HOMFLY(K;a,0)$ \\
\midrule
\endhead

\midrule
\endfoot

\bottomrule
\endlastfoot

$[3, 1]$ & $\mathbf{s}_1^{-3}$ & 2 & $-3$ & $e$ & 
$\tt_{(12)}^{-1}\tt_{(12)}^{-1}$ & 
$[2, -1]$ \\ \addlinespace

$[4, 1]$ & $(\mathbf{s}_1^{-1}\mathbf{s}_2)^2$ & 3 & 0 & $\mathbf{s}_2$ & 
$\tt_{(12)}^{-1}\tt_{(13)}^{-1}\tt_{(12)}$  & $[1, -1, 1]$ \\ \addlinespace

$[5, 1]$ &$\mathbf{s}_1^{-5}$ & 2 & $-5$ & $e$  & 
$\tt_{(12)}^{-1}\tt_{(12)}^{-1}\tt_{(12)}^{-1}$ & 
 $[3, -2]$ \\ \addlinespace

$[5, 2]$ &$\mathbf{s}_1^{-3}\mathbf{s}_2^{-1}\mathbf{s}_1\mathbf{s}_2^{-1}$ & 3 & $-4$ & $\mathbf{s}_2$  & 
$\tt_{(12)}^{-1}\tt_{(13)}^{-1}\tt_{(13)}^{-1}\tt_{(23)}\tt_{(13)}^{-1}$ & 
 $[1, 1, -1]$ \\ \addlinespace

$[6, 1]$ &$\mathbf{s}_1^{-2}\mathbf{s}_2^{-1}\mathbf{s}_1\mathbf{s}_3\mathbf{s}_2^{-1}\mathbf{s}_3$ & 4 & $-1$ & $\mathbf{s}_2$  & 
$ \tt_{(12)}^{-1}\tt_{(13)}^{-1}\tt_{(12)}\tt_{(13)}^{-1}\tt_{(14)}^{-1}\tt_{(13)}$ & 
 $[1, 0, -1, 1]$ \\ \addlinespace

$[6, 2]$ &$\mathbf{s}_1^{-3}\mathbf{s}_2\mathbf{s}_1^{-1}\mathbf{s}_2$ & 3 & $-2$ & $\mathbf{s}_2$  & 
$\tt_{(12)}^{-1}\tt_{(13)}^{-1}\tt_{(13)}^{-1}\tt_{(12)}$ & 
 $[2, -2, 1]$ \\ \addlinespace

$[6, 3]$ &$\mathbf{s}_1^{-2}\mathbf{s}_2\mathbf{s}_1^{-1}\mathbf{s}_2^2$ & 3 & 0 & $\mathbf{s}_2$  & 
$ \tt_{(12)}^{-1}\tt_{(13)}^{-1}\tt_{(12)}\tt_{(23)}\tt_{(12)}^{-1}$ & 
 $[-1, 3, -1]$ \\ \addlinespace

$[7, 1]$ &$\mathbf{s}_1^{-7}$ & 2 & $-7$ & $e$ & 
$ \tt_{(12)}^{-1}\tt_{(12)}^{-1}\tt_{(12)}^{-1}\tt_{(12)}^{-1}$ & 
 $[4, -3]$ \\ \addlinespace

$[7, 2]$ &$\mathbf{s}_1^{-3}\mathbf{s}_2^{-1}\mathbf{s}_1\mathbf{s}_2^{-1}\mathbf{s}_3^{-1}\mathbf{s}_2\mathbf{s}_3^{-1}$ & 4 & $-5$ & $\mathbf{s}_2$ & 
$\tt_{(12)}^{-1}\tt_{(13)}^{-1}\tt_{(13)}^{-1}\tt_{(23)}\tt_{(34)}^{-1}\tt_{(13)}^{-1}$ & 
 $[1, 0, 1, -1]$ \\ \addlinespace

$[7, 3]$ &$\mathbf{s}_1^5\mathbf{s}_2\mathbf{s}_1^{-1}\mathbf{s}_2$ & 3 & 6 & $\mathbf{s}_2$  & 
$ \tt_{(12)}^{-1}\tt_{(13)}\tt_{(13)}\tt_{(12)}$ & 
 $[-2, 2, 1]$ \\ \addlinespace

$[7, 4]$ &$\mathbf{s}_1^2\mathbf{s}_2\mathbf{s}_1^{-1}\mathbf{s}_2^{2}\mathbf{s}_3\mathbf{s}_2^{-1}\mathbf{s}_3$ & 4 & 5 & $\mathbf{s}_2$ & 
$\tt_{(23)}\tt_{(13)}\tt_{(12)}^{-1}\tt_{(14)}^{-1}\tt_{(13)}$ & 
 $[-1, 0, 2, 0]$ \\ \addlinespace

$[7, 5]$ &$\mathbf{s}_1^{-4}\mathbf{s}_2^{-1}\mathbf{s}_1\mathbf{s}_2^{-2}$ & 3 & $-6$ & $\mathbf{s}_2$  & 
$ \tt_{(12)}^{-1}\tt_{(13)}^{-1}\tt_{(13)}^{-1}\tt_{(12)}\tt_{(13)}^{-1}\tt_{(13)}^{-1}$ & 
 $[2, 0, -1]$ \\ \addlinespace

$[7, 6]$ &$\mathbf{s}_1^{-2}\mathbf{s}_2\mathbf{s}_1^{-1}\mathbf{s}_3^{-1}\mathbf{s}_2\mathbf{s}_3^{-1}$ & 4 & $-3$ & $\mathbf{s}_2$  & 
$ \tt_{(12)}^{-1}\tt_{(13)}^{-1}\tt_{(12)}\tt_{(23)}\tt_{(12)}^{-1}\tt_{(34)}^{-1}\tt_{(13)}^{-1}$ & 
 $[1, -1, 2, -1]$ \\ \addlinespace

$[7, 7]$ &$(\mathbf{s}_1\mathbf{s}_2^{-1})^2\mathbf{s}_3\mathbf{s}_2^{-1}\mathbf{s}_3$& 4 & 1 & $\mathbf{s}_2$  & 
$ \tt_{(12)}^{-1}\tt_{(23)}\tt_{(13)}^{-1}\tt_{(14)}^{-1}\tt_{(13)}$ & 
 $[1, -2, 2, 0]$ \\ \addlinespace

$[8, 1]$ &$\mathbf{s}_1^{-2}\mathbf{s}_2^{-1}\mathbf{s}_1\mathbf{s}_2^{-1}\mathbf{s}_3^{-1}\mathbf{s}_2\mathbf{s}_4\mathbf{s}_3^{-1}\mathbf{s}_4$& 5 & $-2$ & $\mathbf{s}_3\mathbf{s}_2\mathbf{s}_4$ & 
$ \tt_{(12)}^{-1}\tt_{(13)}^{-1}\tt_{(14)}^{-1}\tt_{(13)}\tt_{(12)}\tt_{(13)}^{-1}\tt_{(14)}^{-1}\tt_{(15)}^{-1}\tt_{(13)}$ & 
 $[1, 0, 0, -1, 1]$ \\ \addlinespace

$[8, 2]$ &$\mathbf{s}_1^{-4}(\mathbf{s}_1^{-1}\mathbf{s}_2)^2$& 3 & $-4$ & $\mathbf{s}_2$ & 
$ \tt_{(12)}^{-1}\tt_{(13)}^{-1}\tt_{(13)}^{-1}\tt_{(13)}^{-1}\tt_{(12)}$ & 
 $[3, -3, 1]$ \\ \addlinespace

$[8, 3]$ &$\mathbf{s}_1^{-2}\mathbf{s}_2^{-1}\mathbf{s}_1\mathbf{s}_3\mathbf{s}_2^{-1}\mathbf{s}_3\mathbf{s}_4\mathbf{s}_3^{-1}\mathbf{s}_4$& 5 & 0 & $\mathbf{s}_3\mathbf{s}_2\mathbf{s}_4$  & 
$ \tt_{(12)}^{-1}\tt_{(13)}^{-1}\tt_{(14)}^{-1}\tt_{(13)}\tt_{(12)}\tt_{(13)}^{-1}\tt_{(14)}^{-1}\tt_{(15)}^{-1}\tt_{(14)}\tt_{(13)}$ & 
 $[1, 0, -1, 0, 1]$ \\ \addlinespace

$[8, 4]$ &$\mathbf{s}_1^{-3}\mathbf{s}_2\mathbf{s}_1^{-1}\mathbf{s}_2\mathbf{s}_3\mathbf{s}_2^{-1}\mathbf{s}_3$& 4 & $-1$ & $\mathbf{s}_2$  & 
$ \tt_{(12)}^{-1}\tt_{(13)}^{-1}\tt_{(13)}^{-1}\tt_{(12)}\tt_{(14)}^{-1}\tt_{(13)}$ & 
 $[2, -2, 0, 1]$ \\ \addlinespace

$[8, 5]$ &$(\mathbf{s}_1^{3}\mathbf{s}_2^{-1})^2$& 3 & 4 & $\mathbf{s}_2$  & 
$\tt_{(12)}^{-1}\tt_{(13)}\tt_{(23)}\tt_{(23)}\tt_{(13)}^{-1}$ & 
 $[2, -5, 4]$ \\ \addlinespace

$[8, 6]$ &$\mathbf{s}_1^{-4}\mathbf{s}_2^{-1}\mathbf{s}_1\mathbf{s}_3\mathbf{s}_2^{-1}\mathbf{s}_3$& 4 & $-3$ & $\mathbf{s}_2$ & 
$\tt_{(12)}^{-1}\tt_{(13)}^{-1}\tt_{(13)}^{-1}\tt_{(12)}\tt_{(13)}^{-1}\tt_{(14)}^{-1}\tt_{(13)}$ & 
 $[2, -1, -1, 1]$ \\ \addlinespace

$[8, 7]$ &$\mathbf{s}_1^{3}(\mathbf{s}_1\mathbf{s}_2^{-1})^2\mathbf{s}_2^{-1}$& 3 & 2 & $\mathbf{s}_2$ & 
$\tt_{(12)}^{-1}\tt_{(13)}\tt_{(13)}\tt_{(12)}\tt_{(13)}^{-1}\tt_{(13)}^{-1}$ & 
 $[-2, 4, -1]$ \\ \addlinespace

$[8, 8]$ &$\mathbf{s}_1^{3}\mathbf{s}_2\mathbf{s}_1^{-1}\mathbf{s}_3^{-1}\mathbf{s}_2\mathbf{s}_3^{-2}$& 4 & 1 & $\mathbf{s}_2$ & 
$\tt_{(12)}^{-1}\tt_{(13)}\tt_{(12)}\tt_{(14)}^{-1}\tt_{(13)}^{-1}$ & 
 $[-1, 1, 2, -1]$ \\ \addlinespace

$[8, 9]$ &$\mathbf{s}_1^{-2}(\mathbf{s}_1^{-1}\mathbf{s}_2)^2\mathbf{s}_2^{2}$& 3 & 0 & $\mathbf{s}_2$ & 
$\tt_{(12)}^{-1}\tt_{(13)}^{-1}\tt_{(13)}^{-1}\tt_{(12)}\tt_{(13)}$ & 
$[2, -3, 2]$ \\ \addlinespace

$[8, 10]$ &$\mathbf{s}_1^{3}\mathbf{s}_2^{-1}\mathbf{s}_1^{2}\mathbf{s}_2^{-2}$& 3 & 2 & $e$ & 
$\tt_{(12)}\tt_{(13)}^{-1}\tt_{(23)}\tt_{(13)}^{-1}$ & 
 $[-3, 6, -2]$ \\ \addlinespace

$[8, 11]$ &$\mathbf{s}_1^{-2}\mathbf{s}_2^{-1}\mathbf{s}_1\mathbf{s}_2^{-2}\mathbf{s}_3\mathbf{s}_2^{-1}\mathbf{s}_3$& 4 & $-3$ & $\mathbf{s}_2$ & 
$ \tt_{(12)}^{-1}\tt_{(13)}^{-1}\tt_{(12)}\tt_{(13)}^{-1}\tt_{(13)}^{-1}\tt_{(14)}^{-1}\tt_{(13)}$ & 
 $[1, 1, -2, 1]$ \\ \addlinespace

$[8, 12]$ &$\mathbf{s}_1^{-1}\mathbf{s}_2\mathbf{s}_1^{-1}\mathbf{s}_3^{-1}\mathbf{s}_2\mathbf{s}_4\mathbf{s}_3^{-1}\mathbf{s}_4$& 5 & 0 & $\mathbf{s}_3\mathbf{s}_2\mathbf{s}_4$ & 
$ \tt_{(12)}^{-1}\tt_{(13)}^{-1}\tt_{(14)}^{-1}\tt_{(13)}\tt_{(12)}\tt_{(13)}^{-1}\tt_{(15)}^{-1}\tt_{(13)}$ & 
 $[1, -1, 1, -1, 1]$ \\ \addlinespace

$[8, 13]$ &$\mathbf{s}_1^{-1}(\mathbf{s}_1^{-1}\mathbf{s}_2)^2\mathbf{s}_2\mathbf{s}_3\mathbf{s}_2^{-1}\mathbf{s}_3$& 4 & 1 & $\mathbf{s}_2$ & 
$ \tt_{(12)}^{-1}\tt_{(13)}^{-1}\tt_{(12)}\tt_{(23)}\tt_{(12)}^{-1}\tt_{(14)}^{-1}\tt_{(13)}$ & 
$[-1, 2, 0, 0]$ \\ \addlinespace

$[8, 14]$ &$\mathbf{s}_1^{-3}\mathbf{s}_2^{-1}\mathbf{s}_1(\mathbf{s}_2^{-1}\mathbf{s}_3)^2$& 4 & $-3$ & $\mathbf{s}_2$ & 
$\tt_{(12)}^{-1}\tt_{(13)}^{-1}\tt_{(13)}^{-1}\tt_{(23)}\tt_{(13)}^{-1}\tt_{(14)}^{-1}\tt_{(13)}$ & 
 $[1, 0, 0, 0]$ \\ \addlinespace

$[8, 15]$ &$\mathbf{s}_1^{-2}\mathbf{s}_2\mathbf{s}_1^{-1}\mathbf{s}_3^{-1}\mathbf{s}_2^{-3}\mathbf{s}_3^{-1}$& 4 & $-7$ & $\mathbf{s}_2$ & 
$\tt_{(12)}^{-1}\tt_{(13)}^{-1}\tt_{(12)}\tt_{(23)}\tt_{(12)}^{-1}\tt_{(34)}^{-1}\tt_{(13)}^{-1}\tt_{(14)}^{-1}\tt_{(14)}^{-1}$ & 
 $[1, 3, -4, 1]$ \\ \addlinespace

$[8, 16]$ & $(\mathbf{s}_1^{-2}\mathbf{s}_2)^2\mathbf{s}_1^{-1}\mathbf{s}_2$& 3 & $-2$ & $e$ & 
$\tt_{(12)}^{-1}\tt_{(12)}^{-1}\tt_{(13)}^{-1}\tt_{(12)}\tt_{(23)}\tt_{(12)}^{-1}$ & 
 $[0, 2, -1]$ \\ \addlinespace

$[8, 17]$ &$\mathbf{s}_1^{-1}(\mathbf{s}_1^{-1}\mathbf{s}_2)^3\mathbf{s}_2$& 3 & 0 & $e$  & 
$\tt_{(12)}^{-1}\tt_{(12)}^{-1}\tt_{(13)}^{-1}\tt_{(12)}\tt_{(13)}$ & 
 $[1, -1, 1]$ \\ \addlinespace

$[8, 18]$ &$(\mathbf{s}_1^{-1}\mathbf{s}_2)^4$& 3 & 0 & $e$ & 
$\tt_{(12)}^{-1}\tt_{(23)}^{-1}\tt_{(12)}\tt_{(23)}\tt_{(12)}^{-1}$ & 
 $[-1, 3, -1]$ \\ \addlinespace

$[8, 19]$ &$\mathbf{s}_1^{3}\mathbf{s}_2\mathbf{s}_1^{3}\mathbf{s}_2$& 3 & 8 & $\mathbf{s}_2$  & 
$\tt_{(12)}^{-1}\tt_{(13)}\tt_{(12)}\tt_{(23)}\tt_{(23)}$ & 
 $[1, -5, 5]$ \\ \addlinespace

$[8, 20]$ & $\mathbf{s}_1^{3}\mathbf{s}_2^{-1}\mathbf{s}_1^{-3}\mathbf{s}_2^{-1}$& 3 & $-2$ & $\mathbf{s}_2$ & 
$\tt_{(12)}^{-1}\tt_{(13)}\tt_{(23)}^{-1}\tt_{(13)}^{-1}$ & 
 $[-1, 4, -2]$ \\ \addlinespace

$[8, 21]$ & $\mathbf{s}_1^{-3}\mathbf{s}_2^{-1}\mathbf{s}_1^2\mathbf{s}_2^{-2}$& 3 & $-4$ & $e$ & 
$\tt_{(12)}^{-1}\tt_{(12)}^{-1}\tt_{(13)}^{-1}\tt_{(23)}\tt_{(13)}^{-1}$ & 
 $[3, -3, 1]$ \\
\label{tab:knots}
\end{longtable}
\end{landscape}

\section{Conclusion}

The constructions developed in this paper suggest several natural
questions.  We collect them here as directions for future work.

\subsection{The Noncrossing Support of a Dual Pure Generator}
Let $t\in T$, and let $\tt=\P_c(t)$ be the square of the $c$-dual lift of $t$. \Cref{lem:dual_square_expansion} says that for each $\pi\in\NC(W,c^{-1})\setminus\{e,t\}$, the coefficient $[\T_\pi]\H(\P_c(t))$ is divisible by $(q-1)^{\ell_T(\pi)+1}$. We conjecture that this coefficient is in fact $0$.

\begin{conjecture}
For $t\in T$ and $\pi\in\NC(W,c^{-1})\setminus\{e,t\}$, we have 
\[
[\T_\pi]\H(\P_c(t))=0.
\]
\end{conjecture} 

\subsection{Rational Positive Factorizations}

The rational constructions in~\Cref{sec:rational} are currently
type-by-type.  In the classical types and in most exceptional cases, we
find positive dual $(c,p)$-conjugators, while the constructions presently
given in type $E_7$ for $p=5$ and in type $E_8$ involve signed
factorizations.

\begin{conjecture}
Let $W$ be a crystallographic Coxeter group, let $c$ be a standard Coxeter
element, and let $p>0$ be coprime to the Coxeter number.  Then there exists a
positive dual $(c,p)$-conjugator $\mathbf{g}\in\B_W$. Thus, the element $\XX=\mathbf g\B(c)^p\mathbf g^{-1}\B(c)^{-1}$ should admit a positive factorization of the form
\[
\XX
=
\P_c(t_1)\cdots\P_c(t_{r(p-1)/2}).
\]
\end{conjecture}

A type-uniform construction would explain the similarities among the
type $A$, $B$, and $D$ models, and it would determine whether the signs in
the exceptional tables are artifacts of the chosen lifts. 

\subsection{Flip Graphs for Reflection Subwords}

Suppose $\vect\in T^M$. Let $\pi\in W$. There is a natural flip graph whose vertex set is the set $\Sub_T(\vect,\pi)$ of subwords of $\vect$ that are reduced $T$-words for $\pi$.  In this graph, two distinct subwords are adjacent when their take sets differ by exchanging exactly one index. The subword sets
\[
\Sub_T\!\Bigl(\inv\bigl({\sf c}\,{\sf w}_\circ(c)\bigr),c^{-1}\Bigr)
\qquad\text{and}\qquad
\Sub_T\!\Bigl(\inv\bigl({\sf w}_\circ(c^{-1})\bigr)^2,c^{-1}\Bigr)
\]
recover the usual cluster and noncrossing-partition models, where the graph is the $1$-skeleton of the generalized associahedron. More
generally, subword sets such as
\[
\Sub_T\!\Bigl(\rev\bigl(\inv({\sf w}_\circ(c))\bigr)\,
\inv(c)\,
\inv({\sf w}_\circ(c)),c^{-1}\Bigr)
\]
give flip structures on all faces of the cluster complex, not
only on its facets.  In types $A$ and $B$, this gives little Schr\"oder/central Delannoy combinatorics (see sequences A001003 and A001850 in \cite{oeis}).  It would be interesting to determine which of these flip graphs are polytopal and how they depend on the chosen word for the full twist. 

\subsection{The $F$-triangle and the $M$-triangle}
Let $(s_1,\ldots,s_r)$ be a reduced word for $c$.  With some effort, we can encode F.~Chapoton's F-triangle $F(x,y)$ (see~\cite{ed2004enumerative,athanasiadis2007some} for definitions) in two ways using~\Cref{lem:coeff-delta} and a twisted version of $\Delta^2 \prod_{i=1}^r \s_i^2$, which---upon proper interpretation---we believe likely encodes the difference between the M-triangle and the F-triangle:
\begin{align*} 
F(x,y)&=[\T_e]\left.\left(\left(\frac{x}{q-1}\right)^r \H(\Delta^{4}) \prod_{i=1}^r \left((q-1)(y-x)/x+\T_{s_i}\right)  \sum_{w \in W} \left(\frac{q-1}{x}\right)^{\ell_T(w)}\T_{w}^{-1}\right)\right|_{q=1}\\
&=[\T_e]\left.\left(\left(\frac{x}{q-1}\right)^r \H(\Delta^{-2}) \prod_{i=r}^1 \left((q-1)y+(1-y)\T_{s_i}\right)  \sum_{w \in W} \left(\frac{(1-xy)(q-1)}{x(1-y)}\right)^{\ell_T(w)}\T_{w^{-1}}\right)\right|_{q=1}.
\end{align*}
It would be interesting to explore this further.

\subsection{Full Hecke Expansion via the Dual Braid Group?}

\Cref{thm:main} determines the leading term at $q=1$---it would be very desirable to extend this to the full Hecke expansion.  While individual
dual squares can be explicitly expanded in the Hecke algebra, products of these
expansions quickly leave the noncrossing partition lattice, and we lose control of them using only dual braid group combinatorics.

\section*{Acknowledgments}
Nathan Williams was partially supported by the National Science Foundation under Award No.~2246877, and is grateful to Jaeseong Oh for an invitation to talk about this work at a workshop on Diagrammatics for Combinatorics, Algebra, and Topology at Sungkyunkwan University.  We thank Minh-T\^am Trinh for many insightful conversations. 

We began this project about a year ago.  While we initially wrote all code by hand, as large language model capabilities increased we turned to ChatGPT~5.4,~5.5, and 5.6 Pro, Gemini~3.1 Pro, and Claude Opus~4.8 and~5 for assistance with code generation and editing.  

\bibliographystyle{plain}
\bibliography{bib.bib}

\end{document}